\newcommand{\R}{\mathbb R}
\newcommand{\C}{\mathbb C}
\renewcommand{\H}{\mathbb H}
\newcommand{\N}{\mathbb N}
\newcommand{\D}{\mathbb D}
\newcommand{\E}{\mathbb E}
\renewcommand{\P}{\mathbb P}
\newcommand{\hcap}{\text{hcap}}
\newcommand{\crad}{\text{crad}}
\renewcommand{\Re}{\text{Re}}
\renewcommand{\Im}{\text{Im}}
\newcommand{\vare}{\varepsilon}
\newtheorem{theorem}{Theorem}
\newtheorem{proposition}{Proposition}
\newtheorem{lemma}{Lemma}
\newtheorem{corollary}{Corollary}
\newtheorem{thmCite}{Theorem}
\newtheorem{lemmaCite}{Lemma}
\newtheorem{propositionCite}{Proposition}
\DeclareMathOperator{\Vol}{Vol}
\DeclareMathOperator{\Tr}{Tr}
\DeclareMathOperator{\zdet}{det_\zeta}
\theoremstyle{definition}
\newtheorem{definition}{Definition}
\theoremstyle{remark}
\newtheorem{remark}{Remark}
\title{The $\rho$-Loewner Energy: \\Large Deviations, Minimizers, and Alternative Descriptions}
\author{Ellen Krusell\\ \textit{KTH Royal Institute of Technology}\\ \texttt{ekrusell@kth.se} }
\date{}
\begin{document}
\sloppy

\maketitle
\abstract{We introduce and study the $\rho$-Loewner energy, a variant of the Loewner energy with a force point on the boundary of the domain. We prove a large deviation principle for SLE$_\kappa(\rho)$, as $\kappa \to 0+$ and $\rho>-2$ is fixed, with the $\rho$-Loewner energy as the rate function in both radial and chordal settings. The unique minimizer of the $\rho$-Loewner energy is the SLE$_0(\rho)$ curve. We show that it exhibits three phases as $\rho$ varies and give a flow-line representation. We also define a whole-plane variant for which we explicitly describe the trace.\par We further obtain alternative formulas for the $\rho$-Loewner energy in the reference point hitting phase, $\rho > -2$. In the radial setting we give an equivalent description in terms of the Dirichlet energy of $\log|h'|$, where $h$ is a conformal map onto the complement of the curve, plus a point contribution from the tip of the curve. In the chordal setting, we derive a similar formula under the assumption that the chord ends in the $\rho$-Loewner energy optimal way. Finally, we express the $\rho$-Loewner energy in terms of $\zeta$-regularized determinants of Laplacians.}

\pagenumbering{arabic}
\section{Introduction and main results}
\subsection{Introduction}\label{section:introduction}
A chord is a simple curve connecting two distinct boundary points of a domain in the complex plane. The chordal Loewner differential equation gives a way of encoding an appropriately parametrized chord in a simply connected domain by a continuous and real-valued function. Consider the following reference setting: Suppose $\gamma$ is a chord from $0$ to $\infty$ in $\H$, the upper half-plane, (parametrized appropriately on $[0,\infty)$). Then, the family $(g_t)_{t\geq 0}$ of conformal maps $g_t:\H\setminus\gamma_t\to\H$, normalized so that $g_t(z)=z+O(z^{-1})$ at $\infty$, where $\gamma_t=\gamma([0,t])$, satisfies the
chordal Loewner equation
\begin{equation}\partial_t g_t(z)=\frac{2}{g_t(z)-W_t},\quad g_0(z)=z,\label{eq:chordalloewner}\end{equation}
with driving function $W_t=g_t(\gamma(t))$. The function $W_t$ encodes $\gamma$ in the sense that one can, given $W_t$, retrieve $(g_t)_{t\geq 0}$ by solving (\ref{eq:chordalloewner}) and then obtain $\gamma$ by
$$\gamma(t)=\lim_{y\to 0+}g_t^{-1}(W_t+iy).$$
In general, solving the chordal Loewner equation for an arbitrary real-valued and continuous function $W_t$, yields a family $(g_t)_{t\geq 0}$ of conformal maps $g_t:\H\setminus K_t\to\H$, where $(K_t)_{\geq 0}$ is a family of continuously growing compact subsets of $\overline \H$.\par
The Loewner equation can be used to describe (candidates for) scaling limits of interfaces in planar critical lattice models. Schramm \cite{S00} observed that such random curves should satisfy two important properties, conformal invariance and a domain Markov property, and that a random curve has these properties if and only if its driving function is of the form $W_t=\sqrt{\kappa}B_t$, where $\kappa\geq 0$ and $B_t$ is a standard one-dimensional Brownian motion. These curves, called chordal Schramm-Loewner evolution, SLE$_\kappa$, have, indeed, been shown to be the scaling limit of interfaces in lattice models for several values of $\kappa$, and play a major role in random conformal geometry \cite{S11a,S11b,S22}.\par
In \cite{W19a}, Wang showed that chordal SLE$_\kappa$ satisfies a large deviation principle (LDP) as $\kappa\to 0+$. The rate function $I^{(\H;0,\infty)}$ is the chordal Loewner energy, which for chords $\gamma$ from $0$ to $\infty$ in $\H$ is defined by
\begin{equation*}I^{(\H;0,\infty)}(\gamma)=\frac{1}{2}\int_0^\infty \dot W_t^2 dt,\end{equation*}
when the driving function $W_t$ of $\gamma$ is absolutely continuous on $[0,T]$, for all $T>0$, and $I^{(\H;0,\infty)}(\gamma)=\infty$ otherwise. Heuristically, this means that
\begin{equation*}\text{``}\P[\text{SLE}_\kappa \text{ stays close to }\gamma]\sim \exp\bigg(-\frac{I^{(\H;0,\infty)}(\gamma)}{\kappa}\bigg),\ \text{ as $\kappa\to 0+$.''}\end{equation*} 
(See also \cite{FS17}.) The chordal Loewner energy has a natural extension, called the loop Loewner energy, to the space of Jordan curves on the Riemann sphere \cite{RW21}. In later work, Wang showed that the loop Loewner energy has unexpected ties to Teichmüller theory: the loop Loewner energy coincides (up to a multiplicative constant) with the universal Liouville action, a Kähler potential on the Weil-Petersson universal Teichmüller space \cite{W19b}. In particular, the class of finite Loewner energy loops coincides with the class of Weil-Petersson quasicircles, which has many different characterizations \cite{B19}. The links between random conformal geometry and Teichmüller theory revealed by Wang's result are not yet well-understood. It was also shown in \cite{W19b} that the Loewner energy of a smooth Jordan curve can be expressed in terms of $\zeta$-regularized determinants of Laplacians.\par
These discoveries sparked interest in large deviations for SLE type processes, as $\kappa\to 0+$, which was studied in the subsequent papers \cite{PW21,G23,AHP24,AP24}. Parallel to the LDP development, several articles have been devoted to investigating the Loewner energy and its properties, see, e.g., \cite{FS17,RW21,VW20,VW24,MW23,SW24,BBVW23,M23}. See also the survey \cite{W22}.\par
In the present paper, we study large deviations on SLE$_\kappa(\rho)$ curves and the corresponding large deviations rate function, which we call the $\rho$-Loewner energy. 
The SLE$_\kappa(\rho)$ curve is a natural generalization of SLE$_\kappa$ where one puts an attractive or repelling force ($\rho\in\R$) at one or several marked points, called force points. In this paper we deal with the case of one force point, which will, with a few exceptions, be located on the boundary of the domain. The chordal SLE$_\kappa(\rho)$ curve in $\H$ starting at $0$, with reference point $\infty$ and force point $z_0\in\overline\H\setminus\{0\}$, is the random curve whose Loewner driving function satisfies the SDE
\begin{equation}d W_t = \Re\frac{\rho}{W_t-z_t}dt+\sqrt{\kappa}dB_t,\quad W_0=0,\label{eq:chordaldrive}\end{equation}
where $z_t=g_t(z_0)$, and $B_t$ is a one-dimensional standard Brownian motion. Using the radial Loewner equation
\begin{equation}\partial_t g_t(z)=g_t(z)\frac{W_t+g_t(z)}{W_t-g_t(z)}\label{eq:radialLoewner} \end{equation} 
one can, in a similar way, define the radial SLE$_\kappa(\rho)$ curve in $\D$, the unit disk, starting at $1$, with reference point $0$ and force point $z_0=e^{iv_0}\in\partial\D\setminus\{1\}$. It is the random curve whose driving function $W_t=e^{iw_t}$ satisfies the SDE
\begin{equation}dw_t = \frac{\rho}{2}\cot\frac{w_t-v_t}{2}dt+\sqrt{\kappa}dB_t,\quad w_0=0,\label{eq:radialdrive}\end{equation}
where $e^{iv_t}=g_t(e^{iv_0})$, and $B_t$ is a one-dimensional standard Brownian motion.\par
The first appearance of SLE$_\kappa(\rho)$ was in \cite{LSW03}, where it was shown that SLE$_{8/3}(\rho)$ (in the chordal setting with a boundary force point) is the outer boundary of random sets that satisfy a conformal restriction property. In \cite{W04,D05}, this work was further developed. In the latter it was shown that a chordal SLE$_\kappa(\kappa-4)$, with force point at $x_0>0$ and $\kappa\geq 4$, is a chordal SLE$_\kappa$ conditioned not to hit $[x,\infty)$. Two further special cases of chordal SLE$_\kappa(\rho)$ processes, now with a force point $z_0\in\H$, are SLE$_\kappa(\kappa-6)$, $\kappa\leq 4$, which (after re-parametrization) is a radial SLE$_\kappa$ from $0$ to $z_0$, and SLE$_\kappa(\kappa-8)$, $\kappa\leq 4$, which is (the first part) of a chordal SLE$_\kappa$ ``conditioned to pass through $z_0$'' \cite{SW05}. The SLE$_\kappa(\rho)$ curves are also main players in Imaginary Geometry, (see, e.g., \cite{MS16,MS16c,MS16d,MS17}), where they appear as generalized flow-lines of the Gaussian free field (GFF) with suitable boundary conditions.
\subsection{Main results}\label{section:mainresults}
\paragraph{The $\rho$-Loewner energy}
The goal of the present paper is to study the $\rho$-Loewner energy, the large deviations rate function for SLE$_\kappa(\rho)$ as $\kappa\to 0+$, which is defined in the following way.
\begin{definition}[$\rho$-Loewner energy]\label{def:rhoenergy}Fix $z_0\in\overline\H\setminus\{0\}$ and $\rho\in\R$. Let $\gamma$ be a curve in $\H\setminus\{z_0\}$ parametrized by half-plane capacity, starting at $0$. The chordal $\rho$-Loewner energy of $\gamma$ with respect to the reference point $\infty$ and force point $z_0$, is defined by 
$$I^{(\H;0,\infty)}_{\rho,z_0}(\gamma)=\frac{1}{2}\int_{0}^T\bigg(\dot W_t-\rho\Re\frac{1}{W_t-z_t}\bigg)^2 dt,$$
where $z_t=g_t(z_0)$, if $W$ is absolutely continuous, and set to $\infty$ otherwise. Similarly, fix $e^{iv_0}\in\partial\D$ and $\rho\in\R$. Let $\gamma$ be a curve in $\D\setminus\{0\}$ parametrized by conformal radius, starting at $1$. The radial $\rho$-Loewner energy of $\gamma$ with respect to the reference point $0$ and force point $e^{iv_0}$, is defined by 
$$I^{(\D;1,0)}_{\rho,e^{iv_0}}(\gamma)=\frac{1}{2}\int_{0}^T\bigg(\dot w_t-\frac{\rho}{2}\cot\frac{w_t-v_t}{2}\bigg)^2 dt,$$
where $e^{iv_t}=g_t(e^{iv_0})$, if $w$ is absolutely continuous, and set to $\infty$ otherwise. The radial Loewner energy of $\gamma$ with respect to the reference point $0$ is $I^{\D;1,0}(\gamma)=I^{\D;1,0}_{0,e^{iv_0}}(\gamma).$
\end{definition}
\begin{remark}\label{remark:Freidlin}
Specialists familiar with Freidlin-Wentzell theory (see, e.g., Section \cite[Section 5.6]{DZ10}) might immediately recognize that the formulas for the $\rho$-Loewner energy are consistent with that theory. That is, if one, heuristically, applies the the Freidlin-Wentzell theorem to the driving processes of SLE$_\kappa(\rho)$, as $\kappa\to 0+$, then one obtains the $\rho$-Loewner energy as the rate function. Note however, that one can not obtain a large deviation principle on the driving process directly in this way since the drift terms in (\ref{eq:chordaldrive}) and (\ref{eq:radialdrive}) are not uniformly Lipschitz.\end{remark}
\begin{remark}
For curves $\gamma$ which are bounded away from both the reference point and the force point, one can derive an integrated formula for the $\rho$-Loewner energy, comparing it to the regular Loewner energy. E.g., in the chordal setting with $z_0\in\H$ we have 
\begin{equation}I^{(\H;0,\infty)}_{\rho,z_0}(\gamma)=I^{(\H;0,\infty)}(\gamma)+\rho\log\frac{\sin\theta_T}{\sin \theta_0}-\frac{\rho(8+\rho)}{8}\log\frac{|g_T'(z_0)|y_T}{y_0},\label{eq:integratedintro}\end{equation}
where $z_t=g_t(z_0)$, $y_t=\Im z_t$, and $\theta_t=\arg z_t\in(0,\pi)$. The integrated formulas in the other settings are presented in Proposition \ref{prop:integrated}.
\end{remark}
We define the $\rho$-Loewner energy of a curve $\gamma$ in a simply connected domain $D$, starting at a point $a\in\partial D$, with respect to a reference point $b\in\overline D\setminus\{a\}$ and force point $c\in\partial D\setminus\{a,b\}$ using a conformal map from $D$ to $\H$ or $\D$ mapping $a$ and $b$ appropriately. In \cite{D07,SW05}, it was shown that an SLE$_\kappa(\rho)$ starting at $a$, with reference point $b$ and force point $c$ is (after re-parametrization) an SLE$_\kappa(\kappa-\rho-6)$ starting at $a$, with reference point $c$ and force point $b$. In the deterministic setting we have the analog
\begin{equation}I^{(D;a,b)}_{\rho,c}(\gamma)=I^{(D;a,c)}_{-6-\rho,b}(\gamma).\label{eq:coorchange}\end{equation}
This provides a way to translate facts about the chordal Loewner energy to the radial setting. In particular, we obtain upper and lower bounds for the radial Loewner energy in terms of the chordal Loewner energy (see Corollary \ref{cor:chordalradial}). The integrated formulas and the coordinate change property (\ref{eq:coorchange}) are proved in Section \ref{section:definition}.
\paragraph{Large deviation principle.}
Our first main result, which is proved in Section \ref{section:ldp}, is a large deviation principle (LDP) of SLE$_\kappa(\rho)$, as $\kappa\to 0+$. 
\begin{theorem}[LDP]\label{thm:ldp}
Fix $a\in\partial \D$, $b\in\overline \D\setminus \{a\}$, $c\in\partial\D\setminus\{a,b\}$, and $\rho>-2$. Let $\mathcal X^{(D;a,b)}$ denote the space of simple curves from $a$ to $b$ in $\D$ equipped with the Hausdorff topology. In this topology, the SLE$_\kappa(\rho)$ processes starting at $a$, with reference point $b$ and force point $c$, satisfy the large deviation principle with good rate function $I^{(\D;a,b)}_{\rho,c}$ as $\kappa\to 0+$.
\end{theorem}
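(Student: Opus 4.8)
The plan is to derive the LDP for SLE$_\kappa(\rho)$ from the LDP for ordinary SLE$_\kappa$ --- Schilder's theorem for the Loewner driving function together with continuity of the Loewner transform, as established in \cite{W19a} (and, in the radial setting, by an analogous argument) --- via an absolutely continuous change of measure; as flagged in Remark \ref{remark:Freidlin}, a direct Freidlin--Wentzell argument on the driving SDE is unavailable because the drift is not uniformly Lipschitz, and the change of measure is precisely what circumvents this. By the conformal invariance of SLE$_\kappa(\rho)$ and of the $\rho$-Loewner energy (Section \ref{section:definition}), it suffices to treat the two standard configurations: chordal in $\H$ from $0$ to $\infty$ with boundary force point $z_0$ (the case $b\in\partial\D$), and radial in $\D$ from $1$ to $0$ with boundary force point $e^{iv_0}$ (the case $b$ interior). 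Since we let $\kappa\to0+$, we may assume $\kappa<2(\rho+2)$, so that $\rho\ge\kappa/2-2$; then the SLE$_\kappa(\rho)$ curve is a.s.\ a simple curve reaching its target without hitting the force point, and \eqref{eq:chordaldrive} (resp.\ \eqref{eq:radialdrive}) has a global strong solution.

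Fix a horizon $T$ and, for small $\delta>0$, let $\tau$ be the first time the driving function comes within $\delta$ of the force-point image, capped at $T$; up to $\tau$ the drift $b_t$ --- equal to $\Re\frac{\rho}{W_t-z_t}$, resp.\ $\frac{\rho}{2}\cot\frac{w_t-v_t}{2}$ --- is bounded. By Girsanov's theorem (with Novikov's condition) the law $\Q_\kappa$ of the $\tau$-stopped SLE$_\kappa(\rho)$ driving function is absolutely continuous with respect to the law $\P_\kappa$ of $\tau$-stopped $\sqrt\kappa B$, with density $\exp(\frac1\kappa F_\kappa)$, $F_\kappa=\int_0^\tau b_t\,dW_t-\frac12\int_0^\tau b_t^2\,dt$. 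The drift is a total derivative, $b_t=\partial_W\Psi(W_t,z_t)$ with $\Psi=\rho\log|W-z|$ (chordal) or $\Psi=\rho\log|\sin\frac{w-v}{2}|$ (radial), so Itô's formula together with the Loewner ODE for the force-point image (bounded variation up to $\tau$) rewrites $\int_0^\tau b_t\,dW_t=A_\tau-\kappa C_\tau$, where $A_\tau,C_\tau$ are bounded continuous functionals of the stopped driving path alone (endpoint terms plus Lebesgue integrals of bounded functions of $W_t-z_t$ and of $\partial_t\log|g_t'(\cdot)|$). Thus $F_\kappa=F_0-\kappa C_\tau$ with $F_0:=A_\tau-\frac12\int_0^\tau b_t^2\,dt$ bounded continuous, $F_\kappa\to F_0$ uniformly, and --- since for an absolutely continuous driving function $\int_0^\tau b_t\,dW_t=A_\tau$ with no Itô correction --- expanding the square gives $I_{[0,\tau]}-F_0=\frac12\int_0^\tau(\dot W_t-b_t)^2\,dt=I_{\rho,[0,\tau]}$, with both sides $+\infty$ when $W$ is not absolutely continuous.

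Schilder's theorem gives an LDP at speed $\kappa^{-1}$ for $\tau$-stopped $\sqrt\kappa B$ with good rate function $I_{[0,\tau]}$, so a standard change-of-measure lemma for large deviations (if $\mu_\kappa\to\mathrm{LDP}(J)$ and $d\nu_\kappa=e^{G_\kappa/\kappa}\,d\mu_\kappa$ with $G_\kappa\to G$ uniformly and $G$ bounded continuous, then $\nu_\kappa\to\mathrm{LDP}(J-G)$; here $\int e^{F_\kappa/\kappa}\,d\P_\kappa=1$ forces $\inf(I_{[0,\tau]}-F_0)=0$, so no normalisation enters) yields an LDP for the $\tau$-stopped SLE$_\kappa(\rho)$ driving functions with good rate function $I_{[0,\tau]}-F_0=I_{\rho,[0,\tau]}$. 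It then remains to (i) remove the localization by sending $\delta\to0$ and $T\to\infty$, and (ii) pass from driving functions to curves. For (i) the key inputs are that, since $\kappa<2(\rho+2)$, SLE$_\kappa(\rho)$ stays a macroscopic distance from the force point and reaches a neighbourhood of its target before time $T$ with probability $1$ up to errors decaying faster than any $e^{-c/\kappa}$ (first as $\delta\to0$, then $T\to\infty$), so that the stopped/unstopped and finite/infinite-horizon processes are exponentially good approximations of one another; combined with the monotone convergence $I_{\rho,[0,T]}\uparrow I_\rho$ (nonnegative integrand) and an exponential-tightness estimate, this upgrades the localized LDP to the full one. For (ii) one uses the continuity of the (chordal and radial) Loewner transform on the relevant sets and the uniqueness of the driving function of a simple curve to apply the contraction principle, then transports the rate function back through the conformal maps, obtaining the LDP on $\mathcal X^{(D;a,b)}$ with rate function $I^{(\D;a,b)}_{\rho,c}$; goodness is inherited from that of the Loewner energy and the compactness of the sublevel sets of the truncated $\rho$-energies.

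The main obstacle I expect is step (i): establishing, uniformly over small $\kappa$, quantitative (super-exponential) control on how close SLE$_\kappa(\rho)$ comes to the force point and how quickly it approaches its target, and then gluing the family of horizon-$T$ large deviation statements into a single LDP for the full curve via exponential tightness. A related subtlety is that hitting times are not continuous in the path, so some care is needed either to present $F_\kappa$ as a genuinely continuous functional on the stopped-path space or to run the tilting argument on the events $\{\tau=T\}$; and the continuity of the Loewner transform in step (ii), while essentially available from the Loewner-energy literature, must be verified in the topologies and at the level of generality required here.
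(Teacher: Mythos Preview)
Your approach is genuinely different from the paper's. You propose to obtain the driving-function LDP by Girsanov tilting of Schilder's theorem, exploiting that the SLE$_\kappa(\rho)$ Radon--Nikodym derivative has the form $\exp(F_\kappa/\kappa)$ with $F_\kappa\to F_0$ after integrating the stochastic integral by parts. The paper instead avoids Girsanov entirely at this stage: it truncates the singular drift $H^X$ to a Lipschitz $H^X_\vare$, obtains for each $\vare$ an LDP by the contraction principle (the map $f\mapsto w^{\rho,\vare}(f)$ is continuous), and then shows these approximating families are exponentially good approximations of the true driving process (Lemma~\ref{lemma:martingale}, a direct martingale estimate on $\P[\sqrt\kappa B\notin C^X_{\vare,\rho}]$). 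Both routes hinge on the same key fact---that the probability of the driving/force-point gap becoming small is $O(\vare^{c/\kappa})$---but the paper packages this as ``the truncated processes approximate the true one,'' whereas you package it as ``the $\delta$-stopped processes approximate the unstopped one.'' The paper's route sidesteps two technicalities that you correctly flag: the discontinuity of $\tau$ in the path, and the need for a tilted-LDP lemma beyond the bounded case. The subsequent steps (finite-time curves via the Loewner map, infinite time via return estimates) are essentially as you describe and occupy Section~\ref{subsection:ldpfinite}, Section~\ref{section:ldpinftime}, and Appendix~\ref{section:escape}.

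One concrete imprecision in your sketch: you claim $A_\tau$ is \emph{bounded}, but the endpoint term $\Psi(W_\tau,x_\tau)=\rho\log|W_\tau-x_\tau|$ is not uniformly bounded on $C_0([0,T])$---only $|W_\tau-x_\tau|\ge\delta$ is guaranteed, not an upper bound (e.g.\ for $\rho>0$, $A_\tau$ is unbounded above). So the ``bounded continuous $G$'' version of the tilted-LDP lemma does not apply directly; you would need a Varadhan moment condition such as $\limsup_{\kappa\to0}\kappa\log\E_{\P_\kappa}[e^{\gamma F_0/\kappa}]<\infty$ for some $\gamma>1$, or to exploit that $F_0$ is continuous and bounded on sublevel sets of the Dirichlet energy together with exponential tightness. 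This is not fatal---the growth of $|W_\tau-x_\tau|$ is controlled by $\|W\|_\infty$ and $T/\delta$, and the Gaussian tails of $\sqrt\kappa B$ handle the moment condition---but it needs to be said. By contrast, the paper's exponential-approximation argument never needs Varadhan and so avoids this issue altogether.
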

The LDP result also holds for the family SLE$_\kappa(\kappa+\rho)$, as $\kappa\to 0+$ and $\rho>-2$ is fixed. Hence, the SLE$_\kappa(\tilde\rho)$ starting at $a$, with reference point $c$ and force point $b$, satisfies the LDP, as $\kappa\to 0+$ and $\tilde\rho>-4$ is fixed, with rate function $I^{(\D;a,c)}_{\rho,b}$. One should note here that, when $\tilde\rho>-4$ and $\kappa$ is small, the SLE$_\kappa(\tilde\rho)$ is a.s. stopped within finite time at $t=\tau_{0+}$, where $\tau_{0+}=\lim_{\vare\to 0+}\inf\{t:|W_t-x_t|\leq \vare\}$.\par
\begin{remark}The forthcoming article \cite{AP24} proves an LDP for radial SLE$_{0+}$ in the topology of uniform convergence on the space of simple curves modulo re-parametrization. 
\end{remark}
\begin{remark}The assumption that $\rho>-2$ is a technical one. Our proof of Theorem \ref{thm:ldp} relies on an LDP for the driving processes on finite time intervals $[0,T]$, which we then carry to an LDP on the curves. Since, the driving process of SLE$_\kappa(\rho)$ is terminated within time $T$ with positive probability (for all small $\kappa$) when $\rho\leq -2$, our proof does not work beyond $\rho>-2$.
\label{remark:ldp}\end{remark}
\begin{figure}
\includegraphics[width=\textwidth]{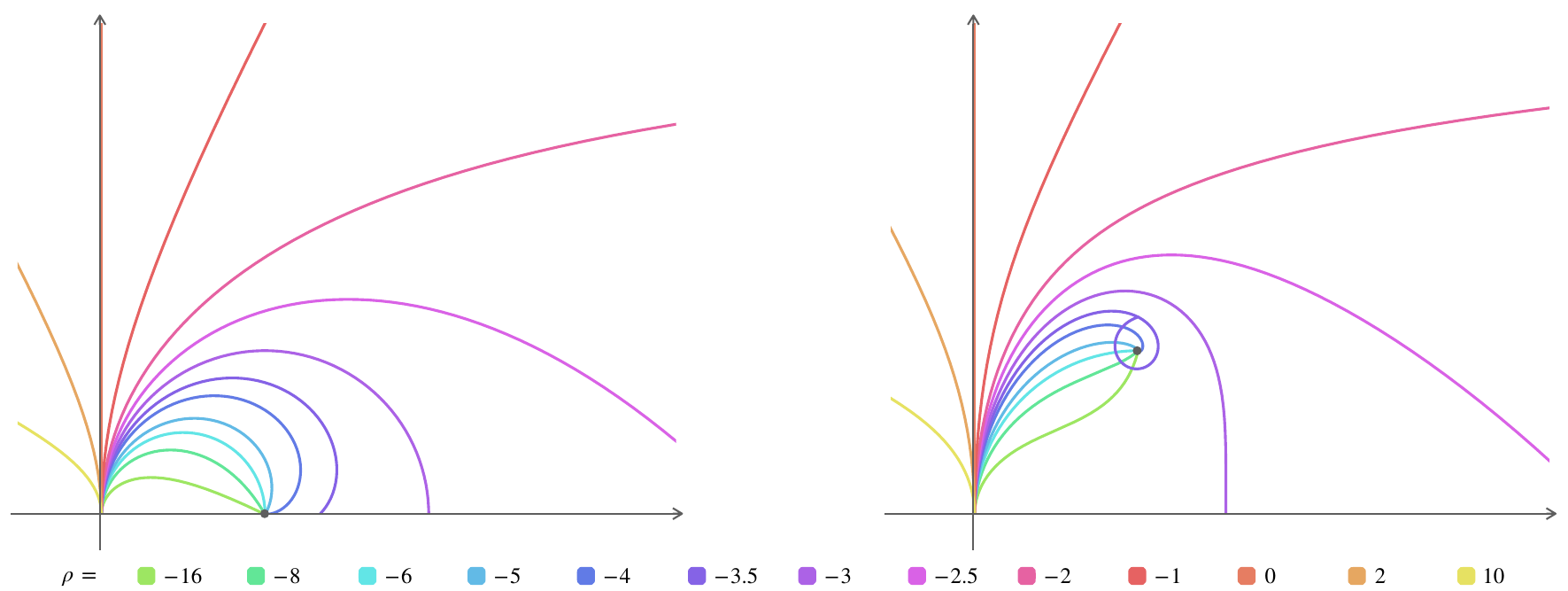}
\caption{Chordal SLE$_0(\rho)$ curves. On the left, the force point is on the boundary, and on the right, the force point is in the interior with $\arg z_0=\pi/2$.\label{fig:sle0}}
\end{figure}
\paragraph{Minimizers and flow-lines.}
It follows directly from the definition of the $\rho$-Loewner energy that its unique minimizer is the SLE$_0(\rho)$ curve. Indeed, there is a unique driving function for which the integrand in the $\rho$-Loewner energy is zero for all $t$, and this is, by definition, the driving function of SLE$_0(\rho)$. In Section \ref{section:minimizers}, we study the SLE$_0(\rho)$ curves. In particular, we show that the Imaginary Geometry interpretation of SLE$_\kappa(\rho)$ as generalized flow-lines of the Gaussian free field (see, e.g., \cite{MS16,MS17}) has the expected analog when $\kappa=0$. That is, the SLE$_0(\rho)$ curve is a (classical) flow-line of the appropriate harmonic field (see Proposition \ref{prop:flowline}). It can be seen in Figure \ref{fig:sle0}, which depicts SLE$_0(\rho)$ curves for some values of $\rho$, that the behavior of SLE$_0(\rho)$ changes drastically with $\rho$. If $\rho\in(-\infty,-4]$, then the SLE$_0(\rho)$ terminates at the force point. If instead $\rho\in(-4,-2)$, then the SLE$_0(\rho)$ terminates by hitting back on itself or the boundary. Lastly, if $\rho\in [-2,\infty)$, then the SLE$_0(\rho)$ approaches the reference point. This (and Remark \ref{remark:ldp}) exemplifies a change of behavior in the $\rho$-Loewner energy when varying $\rho$. After Section \ref{section:minimizers}, we therefore restrict our attention to the cases covered by Theorem \ref{thm:ldp}. In Section \ref{section:minimizers}, we also define a whole-plane variant of radial SLE$_0(\rho)$ for $\rho\geq -2$ and give a flow-line characterization of them. The whole-plane SLE$_0(\rho)$ curves are portrayed, for some values of $\rho>-2$, in Figure \ref{fig:wholeplane}. As expected, given the phase transition of radial SLE$_0(\rho)$ at $\rho=-2$, the whole-plane SLE$_0(-2)$ is different from the whole-plane SLE$_0(\rho)$, $\rho>-2$, in that it does not terminate within finite time. In fact, the whole-plane SLE$_0(-2)$ is a logarithmic spiral, where the spiraling rate depends on a parameter which can be interpreted as the infinitesimal starting position of the force point, see Remark \ref{rmk:wholeplane-2}.
\begin{figure}
\includegraphics[width=\textwidth]{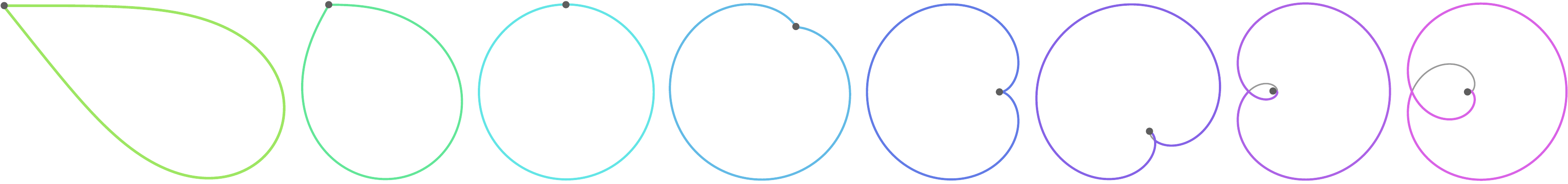}
\caption{Positively oriented whole-plane SLE$_0(\rho)$ curves started at $0$ in the direction $1$ with force point $0$ and reference point $\infty$. From left to right $\rho=-16,\ -8,\ -6,\ -5,\ -4,\ -3.5,\ -3,$ and $ -2.5$. The origin is marked by a gray dot, and in the three right-most figures, the continuation of the flow-line, after self-intersection, is shown in gray. In particular, the SLE$_0(-6)$ is a circle through $0$ and SLE$_0(-4)$ is a cardioid with cusp at $0$.\label{fig:wholeplane}}
\end{figure}
\paragraph{Dirichlet energy formulas.}In Section \ref{section:finiteenergy}, we prove Dirichlet energy formulas for the $\rho$-Loewner energy, in the chordal and radial setting respectively, with a boundary force point and $\rho>-2$.\par
Let $\Sigma=\C\setminus\R^+$. It was shown in \cite{W19b} that, for a chord $\gamma$ from $0$ to $\infty$ in $\Sigma$,
\begin{equation}I^{(\Sigma;0,\infty)}(\gamma)=\frac{1}{\pi}\int_{\Sigma\setminus\gamma}|\nabla\log|h'(z)\|^2dz^2,\label{eq:chorDirichlet}\end{equation}
where $h:\Sigma\setminus\gamma\to\Sigma\setminus\R^-$ is conformal, fixes $\infty$, and maps the upper and lower component of $\Sigma\setminus\gamma$ onto $\H$ and $\H^\ast$ respectively (the latter denoting the lower half-plane). As opposed to the original formula in terms of the driving function, this formula considers the curve as a whole, and in that sense offers a static, rather than dynamic, point of view. Denote by
\begin{equation}\mathcal D(h)=\frac{1}{\pi}\int_D|\nabla \log |h'(z)||^2dz^2\end{equation}
for a conformal map $h$ defined on $D\subset \C$.
\begin{theorem}[Dirichlet energy formula, radial setting]\label{thm:radial}Fix $\rho>-2$ and $z_0\in\Sigma$. Consider a simple curve $\gamma\subset\Sigma$ starting at $0$ and ending at $z_0$. Let $h:\Sigma\setminus\gamma\to\Sigma$ be the conformal map satisfying $h(z_0)=0$, $h(\infty)=\infty$ and $|h'(\infty)|=1$. Denote by $\gamma^0$ the SLE$_0(\rho)$ from $0$ to $z_0$ with force point at $\infty$ and let $h^0:\Sigma\setminus\gamma^0\to\Sigma$ be the conformal map with the same normalization as $h$. Then $\gamma$ has finite $\rho$-Loewner energy if and only if $\mathcal D(h)<\infty$, in which case
\begin{equation}I^{(\Sigma;0,z_0)}_{\rho,\infty}(\gamma)=\mathcal D(h)-\mathcal D(h^0)-\frac{(\rho+6)(\rho-2)}{8}\log|H'(z_0)|_\eta,\label{eq:globalradial}\end{equation}
where $H=(h^0)^{-1}\circ h$ and 
\begin{equation}|H'(z_0)|_{\eta}:=\lim_{s\to 0+}\frac{|H(\eta(s))-H(z_0)|}{|\eta(s)-z_0|},\label{eq:etaderivative}\end{equation}
where $\eta$ is the hyperbolic geodesic from $z_0$ to $\infty$ in $\Sigma\setminus\gamma$. 
\end{theorem}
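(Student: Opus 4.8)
The plan is to prove \eqref{eq:globalradial} first along the Loewner flow for smooth curves, and then pass to arbitrary finite‑energy curves by approximation; the strategy is in the spirit of Wang's chordal formula \eqref{eq:chorDirichlet}, but with the extra force‑point and reference‑point terms tracked along the way. Fix a conformal identification of $(\Sigma,0,z_0)$ with $(\D,1,0)$, sending $\infty$ to a boundary point $e^{iv_0}$; then $I^{(\Sigma;0,z_0)}_{\rho,\infty}(\gamma)=I^{(\D;1,0)}_{\rho,e^{iv_0}}(\gamma)$ by conformal invariance of Definition \ref{def:rhoenergy}, and I parametrize $\gamma$ by conformal radius seen from $z_0$, so $t$ ranges over $[0,\infty)$, with radial Loewner maps $g_t\colon\Sigma\setminus\gamma_t\to\Sigma$ fixing $z_0$ and driving data $(w_t,v_t)$. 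For each $t$ let $h_t\colon\Sigma\setminus\gamma_t\to\Sigma$ be the conformal map normalized as $h$ but with respect to the moving tip $\gamma(t)$, so that $h_t(\gamma(t))=0$, $h_t(\infty)=\infty$, $|h_t'(\infty)|=1$; then $h_0=\mathrm{id}$ and $h_t\to h$ as $t\to\infty$. In parallel I carry along the deterministic SLE$_0(\rho)$ curve $\gamma^0$ with the analogous objects $g_t^0$, $(w_t^0,v_t^0)$, $h_t^0$, and form the comparison map $H_t:=(h_t^0)^{-1}\circ h_t\colon\Sigma\setminus\gamma_t\to\Sigma\setminus\gamma_t^0$, which satisfies $H_0=\mathrm{id}$ and $H_t\to H$.

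The core of the proof is a differential identity. Writing $h_t=m_t\circ g_t$ with $m_t\in\mathrm{Aut}(\Sigma)$ and using the radial Loewner equation \eqref{eq:radialLoewner} together with the composition rule and conformal invariance for the Dirichlet functional $\mathcal D$, one computes
\[
\frac{d}{dt}\big[\mathcal D(h_t)-\mathcal D(h_t^0)\big]
=\frac12\Big(\dot w_t-\frac\rho2\cot\frac{w_t-v_t}{2}\Big)^2
+\frac{(\rho+6)(\rho-2)}{8}\,\dot\tau_t,
\]
where $\tau_t$ is an explicit local‑scaling quantity of $H_t$ at the tip $\gamma(t)$ with $\tau_0=0$. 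The algebraic cancellation that makes the first term exactly the integrand of the radial $\rho$‑Loewner energy is the same one producing the integrated formulas of Proposition \ref{prop:integrated}; the constant equals $\tilde\rho(\tilde\rho+8)/8$ with $\tilde\rho=-6-\rho$, as dictated by the coordinate‑change identity \eqref{eq:coorchange}. Integrating over $[0,\infty)$, using $\mathcal D(h_0)=\mathcal D(h_0^0)=0$, $h_t\to h$, $h_t^0\to h^0$, and $\tau_t\to\log|H'(z_0)|_\eta$, gives $\mathcal D(h)-\mathcal D(h^0)=I^{(\Sigma;0,z_0)}_{\rho,\infty}(\gamma)+\frac{(\rho+6)(\rho-2)}{8}\log|H'(z_0)|_\eta$, which is \eqref{eq:globalradial}.

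I expect the main obstacle to be the identification $\tau_t\to\log|H'(z_0)|_\eta$. Since $z_0$ is the endpoint of $\gamma$, hence a boundary prime end of $\Sigma\setminus\gamma$, the map $H$ has no ordinary derivative there, and one must show that the one‑sided limit \eqref{eq:etaderivative} along the hyperbolic geodesic $\eta$ from $z_0$ to $\infty$ in $\Sigma\setminus\gamma$ exists and equals $\lim_{t\to\infty}e^{\tau_t}$. I would handle this with Koebe distortion estimates, exploiting that in a uniformizing coordinate of $\Sigma\setminus\gamma$ the geodesic $\eta$ lands at $z_0$ non‑tangentially, so that the local scaling of $H$ along $\eta$ matches the limit produced by the two Loewner flows; the $\eta$‑derivative is precisely the object that remains finite as the conformal radius of $\Sigma\setminus\gamma_t$ seen from $z_0$ degenerates to $0$. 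A secondary point requiring care is that $\mathcal D(h_t)$ and $\mathcal D(h_t^0)$ need not be finite individually, so the differential identity must first be established for smooth $\gamma$, where all maps extend analytically across $\gamma$ and $\eta$ and differentiation under the integral is routine, and then extended to every $\gamma$ of finite $\rho$‑Loewner energy by approximating with smooth curves whose energies converge, invoking lower semicontinuity of $I^{(\Sigma;0,z_0)}_{\rho,\infty}$ and of $\mathcal D$ and continuity of the tip term; this approximation also yields the equivalence $I^{(\Sigma;0,z_0)}_{\rho,\infty}(\gamma)<\infty\iff\mathcal D(h)<\infty$.
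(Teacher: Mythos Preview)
Your strategy differs from the paper's and, as written, has a genuine gap in the differential identity.

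The paper does not differentiate $\mathcal D(h_t)-\mathcal D(h_t^0)$ at all. It works in $\H$ with an interior force point (via the square root $\Sigma\to\H$) and uses the already–proven integrated formula (Proposition~\ref{prop:integrated}) together with the chordal Dirichlet identity \eqref{eq:chorDirichlet}. Concretely, writing $\tilde\rho=-6-\rho$ and $h_t(z)=(g_t(\sqrt z)-W_t)^2$, the chain rule gives $\frac{|g_t'(\sqrt{z_0})|y_t}{y_0}=\frac{|h_t'(z_0)|\sin\theta_t}{\sin\theta_0}$, so \eqref{eq:rhoenergyinterior} becomes
\[
I^{(\Sigma;0,z_0)}_{\rho,\infty}(\gamma_t)\;=\;\mathcal D(h_t)\;-\;\frac{\tilde\rho^{2}}{8}\log\frac{\sin\theta_t}{\sin\theta_0}\;-\;\frac{\tilde\rho(8+\tilde\rho)}{8}\log|h_t'(z_0)|.
\]
Then one simply lets $t\to T$: Proposition~\ref{prop:finiteenergy} forces $\sin\theta_t\to 1$ whenever $I^C(\gamma)<\infty$, and Lemma~\ref{lemma:derivativeexists} (a squeeze $e^{-I^C(\gamma_{[t,T]})/2}\le |h_{t,T}'|\le 1$) shows $\log|h_t'(z_0)|$ converges to the $\eta$-directional derivative. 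Subtracting the same identity for $\gamma^0$ yields \eqref{eq:globalradial}. No smooth approximation is needed, and the equivalence $I^{(\Sigma;0,z_0)}_{\rho,\infty}(\gamma)<\infty\iff\mathcal D(h)<\infty$ is already Proposition~\ref{prop:finiteenergy}.

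The problem with your identity is that subtracting the displayed formula for $\gamma^0$ from that for $\gamma$ gives
\[
\frac{d}{dt}\big[\mathcal D(h_t)-\mathcal D(h_t^0)\big]
=\tfrac12\Big(\dot w_t-\tfrac{\rho}{2}\cot\tfrac{w_t-v_t}{2}\Big)^{2}
+\frac{(\rho+6)^2}{8}\,\partial_t\log\frac{\sin\theta_t}{\sin\theta_t^0}
+\frac{(\rho+6)(\rho-2)}{8}\,\partial_t\log\frac{|h_t'(z_0)|}{|(h_t^0)'(z_0)|},
\]
i.e.\ there are \emph{two} boundary terms with different coefficients, and $\sin\theta_t$ equals $\sin\frac{v_t-w_t}{2}$, which has no reason to agree with $\sin\frac{v_t^0-w_t^0}{2}$ at the same radial time. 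Your single $\frac{(\rho+6)(\rho-2)}{8}\dot\tau_t$ cannot absorb both. The extra term happens to integrate to zero in the limit (both angles tend to $\pi$), which is exactly what Proposition~\ref{prop:finiteenergy} supplies, but it is not a total derivative of a tip quantity.

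Relatedly, the correct scalar to track is $|h_t'(z_0)|$, the derivative at the \emph{force point} $z_0$ (an interior point for $t<T$), not a ``scaling of $H_t$ at the tip $\gamma(t)$''. The paper's Lemma~\ref{lemma:derivativeexists} proves $\lim_{t\to T}|h_t'(z_0)|=|h_T'(z_0)|_\eta$ by sandwiching $|h_{t,T}'|$ between $e^{-\frac12 I^C(\gamma_{[t,T]})}$ and $1$; this is more concrete than Koebe distortion and does not require smoothness of $\gamma$.
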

\begin{remark}The proof uses the Dirichlet energy formula (\ref{eq:chorDirichlet}) and the integrated formula for the $\rho$-Loewner energy (\ref{eq:integratedintro}). The main part of the proof is to show that (\ref{eq:etaderivative}) exists whenever $D(h)<\infty$. Note that $D(h)<\infty$ if and only if $\gamma\cup \eta\cup \R^+$ is a Weil-Petersson quasicircle. Hence, $\gamma$, with $D(h)<\infty$, is not necessarily $C^1$ at $z_0$.\end{remark}
Let $x_0>0$ and denote the upper prime end at $x_0$ of $\Sigma$ by $x_0^+$. In Proposition \ref{proposition:angleapproach} we show that, for $\rho>-2$, chords $\gamma\subset\Sigma$ from $0$ to $\infty$ of finite $\rho$-Loewner energy, with respect to the force point $x_0^+$, approach $\infty$ with an angle $2\alpha\pi$, where $\alpha=\frac{\rho+2}{\rho+4}$, in a certain sense. The corner at $\infty$ causes the Dirichlet energy of $\log|h'|$ to diverge. To combat this, we introduce a re-normalized Dirichlet energy. Consider a chord $\gamma$ from $0$ to $\infty$ in $\Sigma$. Let $h:\Sigma\setminus\gamma\to\Sigma\setminus\R^-$ be a conformal map as in (\ref{eq:chorDirichlet}). For $\beta\in(0,1)$ we define, provided that the limit exists,
\begin{equation}\mathcal D_\beta(h):=\lim_{R\to\infty}\bigg(\frac{1}{\pi}\int_{B(0,R)\setminus(\gamma\cup\R^+)}|\nabla\log|h'\|^2dz^2-c_\beta\log R\bigg),\ c_\beta=\frac{(1-2\beta)^2}{2\beta(1-\beta)}.\label{eq:renormalizedDir}\end{equation}
In light of (\ref{eq:chorDirichlet}), $D_\beta(h)$ can be viewed as a re-normalization of the chordal Loewner energy of $\gamma$ when $\gamma\cup\R^+$ forms a $2\beta\pi$ corner at $\infty$. 
This type of re-normalized Loewner energy has been studied previously in the case of piece-wise linear Jordan curves in \cite{B23}. In a similar spirit, but using a different re-normalization procedure, the divergence of the Loewner energy in the presence of corners, has been studied in connection to Coulomb gas in a Jordan domain with piece-wise analytic boundary \cite{JV23}.\par
We write $\gamma_{[T,\infty)}=\gamma([T,\infty))$, for $T\geq 0$. 
\begin{theorem}[Dirichlet energy formula, chordal setting]\label{thm:chordal}Fix $\rho>-2$ and $x_0>0$, and let $\alpha=\frac{\rho+2}{\rho+4}$. Suppose $\gamma\subset\Sigma$ is a chord from $0$ and to $\infty$ and that there is a $T\geq 0$ such that $\gamma_{[T,\infty)}$ is the $I^{(\Sigma,0,\infty)}_{\rho,x_0^+}$-optimal extension of $\gamma_T$. Then,
\begin{equation}I^{(\Sigma;0,\infty)}_{\rho,x_0^+}(\gamma)=D_\alpha(h)-D_\alpha(h^0)-\frac{\rho(\rho+4)}{4}\log|H'(x_0)|,\label{eq:globalchordal}\end{equation}
where $\gamma^0$ denotes the SLE$_0(\rho)$ from $0$, with reference point $\infty$ and force point $x_0^+$, $h^0$ is its corresponding conformal map, and $H$ is the conformal map from the upper component of $\Sigma\setminus\gamma$ to the upper component of $\Sigma\setminus\gamma^0$ with $H(x_0)=x_0$, $H(\infty)=\infty$ and $H'(\infty)=1$.  
\end{theorem}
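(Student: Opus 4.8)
\emph{Proof sketch.} The argument mirrors that of Theorem~\ref{thm:radial}: the inputs are the Dirichlet energy formula (\ref{eq:chorDirichlet}) for the ordinary chordal Loewner energy and the chordal, boundary-force-point case of the integrated formula (Proposition~\ref{prop:integrated}). What is new is that a finite-$\rho$-energy chord to $\infty$ meets $\R^+$ at a $2\alpha\pi$ corner at $\infty$ (Proposition~\ref{proposition:angleapproach}), so that its ordinary Loewner energy, equivalently the Dirichlet integral $\tfrac1\pi\int|\nabla\log|h'\|^2$, is infinite; the renormalization in (\ref{eq:renormalizedDir}) --- with $c_\alpha$ precisely the divergence constant for a $2\alpha\pi$ corner --- is tailored to remove it, and the core of the proof is to show that $\mathcal D_\alpha(h)$ exists and to match its definition against the divergences coming out of the integrated formula as the Loewner time grows.

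\emph{Step 1: reduction to finite time.} By additivity of the $\rho$-Loewner energy under concatenation (immediate from its definition via the driving function), $I^{(\Sigma;0,\infty)}_{\rho,x_0^+}(\gamma)$ equals the energy of $\gamma_T$ plus the energy of $\gamma_{[T,\infty)}$ given $\gamma_T$; by hypothesis the latter is the optimal continuation, and for $\rho>-2$ the SLE$_0(\rho)$ continuation from the configuration at any time $T'\geq T$ has zero energy and reaches $\infty$ (Section~\ref{section:minimizers}). Hence $I^{(\Sigma;0,\infty)}_{\rho,x_0^+}(\gamma)=I^{(\Sigma;0,\infty)}_{\rho,x_0^+}(\gamma_{T'})$ for all $T'\geq T$, and $I^{(\Sigma;0,\infty)}_{\rho,x_0^+}(\gamma^0_{T'})=0$ for all $T'\geq 0$. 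Applying the integrated formula and subtracting the $\gamma^0$ identity gives, for $T'\geq T$,
\[
I^{(\Sigma;0,\infty)}_{\rho,x_0^+}(\gamma)=\Big(I^{(\Sigma;0,\infty)}(\gamma_{T'})-I^{(\Sigma;0,\infty)}(\gamma^0_{T'})\Big)+\big(b(T')-b^0(T')\big),
\]
where $b(T')$ is the boundary functional of Proposition~\ref{prop:integrated}, a linear combination of $\log|W_{T'}-x_{T'}|$ and $\log g_{T'}'(x_0)$ with the coefficient of the latter equal to $-\rho(\rho+4)/4$. The left side is independent of $T'$, so it equals the $T'\to\infty$ limit of the right side.

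\emph{Step 2: identifying the two limits.} Along the SLE$_0(\rho)$ tails $(W_t-x_t)^2\sim 2(\rho+2)t$ and $g_t'(x_0)$ decays like a power of $t$, so a direct computation gives that the $\log|W_{T'}-x_{T'}|$ part of $b(T')-b^0(T')$ vanishes in the limit while $g_{T'}'(x_0)/(g^0_{T'})'(x_0)$ converges to a value which one identifies, via the comparison map, with $|H'(x_0)|$; thus $b(T')-b^0(T')\to-\tfrac{\rho(\rho+4)}{4}\log|H'(x_0)|$. For the ordinary energies, the key is the lemma: if $\delta$ is a chord to $\infty$ in $\Sigma$ whose tail is an SLE$_0(\rho)$ flow-line, then $\mathcal D_\alpha(h_\delta)$ exists and
\[
I^{(\Sigma;0,\infty)}(\delta_T)=c_\alpha\log T+\mathcal D_\alpha(h_\delta)+o(1)\qquad(T\to\infty).
\]
Applying this to $\delta=\gamma$ and $\delta=\gamma^0$ and subtracting, the $c_\alpha\log T$ terms cancel and $I^{(\Sigma;0,\infty)}(\gamma_{T'})-I^{(\Sigma;0,\infty)}(\gamma^0_{T'})\to\mathcal D_\alpha(h)-\mathcal D_\alpha(h^0)$; together with the boundary-term limit this is (\ref{eq:globalchordal}).

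\emph{Main obstacle.} Everything rests on the lemma of Step~2, that is, on controlling $h$ near the corner at $\infty$. Away from $\infty$, finite $\rho$-energy of $\gamma$ forces $\gamma_T$ to have finite ordinary Loewner energy (via the integrated formula), hence to be a Weil--Petersson-type curve, so the Dirichlet integral of $\log|h'|$ over fixed bounded regions is finite and stable; near $\infty$, $\gamma$ coincides with an explicit SLE$_0(\rho)$ flow-line, and Proposition~\ref{proposition:angleapproach} together with the SLE$_0(\rho)$ ODE give asymptotics of the curve sharp enough to show that $h$ is asymptotically a power map of exponent $1/(2\alpha)$ on the upper component and $1/(2(1-\alpha))$ on the lower, producing the $c_\alpha\log R+O(1)$ expansion of the truncated Dirichlet integral with a \emph{convergent} $O(1)$. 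Matching this static expansion with the dynamic expansion of $I^{(\Sigma;0,\infty)}(\delta_T)$ read off from the Loewner flow --- the finite-time, corner-renormalized refinement of the proof of (\ref{eq:chorDirichlet}), using that in $\Sigma$ the half-plane capacity and the radius are comparable --- is the technical heart, and it is the chordal counterpart of the step flagged after Theorem~\ref{thm:radial} of proving the geodesic derivative (\ref{eq:etaderivative}) exists. One also has to be careful with the passage between $\Sigma$- and $\H$-coordinates, and to note that finiteness of $\mathcal D_\alpha(h)$ only yields Weil--Petersson regularity of $\gamma$ away from $\infty$, so $\gamma$ need not be smooth at $\infty$.
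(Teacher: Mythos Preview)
Your sketch shares the architecture of the paper's proof --- integrated formula (\ref{eq:rhoenergyboundary}) plus the chordal Dirichlet identity (\ref{eq:chorDirichlet}), with the SLE$_0(\rho)$ curve subtracted off --- but the execution differs in a way worth noting. You send $T'\to\infty$ and rely on an asymptotic expansion lemma relating the time-truncated ordinary energy $I^{(\Sigma;0,\infty)}(\delta_{T'})$ to the spatially-renormalized $\mathcal D_\alpha(h_\delta)$. The paper instead works at a \emph{single fixed finite time}: it picks $\tilde T$ so that the $\gamma^0$ mapping-out function satisfies $\tilde h_{\tilde T}(x_0^+)=h_T(x_0^+)$. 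This matching forces $H=\tilde h_{\tilde T}^{-1}\circ h_T$, so the boundary contribution in (\ref{eq:rhoenergyboundary}) collapses \emph{exactly} to $-\tfrac{\rho(\rho+4)}{4}\log|H'(x_0)|$ with no limit needed. More to the point, the matched times make the mapped-out tails coincide as one curve $\tilde\gamma^0:=h_T(\gamma_{[T,\infty)})=\tilde h_{\tilde T}(\gamma^0_{[\tilde T,\infty)})$, so that $h_\gamma=h_{\tilde\gamma^0}\circ h_T$ and $h_{\gamma^0}=h_{\tilde\gamma^0}\circ\tilde h_{\tilde T}$ share the same outer factor. The corner divergence sits entirely in $h_{\tilde\gamma^0}$; the paper then uses Stokes' theorem together with the expansion $\sqrt{h_T(z^2)}=z+O(1)$ at $\infty$ to show that the cross terms vanish and the $h_{\tilde\gamma^0}$ contributions cancel upon subtraction. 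A separate preliminary lemma (Lemma~\ref{lemma:direnergy}) handles existence and finiteness of $\mathcal D_\alpha(\gamma^0)$.

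Your route can likely be pushed through, but the lemma in Step~2 is carrying weight you have not discharged: it equates the constant term in a \emph{Loewner-time} truncated quantity (the full Dirichlet integral of $\log|h_{T'}'|$, where $h_{T'}:\Sigma\setminus\gamma_{T'}\to\Sigma$) with the constant term in a \emph{spatially} truncated one (the integral of $\log|h'|$ over $B(0,R)$, where $h:\Sigma\setminus\gamma\to\Sigma\setminus\R^-$). These are integrals of different conformal maps over different regions, and matching them at the $O(1)$ level is not automatic --- indeed, proving it would essentially force you into the same common-tail factorization the paper uses. Likewise, your claim that $g_{T'}'(x_0)/(g^0_{T'})'(x_0)\to|H'(x_0)|$ is the chordal analog of Lemma~\ref{lemma:derivativeexists} and needs its own argument. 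The paper's finite-time matching buys you all of this for free: the divergent pieces are literally the same object and cancel by construction rather than by asymptotic estimate.
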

We expect (\ref{eq:globalchordal}) to hold for a wider class of curves, that do not necessarily approach $\infty$ in the optimal way, at least under some assumptions on the regularity close to $\infty$.
\paragraph{$\zeta$-regularized determinants.}
Finally, in Section \ref{section:laplacians}, we relate the $\rho$-Loewner energy to $\zeta$-regularized determinants of Laplacians by introducing a $\rho$-Loewner potential, in the same spirit as in \cite{W19b,PW21}. For a domain $D\subset \C$ endowed with a Riemannian metric $g$, we let $\Delta_{(D,g)}$ denote the Friedrichs extension of the Dirichlet Laplace-Beltrami operator on $(D,g)$.\par
Consider a conformal metric $g=e^{2\sigma}dz^2$, $\sigma\in C^\infty(\D)$, and fix $a\in \partial \D$, $b\in\overline \D\setminus\{a\}$, and $c\in\partial \D\setminus\{a,b\}$. For each curve $\gamma\in\D$ from $a$ to $b$ we let $\eta=\eta(\gamma)$ denote the hyperbolic geodesic from $b$ to $c$ in $\D\setminus\gamma$. We define, for $\rho>-2$, the $\rho$-Loewner potential of $\gamma$ with respect to $g$ by
\begin{equation}\mathcal H^{(\D;a,b)}_{\rho,c}(\gamma;g):=\beta \mathcal H^\D(\gamma;g)+(1-\beta) \mathcal H^\D(\gamma\cup\eta;g),\label{eq:loewnerpotential}\end{equation}
provided that the right-hand side is defined, where 
$$\mathcal H^D(K,g)=\log\zdet\Delta_{(D,g)}-\sum_{D_i}\log\zdet\Delta_{(D_i,g)},$$
and $D_i$ are the connected components of $D\setminus K$. Here the coefficients on the right are given by
\[
\beta:=\frac{(2-\rho)(\rho+6)}{12}, \quad 1-\beta = \frac{\rho(\rho+4)}{12},
\]
which are not positive for all $\rho>-2$, so this is not quite a convex combination.\par 
In order to guarantee that the $\rho$-Loewner potential is defined, and to relate it to the $\rho$-Loewner energy, we impose a few regularity assumptions on the metric $g$ and curve $\gamma$. To this end, let $\Gamma\subset\D$ be a curve from $1$ to $0$, smooth up to its endpoints, and fix a conformal map $\varphi:U\cap D\to \D\setminus\Gamma$, for $a\in U\subset\overline \D$ relatively open, with $\varphi$ extending continuously to $U$ and $\varphi(a)=0$. We obtain a smooth slit structure, denoted by $(\overline\D,\varphi)$, on $\overline\D$ by declaring that the $z$-coordinate is smooth on $\overline\D\setminus\{a\}$ and that the $w=\varphi(z)$-coordinate is smooth on $U$.
We say that a $(\overline\D,\varphi)$-smooth curve $\gamma\in\overline\D$, starting at $a$, is smoothly attached at $a$ if $\varphi(\gamma)\cup\Gamma$ is smooth at $a$. For a fixed choice of $a$, $b$, $c$, $\Gamma$ and $\varphi$ as above, we let $\hat{\mathcal X}$ denote the class of $(\overline \D,\varphi)$-smooth curves $\gamma\subset\D\cup\{a,b\}$ from $a$ to $b$, smoothly attached at $a$ such that 
\begin{itemize}
\item if $b\in \D$, then $\gamma\cup\eta$ is smooth at $b$,
\item if $b\in \partial \D$, then there exists a $T\geq 0$ such that $\gamma_{[T,\infty)}$ is the $I^{(\D,a,b)}_{\rho,c}$-optimal extension of $\gamma_T$.
\end{itemize}  
\begin{proposition}[Determinants of Laplacians]\label{prop:laplacians}Fix $\rho>-2$, and $a$, $b$, $c$, $\Gamma$ and $\varphi$ as above. For all $(\overline\D,\varphi)$-smooth conformal metrics $g$, and $\gamma_1,\gamma_2\in\hat{\mathcal X}$ we have
$$I^{(\D,a,b)}_{\rho,c}(\gamma_1)-I^{(\D,a,b)}_{\rho,c}(\gamma_2)=12\Big(\mathcal H_{\rho,c}^{(\D,a,b)}(\gamma_1;g)-\mathcal H_{\rho,c}^{(\D,a,b)}(\gamma_2;g)\Big).$$
\end{proposition}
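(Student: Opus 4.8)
The plan is to reduce Proposition~\ref{prop:laplacians} to the two Dirichlet energy formulas, Theorem~\ref{thm:radial} and Theorem~\ref{thm:chordal}, via a Polyakov-type anomaly formula relating $\zeta$-regularized determinants to Dirichlet energies of $\log|h'|$. First I would recall the Polyakov--Alvarez (conformal anomaly) formula: for a conformal map $h:(D,g)\to(D',g')$ between domains with smooth boundary, $\log\zdet\Delta_{(D,g)}-\log\zdet\Delta_{(D',g')}$ equals an explicit local expression in $\sigma$ and the conformal factor of $h$, consisting of a bulk term $\propto\int|\nabla\log|h'||^2$, a boundary (geodesic curvature) term, and a corner term $\propto\sum(\theta_j^{-1}\theta_j - \cdots)$ for each corner of angle $\theta_j$. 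The key point is that when one forms the \emph{difference} $\mathcal H^{(\D;a,b)}_{\rho,c}(\gamma_1;g)-\mathcal H^{(\D;a,b)}_{\rho,c}(\gamma_2;g)$, all terms that depend only on the metric $g$ and on the fixed slit data (the corner at $a$, the behavior at $b$) cancel, and what survives is a combination of the Dirichlet energies $\mathcal D(\cdot)$ of the uniformizing maps of $\gamma_1$ and $\gamma_2$ together with the angle/corner contributions at $b$ and $c$. So the strategy is: (i) apply the anomaly formula to each of the regions $\D\setminus\gamma$ and $\D\setminus(\gamma\cup\eta)$ and their pieces, (ii) take the prescribed $\beta,(1-\beta)$ combination, (iii) take the difference between $\gamma_1$ and $\gamma_2$, and (iv) identify the result, after the coefficient $12$, with $I^{(\D;a,b)}_{\rho,c}(\gamma_1)-I^{(\D;a,b)}_{\rho,c}(\gamma_2)$ using Theorems~\ref{thm:radial}/\ref{thm:chordal} (transported to $\D$ by conformal covariance and the coordinate-change relation~(\ref{eq:coorchange})).

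More concretely, in the radial case $b\in\D$: the region $\D\setminus\gamma$ is simply connected, so $\mathcal H^\D(\gamma;g)$ produces (after the anomaly formula and cancellation of the $g$-only and $a$-corner terms) essentially $\tfrac{1}{12}\mathcal D$ of the uniformizing map plus a contribution from the tip of $\gamma$ at $b$; the region $\D\setminus(\gamma\cup\eta)$ has two components meeting along $\eta$, and $\mathcal H^\D(\gamma\cup\eta;g)$ contributes the Dirichlet energies on both sides of $\eta$ plus corner terms at $b$ and at $c$. Weighting by $\beta$ and $1-\beta$ and using that $\log|H'(z_0)|_\eta$ in Theorem~\ref{thm:radial} is exactly the normal-derivative/tip contribution that emerges here, I expect the combination $12\,\mathcal H^{(\D;a,b)}_{\rho,c}$ to match $\mathcal D(h)-\frac{(\rho+6)(\rho-2)}{8}\log|H'|_\eta$ up to an additive constant depending on $g$ but not on $\gamma$ — and that constant drops out in the difference. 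The chordal case $b\in\partial\D$ is analogous but uses the renormalized Dirichlet energy $\mathcal D_\alpha$ of~(\ref{eq:renormalizedDir}): the corner of opening $2\alpha\pi$ at $b$ is precisely what the optimal-approach hypothesis guarantees, the divergent $c_\alpha\log R$ piece of the bulk integral is exactly compensated by the divergence of the corner term in the anomaly formula as the cutoff is removed, and the finite remainder matches $\mathcal D_\alpha(h)-\frac{\rho(\rho+4)}{4}\log|H'(x_0)|$, again up to a $\gamma$-independent constant.

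The main obstacle, I expect, is the analytic bookkeeping at the marked points — making the anomaly formula rigorous in the presence of a slit (the curve $\gamma$ reaching $a$, and $\gamma\cup\eta$ forming a ``crosscut'' meeting the boundary) and of non-smooth corners at $b$ and $c$. The slit structure $(\overline\D,\varphi)$ and the smoothness hypotheses in the definition of $\hat{\mathcal X}$ are tailored precisely so that one may excise small neighborhoods of $a$, $b$, $c$, apply the standard Polyakov--Alvarez formula on the smooth-boundary complement, and then control the limits as the neighborhoods shrink; the tip contribution at $b$ (radial case) requires identifying $\lim |H(\eta(s))-H(z_0)|/|\eta(s)-z_0|$ with the relevant boundary term, which is where the hypothesis that $\gamma\cup\eta$ is smooth at $b$ enters, and in the chordal case the renormalization must be matched to the logarithmically divergent corner term at $\infty$. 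A secondary point is that the coefficients $\beta=\frac{(2-\rho)(\rho+6)}{12}$ and $1-\beta=\frac{\rho(\rho+4)}{12}$ are not both positive, so one cannot argue by monotonicity or positivity anywhere; one simply tracks signed linear combinations, but this means the algebraic identity $12\beta\cdot(\text{tip coeff})=\frac{(\rho+6)(\rho-2)}{8}$ and its chordal analog must be verified by direct computation, and getting these constants to line up is the crux of the ``identification'' step (iv).
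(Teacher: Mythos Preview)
Your outline is correct and the overall strategy---Polyakov--Alvarez plus the Dirichlet-energy formulas---is the paper's as well, but the paper organizes the comparison differently in a way that sidesteps your ``main obstacle.'' Rather than uniformizing each $\gamma_j$ to a reference and taking differences, the paper constructs a single conformal map $H=\psi^{-1}\circ h\circ\psi$ from the complement of $\gamma_1$ to the complement of $\gamma_2$ (matching tips, force-point images, and $\infty$); because both curves have identical local geometry at every marked point, the conformal factor $\sigma=\sigma_0(H)+\log|H'|-\sigma_0$ is genuinely smooth in the sense of the companion Polyakov--Alvarez formula (Theorem~B), so no excision-and-limit at the corners is needed. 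Your route would require handling a conformal factor with logarithmic singularities at every angle change, effectively rederiving an extension of the anomaly formula to non-smooth $\sigma$. A second difference: in the chordal case the paper never invokes Theorem~\ref{thm:chordal} or the renormalized $\mathcal D_\alpha$; since $H$ is analytic at $\infty$ (both tails are $\rho$-optimal by the $\hat{\mathcal X}$ hypothesis), the comparison yields an ordinary Dirichlet integral of $\log|h'|$ on finite-time slits, and the final identification uses only~(\ref{eq:chorDirichlet}) and the integrated formula~(\ref{eq:rhoenergyboundary}). Your approach would reach the same endpoint but with an extra cancellation of the $c_\alpha\log R$ divergences to check.
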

\begin{remark}
If $b\in\partial\D$, or if $b=0$ and $c$ is antipodal to $a$, then there is a choice of $\Gamma$ and $\varphi$ such that the $I^{(\D,a,b)}_{\rho,c}$-minimizer belongs to $\hat{\mathcal X}$. See Remark \ref{rmk:gamma0inXhat}.
\end{remark}
\begin{remark}
The proof of Proposition \ref{prop:laplacians} uses Theorem \ref{thm:polalv}, a Polyakov-Alvarez type formula for a smooth conformal change of metric proved in our companion paper \cite{K24}. The regularity assumptions at $a$, that is, that $\overline\D$ locally has the geometry of a smooth slit domain and that $\gamma$ is smoothly attached at $a$, are imposed to guarantee smoothness of the change of metric used in the proof.
\end{remark}
\subsection*{Acknowledgments}
First and foremost, I would like to thank my supervisor Fredrik Viklund for numerous inspiring and helpful discussions, and for his guidance throughout the process of writing this paper. I would also like to thank Yilin Wang, Alan Sola, and Vladislav Guskov for several discussions about the project and their comments on the draft. Finally, I thank Lukas Schoug for a helpful discussion on flow-lines and Tim Mesikepp for a conversation that inspired Lemma \ref{lemma:welding}. This work was supported by a grant from the Knut and Alice Wallenberg foundation. Part of this work was carried out at the Simons Laufer Mathematical Sciences Institute, while participating in the program The Analysis and Geometry of Random Spaces.
\section{Preliminaries}\label{section:preliminaries}
\subsection{The chordal and radial Loewner equations}
We here provide some of the details regarding the chordal and radial Loewner equations that were omitted in Section \ref{section:introduction}. For further details, we refer the reader to \cite{L05} and \cite{K17}. A compact set $K\subset\overline \H$ is called a half-plane hull if $\H\setminus K$ is simply connected and $K=\overline{K\cap \H}$. For each half-plane hull there is a unique conformal map $g_K:\H\setminus K\to\H$, called the mapping-out function of $\gamma_t$, satisfying hydrodynamic normalization, that is
$$g_K(z)=z+a_Kz^{-1}+o(z^{-1}),\ \text{as }z\to\infty.$$
A family $(K_t)$ of continuously growing half-plane hulls is said to be parametrized by half-plane capacity if $a_{K_t}=2t$ for all $t$. Solving (\ref{eq:chordalloewner}) for an arbitrary real-valued and continuous function $t\mapsto W_t$ yields a family of conformal maps $g_t:=g_{K_t}:\H\setminus K_t\to\H$, satisfying hydrodynamic normalization, where $(K_t)$ is a family of continuously and locally growing half-plane hulls parametrized by half-plane capacity.\par
In a similar manner, a compact set $K\subset\overline \D$ is called a disk hull if $0\notin K$, $\D\setminus K$ is simply connected, and $K=\overline{K\cap\D}$. The mapping-out function of a disk hull $K$ is the unique conformal map $g_K:\D\setminus K\to\D$ satisfying $g_K(0)=0$ and $g_K'(0)>0$. A family $(K_t)$ of continuously growing disk hulls is said to be parametrized according to conformal radius if $g_{K_t}'(0)=e^{t}$ for all $t$. Solving the radial Loewner equation (\ref{eq:radialLoewner}) for an arbitrary continuous function $t\mapsto W_t=e^{iw_t}\in\partial\D$ gives a family of mapping-out functions $(g_t:=g_{K_t})$ corresponding to a family $(K_t)$ of continuously and locally growing disk hulls parametrized by conformal radius. The radial SLE$_\kappa$, $\kappa\geq 0$, from $1$ to $0$ in $\D$ is the random curve with radial driving process $w_t=\sqrt{\kappa}B_t$, where $B_t$ is a one-dimensional standard Brownian motion.\par 
We will use $\cdot$ to denote time-derivatives, e.g., $\dot g_t(z)=\partial_t g_t(z)$. 
\subsection{SLE\texorpdfstring{$_\kappa(\rho)$}{} 
processes}\label{section:slekapparho} 
Let $\kappa\geq 0$ and $\rho\in\R$. 
\begin{definition}\label{def:cslekapparho}The chordal SLE$_\kappa(\rho)$ in $\H$, starting at $0$, with reference point $\infty$ and force point $z_0\in\overline\H\setminus\{0\}$ is the random curve whose driving process $W_t$ satisfies the SDE
\begin{align*}
dW_t&=\Re\frac{\rho}{W_t-z_t}dt+\sqrt{\kappa}dB_t,\quad W_0=0,
\end{align*}
where $z_t=g_t(z_0)$, 
defined up to the time $\tau_{0+}=\lim_{\vare\to 0+}\tau_\vare$, where $\tau_\vare=\inf\{t>0:|W_t-z_t|=\vare\}$,
and $B_t$ is standard one-dimensional Brownian motion. 
\end{definition}
\begin{definition} The radial SLE$_\kappa(\rho)$-processes in $\D$, starting at $1$, with reference point $0$ and force point $z_0\in\partial \D$ is the random curve whose driving process $W_t$ satisfies the SDE
\begin{align}\label{eq:radialSDE}
dW_t&=-\bigg(\frac{\rho}{2}W_t\frac{W_t+z_t}{W_t-z_t}+\frac{\kappa}{2}W_t\bigg)dt+i\sqrt{\kappa}W_tdB_t,\quad W_0=1,
\end{align}
where $z_t=g_t(z_0)$, defined up to $\tau_{0_+}=\lim_{\vare\to 0+}\tau_\vare$, where $\tau_\vare=\inf\{t:|W_t-z_t|=\vare\}$, and $B_t$ is a standard one-dimensional Brownian motion. 
\end{definition}
\begin{remark}
Note that, if one changes coordinates by $W_t=e^{iw_t}$ and $z_t=e^{iv_t}$, where $w_t$ and $v_t$ are the unique continuous functions with $w_0=0$ and $v_0\in(0,2\pi)$, then, using Itô's formula, (\ref{eq:radialSDE}) transforms into (\ref{eq:radialdrive}).
\end{remark}
\begin{remark}
One can, for some values of $\kappa$ and $\rho$, define the SLE$_\kappa(\rho)$ process past $\tau_{0+}$, up until the so-called continuation threshold. However, when $\kappa$ is small and $\rho>-2$, as in Theorem \ref{thm:ldp}, we have $\tau_{0+}=\infty$ a.s. Hence, the definition above will suffice for our purposes.\end{remark}
\begin{remark}
Often, the random curve in Definition \ref{def:cslekapparho} is called the chordal SLE$_\kappa(\rho)$ in $\H$ from $0$ \emph{to} $\infty$ with force point $z_0$. However, for some values of $\kappa$ and $\rho$, the chordal SLE$_\kappa(\rho)$ is a.s. bounded. To avoid confusion, we refer to $\infty$ as the reference point rather than the end-point. The same convention is used in the radial setting.
\end{remark}
\subsection{Harmonic measure}
We recall some of the basic properties of harmonic measure. The harmonic measure of a Borel set $E\subset\partial \D$ with respect to $0$ and $\D$ is
$$\omega(0,E,\D)=\frac{m(E)}{2\pi},$$
where $m(E)$ is the ``Lebesgue measure'' on $\partial \D$. For a simply connected domain $D$, $z\in\partial D$ and $E\subset \partial D$ (here we consider $\partial D$ as the set of prime ends of $D$) we define
$$\omega(z,E,D):=\omega(0,\varphi(E),\D),$$
where $\varphi:D\to\D$ is a conformal map with $\varphi(z)=0$. Then the harmonic measure is, by construction, a conformally invariant probability measure on $\partial D$. The harmonic measure can be considered as a completely deterministic object, but it also has a useful probabilistic interpretation: $\omega(z,E,D)$ is the probability that a planar Brownian motion, started at $z$, exits $D$ at $E$. This interpretation also applies when $D$ is not simply connected. We will sometimes abuse notation in the following way: if $K$ is a compact subset of $\C\setminus D$, then we will write
$\omega(z,K,D)$ for $\omega(z,K\cap \partial D,D)$.\par
We will sometimes refer to monotonicity of harmonic measure. By this we mean the following:
Suppose that $D_1$ and $D_2$ are two domains with $z\in D_1\cap D_2$, and that $E_1\subset \partial D_1$ and $E_2\subset \partial D_2$ are such that any path in $\C$ starting at $z$ which exits $D_1$ at $E_1$ also exits $D_2$ at $E_2$. Then, the probabilistic interpretation of harmonic measure gives
$$\omega(z,E_1,D_1)\leq \omega(z,E_2,D_2).$$
We will also encounter the harmonic measure with respect to a boundary point. If $D$ is a domain with smooth boundary at $x\in\partial D$  and $E\subset \partial D$ we define
$$\omega_x(E,D) :=\lim_{\vare\to 0+}\frac{1}{\vare}\omega(x+\vare n_x,E,D)$$
where $n_x$ is the inner unit normal at $x$. If $\varphi:D\to D'$ is a conformal map where $D'$ is a domain with smooth boundary at $x'=\varphi(x)$ then 
$$\omega_{x'}(\varphi(E),D')=\omega_{x}(E,D)|\varphi'(x)|^{-1}.$$
If $D$ is a domain such that $0\in\partial (1/D)$, where $1/D:=\{1/z:z\in D\}$, and $\partial (1/D)$ is smooth at $0$, we define 
$$\omega_{\infty}(E,D):=\omega_{0}(1/E,1/D),$$
for $\partial E\subset \partial D$. If $D$ and $D'$ are two such domains and $\varphi:D\to D'$ is a conformal map fixing $\infty$ this gives
$$\omega_{\infty}(\varphi(E),D')=\omega_{\infty}(E,D)|\varphi'(\infty)|^{-1},$$
where we use the convention $\varphi'(\infty):=\psi'(0)$, with $\psi(z)=1/\varphi(1/z)$ (we use this convention for derivatives at $\infty$ throughout). 
Finally, we mention, that if $I=[a,b]\subset \R$, then 
\begin{alignat*}{3}\omega(z,I,\H)&=\frac{1}{\pi}\arg\bigg(\frac{z-a}{z-b}\bigg),\quad && z\in\H,\\
\omega_x(I,\H)&=\frac{1}{\pi}\frac{b-a}{(x-a)(x-b)},\quad && x\in\R\setminus I,\\
\omega_\infty(I,\H)&=\frac{1}{\pi}(b-a).&&
\end{alignat*}
\subsection{Chordal Loewner energy}
The chordal Loewner energy of a curve $\gamma:(0,T)\to\H$, $T\in(0,\infty]$, with $\gamma(0+)=0$ and reference point $\infty$, parametrized by half-plane capacity, is the Dirichlet energy of its driving function $W$, 
$$I^C(\gamma)=\begin{cases}\frac{1}{2}\int_0^T \dot W_t^2 dt, &W\text{ abs. cont. on $(0,t]$ for all $t\in(0,T),$}\\
\infty, &\text{otherwise.}\end{cases}$$
Here $C$ denotes the chordal setting $(\H;0,\infty)$. 
If $I^C(\gamma)<\infty$, then $\gamma$ is simple. One could, in principle, also talk about the chordal Loewner energy of a family of continuously and locally growing half-plane hulls $(K_t)_{t}$. However, unless $(K_t)_t$ corresponds to a simple curve, we have $I^C((K_t)_t)=\infty$, so we can restrict to studying (simple) curves. It is not hard to show that $I^C$ is scale invariant. This allows one to define the Loewner energy of a curve $\gamma$ in a simply connected domain $D$ from $a\in\partial D$ with reference point $b\in\partial D\setminus\{a\}$ by using a conformal map $\varphi:D\to \H$, with $\varphi(a)=0$, $\varphi(b)=\infty$
$$I^{(D;a,b)}(\gamma):=I^{(\H;0,\infty)}(\varphi(\gamma))=I^C(\varphi(\gamma)).$$
The unique minimizer of $I^C$ is $i\R^+$ (since the corresponding driving function is $W\equiv 0$) and hence the unique minimizer of $I^{(D;a,b)}$ is the hyperbolic geodesic from $a$ to $b$. 
The Loewner energy has the additive property
$$I^{(\H;0,\infty)}(\gamma)=I^{(\H;0,\infty)}(\gamma_t)+I^{(\H\setminus\gamma_t;\gamma(t),\infty)}(\gamma_{[t,T)}).$$
Therefore the Loewner energy of a bounded curve $\gamma:(0,T]\to\H$ equals the Loewner energy of the ``completed curve'' $\gamma\cup\eta$, that is,
$$I^{C}(\gamma)=I^C(\gamma\cup\eta),$$
where $\eta$ is the hyperbolic geodesic in $\H\setminus\gamma$ from $\gamma(T)$ to $\infty$.\par
Consider $z_0\in\H$. By \cite[Proposition 3.1]{W19a}
\begin{equation}\inf_{\gamma\ni z_0} I^C(\gamma)=-8\log\sin\arg(z_0)=-8\log\sin(\pi\omega(z_0,(-\infty,0],\H))\label{eq:anglebound}\end{equation}where the infimum is taken over all simple curves passing through $z_0$. The infimum is attained for the curve SLE$_0(-8)$ with force point at $z_0$.
\subsection{Large deviation principles}\label{section:expapprox}
We give an overview of the concepts from large deviations theory that we will use, and refer the reader to \cite{DZ10} for details. Let $\mathcal Y$ be a regular Hausdorff topological space. A function $J:\mathcal Y\to [0,\infty]$ is called a rate function if it is lower-semicontinuous, that is, if the sub-level sets $\{y\in\mathcal Y:J(y)\leq c\}$, $c\geq 0$, are closed. If all sub-level sets are compact we say that $J$ is a good rate function. Let $\mathcal B$ be the Borel $\sigma$-algebra on $\mathcal Y$ and let $(P_\vare)_{\vare>0}$ be a family of probability measures on $(\mathcal Y,\mathcal B)$. We say that $(P_\vare)_{\vare>0}$ satisfies the large deviations principle with rate function $J$ if 
$$\liminf_{\vare\to 0+} \vare\log P_\vare(O)\geq -\inf_{y\in O}J(y),\quad \limsup_{\vare\to 0+} \vare\log P_\vare(F)\leq -\inf_{y\in F}J(y),$$
for all open sets $O$ and closed sets $F$.\par
When showing the large deviation principle for the SLE$_\kappa(\rho)$ driving process we will use theory of exponential approximations. The main idea is that one can obtain an LDP on a family of measures by first showing LDPs for families of measures which are sufficiently good approximations of the original family.
\begin{definition}Let $(\mathcal Y,d)$ be a metric space and $(P_{m,\vare})_{\vare>0}$, $m\in\N$, and $(P_\vare)_{\vare>0}$ be families of probability measures on $(\mathcal Y,\mathcal B)$. We say that $((P_{m,\vare})_\vare)_m$ is an exponentially good approximation of $(P_\vare)_\vare$ if there, for every $\vare>0$, exists a probability space $(\Omega,\mathcal F, \mu_{\vare})$ and random variables $(Z_{\vare})$, $( Z_{m,\vare})$, $m\in\N$, on $(\Omega,\mathcal F, \mu_{\vare})$, with marginal distributions $P_\vare$ and $P_{m,\vare}$ respectively, such that
$$\lim_{m\to\infty}\limsup_{\vare\to 0}\vare\log \mu_{\vare,m}(d(Z_\vare,Z_{\vare,m})>\delta)=-\infty,$$
for each $\delta>0$. 
If the approximating sequence is constant, that is, $P_{m,\vare}=\tilde P_\vare$ for all $m$, then we say that $(P_\vare)$ and $(\tilde P_\vare)$ are exponentially equivalent.
\end{definition}
\begin{thmCite}[{{\cite[Theorem 4.2.16]{DZ10}}}] Suppose that for every $m$, the family of measures $(P_{m,\vare})$ satisfies the LDP with rate function $J_m$ and that $(P_{m,\vare})$ is an exponentially good approximation of $(P_\vare)$. If $J$ is a good rate function and for every closed set $F$,
$$\inf_{y\in F} J(y)\leq \limsup_{m\to\infty}\inf_{y\in F} J_m(y),$$ then $(P_\vare)$ satisfies the LDP with rate function $J$. \label{thm:expapprox}
\end{thmCite}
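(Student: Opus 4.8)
\emph{Proof proposal.} The plan is to verify directly the two halves of the large deviation principle for $(P_\vare)$: the upper bound $\limsup_{\vare\to0}\vare\log P_\vare(F)\le-\inf_{y\in F}J(y)$ for closed $F$, and the lower bound $\liminf_{\vare\to0}\vare\log P_\vare(O)\ge-\inf_{y\in O}J(y)$ for open $O$. Throughout I work on the spaces $(\Omega,\mathcal F,\mu_\vare)$ supplied by the definition of exponentially good approximation, carrying the couplings $(Z_\vare,Z_{m,\vare})$ with marginals $P_\vare$ and $P_{m,\vare}$; write $\Delta_{m,\vare}=d(Z_\vare,Z_{m,\vare})$ and $R_{m,\delta}=\limsup_{\vare\to0}\vare\log\mu_\vare(\Delta_{m,\vare}>\delta)$, so that the approximation hypothesis reads $\lim_{m\to\infty}R_{m,\delta}=-\infty$ for each $\delta>0$. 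I will repeatedly use that, for nonnegative $a_\vare,b_\vare$, $\limsup_{\vare\to0}\vare\log(a_\vare+b_\vare)=\max\big(\limsup_\vare\vare\log a_\vare,\ \limsup_\vare\vare\log b_\vare\big)$, and the elementary inequality $\liminf_{\vare\to0}\max(f_\vare,g_\vare)\le\max\big(\liminf_\vare f_\vare,\ \limsup_\vare g_\vare\big)$.

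For the upper bound, fix a closed set $F$ and $\delta>0$, and let $F^\delta=\{y:d(y,F)\le\delta\}$, which is closed. Since $\{Z_\vare\in F\}\subseteq\{Z_{m,\vare}\in F^\delta\}\cup\{\Delta_{m,\vare}>\delta\}$, we have $P_\vare(F)\le P_{m,\vare}(F^\delta)+\mu_\vare(\Delta_{m,\vare}>\delta)$; taking $\vare\log$, passing to $\limsup_{\vare\to0}$, and invoking the LDP upper bound for $(P_{m,\vare})$ on the closed set $F^\delta$ yields $\limsup_{\vare\to0}\vare\log P_\vare(F)\le\max\big(-\inf_{F^\delta}J_m,\ R_{m,\delta}\big)$ for every $m$. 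Choosing $m$ along a subsequence realizing $\limsup_m\inf_{F^\delta}J_m$, the second entry tends to $-\infty$, so $\limsup_{\vare\to0}\vare\log P_\vare(F)\le-\limsup_m\inf_{F^\delta}J_m\le-\inf_{F^\delta}J$, the last inequality being the standing hypothesis applied to $F^\delta$. Finally let $\delta\downarrow0$: since $J$ is good, its sublevel sets are compact, and the usual compactness argument (if $y_k\in F^{\delta_k}$ with $\delta_k\to0$ and $J(y_k)$ bounded, a subsequential limit lies in $F$ with no larger value of $J$, by lower semicontinuity) gives $\inf_{F^\delta}J\uparrow\inf_F J$. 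This proves the upper bound.

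For the lower bound, fix an open set $O$, a point $y\in O$, and $\delta>0$ with $B(y,2\delta)\subseteq O$. From $\{Z_{m,\vare}\in B(y,\delta)\}\cap\{\Delta_{m,\vare}\le\delta\}\subseteq\{Z_\vare\in B(y,2\delta)\}\subseteq\{Z_\vare\in O\}$ we obtain $P_{m,\vare}(B(y,\delta))\le P_\vare(O)+\mu_\vare(\Delta_{m,\vare}>\delta)\le 2\max\big(P_\vare(O),\ \mu_\vare(\Delta_{m,\vare}>\delta)\big)$. Taking $\vare\log$, passing to $\liminf_{\vare\to0}$, using the elementary inequality above on the left and the LDP lower bound for $(P_{m,\vare})$ on the open set $B(y,\delta)$ on the right, we get $\max\big(\liminf_{\vare\to0}\vare\log P_\vare(O),\ R_{m,\delta}\big)\ge-\inf_{B(y,\delta)}J_m$ for all $m$. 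Passing to a subsequence realizing $\liminf_m\inf_{B(y,\delta)}J_m$, the second entry again tends to $-\infty$, forcing $\liminf_{\vare\to0}\vare\log P_\vare(O)\ge-\liminf_m\inf_{B(y,\delta)}J_m$. Taking the supremum over $y\in O$ and admissible $\delta$, and passing $\delta\downarrow0$ by lower semicontinuity, this is the lower bound with the \emph{localized} rate function $\widetilde J(x)=\sup_{\delta>0}\liminf_m\inf_{B(x,\delta)}J_m$.

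The step I expect to be the main obstacle is the identification of the rate function. The coupling argument delivers the lower bound only with $\widetilde J$ assembled from the approximants, so one must verify $\inf_O\widetilde J=\inf_O J$ for every open $O$, i.e.\ that the approximating measures do not create spuriously favorable lower bounds. This is exactly where the two remaining hypotheses act in concert: the goodness of $J$ provides the compactness that makes the $\delta\downarrow0$ and $m\to\infty$ limits commute with infima over closed sets, while the closed-set inequality $\inf_F J\le\limsup_m\inf_F J_m$ ties the approximating rate functions back to $J$; together with the upper bound already established with rate $J$, this pins down the rate function of $(P_\vare)$ as $J$. A secondary but routine point is the handling of $\vare\log$ through sums, maxima, and in particular through the \emph{difference} $P_\vare(O)\ge P_{m,\vare}(B(y,\delta))-\mu_\vare(\Delta_{m,\vare}>\delta)$: this manipulation is valid precisely because ``exponentially good'' renders the error term super-exponentially smaller than the main term once $m$ is large, so that the estimate $P_\vare(O)\ge\tfrac12 P_{m,\vare}(B(y,\delta))$ takes over.
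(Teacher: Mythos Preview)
The paper does not prove this statement; it is cited from Dembo--Zeitouni without proof, so there is no in-paper argument to compare against. Your coupling argument is the standard one and both the upper- and lower-bound computations are correct as written.

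The ``main obstacle'' you flag---identifying your localized rate function $\widetilde J(y)=\sup_{\delta>0}\liminf_m\inf_{B(y,\delta)}J_m$ with the given $J$---is in fact a non-issue, but not for the reason you sketch. In the original \cite[Theorem~4.2.16]{DZ10}, part~(a) \emph{defines} the rate function to be precisely your $\widetilde J$ and establishes the weak LDP with it; part~(b), which is what the paper quotes, then assumes that \emph{this specific} $J=\widetilde J$ is good and satisfies the closed-set inequality, and upgrades to the full LDP. So there is nothing to identify: the lower bound you derived is already the lower bound with rate $J$, and the goodness together with the closed-set hypothesis are used only to obtain the upper bound, exactly as in your first paragraph.

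Your closing suggestion---that the upper bound with rate $J$ combined with the lower bound with rate $\widetilde J$ ``pins down'' the rate function as $J$---does not work as stated and should be dropped. An LDP upper bound with one function and a lower bound with another do not force the two to agree; if $J$ were allowed to be an arbitrary good rate function satisfying the closed-set inequality, the hypothesis gives $J(y)\le\limsup_m J_m(y)$ (take $F=\{y\}$), which is the wrong direction for concluding $\widetilde J\le J$. The resolution is simply that in the source the two functions are equal by definition.
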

\subsection{\texorpdfstring{$\zeta$}{}-regularized determinants of Laplacians}  \label{section:prellapl}
Let $(M,g)$ be a Riemannian 2-manifold with $\partial M\neq 0$. The (positive) Dirichlet Laplace-Beltrami operator on $M$, originally defined on smooth functions with compact support, can be extended to a self-adjoint operator using a Friedrichs extension. Denote this operator by $\Delta_{(M,g)}$. The spectrum of $\Delta_{(M,g)}$ is discrete and positive, and the eigenvalues, which can be ordered in non-decreasing order, 
$$0<\lambda_1\leq \lambda_2\leq \lambda_3\leq ... ,$$
satisfy Weyl's law
\begin{equation}\lambda_n\sim \frac{n}{4\pi\Vol(M,g)},\quad \text{as }n\to\infty.\label{eq:weyl}
\end{equation}
Hence, the determinant of $\Delta_{(M,g)}$ is not defined in the classical sense. The $\zeta$-regularized determinant of $\Delta_{(M,g)}$ is defined using the spectral $\zeta$-function
$$\zeta_{(M,g)}(s)=\sum_{n\geq 1}\lambda_n^{-s},\ \Re s>1.$$
Note that the right-hand side converges by (\ref{eq:weyl}). A computation shows that the spectral $\zeta$-function can be rewritten as 
$$\zeta_{(M,g)}(s)=\frac{1}{\Gamma(s)}\int_{0}^\infty t^{s-1}\Tr(e^{-t\Delta_{(M,g)}})dt,\ \Re s>1.$$
Under certain regularity assumptions on $(M,g)$ (which we will make precise below), one can, using a short-time asymptotic expansion of the heat trace $\Tr(e^{-t\Delta_{(M,g)}})$, show that $\zeta_{(M,g)}$ can be analytically continued to a neighborhood of the origin. This allows one to define the $\zeta$-regularized determinant of $\Delta_{(M,g)}$ by 
$$\zdet \Delta_{(M,g)}:=e^{-\zeta_{(M,g)}'(0)}$$
motivated by the formal computation
$$\zeta'_{(M,g)}(s)=-\sum_{n\geq 1}\log\lambda_n \lambda^{-s},\ \Re s>1\leadsto ``\zeta_{(M,g)}'(0)=-\log\prod_{n\geq 1}\lambda_n.\text{''}$$
\begin{definition}\label{def:cpd}Let $M$ be a compact surface with boundary $\partial M\neq \varnothing$, with finitely many distinct marked points $p_1,...,p_n\in\partial M$, a smooth structure and a smooth Riemannian metric $g$ on $M\setminus\{p_1,...,p_n\}$. We say that $(M,g,(p_j),(\beta_j))$ is a curvilinear polygonal domain with corners $\beta_j\pi\in(0,2\pi]$, $j=1,...,n$, if there exists, for each $j=1,...,n$ an open neighborhood $U_j\ni p_j$ and a homeomorphism $\varphi_j:U_j^\circ \to V_j\subset\C$, extending continuously to $U_j$ with $\varphi_j(p_j)=0$ satisfying the following:
\begin{enumerate}[label=(\roman*)]
\item The boundary $\varphi_j(\partial M\cap U_j)$ (viewed in the sense of prime ends) is rectifiable. Let $\gamma:(a,b)\to \C$ be the arc-length parametrization of $\varphi_j(\partial M\cap U_j)$ which is positively oriented and satisfies $\gamma(0)=0$. Then $\gamma|_{(a,0]}$ and $\gamma|_{[0,b)}$ are smooth and form an interior  angle $\beta_j\pi$ at $0$. 
\item The pull-back $(\varphi_j^{-1})^\ast g$ can be expressed as $e^{2\sigma_j}dz^2$ where $\sigma_j\in C^\infty(V_j^\circ)$, and all partial derivatives extend continuously to $\gamma|_{(a,0]}$ and $\gamma|_{[0,b)}$.
\item There is a smooth Jordan curve extension $\Gamma_1$ of $\gamma|_{(a,0]}$ such that $\Gamma_1$ is positively oriented with respect to the bounded component of $\C\setminus\Gamma_1$. Moreover, there exists an extension $\sigma_{j,1}\in C^\infty(V_{j,1})$ of $\sigma_j$, where $V_{j,1}$ is the closure of the bounded component of $\C\setminus\Gamma_1$. 
\item The analogous to the previous condition holds for $\gamma_{[0,b)}$ giving a Jordan curve $\Gamma_2$, a smoothly bounded domain $V_{j,2}$, and $\sigma_{j,2}\in C^\infty(V_{j,2})$.
\end{enumerate}
\end{definition}
\begin{remark}In Definition \ref{def:cpd} ``smooth structure'' refers to a smooth structure in the usual differential geometry sense. That is, a smooth structure on $M\setminus\{p_1,...,p_n\}$ is an atlas of smoothly compatible charts $(U_\alpha,\varphi_\alpha)_\alpha$, where $\varphi_\alpha:U_\alpha\to V_\alpha$ is a homeomorphism and $V_\alpha\subset\overline \H$ relatively open.
\end{remark}
\begin{remark}
Definition \ref{def:cpd} can be generalized to allow for corners with interior angles $\beta_j\pi\in(0,\infty),$ see \cite{K24}. Since we will only encounter curvilinear polygonal domains where the interior angles $\beta_j\pi\in(0,2\pi]$, Definition \ref{def:cpd} will suffice for the purposes of this paper. 
\end{remark}
\begin{definition}\label{def:smooth}Let $(M,g,(p_j),(\beta_j))$ be a curvilinear polygonal domain with angles $\beta_j\pi\in(0,2\pi]$. We say that $\psi:M\to \R$ is smooth, $\psi\in C^\infty(M,g,(p_j),(\beta_j))$, if $\psi\in C^\infty(M\setminus\{p_1,...,p_n\})$ and if there, for every $j=1,...,n$, is a choice of $(\varphi_j,U_j)$, $V_{j,1}$, and $V_{j,2}$ as in Definition \ref{def:cpd} such that all partial derivatives of $\psi\circ\varphi^{-1}$ extend continuously to $\gamma_{(a,0]}$ and $\gamma_{[0,b)}$ (with $\gamma$ as in Definition \ref{def:cpd}) and there exists extensions $\psi_{j,1}\in C^{\infty}(V_{j,1})$ and $\psi_{j,2}\in C^{\infty}(V_{j,2})$ of $\psi\circ\varphi_j^{-1}$.
\end{definition}
In \cite{NRS19}, Nursultanov, Rowlett, and Sher obtained a short-time asymptotic expansion of the heat trace on curvilinear polygonal domains with corners of angles $\beta_j\pi\in(0,2\pi)$, showing that $\zdet\Delta_{(M,g)}$ can be defined for such domains. In the companion paper \cite{K24}, we obtain a short-time asymptotic expansion for curvilinear polygonal domains with corners of arbitrary positive angles ($\beta_j\pi\in(0,\infty)$), showing that $\zdet\Delta_{(M,g)}$ can be defined in such cases as well.\par
The short-time asymptotic expansion of the heat trace is not sufficient to compute the regularized determinant. However, using a short-time asymptotic expansion of $\Tr(\sigma e^{-t\Delta_{(M,g)}})$ one can obtain an explicit formula for $\log\zdet \Delta_{(M,g)}-\log\zdet\Delta_{(M,g_0)}$ where $g=e^{2\sigma}g_0$, for smooth $\sigma$. The first comparison formulas of this type were obtained by Polyakov, in the case of manifolds without boundary \cite{P81}, and Alvarez, in the case of manifolds with a smooth boundary \cite{A83}. The comparison formulas of Polyakov and Alvarez were proved by Osgood, Phillips, and Sarnak in \cite{OPS88}. In \cite{AKR20}, a Polyakov-Alvarez type formula was obtained for curvilinear polygonal domains with angles $\beta_j\pi\in(0,2\pi)$ and in \cite{K24} we show that the same formula holds when $\beta_j\pi\in(0,\infty)$. 
\par 
For a Riemannian surface $(M,g)$, $\Vol_g$ and $\ell_g$ denote the volume and arc-length measures with respect to $g$. Moreover, $K_g$ denotes the Gaussian curvature, $k_g$ the geodesic curvature, and $\partial_n$ the outer unit derivative. We have the following Polyakov-Alvarez comparison formula.  
\begin{thmCite}[{{\cite[Theorem 2]{K24}}}]\label{thm:polalv} Let $(M,g_0,(p_j),(\beta_j))$ be a curvilinear polygonal domain and $\sigma\in C^\infty(M,g_0,(p_j),(\beta_j))$. Define $g=e^{2\sigma}g_0$. Then 
\begin{align*}\log\zdet\Delta_{(M;g_0)}-\log\zdet\Delta_{(M,g)}=&\frac{1}{6\pi}\bigg(\frac{1}{2}\int_M |\nabla_{g_0}\sigma|^2d\Vol_{g_0}+\int_M \sigma K_{g_0}d\Vol_{g_0}+\int_{\partial M}\sigma k_{g_0} d\ell_{g_0}\bigg)\\&+\frac{1}{4\pi}\int_{\partial M}\partial_{n_{g_0}}\sigma d\ell_{g_0} +\frac{1}{12}\sum_{j=1}^n\Big(\frac{1}{\beta_j}-\beta_j\Big)\sigma(p_j).\end{align*}
\end{thmCite}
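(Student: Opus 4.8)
\emph{Sketch of a proof.} The plan is to interpolate between $g_0$ and $g=e^{2\sigma}g_0$ along the conformal family $g_u:=e^{2u\sigma}g_0$, $u\in[0,1]$, to compute $\frac{d}{du}\log\zdet\Delta_{(M,g_u)}$, and then to integrate in $u$. The essential input in dimension two is the conformal covariance $\Delta_{(M,g_u)}=e^{-2u\sigma}\Delta_{(M,g_0)}$: the Dirichlet form $\int_M|\nabla f|^2\,d\Vol$ and the form domain $H^1_0$ are conformally invariant, the Dirichlet boundary condition is unchanged, and only the ambient $L^2$-inner product picks up the weight $e^{2u\sigma}$. Hence the eigenvalues $\lambda_n(u)$ of $\Delta_{(M,g_u)}$ are those of the fixed operator $\Delta_{(M,g_0)}$ with weight $e^{2u\sigma}\,d\Vol_{g_0}$, and first--order perturbation theory (differentiating $\Delta_{(M,g_0)}\phi_n=\lambda_n(u)e^{2u\sigma}\phi_n$ and pairing with $\phi_n$) gives $\dot\lambda_n(u)=-2\lambda_n(u)\langle\sigma\phi_n,\phi_n\rangle_{g_u}$ for the $g_u$-normalized eigenfunctions.

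First I would establish the variational formula. From the previous line, $\tfrac{d}{du}K_u(t)=2t\sum_n\lambda_n e^{-t\lambda_n}\langle\sigma\phi_n,\phi_n\rangle_{g_u}=2t\,\Tr\!\big(\sigma\Delta_{(M,g_u)}e^{-t\Delta_{(M,g_u)}}\big)=-2t\,\partial_t\Theta_u(t)$, where $K_u(t)=\Tr(e^{-t\Delta_{(M,g_u)}})$ and $\Theta_u(t)=\Tr(\sigma e^{-t\Delta_{(M,g_u)}})$. Plugging this into $\zeta_{(M,g_u)}(s)=\frac1{\Gamma(s)}\int_0^\infty t^{s-1}K_u(t)\,dt$ (valid for $\Re s>1$; there is no zero eigenvalue since $\partial M\neq\varnothing$), differentiating under the integral sign and integrating by parts in $t$ --- the boundary contributions vanish for $\Re s$ large because of the spectral gap at $t\to\infty$ and the short--time expansion of $\Theta_u$ at $t\to 0$ --- yields $\tfrac{d}{du}\zeta_{(M,g_u)}(s)=2s\,\tilde\zeta_u(s)$, $\tilde\zeta_u(s):=\frac1{\Gamma(s)}\int_0^\infty t^{s-1}\Theta_u(t)\,dt$, first for $\Re s$ large and then for all $s$ by analytic continuation. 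Since $\tilde\zeta_u$ is holomorphic near $s=0$ with $\tilde\zeta_u(0)$ equal to the coefficient $c_0^{(u)}$ of $t^0$ in the short--time expansion of $\Theta_u$, and since $\tfrac{d}{ds}\big|_{s=0}\big(2s\,\tilde\zeta_u(s)\big)=2\tilde\zeta_u(0)$, we get
\[
\frac{d}{du}\log\zdet\Delta_{(M,g_u)}=-\frac{d}{du}\zeta_{(M,g_u)}'(0)=-2\,c_0^{(u)}.
\]
The differentiation in $u$ under all of these operations must be justified uniformly in $u\in[0,1]$, which reduces to having the short--time expansion of $\Theta_u$ below with locally uniform remainder control.

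The heart of the proof is the short--time asymptotic expansion of the $\sigma$-weighted heat trace on a curvilinear polygonal domain:
\[
\Tr\!\big(\sigma e^{-t\Delta_{(M,g_u)}}\big)=\frac{1}{4\pi t}\int_M\sigma\,d\Vol_{g_u}-\frac{1}{8\sqrt{\pi t}}\int_{\partial M}\sigma\,d\ell_{g_u}+c_0^{(u)}+O(\sqrt t)\qquad(t\to 0^+),
\]
\begin{align*}
c_0^{(u)}=&\ \frac{1}{12\pi}\int_M\sigma K_{g_u}\,d\Vol_{g_u}+\frac{1}{12\pi}\int_{\partial M}\sigma k_{g_u}\,d\ell_{g_u}\\
&+\frac{1}{8\pi}\int_{\partial M}\partial_{n_{g_u}}\sigma\,d\ell_{g_u}+\frac{1}{24}\sum_{j=1}^n\Big(\frac1{\beta_j}-\beta_j\Big)\sigma(p_j).
\end{align*}
I would prove this by constructing a heat parametrix for $\Delta_{(M,g_u)}$ out of three model pieces, glued by a partition of unity subordinate to a suitable cover of $M$: the Euclidean heat kernel away from $\partial M$, the Dirichlet half--plane heat kernel near smooth boundary arcs, and, near each corner $p_j$, the Dirichlet heat kernel of the model infinite wedge of opening angle $\beta_j\pi$ (available in the Carslaw--Sommerfeld contour--integral form). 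After estimating the parametrix error in trace norm, one integrates $\sigma$ against the on--diagonal kernel: the interior and smooth--boundary models reproduce the first three terms of $c_0^{(u)}$ by the classical local computations, while the wedge model contributes $\frac{1}{24}(\frac1{\beta_j}-\beta_j)\sigma(p_j)$ after evaluating the relevant wedge integral and using continuity of $\sigma$ at $p_j$. This is precisely the technical content of the companion paper \cite{K24}, extending \cite{NRS19} and \cite{AKR20}; I expect the main obstacle to lie exactly here --- controlling the corner parametrix and its error, and justifying the resulting expansion (with its uniformity in $u$), over the full admissible range of angles --- which is why the hypotheses package the corner geometry into Definition \ref{def:cpd}.

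Finally I would integrate from $u=0$ to $u=1$. Under $g_u=e^{2u\sigma}g_0$ one has, with the positive Laplacian, $K_{g_u}\,d\Vol_{g_u}=(K_{g_0}+u\,\Delta_{(M,g_0)}\sigma)\,d\Vol_{g_0}$ and $k_{g_u}\,d\ell_{g_u}=(k_{g_0}+u\,\partial_{n_{g_0}}\sigma)\,d\ell_{g_0}$, together with the conformal invariance $\partial_{n_{g_u}}\sigma\,d\ell_{g_u}=\partial_{n_{g_0}}\sigma\,d\ell_{g_0}$, while the angles $\beta_j$ and the numbers $\sigma(p_j)$ do not depend on $u$. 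Therefore
\[
\log\zdet\Delta_{(M,g_0)}-\log\zdet\Delta_{(M,g)}=\int_0^1 2\,c_0^{(u)}\,du,
\]
and $\int_0^1 u\,du=\tfrac12$ turns the curvature and geodesic contributions into $\tfrac12\int_M\sigma\,\Delta_{(M,g_0)}\sigma\,d\Vol_{g_0}$ and $\tfrac12\int_{\partial M}\sigma\,\partial_{n_{g_0}}\sigma\,d\ell_{g_0}$. Green's identity $\int_M\sigma\,\Delta_{(M,g_0)}\sigma\,d\Vol_{g_0}=\int_M|\nabla_{g_0}\sigma|^2\,d\Vol_{g_0}-\int_{\partial M}\sigma\,\partial_{n_{g_0}}\sigma\,d\ell_{g_0}$ then cancels the two half--boundary terms against each other, and collecting the rest yields exactly the asserted identity. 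As a consistency check, when $M$ has smooth boundary and no corners the $\beta_j$-sum disappears and one recovers the Osgood--Phillips--Sarnak form of the Polyakov--Alvarez formula \cite{OPS88}.
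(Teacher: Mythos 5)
This statement is an imported result (Theorem~\ref{thm:polalv} is cited from the companion paper \cite{K24}); the present paper gives no proof of it, so there is nothing internal to compare against. Your sketch is the standard Osgood--Phillips--Sarnak conformal-variation argument, and it matches exactly the strategy the paper attributes to \cite{K24} in Section~\ref{section:prellapl}: differentiate $\zeta_{(M,g_u)}'(0)$ along $g_u=e^{2u\sigma}g_0$ via the identity $\frac{d}{du}\zeta_{(M,g_u)}(s)=2s\,\tilde\zeta_u(s)$, read off $\frac{d}{du}\log\zdet\Delta_{(M,g_u)}=-2c_0^{(u)}$ from the constant term of the short-time expansion of $\Tr(\sigma e^{-t\Delta_{(M,g_u)}})$, and integrate in $u$ using the transformation rules (\ref{eq:transrulemetric}) and Green's identity. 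I checked your bookkeeping: with $c_0^{(u)}$ as you state it, the $u$-integration and the cancellation of the two $\frac{1}{12\pi}\int_{\partial M}\sigma\,\partial_{n_{g_0}}\sigma\,d\ell_{g_0}$ terms reproduce the displayed formula, and the corner coefficient $\frac{1}{24}\big(\frac{1}{\beta_j}-\beta_j\big)$ is the classical wedge contribution, so the constants are consistent. The one caveat is that the entire mathematical substance of the theorem sits in the step you flag but do not carry out: establishing the short-time expansion of the weighted heat trace, with the corner term and with remainder control uniform in $u$, for corners of the angles admitted by Definition~\ref{def:cpd} (in \cite{K24}, for arbitrary $\beta_j\pi\in(0,\infty)$, which is precisely what goes beyond \cite{NRS19,AKR20}). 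Deferring that to \cite{K24} is legitimate here since it is literally that paper's content, but your argument is a reduction to it rather than a self-contained proof.
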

When working with conformal changes of metric, that is, $g=e^{2\sigma}g_0$ for some smooth $\sigma$, the following transformation rules are useful
\begin{equation}
\begin{alignedat}{2}
\Delta_{g} &= e^{-2\sigma}\Delta_{g_0},\ &\partial_{n_g} &= e^{-\sigma}\partial_{n_{g_0}},\\
d\Vol_{g} &= e^{2\sigma}d\Vol_{g_0},\ & 
d\ell_g &= e^{\sigma}d\ell_{g_0},\\
\Delta_{g_0} \sigma &= e^{2\sigma} K_g-K_{g_0},\quad\quad &\partial_{n_{g_0}}\sigma &= e^{\sigma} k_g - k_{g_0}.
\label{eq:transrulemetric}\end{alignedat}
\end{equation}
For ease of notation, we will drop the subscript specifying the metric when we consider the Euclidean metric, i.e., $\Delta=\Delta_{dz^2}$.
\section{Basic properties of the \texorpdfstring{$\rho$}{}-Loewner energy}\label{section:definition}
In this section, we define the $\rho$-Loewner energy and give some of its basic properties. For ease of notation we let $C=(\H;0,\infty)$ and $R=(\D;1,0)$ denote the two reference settings. 
\begin{definition}\label{def:rhoenergyhp}Fix $z_0\in\overline\H\setminus\{0\}$ and $\rho\in\R$. Let $(K_t)_{t\in [0,T]}$, $T\in[0,\infty)$, be a family of half-plane hulls, parametrized by half-plane capacity and driven by a continuous function $W_t$, with $W_0=0$, such that $z_0\notin K_T$. We define the $\rho$-Loewner energy of $(K_t)_{t\in [0,T]}$ in $\H$ from $0$, with reference point $\infty$ and force point $z_0$, as 
$$I^C_{\rho,z_0}((K_t))=\frac{1}{2}\int_{0}^T\bigg(\dot W_t-\rho\Re\frac{1}{W_t-z_t}\bigg)^2 dt,$$
where $z_t=g_t(z_0)$, if $W_t$ is absolutely continuous and otherwise we set $I^C_{\rho,z_0}((K_t))=\infty$. For a family, $(K_t)_{t\in[0,T)}$, $T\in(0,\infty]$, where $(K_t)_{t\in[0,\tau]}$ is as above, for all $\tau\in[0,T)$, we define
$$I^C_{\rho,z_0}((K_t)_{t\in[0,T)})=\lim_{\tau\to T-}I^C_{\rho,z_0}((K_t)_{t\in[0,\tau]}).$$
\end{definition}
\begin{definition}\label{def:rhoenergyud}Fix $v_0\in(0,2\pi)$ and $\rho\in\R$. Let $(K_t)_{t\in[0,T]}$, $T\in[0,\infty)$, be a family of disk hulls, parametrized by conformal radius and driven by $W_t=e^{iw_t}$, where $w_t$ is continuous with $w_0=0$, such that $e^{iv_0}\notin K_T$. The $\rho$-Loewner energy of $(K_t)_{t\in[0,T]}$ in $\D$ from $1$, with reference point $0$ and force point $e^{iv_0}$, is defined as 
$$I^R_{\rho,e^{iv_0}}((K_t))=\frac{1}{2}\int_0^T\bigg(\dot w_t-\frac{\rho}{2}\cot\Big(\frac{w_t-v_t}{2}\Big)\bigg)^2 dt,$$
where $v_t$ is the unique continuous function with $e^{iv_t}=g_t(e^{iv_0})$, if $w_t$ is absolutely continuous and otherwise we set $I^R_{\rho,e^{iv_0}}((K_t))=\infty$. The definition is extended to families $(K_t)_{t\in[0,T)}$, $T\in(0,\infty]$ as in Definition \ref{def:rhoenergyhp}.
\end{definition}
The integrated formulas for the $\rho$-Loewner energy in the following proposition allows us to compare the $\rho$-Loewner energy to the chordal (or radial) Loewner energy for finite times \emph{strictly before} the force point is hit. 
\begin{proposition}\label{prop:integrated}
For families $(K_t)_{t\in[0,T]}$ of half-plane or disk hulls as in Definitions \ref{def:rhoenergyhp} and \ref{def:rhoenergyud} we have the following
\begin{alignat}{3}
\label{eq:rhoenergyinterior}
I^C_{\rho,z_0}((K_t))&=I^C((K_t))+\rho\log\frac{\sin\theta_T}{\sin \theta_0}-\frac{\rho(8+\rho)}{8}\log\frac{|g_T'(z_0)|y_T}{y_0},&& z_0\in \H,\\
\label{eq:rhoenergyboundary}
I^C_{\rho,z_0}((K_t))&=I^C((K_t))-\rho\log\frac{|W_T-z_T|}{|z_0|}-\frac{\rho(4+\rho)}{4}\log |g_T'(z_0)|,&& z_0\in \R\setminus\{0\},\\
 \label{eq:rhoenergydisk}
 I^R_{\rho,z_0}((K_t))&=I^R((K_t))-\rho\log\frac{| W_T-z_T|}{|W_0-z_0|}-\frac{\rho(4+\rho)}{8}\log(|g_T'(z_0)|^2|g_T'(0)|),\ && z_0\in\partial \D,
\end{alignat}
where $I^C$ denotes the chordal Loewner energy and $$I^R((K_t)):=\begin{cases}\frac{1}{2}\int_{0}^T\dot w_t^2dt &\text{if } w_t \text{ abs. cont.,}\\ \infty &\text{otherwise},\end{cases} $$ is the radial Loewner energy. For $z_0\in\H$ we write $z_t=x_t+iy_t$, $x_t,y_t\in\R$, and $\theta_t=\arg(z_t)$.
\end{proposition}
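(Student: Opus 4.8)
The plan is to prove all three identities by the same elementary mechanism: expand the square in Definitions \ref{def:rhoenergyhp} and \ref{def:rhoenergyud}, and recognise each of the two resulting correction terms as the integral of a total $t$-derivative of an explicit conformal quantity. In the chordal case the expansion reads
\[
I^C_{\rho,z_0}((K_t)) = I^C((K_t)) - \rho\int_0^T \dot W_t\,\Re\tfrac{1}{W_t-z_t}\,dt + \frac{\rho^2}{2}\int_0^T\Big(\Re\tfrac{1}{W_t-z_t}\Big)^2 dt ,
\]
with the obvious analogue in the radial case ($\tfrac{\rho}{2}\cot\tfrac{w_t-v_t}{2}$ in place of $\rho\,\Re\tfrac{1}{W_t-z_t}$). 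If the driving function is not absolutely continuous, the left-hand side of each identity is $+\infty$ by definition and so is $I^C$ (or $I^R$ in the radial case) on the right, while the remaining terms on the right are finite because $z_0\notin K_T$ makes $g_T$ conformal at $z_0$ and keeps $z_t-W_t$ bounded away from $0$ on $[0,T]$; the identity then holds as $\infty=\infty$, and one may assume absolute continuity henceforth. In that case $(\Re\tfrac1{W_t-z_t})^2$ is bounded on $[0,T]$, so the quadratic correction is finite, and the reverse triangle inequality in $L^2([0,T])$ shows that $I^C_{\rho,z_0}$ is finite if and only if $I^C$ is (and similarly in the radial setting) --- a fact worth recording.

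For the interior identity (\ref{eq:rhoenergyinterior}) the input is (\ref{eq:chordalloewner}), which gives $\dot z_t = 2/(z_t-W_t)$ and, differentiating in $z$, $\partial_t g_t'(z_0) = -2\,g_t'(z_0)/(z_t-W_t)^2$. From the imaginary part of $\dot z_t$ and the real part of $\partial_t g_t'(z_0)/g_t'(z_0)$ one gets $\tfrac{d}{dt}\log y_t = -2/|z_t-W_t|^2$ and $\tfrac{d}{dt}\log|g_t'(z_0)| = -2\,\Re\tfrac{1}{(z_t-W_t)^2}$, and an elementary algebraic identity relating $|u|^{-2}$, $\Re(u^{-2})$ and $(\Re u^{-1})^2$ collapses these to $\tfrac{d}{dt}\big(\log|g_t'(z_0)|+\log y_t\big) = -4\big(\Re\tfrac{1}{z_t-W_t}\big)^2$; this is the quadratic correction and supplies the $\tfrac{\rho^2}{8}$ part of the coefficient $\tfrac{\rho(8+\rho)}{8}=\rho+\tfrac{\rho^2}{8}$. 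For the linear correction one computes from $\tfrac{d}{dt}(z_t-W_t) = 2/(z_t-W_t)-\dot W_t$ that $\tfrac{d}{dt}\log\sin\theta_t = \dot W_t\,\Re\tfrac{1}{z_t-W_t} - 4\big(\Re\tfrac{1}{z_t-W_t}\big)^2$, where $\theta_t$ is the argument of $z_t-W_t$ (so $\theta_0=\arg z_0$); combining with the previous display gives $-\dot W_t\,\Re\tfrac1{W_t-z_t} = \tfrac{d}{dt}\big(\log\sin\theta_t-\log|g_t'(z_0)|-\log y_t\big)$. Integrating both corrections over $[0,T]$ by the fundamental theorem of calculus for absolutely continuous functions, using $g_0=\mathrm{id}$, and collecting powers of $\rho$ yields (\ref{eq:rhoenergyinterior}). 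The boundary identity (\ref{eq:rhoenergyboundary}) is the degenerate version of this with $y_t\equiv 0$: writing $u_t:=z_t-W_t$, now real, one has $\tfrac{d}{dt}\log|u_t| = 2/u_t^2-\dot W_t/u_t$ and $\tfrac{d}{dt}\log|g_t'(z_0)| = -2/u_t^2$, so $(\Re\tfrac1{W_t-z_t})^2 = 1/u_t^2 = -\tfrac12\tfrac{d}{dt}\log|g_t'(z_0)|$ and $-\dot W_t\,\Re\tfrac1{W_t-z_t} = \dot W_t/u_t = -\tfrac{d}{dt}\big(\log|u_t|+\log|g_t'(z_0)|\big)$; the same integration produces the coefficient $\tfrac{\rho(4+\rho)}{4}$ and the term $-\rho\log(|W_T-z_T|/|z_0|)$.

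The radial identity (\ref{eq:rhoenergydisk}) runs along identical lines using (\ref{eq:radialLoewner}). Writing $\phi_t=\tfrac{w_t-v_t}{2}$, one finds $\dot v_t = -\cot\phi_t$, $|W_t-z_t| = 2|\sin\phi_t|$, and, differentiating (\ref{eq:radialLoewner}) in $z$, $\tfrac{d}{dt}\log|g_t'(z_0)| = -\tfrac12\csc^2\phi_t$ together with $\tfrac{d}{dt}\log|g_t'(0)| = 1$ (the latter merely encoding the conformal-radius parametrisation). Using $\csc^2=1+\cot^2$, these combine into $\cot^2\phi_t = -\tfrac{d}{dt}\log\big(|g_t'(z_0)|^2|g_t'(0)|\big)$, the quadratic correction, and --- together with $\tfrac{d}{dt}\log|W_t-z_t| = \dot\phi_t\cot\phi_t = \tfrac12(\dot w_t+\cot\phi_t)\cot\phi_t$ --- into $\dot w_t\cot\phi_t = \tfrac{d}{dt}\big(2\log|W_t-z_t|+\log(|g_t'(z_0)|^2|g_t'(0)|)\big)$, the linear correction; integrating and collecting powers of $\rho$ gives (\ref{eq:rhoenergydisk}). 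All of this is routine calculus once the Loewner ODEs are at hand, and I do not expect a serious obstacle. The two points requiring care are the reduction to the absolutely continuous case (with the easy finiteness of the correction integrals), and correctly singling out the geometric quantity whose logarithmic derivative furnishes the linear-in-$\rho$ correction --- the argument of $z_t-W_t$ in the interior case, $|W_t-z_t|$ in the boundary and radial cases --- and then tracking the $t=0$ endpoint values.
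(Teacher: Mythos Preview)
Your proof is correct and follows essentially the same approach as the paper: expand the square in the definition and recognise the cross and quadratic correction terms as integrals of total $t$-derivatives computed from the Loewner ODE. The paper spells out only (\ref{eq:rhoenergyinterior}) and leaves (\ref{eq:rhoenergyboundary}) and (\ref{eq:rhoenergydisk}) to the reader, whereas you supply all three; your reading of $\theta_t$ as $\arg(z_t-W_t)$ rather than the stated $\arg(z_t)$ is the one the paper actually uses in its computation (and elsewhere, e.g.\ in the proofs of Proposition~\ref{prop:radialbounds} and Corollary~\ref{cor:infenergy}).
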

Note that (\ref{eq:rhoenergyinterior}-\ref{eq:rhoenergydisk}) hold even when the $\rho$-Loewner energy is infinite.
\begin{proof}We show (\ref{eq:rhoenergyinterior}). Identities (\ref{eq:rhoenergyboundary}) and (\ref{eq:rhoenergydisk}) can be shown using the same technique.\par
Fix $z_0\in\H$. If $W_t$ is not absolutely continuous, then $I^C(\gamma)=\infty$ and $I^C_{\rho,z_0}(\gamma)= \infty$, by definition. So, we may assume that $W_t$ is absolutely continuous. We then have
\begin{align*}\frac{1}{2}\int_0^T&\Big(\dot W_t-\rho\Re\frac{1}{W_t-z_t}\Big)^2dt=I^C(\gamma)+\int_0^T\Big(\frac{\rho^2}{2}\frac{(W_t-x_t)^2}{|W_t-z_t|^4}-\rho\dot W_t\frac{W_t-x_t}{|W_t-z_t|^2}\Big)dt.
\end{align*}
Hence, it suffices to show that 
\begin{align}\int_{0}^T\bigg(\frac{\rho^2}{2}\frac{(W_t-x_t)^2}{|W_t-z_t|^4}-&\rho\dot W_t\frac{W_t-x_t}{|W_t-z_t|^2}\bigg)dt=\rho\log\frac{\sin\theta_T}{\sin\theta_0}-\frac{\rho(8+\rho)}{8}\log\frac{|g_T'(z_0)|y_T}{y_0}.
\label{eq:integrated}\end{align}
The Loewner equation gives
\begin{align*}
\dot x_t=2\frac{x_t-W_t}{|W_t-z_t|^2},\quad \dot y_t=-2\frac{y_t}{|W_t-z_t|^2},\quad
\dot g_t'(z_0)=-\frac{2g_t'(z_0)}{(g_t(z_0)-W_t)^2}.
\end{align*}
which in turn yields
\begin{align*}
\partial_t(\log y_t)=-\frac{2}{|W_t-z_t|^2},
\quad \partial_t(\log|g_t'(z_0)|)=\Re\partial_t\log g_t'(z_0)=-2\frac{(W_t-x_t)^2-y_t^2}{|z_t-W_t|^4},
\end{align*}
Since $W_t$ is absolutely continuous, $z_t\in C^1$, and $|W_t-z_t|$ is bounded below, we have that $\log|W_t-z_t|$ is absolutely continuous with
$$\partial_t \log|W_t-z_t|=\dot W_t\frac{W_t-x_t}{|W_t-z_t|^2}+2\frac{(W_t-x_t)^2-y_t^2}{|W_t-z_t|^4}\text{ a.e.}$$By combining the above and integrating we find (\ref{eq:integrated}).
\end{proof}
Proposition \ref{prop:integrated} immediately shows that $I^C_{\rho,z_0}((K_t)_{t\in[0,T]})<\infty$ implies that $(K_t)_{t\in[0,T]}$ corresponds to a simple curve (note that this assumes $z_0\notin K_T$). The same conclusion can be drawn in the disk setting in view of the following proposition.
\begin{proposition}\label{prop:coorchangeradialenergy}
Fix $v_0\in(0,2\pi)$ and let $(K_t^R)_{t\in[0,T]}$ be a family of disk hulls as in Definition \ref{def:rhoenergyud}. Let $\varphi_0:\H\to\D$ be a conformal map with $\varphi_0^{-1}(e^{iv_0})=\infty$, $\varphi_0^{-1}(1)=0$. Then $(K_{t(s)}^C):=(\varphi^{-1}_0(K_{t(s)}^{R}))$, where $s\mapsto t(s)$ is the appropriate re-parametrization, is a family of half-plane hulls as in Definition \ref{def:rhoenergyhp} with respect to the point $z_0^C:=\varphi_0^{-1}(0)$. The reverse also holds, i.e., every such family of half-plane hulls induces a family of disk hulls. Moreover, under this change of coordinates
$$I^R_{\rho,e^{iv_0}}((K_t^R)_t)=I^C_{-6-\rho,z_0}((K_s^C)_s).$$
\end{proposition}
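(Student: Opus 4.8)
The plan is to carry out a change of coordinates at the level of Loewner chains, following the deterministic ($\kappa=0$) analog of the SLE coordinate change computation of \cite{SW05}. Write $\psi:=\varphi_0^{-1}:\D\to\H$, so $\psi(1)=0$, $\psi(e^{iv_0})=\infty$, $\psi(0)=z_0^C\in\H$. First I would treat the correspondence between the two families of hulls. Given $(K^R_t)_{t\in[0,T]}$ as in Definition \ref{def:rhoenergyud}, the sets $\psi(K^R_t)$ form a continuously and locally growing family of half-plane hulls in $\H$ emanating from $\psi(1)=0$; they avoid $z_0^C=\psi(0)$ since $0\notin K^R_T$, and they avoid $\infty$ since $e^{iv_0}\notin K^R_T$. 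Reparametrizing by half-plane capacity --- i.e.\ letting $t\mapsto s(t)$ be the continuous, strictly increasing change of time with $\hcap(\psi(K^R_{t}))=2s(t)$ and $t(s)$ its inverse --- yields a family $(K^C_s)_s:=(\psi(K^R_{t(s)}))_s$ of the type in Definition \ref{def:rhoenergyhp} with force point $z_0^C$. Since $\psi$ is a homeomorphism of $\overline\D$ onto $\overline\H$ this correspondence is a bijection, the reverse passage being identical, which proves the first assertion; and, by the symmetry of the construction, $w$ is absolutely continuous if and only if the chordal driving function of $(K^C_s)_s$ is, so in the non-absolutely-continuous case both sides of the claimed identity are $+\infty$ by definition and there is nothing to prove.

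Assume then that $w$ is absolutely continuous. Let $g^R_t$ and $\hat g_s$ be the radial and chordal mapping-out functions of $K^R_t$ and $K^C_s$, with driving functions $W^R_t=e^{iw_t}$ and $\hat W_s$; let $v_t$ be as in Definition \ref{def:rhoenergyud} and put $\hat z_s=\hat g_s(z_0^C)$, $\hat y_s=\Im\hat z_s$. For each $t$, with $s=s(t)$, the map $\psi_t:=\hat g_s\circ\psi\circ(g^R_t)^{-1}$ is a conformal bijection $\D\to\H$, hence a Möbius transformation, with $\psi_t(e^{iw_t})=\hat W_s$, $\psi_t(e^{iv_t})=\infty$, $\psi_t(0)=\hat z_s$. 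Differentiating the identity $\hat g_{s(t)}\circ\psi=\psi_t\circ g^R_t$ in $t$, using the Loewner equations \eqref{eq:radialLoewner} and \eqref{eq:chordalloewner}, and expanding $\psi_t$ near its three marked points, I expect to obtain --- exactly as in the SLE coordinate change --- that $\tfrac{ds}{dt}=c_t^2$, where $c_t:=i e^{iw_t}\psi_t'(e^{iw_t})>0$, and that, writing $\dot w_t=\tfrac{\rho}{2}\cot\tfrac{w_t-v_t}{2}+\xi_t$ (so that $\tfrac12\int_0^T\xi_t^2\,dt=I^R_{\rho,e^{iv_0}}((K^R_t)_t)$), one has
\[
\frac{d\hat W_s}{ds}=(-6-\rho)\,\Re\frac{1}{\hat W_s-\hat z_s}+\hat\xi_s,\qquad \hat\xi_{s(t)}=\frac{\xi_t}{c_t}.
\]
Changing variables with $ds=c_t^2\,dt$ then gives
\[
I^C_{-6-\rho,z_0^C}\big((K^C_s)_s\big)=\tfrac12\int_0^{s(T)}\hat\xi_s^2\,ds=\tfrac12\int_0^{T}(\xi_t/c_t)^2\,c_t^2\,dt=\tfrac12\int_0^{T}\xi_t^2\,dt=I^R_{\rho,e^{iv_0}}\big((K^R_t)_t\big),
\]
which is the identity on a finite interval; the extension to $T\in(0,\infty]$ is immediate from the limiting definitions in Definitions \ref{def:rhoenergyhp} and \ref{def:rhoenergyud}.

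The main obstacle is the displayed computation: one must solve for $\tfrac{d\hat W_s}{ds}$ from the implicit relation created by $\partial_t\psi_t$ (which itself involves $\dot w_t$ and $\tfrac{d\hat W_s}{ds}$), check that the contribution of $\xi_t$ survives cleanly as $\xi_t/c_t$, and verify that all remaining terms assemble into the SLE$_0(-6-\rho)$ chordal drift at $(\hat W_s,\hat z_s)$ --- in particular that the constant is $-6-\rho$ and not merely $-\rho$, which is where the second-order expansion of the Möbius maps $\psi_t$ enters, exactly as the $\kappa-6-\rho$ of the genuinely stochastic coordinate change does. A convenient way to organize and independently check this is to invoke Proposition \ref{prop:integrated}: applying the integrated formula \eqref{eq:rhoenergydisk} to the left-hand side and \eqref{eq:rhoenergyinterior} (with $\rho$ replaced by $-6-\rho$, noting $z_0^C\in\H$) to the right-hand side, and using $(g^R_T)'(0)=e^{T}$, one reduces the identity to the $\rho$-independent geometric relations
\[
\frac{|e^{iw_T}-e^{iv_T}|}{|1-e^{iv_0}|}=\frac{\sin\theta_{s(T)}}{\sin\theta_0},\qquad |(g^R_T)'(e^{iv_0})|^2\,e^{T}=\frac{|\hat g_{s(T)}'(z_0^C)|\,\hat y_{s(T)}}{\hat y_0},\qquad \theta_s:=\arg\hat z_s,
\]
together with a single instance of the coordinate change for the ordinary ($\rho=0$) energies. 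The geometric relations in turn follow from the chain rule $\hat g_{s}\circ\psi=\psi_t\circ g^R_t$, conformal invariance of harmonic measure, and the transformation rules for harmonic measure at interior and boundary points recalled in Section \ref{section:preliminaries}. Either route reduces the proof to one explicit conformal computation, with the rest being bookkeeping.
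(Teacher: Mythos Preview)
Your primary approach is exactly the one the paper takes: set up the time-dependent M\"obius map $\psi_t=\hat g_s\circ\psi\circ(g^R_t)^{-1}$ (the paper works with its inverse $\varphi_s=\psi_t^{-1}$), differentiate the commuting relation using the two Loewner equations, and read off the time-change $ds/dt=|\psi_t'(e^{iw_t})|^2$ together with the drift transformation. The paper then actually performs the computation you label ``the main obstacle,'' verifying line by line that the residual term assembles into $(-6-\rho)\,\Re\tfrac{1}{\hat W_s-\hat z_s}$; your outline is correct, but as written it only asserts the expected outcome rather than establishing it.

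One comment on your ``alternative route'' via Proposition~\ref{prop:integrated}: it is a useful sanity check on the $\rho$-dependent terms, but it is not an independent proof. After matching the geometric factors you are still left with the $\rho$-independent statement $I^R((K^R_t))=I^C_{-6,z_0^C}((K^C_s))$, which is precisely the proposition at $\rho=0$ and therefore still requires the direct coordinate-change computation. So the integrated formulas reduce the general $\rho$ case to one particular $\rho$, but do not bypass the Schramm--Wilson calculation.
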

This can be seen as a deterministic analog of \cite[ Theorem 3]{SW05}, and the proof is very similar.
\begin{proof}
Throughout this proof, superscripts $C$ and $R$ will be used to distinguish between variables, functions, etc., belonging to the chordal and radial settings respectively. Most of these computations are given in \cite{SW05}, but for the sake of completeness we present them here as well. Let $(g_t^R)$ be the family of mapping-out functions corresponding to $(K_t^R)$. Since $(K_t^C)$ is a family of continuously growing half-plane hulls ($e^{iv_0}\notin K_T^R$), there is a re-parametrization $t=t(s):[0,S]\to[0,T]$ such that $(K_{t(s)}^C)$ is parametrized by half-plane capacity. Let $g_s^C$ be the mapping-out function of $K_{t(s)}^C$. We denote $\varphi_s:=g_{t(s)}^R\circ\varphi_0\circ(g_s^C)^{-1}:\H\to\D$. It can be seen that $\varphi_s$ must be of the form,
$\varphi_s(z)=\lambda_s\frac{z-z_s}{z-\overline z_s},$ where $z_s=g_s^C(z_0)$, for some $\lambda_s\in\partial \D$. Note that $\lambda_s=\varphi_s(\infty)=g_{t(s)}^R(e^{iv_0})$. Loewner's theorem shows that $(g_s^C)$ satisfies the chordal Loewner equation with $W_s^C=\varphi_s^{-1}(e^{iw_{t(s)}})$. 
Since $(g_t^R)'(0)=e^t$ we have 
$$\frac{dt}{ds}=\partial_s\log|(\varphi_0^{-1})'(0)(g_s^C)'(z_0)\varphi_s'(z_s)| = \partial_s\log|(g_s^C)'(z_0)|-\partial_s\log y_s = \frac{4y_s^2}{|z_s-W_s^C|^4},$$
where the third equality follows from the chordal Loewner equation. Since $$w_{t(s)}=-i\log(\varphi_s(W_s^C)),$$ we have that $w_t$ is absolutely continuous if and only if $W_s^C$ is absolutely continuous, and if they are then
$$\partial_s w_{t(s)}=-i(\partial_s\log\lambda_s+\partial_s\log(W_s^C-z_s)-\partial_s\log(W_s^c-\overline z_s)) \text{ a.e.}$$
From the radial Loewner equation, we have
$\partial_s\lambda_{s}=\lambda_{s}\frac{W_{t(s)}^R+\lambda_{s}}{W_{t(s)}^R-\lambda_{s}}\frac{dt}{ds}$. Thus, $$\partial_s\log\lambda_{s}=\frac{\varphi_s(W_s^C)+\lambda_s}{\varphi_s(W_s^C)-\lambda_s}\frac{dt}{ds}=i\frac{W_s^C-x_s}{y_s}\frac{dt}{ds}.$$
Moreover, the chordal Loewner equation gives
\begin{align*}
\partial_s\log (W_{s}^C-z_{s})&=\frac{\dot W_{s}^C}{W_s^C-z_s}+\frac{2}{(W_s^C-z_s)^2},
\end{align*}
and the same holds when $z_s$ is replaced by $\overline z_s$. Finally, observe that 
$$\cot\bigg(\frac{w_{t(s)}-v_{t(s)}}{2}\bigg)=-\frac{W_{s}^C-x_s}{y_s}.$$
When combining all of the above, we find that
$$\int_{0}^T\bigg(\dot w_t - \rho\cot\bigg(\frac{w_s-v_s}{2}\bigg)\bigg)^2 ds = \int_0^S\bigg(\dot W_s^C+(6+\rho)\frac{W_s^C-x_s}{|W_s^C-z_s|^2}\bigg)^2ds,$$
whenever $W_s^C$ and $w_t$ are absolutely continuous. This completes the proof.
\end{proof}
Since we now know that any hull family, on a closed interval $[0,T]$, with infinite chordal Loewner energy also has infinite $\rho$-Loewner energy, we may from now on restrict our attention to hull families corresponding to simple curves. \par
In most cases, curves in $\H$ ($\D$) will be parametrized according to half-plane capacity (conformal radius) but in general, we will consider curves $\gamma$ up to re-parametrization. When we write $I^C_{\rho,z_0}(\gamma)$ ($I^R_{\rho,e^{iv_0}}(\gamma)$) it is implicit that the curve $\gamma$ is (re-)parametrized by half-plane capacity (conformal radius).\par For a force point $x_0\in\R\setminus\{0\}$ we have the following analog of Proposition \ref{prop:coorchangeradialenergy}.
\begin{proposition}\label{prop:coorchangechordenergy}
Fix $x_0\in\R\setminus\{0\}$. Let $\gamma:(0,T]\to\H$ be a simple curve with $\gamma(0+)=0$ parametrized according to half-plane capacity and let $\varphi:\H\to\H$ be a conformal map satisfying $\varphi(0)=0$ and $\varphi(x_0)=\infty$. Then
\begin{equation}I^C_{\rho,x_0}(\gamma)=I^C_{-6-\rho,\varphi(\infty)}(\varphi(\gamma)).\label{eq:equalitychordalchordal}\end{equation}
\end{proposition}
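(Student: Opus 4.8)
The plan is to follow the template of Proposition~\ref{prop:coorchangeradialenergy}: exhibit a family of Möbius automorphisms of $\H$ intertwining the chordal Loewner flow of $\gamma$ with that of $\varphi(\gamma)$, read off how the driving function transforms, and substitute into the energy integral. First I would dispose of degeneracies. For $\gamma\colon(0,T]\to\H$ with $T<\infty$ the hull $K_T$ equals $\gamma_T$ and meets $\R$ only at $0$, so $x_0\notin K_T$; since $\varphi$ is a Möbius automorphism of $\H$ with $\varphi(0)=0$ and $\varphi(x_0)=\infty$, injectivity forces $\tilde x_0:=\varphi(\infty)\in\R\setminus\{0\}$, and likewise $\tilde x_0\notin\varphi(K_T)$, so both $\rho$-Loewner energies in \eqref{eq:equalitychordalchordal} are well defined; the case $T=\infty$ then follows from Definition~\ref{def:rhoenergyhp} by letting $T\to\infty$. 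If $W$ is not absolutely continuous one needs the right-hand side to be $\infty$ as well; this holds because the whole construction below is symmetric under replacing $\varphi$ by $\varphi^{-1}$ (which maps $0\mapsto 0$, $\tilde x_0\mapsto\infty$), so absolute continuity of the two driving functions is equivalent. Hence assume $W$ is absolutely continuous.

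Write $\tilde\gamma=\varphi(\gamma)$ reparametrized by half-plane capacity via $t\mapsto s(t)$, with mapping-out functions $\tilde g_s$, driving function $\tilde W_s$, and $\tilde x_s:=\tilde g_s(\tilde x_0)$. Put $\varphi_t:=\tilde g_{s(t)}\circ\varphi\circ g_t^{-1}\colon\H\to\H$, a Möbius automorphism, and $x_t:=g_t(x_0)$. Directly from the definitions $\varphi_t(x_t)=\infty$, $\varphi_t(\infty)=\tilde x_{s(t)}$, and $\varphi_t(W_t)=\tilde W_{s(t)}$; in particular $\varphi_t(u)=\tilde x_{s(t)}+\dfrac{c_t}{x_t-u}$ for some $c_t>0$. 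Regarding $\varphi_t(u)$ as a function of $(t,u)$ and differentiating the identity $\tilde g_{s(t)}\circ\varphi=\varphi_t\circ g_t$ in $t$, then matching the coefficients of $(u-W_t)^{-1}$ and $(u-W_t)^0$ in the Laurent expansion about $u=W_t$ — this is the standard chordal coordinate-change computation — gives $\dot s(t)=\varphi_t'(W_t)^2$ and $\partial_t\varphi_t(W_t)=-3\,\varphi_t''(W_t)$ ($t$-derivative at fixed $u$). Since $\dot{\tilde W}_{s(t)}\,\dot s(t)=\tfrac{d}{dt}\tilde W_{s(t)}=\partial_t\varphi_t(W_t)+\varphi_t'(W_t)\dot W_t$, this yields, at $s=s(t)$,
\[
\dot{\tilde W}_s=\frac{\dot W_t}{\varphi_t'(W_t)}-\frac{3\,\varphi_t''(W_t)}{\varphi_t'(W_t)^2}.
\]

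Finally I would substitute. The explicit form of $\varphi_t$ gives $\tilde W_{s(t)}-\tilde x_{s(t)}=\dfrac{c_t}{x_t-W_t}$, $\varphi_t'(W_t)=\dfrac{c_t}{(x_t-W_t)^2}$, and $\dfrac{\varphi_t''(W_t)}{\varphi_t'(W_t)^2}=\dfrac{2(x_t-W_t)}{c_t}$; in particular $-\dfrac{3\varphi_t''(W_t)}{\varphi_t'(W_t)^2}=\dfrac{-6}{\tilde W_{s(t)}-\tilde x_{s(t)}}$, so the drift produced by the coordinate change is exactly the SLE$_0(-6)$ drift in the image coordinates. Consequently, at $s=s(t)$,
\[
\dot{\tilde W}_s-\frac{-6-\rho}{\tilde W_s-\tilde x_s}=\frac{\dot W_t}{\varphi_t'(W_t)}+\frac{\rho}{\tilde W_s-\tilde x_s}=\frac{1}{\varphi_t'(W_t)}\Big(\dot W_t-\frac{\rho}{W_t-x_t}\Big),
\]
using $\dfrac{\rho}{\tilde W_{s(t)}-\tilde x_{s(t)}}=-\dfrac{1}{\varphi_t'(W_t)}\dfrac{\rho}{W_t-x_t}$ in the last step. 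Since $ds=\varphi_t'(W_t)^2\,dt$, the substitution $s=s(t)$ turns $\tfrac12\int_0^S\big(\dot{\tilde W}_s-\tfrac{-6-\rho}{\tilde W_s-\tilde x_s}\big)^2\,ds$ into $\tfrac12\int_0^T\big(\dot W_t-\tfrac{\rho}{W_t-x_t}\big)^2\,dt$, which is \eqref{eq:equalitychordalchordal}. I do not expect a genuine obstacle beyond keeping the bookkeeping straight; the one point worth emphasizing is the identity $-3\varphi_t''/(\varphi_t')^2=-6/(\tilde W_{s(t)}-\tilde x_{s(t)})$, which is what makes the shift $\rho\mapsto-6-\rho$ appear and is the deterministic shadow of the classical fact that chordal SLE$_\kappa$, mapped so that a boundary marked point goes to $\infty$, becomes SLE$_\kappa(\kappa-6)$. (Because of this, \eqref{eq:equalitychordalchordal} together with conformal invariance of $I^{(D;a,b)}_{\rho,c}$ is exactly the chordal instance of \eqref{eq:coorchange}, so it is proved here rather than quoted.) The remaining verifications — the stated values of $\varphi_t$ and the coordinate-change expansion — run as in the proof of Proposition~\ref{prop:coorchangeradialenergy}.
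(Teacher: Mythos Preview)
Your proof is correct and follows exactly the approach the paper intends: the paper does not write out a proof of this proposition but simply says ``The proof is similar to that of Proposition~\ref{prop:coorchangeradialenergy} and is left to the reader,'' and you have carried out precisely that coordinate-change computation. The key identities $\dot s=\varphi_t'(W_t)^2$, $\partial_t\varphi_t(W_t)=-3\varphi_t''(W_t)$, and $-3\varphi_t''(W_t)/\varphi_t'(W_t)^2=-6/(\tilde W_{s}-\tilde x_{s})$ are all correct, and the final substitution is clean.
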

The proof is similar to that of Proposition \ref{prop:coorchangeradialenergy} and is left to the reader.\par
Let $D\subsetneq \C$ be a simply connected domain, $a\in \partial D$, $b\in\partial D\setminus\{a\}$, and $c\in \overline D\setminus\{a,b\}$ (here we consider $\partial D$ in terms of prime ends). We define the $\rho$-Loewner energy of a simple curve $\gamma\in D\setminus\{c\}$, starting at $a$, with reference point $b$ and force point $c\in\overline D\setminus\{a,b\}$, as
$$I^{(D;a,b)}_{\rho,c}(\gamma):=I^C_{\rho,\varphi(c)}(\varphi(\gamma)),$$
where $\varphi:D\to \H$ is a conformal map with $\varphi(a)=0$ and $\varphi(b)=\infty$.
This is well-defined since $I^C_{\rho,z_0}(\gamma)=I^C_{\rho,\lambda z_0}(\lambda\gamma)$ for any $\lambda>0$. 
Suppose instead that $a\in\partial D$, $b\in D$, and $c\in\partial D\setminus\{a\}$. Then we define the $\rho$-Loewner energy of a simple curve $\gamma$ starting at $a$, with reference point $b$ and force point $c$, as
$$I^{(D;a,b)}_{\rho,c}(\gamma):=I^R_{\rho,\varphi(c)}(\varphi(\gamma)),$$
where $\varphi:D\to \D$ is \emph{the} conformal map with $\varphi(a)=1$ and $\varphi(b)=0$.
Propositions \ref{prop:coorchangeradialenergy} and \ref{prop:coorchangechordenergy} can now be summarized by (\ref{eq:coorchange}).  
The $\rho$-Loewner energy satisfies the same type of additive property as the chordal Loewner energy, that is
$$I^{(D;a,b)}_{\rho,c}(\gamma_T)=I^{(D;a,b)}_{\rho,c}(\gamma_t)+I^{(D\setminus\gamma_t;\gamma(t),b)}_{\rho,c}(\gamma_{[t,T]}),\ t\in(0,T).$$
\section{A large deviation principle for SLE\texorpdfstring{$_\kappa(\rho)$}{}} \label{section:ldp}
The goal of this section is to prove Theorem \ref{thm:ldpinf}, which includes Theorem \ref{thm:ldp}. 
The proof follows the same outline as the proof of the LDP for chordal SLE$_\kappa$ in \cite{PW21}. Using an LDP on the driving process on a finite time interval (obtained in Section \ref{subsection:ldpdrive}) and certain continuity properties of the Loewner map we obtain an LDP on SLE$_\kappa(\rho)$ stopped at a finite time (see Section \ref{subsection:ldpfinite}). We then show, in Section \ref{section:ldpinftime}, the large deviation principle on the full SLE$_\kappa(\rho)$ curve by using estimates on return probabilities (which are shown in Appendix \ref{section:escape}). These steps can be carried out, more or less exactly as in \cite{PW21}. Therefore, the bulk of the work will lie in showing the LDP for the driving process of SLE$_\kappa(\rho)$, and establishing the return probability estimates for SLE$_\kappa(\rho)$.\par
Throughout this section $\P$ denotes the standard Wiener measure and $B_t$ denotes a one-dimensional standard Brownian motion. 
\subsection{LDP on driving process}\label{subsection:ldpdrive}
The radial and chordal settings will be treated in very similar ways. Whenever we wish to distinguish the two cases we will use a symbol $X\in\{C,R\}$, often as a superscript, where $C$ and $R$ denote $(\H,0,\infty)$ and $(\D,1,0)$ respectively.
We define $$H^C(x) := \frac{2}{x}\quad \text{ and }\quad H^R(x):=\cot\bigg(\frac{x}{2}\bigg).$$ Fix $\rho>-2$, $T>0$, and $v_0\in\R\setminus\{0\}$ for $X=C$ and $v_0\in(0,2\pi)$ for $X=R$. Let $C_0([0,T])$ denote the space of real-valued continuous functions $t\mapsto f_t$ on $[0,T]$ satisfying $f_0=0$, endowed with the supremum norm. For each $f\in C_0([0,T])$, there is a unique pair $(w^\rho,v^\rho)$ of continuous functions satisfying 
\begin{align}
w^\rho_t&=\frac{\rho}{2}\int_0^{t}H^X(w^\rho_s-v^\rho_s) ds+f_{t},\label{eq:wrho}\\
v^\rho_t&=-\int_0^{t }H^X(w^\rho_s-v^\rho_s)ds+v_0,\label{eq:vrho}
\end{align}
for $t<\tau_{0+}\land T$ where $\tau_{0+}=\tau_{0+}(f)=\lim_{\vare\to\infty}\tau_{\vare}$ and $\tau_{\vare}:=\inf\{t:|H^X(w^\rho_t-v^\rho_t)|\geq 1/\vare\}$. Indeed, since $H^X$ is Lipschitz  on $\{x:|H^X(x)|\leq 1/\vare\}$ there is a unique solution to (\ref{eq:wrho},\ \ref{eq:vrho}) up to time $\tau_\vare$, for every $\vare>0$. We define, for $\vare>0$,
$$C^X_{\vare,\rho}=\{f\in C_0([0,T]):\tau_{\vare}=\infty\},\quad C^X_{0+,\rho}=\{f\in C_0([0,T]):\tau_{0+}=\infty\},$$
and 
$$\mathcal W^\rho=\mathcal W^\rho(T):C_0([0,T])\to C_0([0,T])$$
by $\mathcal W^\rho(f)=w^\rho(f)$, if $f\in C^X_{0+,\rho}$, and by $\mathcal W^\rho(f)\equiv 0$ if $f\in C_0([0,T])\setminus C^X_{0+,\rho} $. One can, using standard arguments, show that $\mathcal W^\rho$ is continuous on $C^X_{0+,\rho}$. It follows from Lemma \ref{lemma:martingale}, proved below, that $\sqrt{\kappa}B|_{[0,T]}\in C^X_{0+,\rho}$ a.s. for sufficiently small $\kappa$. Therefore, $$\mathcal W^\rho(\sqrt{\kappa}B|_{[0,T]})=w^\rho(\sqrt{\kappa}B|_{[0,T]}), \text{ a.s.}$$ That is, $\mathcal W^\rho(\sqrt{\kappa}B|_{[0,T]})$ coincides a.s. with the driving process of SLE$_\kappa(\rho)$ when $\kappa$ is small.
We introduce a functional $I^X_{\rho,v_0}:C_0([0,T])\to[0,\infty]$ defined by
\begin{equation}I^X_{\rho,v_0}(w)=\frac{1}{2}\int_{0}^{T}\bigg(\dot w_t -\frac{\rho}{2} H^X(w_t-v_t)\bigg)^2dt\label{eq:energydrive}\end{equation}
for all $w\in \mathcal W^\rho(C^X_{0+,\rho})$ which are absolutely continuous, where $v$ is the solution of (\ref{eq:vrho}) given $w$, and otherwise we set $I^X_{\rho,v_0}(w)=\infty$. Observe that, for $f\in C_{0+,\rho}^X$, we have $I_D(f)=I^{X}_{\rho,v_0}(w^\rho(f))$, where $I_D$ denotes the Dirichlet energy.
\begin{proposition}\label{proposition:ldpdrive}
The laws of $\mathcal W^\rho(\sqrt{\kappa}B|_{[0,T]})$ and $\mathcal W^{\kappa+\rho}(\sqrt{\kappa}B|_{[0,T]})$ satisfy the large deviation principle with good rate function $I^X_{\rho,v_0}$ as $\kappa\to 0+$ and $\rho>-2$ is fixed.
\end{proposition}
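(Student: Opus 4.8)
The plan is to read (\ref{eq:wrho})--(\ref{eq:vrho}) as a coupled ODE system driven by the small noise $f=\sqrt\kappa B|_{[0,T]}$ and to deduce the LDP from the exponential-approximation machinery of Theorem~\ref{thm:expapprox}, in the spirit of Freidlin--Wentzell theory (cf.\ Remark~\ref{remark:Freidlin}). For each $\vare>0$ fix a bounded, globally Lipschitz $\tilde H^{X}_\vare:\R\to\R$ agreeing with $H^X$ on $\{x:|H^X(x)|\le 1/\vare\}$ --- on that set $H^X$ is itself Lipschitz --- and let $\mathcal W^{\rho}_\vare:C_0([0,T])\to C_0([0,T])$ be the solution map of (\ref{eq:wrho})--(\ref{eq:vrho}) with $H^X$ replaced by $\tilde H^X_\vare$. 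Since $\tilde H^X_\vare$ is globally Lipschitz, the modified system has a unique global solution for every $f$, depending Lipschitz-continuously on $f$ by Grönwall; moreover $\mathcal W^\rho_\vare$ agrees with $\mathcal W^\rho$ up to the stopping time $\tau_\vare$, so on $\{\tau_\vare(f)>T\}$ the two maps produce the same path on $[0,T]$ (note $\tau_\vare\le\tau_{0+}$).

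By Schilder's theorem $\sqrt\kappa B|_{[0,T]}$ satisfies the LDP in $C_0([0,T])$ with good rate function $I_D$ at speed $\kappa$, so by the contraction principle the law of $\mathcal W^\rho_\vare(\sqrt\kappa B|_{[0,T]})$ satisfies the LDP with good rate function $I^\vare(w):=\inf\{I_D(f):\mathcal W^\rho_\vare(f)=w\}$. The map $\mathcal W^\rho_\vare$ is injective (from $w$ one recovers $v$ by solving $\dot v=-\tilde H^X_\vare(w-v)$, and then $f_t=w_t-\tfrac\rho2\int_0^t\tilde H^X_\vare(w_s-v_s)\,ds$), and a short computation identifies $I^\vare(w)=\tfrac12\int_0^T(\dot w_t-\tfrac\rho2\tilde H^X_\vare(w_t-v_t))^2\,dt$ for absolutely continuous $w$ in the range, in direct analogy with (\ref{eq:energydrive}).

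The crux is the super-exponential collision estimate
\[
\lim_{\vare\to 0+}\limsup_{\kappa\to 0+}\kappa\log\P\big(\tau_\vare(\sqrt\kappa B|_{[0,T]})\le T\big)=-\infty ,
\]
which is where the martingale (equivalently, Bessel-comparison) estimates enter: the separation $u=w^\rho-v^\rho$ obeys $\dot u=\tfrac{\rho+2}{2}H^X(u)+\dot f$, a (time-changed) Bessel-type flow which for $\rho>-2$ is strongly repelled from the collision set, so an exponential-martingale bound makes the probability of entering a $\vare$-neighbourhood of it before time $T$ decay faster than $e^{-c/\kappa}$ for every $c$ once $\vare$ is small. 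I expect making this estimate uniform as $\kappa\to0$ --- and robust under replacing $\rho$ by $\kappa+\rho$, which stays $>-2$ for small $\kappa$ --- to be the main obstacle; this is essentially the role of Lemma~\ref{lemma:martingale}. Granting it, two things follow. First, coupling through the same $B$, $\{d(\mathcal W^\rho_\vare(\sqrt\kappa B),\mathcal W^\rho(\sqrt\kappa B))>\delta\}\subseteq\{\tau_\vare(\sqrt\kappa B)\le T\}$, so $((\text{law of }\mathcal W^\rho_\vare(\sqrt\kappa B))_\kappa)_\vare$ is an exponentially good approximation of $(\text{law of }\mathcal W^\rho(\sqrt\kappa B))_\kappa$. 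Second, the Schilder lower bound turns the estimate into a uniform separation of finite-energy drivers: for every $c$ there is $\vare_c>0$ with $\{I_D\le c\}\subseteq C^X_{\vare_c,\rho}$. On $C^X_{\vare_c,\rho}$ the maps $\mathcal W^\rho$ and $\mathcal W^\rho_{\vare_c}$ coincide and are continuous and $\{I_D\le c\}$ is compact, so $\{I^X_{\rho,v_0}\le c\}\subseteq\mathcal W^\rho(\{I_D\le c\})$ is relatively compact; a routine check gives that it is closed, whence $I^X_{\rho,v_0}$ is a good rate function. The same separation shows $\mathcal W^\rho_\vare=\mathcal W^\rho$ and $I^\vare=I^X_{\rho,v_0}$ on $\mathcal W^\rho(\{I_D\le c\})$ for all small $\vare$; extracting a subsequential limit of near-minimizers of $I^\vare$ over a closed set $F$ along a subsequence realizing the $\limsup$ then yields $\inf_F I^X_{\rho,v_0}\le\limsup_{\vare\to0}\inf_F I^\vare$, the last hypothesis of Theorem~\ref{thm:expapprox}. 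That theorem gives the LDP for $\mathcal W^\rho(\sqrt\kappa B|_{[0,T]})$ with rate function $I^X_{\rho,v_0}$.

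Finally, for $\mathcal W^{\kappa+\rho}(\sqrt\kappa B|_{[0,T]})$ I would establish exponential equivalence with $\mathcal W^\rho(\sqrt\kappa B|_{[0,T]})$: driven by the same $B$, the two systems have drifts differing only by $\tfrac\kappa2 H^X$, which on the (super-exponentially likely) event that neither system collides before $T$ and both stay in $\{|H^X|\le 1/\vare\}$ is bounded by $\kappa/(2\vare)$; Grönwall then bounds their sup-distance on that event by $\kappa\,C(\vare,T)\to0$, so for fixed small $\vare$ and all small $\kappa$ the distance can exceed $\delta$ only on the rare collision event, and letting $\vare\to0$ gives $\lim_{\kappa\to0}\kappa\log\P(\mathrm{dist}>\delta)=-\infty$. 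Since exponential equivalence preserves the LDP, $\mathcal W^{\kappa+\rho}(\sqrt\kappa B|_{[0,T]})$ satisfies the LDP with the same rate function $I^X_{\rho,v_0}$.
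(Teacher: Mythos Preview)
Your proposal is correct and follows essentially the same route as the paper: regularize the singular drift $H^X$ to a globally Lipschitz $\tilde H^X_\vare$, apply Schilder plus the contraction principle to get an LDP for the regularized process, use the Bessel/martingale collision estimate (Lemma~\ref{lemma:martingale}) to show the regularized processes form an exponentially good approximation, and conclude via Theorem~\ref{thm:expapprox}; then handle $\mathcal W^{\kappa+\rho}$ by a Gr\"onwall argument (the paper's Lemma~\ref{lemma:continuityinrho}) to obtain exponential equivalence.

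The only noteworthy difference is in how you obtain the deterministic separation $\{I_D\le c\}\subseteq C^X_{\vare_c,\rho}$ and verify the closed-set hypothesis of Theorem~\ref{thm:expapprox}. You propose to deduce the separation from the probabilistic collision estimate via Schilder (it should be the \emph{upper} bound, applied to the closed complement of $C^X_{\vare,\rho}$, not the lower bound), whereas the paper proves it directly by an elementary energy computation (Lemma~\ref{lemma:infiniteenergydrive}). Your approach is valid once one checks that $\{f:\tau_\vare(f)\le T\}$ is closed, but the paper's direct argument is cleaner and avoids this topological verification. Similarly, for the hypothesis $\inf_F I^X_{\rho,v_0}\le\limsup_\vare\inf_F I^\vare$ the paper argues by contradiction using that $I^\vare=I^X_{\rho,v_0}$ on $\mathcal W^\rho(C^X_{\vare,\rho})$ for small $\vare$, rather than via subsequential limits of near-minimizers; both work.
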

Before presenting the proof of Proposition \ref{proposition:ldpdrive} we provide a few lemmas. Define
$$K^C(x)=x^2/4\quad\text{ and }\quad K^R(x)=\sin^2(x/2),$$
and note that 
$$(\log K^X(x))' = H^X(x)\quad\text{ and }\quad |H^X(x)|\geq 1/\vare\implies K^X(x)\leq \vare^2.$$
\begin{lemma}\label{lemma:infiniteenergydrive}
For every $M>0$ there exists $\vare>0$ such that $I^X_{\rho,v_0}(w)< M$ implies $w\in \mathcal W^\rho(C^X_{\vare,\rho})$.
\end{lemma}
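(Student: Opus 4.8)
The plan is to argue by contrapositive: if $w = \mathcal W^\rho(f)$ for some $f \in C^X_{0+,\rho}$ escapes every set $C^X_{\vare,\rho}$, meaning $\tau_\vare < \infty$ (equivalently $\tau_\vare \le T$) for all $\vare > 0$, then its $\rho$-Loewner energy $I^X_{\rho,v_0}(w)$ must be large — in fact can be forced above any given $M$ by taking $\vare$ small enough. Concretely, I would show: for every $M > 0$ there is $\vare = \vare(M, \rho, v_0, T) > 0$ such that $\tau_\vare \le T$ implies $I^X_{\rho,v_0}(w) \ge M$. First I would dispose of the non–absolutely-continuous case (where $I^X_{\rho,v_0}(w) = \infty$ trivially) and assume $w$ is absolutely continuous on $[0,T]$. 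Writing $u_t := w_t - v_t$, equations (\ref{eq:wrho})–(\ref{eq:vrho}) give $\dot u_t = \dot w_t - \dot v_t = \dot w_t + H^X(u_t)$, so that $\dot w_t - \tfrac{\rho}{2} H^X(u_t) = \dot u_t - \tfrac{\rho+2}{2} H^X(u_t)$; hence
\begin{equation*}
I^X_{\rho,v_0}(w) = \frac12 \int_0^T \Bigl( \dot u_t - \tfrac{\rho+2}{2} H^X(u_t) \Bigr)^2 dt.
\end{equation*}
This is the key reduction: the drift coefficient $\rho+2$ is positive precisely because $\rho > -2$.

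Next I would exploit the observation recorded just before the lemma, namely $(\log K^X(u))' = H^X(u)$ and the implication $|H^X(u)| \ge 1/\vare \Rightarrow K^X(u) \le \vare^2$. So $\partial_t \log K^X(u_t) = H^X(u_t) \dot u_t$, and I want to produce a lower bound on the energy integral in terms of how small $K^X(u_t)$ becomes, using $\tau_\vare \le T$ to guarantee $K^X(u_{\tau_\vare}) \le \vare^2$ while $K^X(u_0) = K^X(w_0 - v_0) = K^X(-v_0)$ is a fixed positive constant (here $v_0 \ne 0$, and in the radial case $v_0 \in (0,2\pi)$, so $K^R(-v_0) = \sin^2(v_0/2) > 0$). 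A clean way to extract the bound is a Cauchy–Schwarz / change-of-variables argument: on the time interval $[0, \tau_\vare]$,
\begin{equation*}
\int_0^{\tau_\vare} \Bigl( \dot u_t - \tfrac{\rho+2}{2} H^X(u_t) \Bigr)^2 dt \ \cdot \ \int_0^{\tau_\vare} \phi(u_t)^2 dt \ \ge \ \Bigl( \int_0^{\tau_\vare} \bigl( \dot u_t - \tfrac{\rho+2}{2} H^X(u_t)\bigr) \phi(u_t)\, dt \Bigr)^2
\end{equation*}
for a suitable test weight $\phi$; choosing $\phi$ so that both $\dot u_t \phi(u_t)$ and $H^X(u_t)\phi(u_t)$ integrate to something controlled by the endpoint values of $K^X(u_t)$ (for the $H^X$ term one uses $\phi = H^X$ together with $(\log K^X)' = H^X$, so $H^X(u_t)^2 = \partial_t \log K^X(u_t) \cdot H^X(u_t) / \dot u_t$ — better: take the primitive directly). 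Actually the slicker route is: set $L_t = \log K^X(u_t)$, so $\dot L_t = H^X(u_t)\dot u_t$; then $\dot u_t - \tfrac{\rho+2}{2}H^X(u_t)$ times $H^X(u_t)$ equals $\dot L_t - \tfrac{\rho+2}{2} H^X(u_t)^2$, whose integral over $[0,\tau_\vare]$ is $L_{\tau_\vare} - L_0 - \tfrac{\rho+2}{2}\int_0^{\tau_\vare} H^X(u_t)^2 dt$. Combining with Cauchy–Schwarz against $\int H^X(u_t)^2 dt$ and solving the resulting quadratic inequality in the quantity $\bigl(\int H^X(u_t)^2 dt\bigr)^{1/2}$ yields a lower bound of the form $I^X_{\rho,v_0}(w) \ge c\,(L_0 - L_{\tau_\vare})_+ = c\bigl(\log K^X(-v_0) - \log K^X(u_{\tau_\vare})\bigr)$, and since $K^X(u_{\tau_\vare}) \le \vare^2$ this is $\ge c \log( K^X(-v_0)/\vare^2) \to \infty$ as $\vare \to 0$. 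Taking $\vare$ small enough that this exceeds $M$ completes the proof; the same $\vare$ works for $\mathcal W^{\kappa+\rho}$ uniformly in small $\kappa$ since $\rho + \kappa + 2 \ge \rho + 2 > 0$, and one only needs a lower bound on the drift coefficient.

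The main obstacle I anticipate is making the Cauchy–Schwarz step genuinely quantitative and sign-correct — that is, checking that the cross term involving $\int_0^{\tau_\vare} H^X(u_t)^2\, dt$ can be absorbed rather than overwhelming the bound, which is exactly where positivity of $\rho+2$ enters and where the argument would break for $\rho \le -2$. One must also be slightly careful that $\log K^X(u_{\tau_\vare})$ could in principle be very negative for reasons unrelated to $\vare$ (it is bounded above by $\log \vare^2$, which is all we need, so this is fine), and that in the radial case one stays in a single branch so that $v_t$, $w_t$, and hence $u_t$ remain well-defined continuous real-valued functions up to $\tau_\vare$ — which is guaranteed by the setup preceding the lemma. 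A secondary technical point is handling the degenerate case $\tau_\vare = 0$, but $\tau_\vare > 0$ always holds since $|H^X(u_0)| = |H^X(-v_0)| < \infty$ and $u$ is continuous.
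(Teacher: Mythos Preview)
Your plan is correct and would yield a valid proof. Both your argument and the paper's proceed by contrapositive and exploit the identity $(\log K^X)' = H^X$ together with $|H^X(u)| \ge 1/\vare \Rightarrow K^X(u) \le \vare^2$, so the overall architecture is the same; the difference lies in the inequality used at the core step. The paper expands the integrand in terms of $\dot w$ and applies the pointwise bound $(H^X + \dot w)^2 \ge 0$, i.e.\ $H^2 \le \dot w^2 + 2\,\partial_t\log K^X$, to obtain a lower bound of the form $I^X_{\rho,v_0}(w) \ge \min(1,\tfrac{\rho+2}{2})\bigl(\min(1,\tfrac{\rho+2}{2})\,I_D(w) - |\rho|\log\tfrac{K^X(u_T)}{K^X(-v_0)}\bigr)$. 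You instead pass to $u = w - v$, so the integrand becomes $(\dot u - \tfrac{\rho+2}{2}H^X)^2$, and apply Cauchy--Schwarz against $H^X(u_t)$; solving the resulting quadratic gives $I^X_{\rho,v_0}(w) \ge (\rho+2)\bigl(\log K^X(-v_0) - \log K^X(u_{\tau_\vare})\bigr)$. Your bound is cleaner and, notably, nondegenerate for all $\rho > -2$: the paper's final displayed lower bound carries the prefactor $|\rho|$ and hence collapses to $0$ at $\rho = 0$, so strictly speaking its argument does not cover that case, whereas yours does. The paper's route, on the other hand, also produces the auxiliary term $I_D(w)$, which is not needed for this lemma but mirrors the bounds proved later (Propositions~\ref{prop:radialbounds} and~\ref{prop:chordalbounds}). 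Your anticipated obstacle---absorbing the $\int H^2$ cross term---is resolved exactly as you say: writing $A = 2I^X_{\rho,v_0}(w)$, $B = \int_0^{\tau_\vare} H^X(u_t)^2\,dt > 0$, and $\Delta L = L_{\tau_\vare} - L_0 < 0$, Cauchy--Schwarz gives $AB \ge (\Delta L - \tfrac{\rho+2}{2}B)^2$, and AM--GM on $\Delta L^2/B + \tfrac{(\rho+2)^2}{4}B$ yields $A \ge -2(\rho+2)\Delta L$, which diverges as $\vare \to 0$.
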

\begin{proof}Since $I^X_{\rho,v_0}(w)<\infty$ implies $w\in \mathcal W^\rho(C_{0+,\rho}^X)$, it suffices to prove that there exists $\vare>0$ such that $I^X_{\rho,v_0}(w)\geq M$ for all $w\in \mathcal W^\rho(C_{0+,\rho}^X\setminus C_{\vare,\rho}^X).$\par
Suppose that $f\in C^X_{0+,\rho}\setminus C^X_{\delta,\rho}$, for some $\delta>0$, is absolutely continuous, and denote by $w=w^\rho(f)$ and $v=v^\rho(f)$. Then $\tau_{\delta}(f)\leq T$ implying that
$K^X(w_{\tau_\delta}-v_{\tau_\delta})\leq \delta^2.$
We may assume that $\tau_\delta=T$ (since $I^X_{\rho,v_0}(w)\geq I^X_{\rho,v_0}(w_{[0,\tau_\delta]})$). 
Observe that
$$\partial_t \log K^X(w_t-v_t) = \dot w_t H^X(w_t-v_t)+(H^X(w_t-v_t))^2.$$
Therefore,
$$(H^X(w_t-v_t))^2 \leq (H^X(w_t-v_t))^2+(H^X(w_t-v_t)+\dot w_t)^2 =\dot w_t^2+2\partial_t\log K^X(w_t-v_t),$$
which, when plugged into (\ref{eq:energydrive}), gives
\begin{align*}I^X_{\rho,v_0}(w) &\geq \min(1,\tfrac{\rho+2}{2})\bigg(\min(1,\tfrac{\rho+2}{2}) I_D(w)-|\rho|\log\frac{K^X(w_T-v_T)}{K^X(w_0-v_0)}\bigg)\\&\geq -|\rho|\min(1,\tfrac{\rho+2}{2})\log\frac{\delta^2}{K^X(-v_0)}.\end{align*} By setting $\vare=\delta$ sufficiently small, the right-hand side becomes larger than $M$, showing that $I^X_{\rho,v_0}(w)\geq M$ for all $w\in \mathcal W^\rho(C^X_{0+,\rho}\setminus C^X_{\vare,\rho})$. 
\end{proof}
\begin{lemma}\label{lemma:driveenergygoodratefunction} $I^X_{\rho,v_0}:C_0([0,T])\to [0,\infty]$ is a good rate function.
\end{lemma}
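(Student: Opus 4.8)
The plan is to push forward the classical fact that the Dirichlet energy $I_D$ is a good rate function on $C_0([0,T])$ through the map $\mathcal W^\rho$. Recall that its sub-level sets $K_M:=\{f\in C_0([0,T]):I_D(f)\le M\}$ are compact: the Cauchy--Schwarz bound $|f_t-f_s|\le\sqrt{2M}\,\sqrt{|t-s|}$ together with $f_0=0$ gives uniform boundedness and equicontinuity, hence relative compactness by Arzel\`a--Ascoli, and lower semicontinuity of $I_D$ gives closedness. As noted in the text, $I^X_{\rho,v_0}(\mathcal W^\rho(f))=I_D(f)$ for every $f\in C^X_{0+,\rho}$, and $I^X_{\rho,v_0}\equiv\infty$ off $\mathcal W^\rho(C^X_{0+,\rho})$; hence for each $M$ one has $\{w:I^X_{\rho,v_0}(w)\le M\}=\mathcal W^\rho(K_M\cap C^X_{0+,\rho})$. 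Since $\mathcal W^\rho$ is continuous on $C^X_{0+,\rho}$, it then suffices to show that $K_M\cap C^X_{0+,\rho}$ is compact, and as it is a subset of the compact set $K_M$, that it is closed.

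The first step is to replace $C^X_{0+,\rho}$ by the more tractable $C^X_{\vare,\rho}$. Applying Lemma~\ref{lemma:infiniteenergydrive} at level $M+1$ yields $\vare=\vare(M)>0$ such that $I^X_{\rho,v_0}(w)<M+1$ forces $w\in\mathcal W^\rho(C^X_{\vare,\rho})$. Since $\mathcal W^\rho$ is injective on $C^X_{0+,\rho}$ (one recovers $f$ from $w^\rho(f)$ by solving (\ref{eq:vrho}) and then (\ref{eq:wrho})) and $C^X_{\vare,\rho}\subset C^X_{0+,\rho}$, this upgrades to the equality $K_M\cap C^X_{0+,\rho}=K_M\cap C^X_{\vare,\rho}$. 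The gain is a uniform safety margin: $|H^X(w^\rho_s(f)-v^\rho_s(f))|\le 1/\vare$ for every $s\in[0,T]$ and every $f$ in this set, so the pair $(w^\rho(f),v^\rho(f))$ stays in the region $\{x:|H^X(x)|\le 1/\vare\}$ on which $H^X$ is bounded and Lipschitz.

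It then remains to show $K_M\cap C^X_{\vare,\rho}$ is closed. Given $f_n\in K_M\cap C^X_{\vare,\rho}$ with $f_n\to f$ uniformly (so $f\in K_M$ automatically), the integral equations (\ref{eq:wrho})--(\ref{eq:vrho}) and the bound $|H^X|\le 1/\vare$ make $(w^\rho(f_n))_n$ and $(v^\rho(f_n))_n$ uniformly bounded and equicontinuous; passing to a subsequence, they converge uniformly to a continuous pair $(w,v)$, and since $H^X$ is continuous on the closed set $\{|H^X|\le 1/\vare\}$, dominated convergence lets me take limits in (\ref{eq:wrho})--(\ref{eq:vrho}), so $(w,v)$ solves these equations with datum $f$ on all of $[0,T]$ while staying in $\{|H^X|\le 1/\vare\}$. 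By uniqueness of solutions on a slightly larger region where $H^X$ is still Lipschitz, $(w,v)=(w^\rho(f),v^\rho(f))$ and $\tau_{0+}(f)>T$, i.e.\ $f\in C^X_{0+,\rho}$; combined with $f\in K_M$ and the previous paragraph, $f\in K_M\cap C^X_{0+,\rho}=K_M\cap C^X_{\vare,\rho}$. Hence $K_M\cap C^X_{\vare,\rho}$ is closed, so compact, and $\{I^X_{\rho,v_0}\le M\}=\mathcal W^\rho(K_M\cap C^X_{\vare,\rho})$ is compact for all $M$, which is the claim. The one genuinely delicate point is this last stability argument: $C^X_{\vare,\rho}$ on its own need not be closed, and what makes the limit pass through is precisely that Lemma~\ref{lemma:infiniteenergydrive} provides a uniform margin away from the singularities of $H^X$ over the whole sub-level set, turning the limiting equation into one with a genuinely Lipschitz vector field.
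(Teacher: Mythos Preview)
Your proof is correct and follows essentially the same route as the paper's: both push the goodness of $I_D$ through $\mathcal W^\rho$ using Lemma~\ref{lemma:infiniteenergydrive} and the continuity of $\mathcal W^\rho$ on $C^X_{0+,\rho}$. The only difference is that where you carry out an explicit ODE stability argument in your third paragraph to show the limit $f$ lies in $C^X_{0+,\rho}$, the paper simply re-invokes Lemma~\ref{lemma:infiniteenergydrive} (whose proof already shows $I_D(f)<\infty\Rightarrow f\in C^X_{0+,\rho}$), which is shorter but leaves that implication implicit.
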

\begin{proof}
Let $M\in[0,\infty)$. We wish to show that the sub-level set
$$E_M :=\{w\in C_0([0,T]): I^X_{\rho,v_0}(w)\leq M\}$$
is compact. Take a sequence $(w_n)\subset E_M\subset \mathcal W^\rho(C^X_{0+,\rho})$. Recall that $\mathcal W^\rho:C^X_{0+,\rho}\to C_0([0,T])$ is continuous. Let, for each $n$, $f_n\in C^X_{0+,\rho}$ be such that $w_n=\mathcal W^\rho(f_n)$. Then, $I_D(f_n)=I^{X}_{\rho,v_0}(w_n)\leq M$. Since $I_D:C_0([0,T])\to[0,\infty]$ is a good rate function there exists a subsequence $(f_{n_k})$ converging to $f\in C_0([0,T])$ with $I_D(f)\leq M$. Lemma \ref{lemma:infiniteenergydrive} then implies that $f\in C^X_{0+,\rho}$. By continuity of $\mathcal W^\rho$ on $C^X_{0+,\rho}$, $w_{n_k}$ converges to $w=\mathcal W^\rho(f)$ and since $I^X_{\rho,v_0}(w)=I_D(f)\leq M$, this shows that $E_M$ is compact. 
\end{proof}
We will show Proposition \ref{proposition:ldpdrive} using exponentially good approximations. For each $\vare>0$, let $H^X_\vare:\R\to\R$ be a Lipschitz continuous function satisfying $H^X_\vare(x)=H^X(x)$ whenever $|H^X(x)|\leq 1/\vare$. Let $L_\vare$ denote the corresponding Lipschitz constant. For each $f\in C_0([0,T])$ there exists a unique solution $(w^{\rho,\vare},v^{\rho,\vare})=(w^{\rho,\vare}(f),v^{\rho,\vare}(f))\in C_0([0,T])\times C([0,T])$ to
\begin{align}w^{\rho,\vare}_t&=\frac{\rho}{2}\int_0^t H^X_\vare(w^{\rho,\vare}_s-v^{\rho,\vare}_s)ds+f_t\label{eq:wrhoeps}\\
v^{\rho,\vare}_t&=-\int_0^t H^X_\vare(w^{\rho,\vare}_s-v^{\rho,\vare}_s)ds+v_0.\label{eq:vrhoeps}
\end{align}
Note that $w^{\rho,\vare}:C_0([0,T])\to C_0([0,T])$ is continuous and bijective since for each $w\in C_0([0,T])$ there is a unique $v$ solving (\ref{eq:vrhoeps}), and for this pair $(w,v)$ we may solve (\ref{eq:wrhoeps}) for $f$. It follows from the contraction principle and Schilder's theorem (see, e.g., \cite[Theorem 4.2.1 and Theorem 5.2.3]{DZ10}) that the process $w^{\rho,\vare}(\sqrt{\kappa}B|_{[0,T]})$, $t\in[0,T]$ satisfies the large deviation principle with good rate function $$I^X_{\rho,v_0,\vare}(w)=\begin{cases}\frac{1}{2}\int_0^T(\dot w_t -\frac{\rho}{2}H^X_\vare(w_t-v_t))^2dt& \text{ if }w \text{ abs. cont.,}\\ \infty&\text{ otherwise.}\end{cases}$$ 
Note that, for $f\in C^X_{\vare,\rho}$, we have $\mathcal W^{\rho}(f)=w^{\rho,\vare}(f)$ and $I^X_{\rho,v_0}(\mathcal W^\rho(f))=I^X_{\rho,v_0,\vare}(w^{\rho,\vare}(f))$. 
\begin{lemma}\label{lemma:martingale} Fix $\rho>-2$ and $v_0$ as above. Then there exists $\vare_0>0$ such that, for all $0<\kappa<\rho+2$ and $0<\vare<\vare_0$
\begin{equation}\P[\sqrt{\kappa}B|_{[0,T]}\notin C^X_{\vare,\rho}]\leq c\vare^{2\frac{\rho+2}{\kappa}-1}\label{eq:matringalebnd1}\end{equation} where $c=c(\kappa,\rho,v_0,T)$, for which $\lim_{\kappa\to 0+} \kappa\log c(\kappa,\rho,v_0,T)$ exists and is finite. In particular,
\begin{equation}\limsup_{\kappa\to 0+}\kappa\log\P[\sqrt{\kappa}B|_{[0,T]}\notin C^X_{\vare,\rho}]\to-\infty\text{ as }\vare\to 0+.\label{eq:matringalebnd2}\end{equation}
\end{lemma}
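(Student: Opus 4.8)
The plan is to track the ``gap process'' $D_t:=w^\rho_t-v^\rho_t$, where $(w^\rho,v^\rho)$ solves (\ref{eq:wrho})--(\ref{eq:vrho}) with $f=\sqrt{\kappa}B$, and to bound the probability that $D$ approaches a pole of $H^X$ before time $T$ by an explicit Bessel-type supermartingale argument. Subtracting (\ref{eq:vrho}) from (\ref{eq:wrho}), up to $\tau_\vare\wedge T$ the process satisfies
$$dD_t=\frac{\rho+2}{2}H^X(D_t)\,dt+\sqrt{\kappa}\,dB_t,\qquad D_0=-v_0,$$
a diffusion whose drift is singular precisely at the poles of $H^X$ ($0$ if $X=C$; $0$ and $2\pi$ if $X=R$), these being repelling because $\rho+2>0$. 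Since $\{\sqrt{\kappa}B|_{[0,T]}\notin C^X_{\vare,\rho}\}=\{\tau_\vare\leq T\}$ and $D$ is continuous, on this event $|H^X(D_{\tau_\vare})|=1/\vare$, hence $K^X(D_{\tau_\vare})\leq\vare^2$; so it is enough to bound $\P[\tau_\vare\leq T]$.

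Set $a:=\frac{\rho+2}{\kappa}-\frac12$, which is positive since $\kappa<\rho+2$. Using $(K^X)'=K^XH^X$ and $(K^X)''=K^X\big((H^X)^2+(H^X)'\big)$, Itô's formula shows that the drift of $N_t:=K^X(D_t)^{-a}$ equals
$$a\,K^X(D_t)^{-a}\Big[\Big(\tfrac{\kappa a}{2}-\tfrac{\rho+2}{2}\Big)\big(H^X(D_t)\big)^2-\tfrac{\kappa}{2}(H^X)'(D_t)\Big].$$
The value of $a$ is chosen so that the coefficient of the part of this bracket that blows up at a pole of $H^X$ vanishes (for $X=R$ one uses $\csc^2-\cot^2\equiv1$); what is left is drift $0$ when $X=C$ and drift $+\tfrac{a\kappa}{4}N_t$ when $X=R$. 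Hence, with $\lambda:=\tfrac{a\kappa}{4}=\tfrac{\rho+2}{4}-\tfrac{\kappa}{8}>0$, the process $\tilde N_t:=e^{-\lambda t}K^X(D_t)^{-a}$ is a nonnegative local supermartingale (a local martingale when $X=R$). For $t<\tau_\vare$ we have $|H^X(D_t)|<1/\vare$, which bounds $K^X(D_t)$ below (by $\vare^2$ for $X=C$ and by $\vare^2/(1+\vare^2)$ for $X=R$), so the stopped process $t\mapsto\tilde N_{t\wedge\tau_\vare}$ is bounded, hence a true supermartingale.

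Optional stopping at the bounded time $T\wedge\tau_\vare$ gives $\E[\tilde N_{T\wedge\tau_\vare}]\leq\tilde N_0=K^X(-v_0)^{-a}$, whereas $\tilde N_{T\wedge\tau_\vare}\geq e^{-\lambda T}\vare^{-2a}$ on $\{\tau_\vare\leq T\}$; together these yield (\ref{eq:matringalebnd1}) with $c=c(\kappa,\rho,v_0,T):=e^{\lambda T}K^X(-v_0)^{-a}$, since $2a=2\frac{\rho+2}{\kappa}-1$ and $K^C(-v_0)=v_0^2/4>0$, $K^R(-v_0)=\sin^2(v_0/2)>0$. (The constant $\vare_0$ is immaterial: e.g.\ for $\vare\geq\sqrt{K^X(-v_0)}$ the right-hand side of (\ref{eq:matringalebnd1}) is $\geq e^{\lambda T}\geq1$ and there is nothing to prove.) As $\kappa\to0+$ one has $a\kappa\to\rho+2$ and $\kappa\lambda\to0$, so $\kappa\log c=-a\kappa\log K^X(-v_0)+\kappa\lambda T\to-(\rho+2)\log K^X(-v_0)$, which is finite. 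Finally, (\ref{eq:matringalebnd2}) follows by inserting (\ref{eq:matringalebnd1}) into $\kappa\log\P[\tau_\vare\leq T]\leq\kappa\log c+(2(\rho+2)-\kappa)\log\vare$, letting $\kappa\to0+$ with $\vare<1$ fixed, and then $\vare\to0+$; since $\rho+2>0$, the bound tends to $-\infty$.

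\textbf{Main obstacle.} Once $a$ and $\lambda$ are pinned down the Itô computation is routine; the two points needing care are (i) that in the radial case $K^R(D_t)^{-a}$ alone is only a \emph{sub}martingale, so the discount factor $e^{-\lambda t}$ is indispensable, and it does not spoil the final estimate only because $\lambda=\tfrac{\rho+2}{4}-\tfrac{\kappa}{8}$ stays bounded as $\kappa\to0+$ (so $\kappa\lambda\to0$ and $\kappa\log c$ converges); and (ii) the upgrade from local to genuine supermartingale on $[0,T\wedge\tau_\vare]$, which relies on the elementary fact that $|H^X|<1/\vare$ keeps $K^X(D)$ bounded away from $0$.
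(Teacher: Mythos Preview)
Your proof is correct and takes a genuinely different route from the paper's. Both arguments track the gap process and exploit that it is a Bessel-type diffusion, but the auxiliary processes differ.

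The paper first time-rescales ($t\mapsto t/\kappa$) to normalize the noise, then builds a local martingale $F_{a,v_0}(\tilde w_t)+t$ with $a=(\rho+2)/\kappa$, where
\[
F_{a,v_0}(x)=-2\int_{v_0}^x\int_{v_0}^t K^X(u)^{a}\,du\,K^X(t)^{-a}\,dt
\]
solves $\tfrac12 F''+\tfrac{a}{2}H^X F'=-1$; optional stopping then converts the additive ``$+t$'' into the time bound $\kappa T$, and the hitting estimate follows from a lower bound on $-F_{a,\tilde v_0}$ obtained by estimating the double integral. This requires an auxiliary upper barrier $\{K^X=M\}$ (later sent to $\infty$) to make the stopped martingale bounded.

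You instead use the multiplicative Lyapunov function $K^X(D_t)^{-a}$ with $a=(\rho+2)/\kappa-\tfrac12$, tuned so that the singular part of the drift cancels; the residual constant drift in the radial case is absorbed by the discount $e^{-\lambda t}$. This is the classical ``scale function'' argument for Bessel processes and is noticeably cleaner: no time change, no upper barrier, no integral estimates, and the constant $c=e^{\lambda T}K^X(-v_0)^{-a}$ is explicit. The paper's approach, on the other hand, produces the additional linear-in-$T$ dependence of the constant directly and treats both settings ($X=C,R$) with a single martingale, whereas you need the discount factor precisely in the radial case; as you note, $\lambda$ stays bounded so this costs nothing asymptotically.
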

\begin{proof}
Define $\tilde w_t^{\kappa,\rho} := v^{\rho}_{t/\kappa}(\sqrt\kappa B)-w^{\rho}_{t/\kappa}(\sqrt\kappa B)$, $t\in[0,\kappa T]$, so that
$$d\tilde w_t^{\kappa,\rho} = \frac{\rho+2}{2\kappa}H^X(\tilde w_t^{\kappa,\rho})dt+ d\tilde B_{t}$$
where $\tilde B_t=-\sqrt{\kappa}B_{t/\kappa}$ is a standard Brownian motion. We introduce a local martingale $F_{a,v_0}(\tilde w_t^{\kappa,\rho})+t$ where $a=(\rho+2)/\kappa$, and 
$$F_{a,v_0}(x)=-2\int_{v_0}^x\int_{v_0}^t(K^X(u))^{a}du(K^X(t))^{-a}dt$$
for $x\in(0,2\pi)$, when $X=R$, and, for $\pm x>0$, when $X=C$ and $\pm v_0>0$. Indeed, $F_{a,v_0}(\tilde w_t^{\kappa,\rho})+t$ is a local martingale since
\begin{align*}F'_{a,v_0}(x)&=-2\int_{v_0}^x(K^X(u))^{a}du(K^X(x))^{-a},\\
F''_{a,v_0}(x)&=-2+2a\int_{v_0}^x(K^X(u))^{a}du(K^X(x))^{-a-1}(K^X)'(x),\end{align*}
so that $d(F_{a,v_0}(\tilde w_t^{\kappa,\rho})+t)=F'_{a,v_0}(\tilde w_t^{\kappa,\rho})d\tilde B_t$ by Itô's formula. Let $\delta,M>0$ be such that $K^X(v_0)\in(\delta,M)$. We define a stopping time $\tau^{\kappa,\rho}_{\delta,M}= \kappa T\land\tau_\delta^{\kappa,\rho}\land \tau_M^{\kappa,\rho}$, where $$\tau_x^{\kappa,\rho}=\inf\{t:K^X(\tilde w_t^{\kappa,\rho})=x\}.$$ By the condition on $\delta$ and $M$, $F_{a,v_0}(\tilde w_{t\land \tau^{\kappa,\rho}_{\delta,M}}^{\kappa,\rho})+t\land S^{\kappa,\rho}_{\delta,M}$ is uniformly bounded, and therefore a martingale. Hence,
\begin{equation}0=\E[F_{a,v_0}(\tilde w_0^{\kappa,\rho})+0]\leq \E[F_{a,v_0}(\tilde w^{\kappa,\rho}_{\kappa T\land S^{\kappa,\rho}_{\delta,M}})]+\kappa T.\label{eq:martingale}\end{equation}
Observe that $F_{a,v_0}(x)\leq 0$, and that
$$\E[F_{a,v_0}(\tilde w^{\kappa,\rho}_{\kappa T\land S^{\kappa,\rho}_{\delta,M}})|\tau_{\delta}^{\kappa,\rho}\leq \kappa T\land\tau_M^{\kappa,\rho}]\leq F_{a,\tilde v_0^X}(x_\delta^X),$$
where 
$$\tilde v_0^X=\begin{cases}|v_0|, & X=C,\\ v_0\land(2\pi-v_0),& X=R,
\end{cases}\qquad\text{ and } \qquad x_\delta^X=\begin{cases}2\sqrt{\delta}, & X=C,\\ 2\arcsin\sqrt{\delta},& X=R.
\end{cases}$$ 
Combining this with (\ref{eq:martingale}) we obtain
$$\P[\tau_\delta^{\kappa,\rho}\leq \kappa T\land  \tau_M^{\kappa,\rho}]\leq \frac{\kappa T}{-F_{a,\tilde v_0^X}(x^X_\delta)}.$$
One can see that $\P[\tau_M^{\kappa,\rho}<\tau_\delta^{\kappa,\rho}\land\kappa T]\to 0$ as $M\to\infty$ (in the case $X=R$ this is trivial) since the drift of $\tilde w^{\kappa,\rho}_t$ is bounded for $t<\tau_\delta^{\kappa,\rho}$. Therefore,
$$\P[\tau^{\kappa,\rho}_\delta\leq \kappa T]\leq \frac{\kappa T}{-F_{a,\tilde v_0}(x_\delta^X)}.$$
It remains to bound $-F_{a,\tilde v_0}(x^X_\delta)$ from below. For this, we use that
$$\frac{x^2}{\pi^2}\leq K^R(x)\leq \frac{x^2}{4},\ x\in(0,\pi].$$
A computation shows that for $0<x<\tilde v_0^X/3$ and $a\geq 1$ we have $$-F_{a,\tilde v_0}(x)\geq \frac{c_1^a (\tilde v_0^X)^{2a+1}x^{-2a+1}}{4a^2-1},$$ for a constant $c_1>0$. Now set $\delta=\vare^2$. Since $|H^X(x)|\leq 1/\vare$ implies $K^X(x)\leq \delta$ we find
\begin{equation}\P[\sqrt{\kappa}B_{[0,T]}\notin C^X_{\vare,\rho}]\leq \P[\tau^{\kappa,\rho}_{\delta}\leq \kappa T]\leq c_2(\kappa,\rho)\frac{T\vare^{2a-1}}{(\tilde v_0^X)^{2a+1}},\label{eq:explicitconstant}\end{equation}
where $\lim_{\kappa\to 0+}c_2(\kappa,\rho)\in \R$. Here we have used that $x_\delta^X\leq c_3\sqrt{\delta}=c_3\vare$ for a constant $c_3>0$. This concludes the proof.
\end{proof}
\begin{lemma}\label{lemma:continuityinrho} Fix $\vare>0$. There exists $\kappa_0>0$ such that $\|w^{\kappa+\rho}(f)- w^{\rho}(f)\|_\infty\leq\frac{|\kappa|T}{2\vare}e^{\frac{|\rho+\kappa|+2}{2}L_{\vare/2} T}$, where $L_\vare$ is the Lipschitz constant of $H^X_\vare$, for all $f\in C^X_{\vare,\rho}$ and $\kappa\in(0,\kappa_0)$. 
\end{lemma}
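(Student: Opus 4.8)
The plan is to pass from the singular drift $H^X$ to its Lipschitz regularization $H^X_{\vare/2}$ at level $\vare/2$, compare the two flows by a Gr\"onwall estimate, and then check that for small $\kappa$ the regularization may be removed. Write $u^\eta:=w^\eta(f)-v^\eta(f)$ for the (untruncated) solution of \eqref{eq:wrho}--\eqref{eq:vrho} with parameter $\eta$, and $(w^{\eta,\vare/2},v^{\eta,\vare/2})$ for the solution of \eqref{eq:wrhoeps}--\eqref{eq:vrhoeps} with $H^X_{\vare/2}$ in place of $H^X_\vare$. The key observation is that for $f\in C^X_{\vare,\rho}$ one has $|H^X(u^\rho_t)|<1/\vare$ for all $t\in[0,T]$, so $H^X$ and $H^X_{\vare/2}$ agree along the reference trajectory (since $\{|H^X|\le 2/\vare\}\supset\{|H^X|<1/\vare\}$); hence $w^{\rho,\vare/2}(f)=w^\rho(f)$ and $u^{\rho,\vare/2}=u^\rho$.

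Next, subtracting the integral equations for $u^{\kappa+\rho,\vare/2}$ and $u^\rho$ and adding and subtracting $\tfrac{\kappa+\rho+2}{2}H^X_{\vare/2}(u^\rho_s)$ inside the integrand gives
\[
u^{\kappa+\rho,\vare/2}_t-u^\rho_t=\frac{\kappa+\rho+2}{2}\int_0^t\big(H^X_{\vare/2}(u^{\kappa+\rho,\vare/2}_s)-H^X_{\vare/2}(u^\rho_s)\big)\,ds+\frac{\kappa}{2}\int_0^t H^X_{\vare/2}(u^\rho_s)\,ds .
\]
By the previous step $|H^X_{\vare/2}(u^\rho_s)|=|H^X(u^\rho_s)|<1/\vare$, so the last term has absolute value at most $\tfrac{|\kappa|T}{2\vare}$; the first term is controlled by $|H^X_{\vare/2}(x)-H^X_{\vare/2}(y)|\le L_{\vare/2}|x-y|$, and Gr\"onwall's inequality yields $\|u^{\kappa+\rho,\vare/2}(f)-u^\rho(f)\|_\infty\le \tfrac{|\kappa|T}{2\vare}e^{\frac{|\kappa+\rho+2|}{2}L_{\vare/2}T}\le \tfrac{|\kappa|T}{2\vare}e^{\frac{|\rho+\kappa|+2}{2}L_{\vare/2}T}$, the last step since $\kappa+\rho+2>0$. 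A parallel computation, splitting off $\tfrac{\kappa}{2}H^X_{\vare/2}(u^\rho_s)$ from the drift in \eqref{eq:wrhoeps} (or the elementary bound $\|w^{\kappa+\rho,\vare/2}-w^{\rho,\vare/2}\|_\infty\le\|u^{\kappa+\rho,\vare/2}-u^\rho\|_\infty+\|v^{\kappa+\rho,\vare/2}-v^{\rho,\vare/2}\|_\infty$ together with the $L_{\vare/2}$-Lipschitz estimate for the $v$-equation), gives the stated estimate for $\|w^{\kappa+\rho,\vare/2}(f)-w^{\rho,\vare/2}(f)\|_\infty$.

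It remains to choose $\kappa_0$ so small that $w^{\kappa+\rho,\vare/2}(f)=w^{\kappa+\rho}(f)$ for all $f\in C^X_{\vare,\rho}$ and $\kappa\in(0,\kappa_0)$. Fixing first $\kappa_0\le 1$, the constant in the bound above is at most $|\kappa|\cdot\tfrac{T}{2\vare}e^{\frac{|\rho|+3}{2}L_{\vare/2}T}$, which tends to $0$ as $\kappa\to0$. Since $u^\rho_t$ keeps the constant sign of $u^\rho_0=-v_0\ne0$, and $\{|H^X|<1/\vare\}$ keeps $H^X$ away from its poles with a margin uniform in $f$ (for $X=C$ this means $|u^\rho_t|>2\vare$; for $X=R$ that $u^\rho_t$ stays in a fixed compact subinterval of $(0,2\pi)$ shifted by $-v_0$, at distance bounded below from $2\pi\mathbb{Z}$), a perturbation of size $O(\kappa)$ keeps $u^{\kappa+\rho,\vare/2}_t$ inside $\{|H^X|<2/\vare\}$ once $\kappa_0$ is small enough. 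On that set $H^X_{\vare/2}=H^X$, so $w^{\kappa+\rho,\vare/2}(f)$ actually solves the untruncated system; hence $\tau_{0+}(f)=\infty$ for the $(\kappa+\rho)$-dynamics, i.e.\ $f\in C^X_{0+,\kappa+\rho}$, and $w^{\kappa+\rho,\vare/2}(f)=w^{\kappa+\rho}(f)$. Together with the estimate of the second paragraph this completes the proof.

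The only real obstacle is that $H^X$ is not globally Lipschitz — it blows up as the tip meets the force point — so Gr\"onwall cannot be applied to the true dynamics directly. Regularizing at level $\vare/2$ rather than $\vare$ is precisely what buys a margin wide enough that, uniformly over $f\in C^X_{\vare,\rho}$ and over small $\kappa$, both the reference flow and its perturbation remain in the region where the regularized and true drifts agree, so the Gr\"onwall estimate for the globally Lipschitz system transfers without loss to the original one.
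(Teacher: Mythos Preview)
Your proof is correct and follows essentially the same approach as the paper: regularize at level $\vare/2$, apply a Gr\"onwall estimate, and use the margin between the $1/\vare$ and $2/\vare$ levels to argue uniformly in $f$ that the regularization can be removed for small $\kappa$. The paper phrases the last step via a stopping time $\tilde T(\kappa)=T\land\inf\{t:|H^X(u^{\kappa+\rho}_t)|\ge 2/\vare\}$ rather than working with the globally defined regularized flow, but this is cosmetic.

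One minor remark: your detour through $u=w-v$ is redundant once you invoke the ``parallel computation'' for $|w^{\kappa+\rho,\vare/2}_t-w^\rho_t|+|v^{\kappa+\rho,\vare/2}_t-v^\rho_t|$, which is exactly what the paper does directly and what is needed to obtain the stated constant; the alternative route via the triangle inequality $\|w\|\le\|u\|+\|v\|$ together with the $v$-estimate would only give a constant inflated by a factor $1+L_{\vare/2}T$, which still suffices for the application but is not literally the bound in the lemma.
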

\begin{proof}
Let $\tilde T(\kappa) = T\land \inf\{t:|H^X(w^{\kappa+\rho}_t-v^{\kappa+\rho}_t)|\geq 2/\vare\}$. Then for $t\in[0,\tilde T(\kappa)]$
\begin{align*}|w^\rho_t&-w^{\kappa+\rho}_t|+|v^\rho_t-v^{\kappa+\rho}_t|\\
\leq& \frac{|\rho+\kappa|+2}{2}\int_0^t|H^X(w_s^\rho-v_s^\rho)-H^X(w_s^{\kappa+\rho}-v_s^{\kappa+\rho})|ds+\frac{|\kappa|}{2}\int_0^t |H^X(w^\rho_s-v_s^\rho)|ds\\
\leq& \frac{|\rho+\kappa|+2}{2}L_{\vare/2}\int_0^t|(w_s^\rho-v_s^\rho)-(w_s^{\rho+\kappa}-v_s^{\rho+\kappa})|ds+\frac{|\kappa| T}{2\vare}.
\end{align*}
Therefore Grönwall's lemma shows 
\begin{equation}|w^\rho_t-w_t^{\rho+\kappa}|+|v_t^\rho-v_t^{\rho+\kappa}|\leq \frac{|\kappa| T}{2\vare} e^{\frac{|\rho+\kappa|+2}{2}L_{\vare/2} T},\label{eq:gronwall}\end{equation} for $t\in[0,\tilde T(\kappa)]$. Since $|H^X(w_t^\rho-v_t^\rho)|\leq 1/\vare$ for all $t\in [0,T]$ there exists a $\kappa_0$, which may be chosen uniformly over all $f\in C^X_{\vare,\rho}$, such that (\ref{eq:gronwall}) forces $\tilde T(\kappa)=T$ for all $\kappa\in(0,\kappa_0)$. Hence, (\ref{eq:gronwall}) holds for all $t\in[0,T]$ whenever $\kappa\in(0,\kappa_0)$.
\end{proof}
\begin{proof}[Proof of Proposition \ref{proposition:ldpdrive}]For each $n=1,2,...$, we have
\begin{align*}\P[\|\mathcal W^\rho(\sqrt{\kappa}B|_{[0,T]})-w^{\rho,1/n}(\sqrt{\kappa}B|_{[0,T]})\|_\infty>\delta]&\leq \P[\|\mathcal W^\rho(\sqrt{\kappa}B|_{[0,T]})-w^{\rho,1/n}(\sqrt{\kappa}B|_{[0,T]})\|_\infty> 0]\\&\leq \P[\sqrt{\kappa}B|_{[0,T]}\notin C^X_{1/n,\rho}].
\end{align*}
By applying Lemma \ref{lemma:martingale}, we deduce that (the law of) $w^{\rho,1/n}(\sqrt{\kappa}B_t)$ is an exponentially good approximation of (the law of) $\mathcal W^{\rho}(\sqrt{\kappa{B_t}})$. Moreover, we claim that for any closed set $F$
$$\inf_{w\in F}I^X_{\rho,v_0}(w)\leq \limsup_{n\to\infty}\inf_{w\in F}I^X_{\rho,v_0,1/n}(w).$$
Suppose $F$ were a closed set for which this did not hold. Let $M=\inf_{w\in F}I^X_{\rho,v_0}(w)$ so that 
$$\limsup_{n\to\infty}\inf_{w\in F}I^X_{\rho,v_0,1/n}(w)<M.$$ Let $\vare>0$ be as in Lemma \ref{lemma:infiniteenergydrive}, and let $\tilde F=F\cap \mathcal W^\rho(C^X_{\vare,\rho})$. Then 
$$\limsup_{n\to\infty}\inf_{w\in F}I^X_{\rho,v_0,1/n}(w)=\limsup_{n\to\infty}\inf_{w\in \tilde F}I^X_{\rho,v_0,1/n}(w)=\inf_{w\in \tilde F}I^X_{\rho,v_0}(w)$$
since $I^X_{\rho,v_0,1/n}(w)=I^X_{\rho,v_0}(w)$, for all $w\in \mathcal W^\rho(C^X_{1/n,\rho})$ whenever $n\geq 1/\vare$. This contradicts the assumption on $F$ and establishes the claim. By applying Theorem \ref{thm:expapprox}, we obtain the LDP for $\mathcal W^\rho(\sqrt{\kappa}B_t)$.\par
To obtain the LDP for $\mathcal W^{\kappa+\rho}(\sqrt{\kappa}B|_{[0,T]})$ we, in light of Theorem \ref{thm:expapprox}, only have to show that it is exponentially equivalent to $\mathcal W^\rho(\sqrt{\kappa}B|_{[0,T]})$. Let $\delta>0$ and fix $\vare>0$. By Lemma \ref{lemma:continuityinrho} we have that there exists a $\kappa_0$, depending on $\vare$ but not on $\delta$, such that $\|\mathcal W^{\kappa+\rho}(f)-\mathcal W^{\rho}(f)\|_\infty\leq\delta$ for all $\kappa\in(0,\kappa_0)$ and $f\in C^X_{\vare,\rho}$. Thus,
$$\limsup_{\kappa\to 0+}\kappa\log \P[\|\mathcal W^{\kappa+\rho}(\sqrt{\kappa}B|_{[0,T]})-\mathcal W^{\rho}(\sqrt{\kappa}B|_{[0,T]})\|_\infty>\delta]\leq \limsup_{\kappa\to 0+}\kappa\log \P[\sqrt{\kappa}B|_{[0,T]}\notin C^X_{\vare,\rho}].$$
By applying Lemma \ref{lemma:martingale}, and then letting $\vare\to 0+$ we find that
$$\limsup_{\kappa\to 0+}\kappa\log \P[\|\mathcal W^{\kappa+\rho}(\sqrt{\kappa}B_t)-\mathcal W^{\rho}(\sqrt{\kappa}B_t)\|_\infty>\delta]=-\infty,$$
for all $\delta>0$.
This shows that $\mathcal W^{\kappa+\rho}(\sqrt{\kappa}B_t)$ is exponentially equivalent to $\mathcal W^\rho(\sqrt{\kappa}B_t)$.
\end{proof}
\subsection{LDP on finite time curves}\label{subsection:ldpfinite}
Let $\mathcal C^R$ denote the (compact) space of compact subsets of $\overline\D$ endowed with the Hausdorff distance and the topology induced by it. Fix a conformal map $\varphi:\D\to\H$. Let $\mathcal C^C=\varphi(\mathcal C^R)$, endowed with the distance and topology induced by $\varphi$. The induced Hausdorff distance on $\mathcal C^C$ depends on the choice of $\varphi$, however, the induced topology (which we refer to simply as the Hausdorff topology) will not. We will denote the (induced) Hausdorff distance by $d^h:\mathcal C^X\times \mathcal C^X\to \R$.\par
Fix $T\in(0,\infty)$. Let $\mathcal K^C$ and $\mathcal K^R$ denote the space of half-plane hulls and disk hulls respectively. Further, let
$$\mathcal K^C_T=\{K\in\mathcal K^C:\hcap(K)=2T\}\quad \text{ and }\quad K^R_=\{K\in \mathcal K^R:\crad(\D\setminus K,0)=e^{-T}\}.$$ We will consider two topologies on $\mathcal K^X$: the Hausdorff topology and the topology induced by Carathéodory kernel convergence. The former is simply the subspace topology induced by the inclusion $\iota:\mathcal K^X_T\hookrightarrow \mathcal C^X$. The latter is the topology where $(K_n)\subset \mathcal K_T^X$ converges to $K\in \mathcal K_T^X$ if and only if the inverse mapping-out functions $(g_{K_n}^{-1})$ converge uniformly on compacts to $(g_{K}^{-1})$.\par
Throughout this section we fix, in addition to $T$, $\rho>-2$ and $z_0=v_0\in\R\setminus\{0\}$, if $X=C$, and $z_0=e^{iv_0}$, $v_0\in(0,2\pi)$, if $X=R$. The goal of this section is to show the following large deviation principle.
\begin{proposition}\label{prop:finitetimeLDP} The $X$-SLE$_\kappa(\rho)$ process and $X$-SLE$_\kappa(\kappa+\rho)$  process, stopped at time $T$ (with standard parametrization), satisfy the large deviation principle with good rate function $$I^X_{\rho,z_0}:\mathcal K^X_T\to[0,\infty]$$ with respect to the Hausdorff topology on $\mathcal K_T^X$.
\end{proposition}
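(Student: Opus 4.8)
The plan is to obtain Proposition \ref{prop:finitetimeLDP} from the driving-process large deviation principle of Proposition \ref{proposition:ldpdrive} by transporting it through the deterministic Loewner transform, following \cite{PW21}. Since Proposition \ref{proposition:ldpdrive} already supplies the LDP for the laws of both $\mathcal W^\rho(\sqrt\kappa B|_{[0,T]})$ and $\mathcal W^{\kappa+\rho}(\sqrt\kappa B|_{[0,T]})$ with the \emph{same} good rate function $I^X_{\rho,v_0}$, and since the Loewner transform does not involve $\rho$, both assertions of the proposition will be handled by one and the same argument.

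First I would introduce the solution map $\Phi_T\colon C_0([0,T])\to\mathcal K^X_T$ sending a continuous driving function $W$ to the hull $K_T$ produced by running the chordal (resp.\ radial) Loewner equation up to time $T$. It is classical (see \cite{L05,K17}, and for this precise statement \cite{PW21}) that $\Phi_T$ is continuous when $\mathcal K^X_T$ carries the Carathéodory topology, i.e.\ the topology of locally uniform convergence of the inverse mapping-out functions $g_K^{-1}$. The contraction principle (\cite[Theorem 4.2.1]{DZ10}) then gives, for each of the two families of hull laws, an LDP on $\mathcal K^X_T$ in the Carathéodory topology with good rate function $\tilde I(K)=\inf\{I^X_{\rho,v_0}(W)\colon\Phi_T(W)=K\}$. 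To identify $\tilde I$ with $I^X_{\rho,z_0}$, write $W_K$ for the Loewner driving function of $K$ and compare (\ref{eq:energydrive}) with Definitions \ref{def:rhoenergyhp}--\ref{def:rhoenergyud}: the process $v$ solving (\ref{eq:vrho}) is exactly $z_t=g_t(z_0)$ in the relevant coordinate, and for $X=C$ one has $z_0=v_0\in\R$ so $\Re(W_t-z_t)^{-1}=(W_t-z_t)^{-1}$; hence $I^X_{\rho,z_0}(K)=I^X_{\rho,v_0}(W_K)$ whenever $z_0\notin K$, and in particular $\tilde I(K)\le I^X_{\rho,z_0}(K)$. If this inequality were strict there would be $W'$ with $\Phi_T(W')=K$ and $I^X_{\rho,v_0}(W')<\infty$; by Lemma \ref{lemma:infiniteenergydrive} the drift $\tfrac\rho2 H^X(w^\rho-v^\rho)$ is then uniformly bounded, so $z_0\notin K$, and by Proposition \ref{prop:integrated} and the discussion after it $K$ is a simple arc. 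Uniqueness of the capacity- (resp.\ conformal-radius-)parametrized driving function of a simple arc then forces $W'=W_K$, a contradiction. Thus $\tilde I=I^X_{\rho,z_0}$. Finally, since $\tau_{0+}=\infty$ a.s.\ for SLE$_\kappa(\rho)$ and SLE$_\kappa(\kappa+\rho)$ with $\rho>-2$ and $\kappa$ small, these processes stopped at $T$ are supported on $\{K\in\mathcal K^X_T\colon z_0\notin K\}$, where $I^X_{\rho,z_0}$ is defined, so no further bookkeeping is needed.

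It then remains to upgrade the LDP from the Carathéodory topology to the Hausdorff topology $d^h$, which is the only genuinely technical step. On each sublevel set $\{I^X_{\rho,z_0}\le M\}$ the driving functions have uniformly bounded Dirichlet energy: by Lemma \ref{lemma:infiniteenergydrive} the drift is uniformly bounded there, so the bound on the energy of the input $f$ passes to a bound on $\tfrac12\int_0^T\dot W_t^2\,dt$; hence these driving functions are uniformly $\tfrac12$-Hölder. Standard estimates for the Loewner flow (as in \cite{PW21}) then show that, on such sets, Carathéodory convergence of the hulls upgrades to Hausdorff convergence, so each $\{I^X_{\rho,z_0}\le M\}$ is also $d^h$-compact and $I^X_{\rho,z_0}$ remains a good rate function in the Hausdorff topology. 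Combining this with the Carathéodory-LDP upper bound applied to the closed set $\{I^X_{\rho,z_0}\ge M\}$ yields exponential tightness of the hull laws in $d^h$, and the LDP therefore transfers to the Hausdorff topology with the same rate function (\cite[Corollary 4.2.6]{DZ10}), exactly as in \cite{PW21}. As anticipated in the discussion opening Section \ref{section:ldp}, the substance of this section lies in Proposition \ref{proposition:ldpdrive} and in the return-probability estimates needed in Section \ref{section:ldpinftime}, rather than in the transfer carried out here.
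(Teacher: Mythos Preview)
Your approach is \emph{not} the one taken in the paper (nor, contrary to what you write, the one in \cite{PW21}). The paper never passes through the Carath\'eodory topology or the contraction principle. Instead it works directly in the Hausdorff topology: for a Hausdorff-open $O$ (resp.\ closed $F$), it compares $(\mathcal L_T^X)^{-1}(O)$ with its interior (resp.\ $(\mathcal L_T^X)^{-1}(F)$ with its closure) and observes, via Lemma~\ref{lemma:contfinitetime}, that any driving function in the difference produces a hull with non-empty interior, hence has infinite $I^X_{\rho,v_0}$. Thus the infima over preimage and interior/closure agree (Lemma~\ref{lemma:loewnermapinfinite}), and the driving-process LDP (Proposition~\ref{proposition:ldpdrive}) transfers immediately. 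This is precisely the mechanism of \cite{PW21}.

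Your Carath\'eodory-then-transfer route can be made to work, but the write-up has real gaps. First, \cite[Corollary~4.2.6]{DZ10} upgrades a weak LDP to a full LDP under exponential tightness; it does not change topologies, so it is not the tool you need here. What you actually require is that on each sublevel set $\{I^X_{\rho,z_0}\le M\}$ Carath\'eodory convergence implies Hausdorff convergence; granted this, one can show (by a short argument you do not give) that Hausdorff-closed sets intersected with sublevel sets are Carath\'eodory-closed, and dually for open sets, which together with exponential tightness yields both bounds. Second, your justification for that implication---uniform $\tfrac12$-H\"older control of the driving functions and ``standard estimates''---is not the right argument: the H\"older constant on $\{I^X_{\rho,z_0}\le M\}$ need not be below the Marshall--Rohde/Lind threshold, so continuity of the Loewner map into the uniform curve topology is not available. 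The correct input is exactly Lemma~\ref{lemma:HausCara}: if $\gamma_n\to\gamma$ in the Carath\'eodory sense with $\gamma$ a simple arc, any Hausdorff subsequential limit $\tilde K$ satisfies $\H\setminus\gamma=\,$(unbounded component of $\H\setminus\tilde K$), forcing $\tilde K=\gamma$. In other words, your detour does not avoid the key structural lemma used by the paper---it merely repackages it---while adding an extra layer (the contraction principle and a nontrivial topology-change argument) that the paper's direct proof sidesteps.
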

Since $\tau_{0+}=\infty$ a.s. for the $X$-SLE$_\kappa(\rho)$ when $\rho>-2$ and $\kappa$ is sufficiently small (recall Lemma \ref{lemma:martingale}), we have that an $X$-SLE$_\kappa(\rho)$ stopped at time $T\land \tau_{0+}$ is a.s. the image of $\mathcal W^\rho(\sqrt{\kappa}B|_{[0,T]})$ under the Loewner map
$$\mathcal L^X_T:C_0([0,T])\to \mathcal K^X_T,$$
$$w\mapsto K_T.$$
The Loewner map $\mathcal L^X_T$ is continuous when $\mathcal K^X_T$ is endowed with the Carathéodory topology (see \cite[Proposition 6.1]{K17} for the chordal case and \cite[Proposition 6.1]{MS16b} for the radial case), but is not continuous when $\mathcal  K^X_T$ is endowed with the Hausdorff topology. Lemmas \ref{lemma:HausCara} and \ref{lemma:contfinitetime} were shown in the chordal setting in \cite{PW21} and as the proofs in the radial setting are almost identical we omit them.
\begin{lemma}[{{Chordal case: \cite[Lemma 2.3]{PW21}}}] \label{lemma:HausCara}
Let $(K_n)_n\in\mathcal K^X$ be a sequence of $X$-hulls converging to $K\in\mathcal K^X$ in the Carathéodory topology and to $\tilde K\in\mathcal C$, with $\infty\notin\tilde K$ if $X=C$ and $0\notin\tilde K$ if $X=R$, in the Hausdorff topology. If $X=R$, then $\D\setminus K$ coincides with the connected component of $\D\setminus\tilde K$ containing $0$. If $X=C$, then  $\H\setminus K$ coincides with the unbounded connected component of $\H\setminus\tilde K$. 
\end{lemma}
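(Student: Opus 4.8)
The plan is to prove both cases at once, reading the statement off the Carathéodory kernel theorem and feeding the Hausdorff convergence in through a single compactness observation. Write $D^X$ for $\H$ if $X=C$ and for $\D$ if $X=R$, and let $p$ denote the base point $\infty$, respectively $0$. The mapping-out functions are normalized so that $g_{K_n}^{-1}\colon D^X\to D^X\setminus K_n$ fixes $p$ with positive derivative there (in the half-plane sense when $p=\infty$), so the hypothesis ``$g_{K_n}^{-1}\to g_K^{-1}$ locally uniformly'' is exactly the statement that $D^X\setminus K_n\to D^X\setminus K$ in the sense of Carathéodory kernels with base point $p$. I would use this in two forms: (i) every compact subset of $D^X\setminus K$ is contained in $D^X\setminus K_n$ for all large $n$; and (ii) $D^X\setminus K$ is the \emph{largest} domain containing $p$ with property (i). I would also record at the outset that the hypothesis $p\notin\tilde K$ --- which makes $\tilde K$ a bounded compact set avoiding $p$, and hence makes the component $U$ of $D^X\setminus\tilde K$ named in the statement well defined --- implies that $K_n$ is eventually contained in a fixed ball $B(0,R)$ disjoint from a fixed neighbourhood $N$ of $p$.

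The only input from Hausdorff convergence is the following, which I would isolate and use twice: \emph{if $L\subset D^X\setminus\tilde K$ is compact, then $L\subset D^X\setminus K_n$ for all large $n$.} Indeed, otherwise, along a subsequence choose $\zeta_{n_k}\in L\cap K_{n_k}$; by compactness of $L$ some further subsequence converges to a point $\zeta\in L$, and $K_n\to\tilde K$ in Hausdorff distance forces $\dist(\zeta,\tilde K)=0$, i.e.\ $\zeta\in\tilde K\cap L=\varnothing$, which is absurd.

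With this the two inclusions are quick. For $D^X\setminus K\subseteq U$: given $z\in D^X\setminus K$, property (i) puts a ball about $z$ inside $D^X\setminus K_n$ for all large $n$, so $z$ cannot be a limit of points of $K_n$, hence $z\notin\tilde K$; thus $D^X\setminus K\subseteq D^X\setminus\tilde K$, and being connected and containing $p$ it lies in the component $U$ of $D^X\setminus\tilde K$ through $p$ (in the chordal case this is the unbounded component, since $\H\setminus K$ is unbounded). For $U\subseteq D^X\setminus K$: given $z\in U$, join $z$ to the neighbourhood $N$ of $p$ by a compact path $\gamma\subset U$; choosing $\vare>0$ small enough that the closed $\vare$-neighbourhood $\overline{\gamma^{\vare}}$ still lies in the open set $D^X\setminus\tilde K$, the observation above gives $\overline{\gamma^{\vare}}\subset D^X\setminus K_n$ for all large $n$, while also $N\subset D^X\setminus K_n$ for all large $n$. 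Hence $D:=\gamma^{\vare}\cup N$ is an open connected set containing $p$ and $z$ which is eventually contained in $D^X\setminus K_n$, so by (ii) we get $D\subseteq D^X\setminus K$ and in particular $z\in D^X\setminus K$. Combining the two inclusions yields $U=D^X\setminus K$; the radial case is verbatim the same after replacing ``unbounded component / neighbourhood of $\infty$'' by ``component of $0$ / small disk about $0$''.

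I expect the only genuine subtlety to be conceptual rather than computational: Hausdorff convergence alone does not remember which complementary component persists in the limit, and Carathéodory kernel convergence alone forgets the boundary entirely; it is precisely their interplay --- the compactness observation together with the maximality property (ii) of the kernel --- that pins $D^X\setminus K$ down as the distinguished component of $D^X\setminus\tilde K$. Everything else is routine point-set topology, which is presumably why the radial case is omitted.
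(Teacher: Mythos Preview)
Your proof is correct and follows the standard route: the two inclusions come directly from the two characterizing properties of the Carath\'eodory kernel together with the Hausdorff-convergence compactness observation you isolate. The paper does not actually prove this lemma --- it cites \cite[Lemma~2.3]{PW21} for the chordal case and states that the radial proof is almost identical --- so there is nothing in-paper to compare against; your write-up is essentially what such an omitted proof would look like.
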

\begin{lemma}[{{Chordal case: \cite[Lemma 2.4]{PW21}}}]
Let $F$ be a Hausdorff-closed subset of $\mathcal K_T^X.$ If $$w\in\overline{(\mathcal L^X_T)^{-1}(F)}\setminus(\mathcal L^X_T)^{-1}(F)$$ then $\mathcal L^X_T(w)$ has non-empty interior. Similarly, if $O$ is a Hausdorff-open subset of $\mathcal K^X_{T}$ and $w\in(\mathcal L^X_T)^{-1}(O)\setminus((\mathcal L^X_T)^{-1}(O))^\circ$ then $\mathcal L^X_T(w)$ has non-empty interior.\label{lemma:contfinitetime}
\end{lemma}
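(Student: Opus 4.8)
The plan is to follow \cite[Lemma 2.4]{PW21}, adapting the few places where the radial setting differs. First I would reduce the second assertion to the first: applying the first assertion to the Hausdorff-closed set $F=\mathcal K^X_T\setminus O$ suffices, since $\big((\mathcal L^X_T)^{-1}(O)\big)^\circ$ is precisely the complement in $C_0([0,T])$ of $\overline{(\mathcal L^X_T)^{-1}(F)}$, while $(\mathcal L^X_T)^{-1}(F)$ is the complement of $(\mathcal L^X_T)^{-1}(O)$. So let $F\subseteq\mathcal K^X_T$ be Hausdorff-closed, fix $w\in\overline{(\mathcal L^X_T)^{-1}(F)}\setminus(\mathcal L^X_T)^{-1}(F)$, put $K=\mathcal L^X_T(w)$, and pick $w_n\to w$ in $C_0([0,T])$ with $K_n:=\mathcal L^X_T(w_n)\in F$. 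Since the Loewner map is continuous for the Carathéodory topology (\cite[Proposition 6.1]{K17} in the chordal case, \cite[Proposition 6.1]{MS16b} in the radial case), $K_n\to K$ in the Carathéodory topology. The goal is to show that $K$ has non-empty interior, so suppose it does not.

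Using compactness of $\mathcal C^X$, I would pass to a subsequence so that $K_n\to\tilde K$ in the Hausdorff topology for some $\tilde K\in\mathcal C^X$. The normalization keeps $\tilde K$ away from the reference point: in the radial case $\crad(\D\setminus K_n,0)=e^{-T}$ together with the Koebe $1/4$-theorem forces $\dist(0,K_n)\ge\tfrac14 e^{-T}$, so $0\notin\tilde K$; in the chordal case $\sup_{[0,T]}|w_n|$ is bounded (a convergent sequence is bounded), and the usual Loewner estimate then confines all $K_n$, hence $\tilde K$, to a fixed compact subset of $\C$, so $\infty\notin\tilde K$. Thus Lemma \ref{lemma:HausCara} applies: $\D\setminus K$ (resp. $\H\setminus K$) is the connected component of the complement of $\tilde K$ containing $0$ (resp. $\infty$). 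I then claim $\tilde K=K$. Indeed, if some $z\in\tilde K$ lay in the Carathéodory kernel $\D\setminus K$ (resp. $\H\setminus K$), a small closed neighbourhood of $z$ would lie in that kernel and hence in $\D\setminus K_n$ (resp. $\H\setminus K_n$) for all large $n$, contradicting $z=\lim_n z_n$ with $z_n\in K_n$; hence $\tilde K\subseteq K$. Conversely, by the component description above $K$ equals $\tilde K$ together with the remaining components of the complement of $\tilde K$, each of which is a non-empty open set; since $K$ has empty interior by assumption, there are no such components, and the only possible remaining discrepancy between $\tilde K$ and $K$ is boundary filaments along $\partial\D$ (resp. $\R$), which are ruled out because $w_n\to w$ uniformly (reaching an extra point at positive distance would require an order-one change in the driving function). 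Therefore $\tilde K=K$, so $K=\lim_n K_n$ in the Hausdorff topology; as $K_n\in F$ and $F$ is Hausdorff-closed, $K\in F$, i.e. $w\in(\mathcal L^X_T)^{-1}(F)$, contradicting the choice of $w$. Hence $K$ has non-empty interior.

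The step I expect to require the most care is the identification $\tilde K=K$, i.e. the topological bookkeeping comparing the Carathéodory and Hausdorff limits. Carathéodory convergence only sees the hulls through their complements, so a priori the Hausdorff limit could be strictly smaller (which happens exactly when the hull engulfs a region — the case $K$ has non-empty interior that we have excluded) or could differ along the boundary; the first possibility is handled by Lemma \ref{lemma:HausCara} together with the empty-interior assumption, and the second by uniform convergence of the driving functions, exactly as in \cite[Lemma 2.4]{PW21}. Once $\tilde K=K$ is in place, the conclusion is a short compactness and closedness argument.
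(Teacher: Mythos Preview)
Your proposal is correct and follows precisely the route the paper defers to, namely \cite[Lemma~2.4]{PW21}; the paper itself omits the proof and only remarks that the radial case is almost identical. The one place you flag as delicate—ruling out that $\tilde K$ picks up extra points on $\partial\D$ (resp.\ $\R$) not already in $K$—is real but minor: the cleanest way to make it rigorous is Schwarz reflection across the boundary, which turns such a boundary point into an interior point of the complement of the reflected hull, where Carath\'eodory convergence of the (reflected) mapping-out functions applies directly; your driving-function heuristic points in the right direction but is a step removed from a complete argument.
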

\begin{lemma}\label{lemma:loewnermapinfinite}
Let $O\subset\mathcal K_{T}^X(z_0)$ be Hausdorff-open and $F\subset\mathcal K_{T}^X(z_0)$ be Hausdorff-closed. Then 
$$\inf_{w\in((\mathcal L^X_T)^{-1}(O))^\circ}I^X_{\rho,v_0}(w)=\inf_{w\in(\mathcal L^X_T)^{-1}(O)}I^X_{\rho,v_0}(w),$$
$$\inf_{w\in\overline{(\mathcal L^X_T)^{-1}(F)}}I^X_{\rho,v_0}(w)=\inf_{w\in(\mathcal L^{X}_T)^{-1}(F)}I^X_{\rho,v_0}(w).$$
\end{lemma}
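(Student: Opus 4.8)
The plan is to deduce both identities from Lemma~\ref{lemma:contfinitetime} together with the single observation that a driving function of finite $\rho$-Loewner energy generates a hull with empty interior.

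To make that observation precise, suppose $w\in C_0([0,T])$ has $I^X_{\rho,v_0}(w)<\infty$. Then, by definition of $I^X_{\rho,v_0}$, the function $w$ lies in $\mathcal W^\rho(C^X_{0+,\rho})$, is absolutely continuous, and is the driving function of the Loewner chain $(K_t)_{t\in[0,T]}$ it generates, so that $K_T=\mathcal L^X_T(w)$; with $v$ the solution of (\ref{eq:vrho}), the force point is not swallowed on $[0,T]$ and $I^X_{\rho,z_0}((K_t)_{t\in[0,T]})=I^X_{\rho,v_0}(w)<\infty$. As observed after Proposition~\ref{prop:coorchangeradialenergy} --- via Proposition~\ref{prop:integrated} in the chordal case and via Propositions~\ref{prop:coorchangeradialenergy} and~\ref{prop:integrated} in the radial case --- finite $\rho$-Loewner energy forces $(K_t)_{t\in[0,T]}$ to correspond to a simple curve, so $\mathcal L^X_T(w)$ is a simple arc and in particular has empty interior.

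For the first identity, the inclusion $((\mathcal L^X_T)^{-1}(O))^\circ\subseteq(\mathcal L^X_T)^{-1}(O)$ gives the inequality ``$\geq$'' immediately, so it suffices to show that every $w\in(\mathcal L^X_T)^{-1}(O)$ satisfies $I^X_{\rho,v_0}(w)\geq\inf_{w'\in((\mathcal L^X_T)^{-1}(O))^\circ}I^X_{\rho,v_0}(w')$. This is vacuous when $I^X_{\rho,v_0}(w)=\infty$; otherwise $\mathcal L^X_T(w)$ has empty interior by the observation above, so the conclusion of the second part of Lemma~\ref{lemma:contfinitetime} fails for $w$, which forces $w\notin(\mathcal L^X_T)^{-1}(O)\setminus((\mathcal L^X_T)^{-1}(O))^\circ$, i.e.\ $w\in((\mathcal L^X_T)^{-1}(O))^\circ$, and the inequality is then immediate. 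Taking the infimum over $w\in(\mathcal L^X_T)^{-1}(O)$ gives the first identity.

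The second identity is proved in the same way using the first part of Lemma~\ref{lemma:contfinitetime}: now $(\mathcal L^X_T)^{-1}(F)\subseteq\overline{(\mathcal L^X_T)^{-1}(F)}$ gives ``$\leq$'', and for the reverse inequality one checks that every $w\in\overline{(\mathcal L^X_T)^{-1}(F)}$ with $I^X_{\rho,v_0}(w)<\infty$ in fact lies in $(\mathcal L^X_T)^{-1}(F)$, since its hull $\mathcal L^X_T(w)$ has empty interior and hence Lemma~\ref{lemma:contfinitetime} forbids $w\in\overline{(\mathcal L^X_T)^{-1}(F)}\setminus(\mathcal L^X_T)^{-1}(F)$. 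I do not anticipate a real obstacle: essentially all of the topological content is contained in Lemma~\ref{lemma:contfinitetime}, and the only point requiring care is the bookkeeping that identifies the functional $I^X_{\rho,v_0}$ on $C_0([0,T])$ with the geometric $\rho$-Loewner energy $I^X_{\rho,z_0}$ of the generated hull family, which is what licenses the step ``finite energy $\Rightarrow$ simple curve $\Rightarrow$ empty interior''.
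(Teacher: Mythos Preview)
Your proof is correct and follows essentially the same approach as the paper: both arguments combine Lemma~\ref{lemma:contfinitetime} with the fact (from Section~\ref{section:definition}) that finite $\rho$-Loewner energy forces the hull family to be a simple curve, hence to have empty interior. The paper phrases this as ``$w$ in the difference set $\Rightarrow$ infinite energy'', while you phrase the contrapositive ``finite energy $\Rightarrow$ $w$ not in the difference set''; your bookkeeping of the identification $I^X_{\rho,v_0}(w)=I^X_{\rho,z_0}((K_t))$ is in fact a bit more explicit than the paper's.
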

\begin{proof}
Any $w\in\overline{(\mathcal L^X_T)^{-1}(F)}\setminus(\mathcal L^X_T)^{-1}(F)$ (and similarly $w\in(\mathcal L^{X}_T)^{-1}(O)\setminus(\mathcal L^X_T)^{-1}(O)^\circ$) corresponds to an $X$-hull with non-empty interior $K$ by Lemma \ref{lemma:contfinitetime}. If $w\notin\mathcal W^\rho(C_{0+,\rho}^X)$ then $I^X_{\rho,v_0}(w)=\infty$ by definition. If $w\notin\mathcal W^\rho(C_{0+,\rho}^X)$, then $z_0\notin K_T$ and $I^X_{\rho,v_0}(w)=I^X_{\rho,v_0}((K_t)_t)=\infty$ by the discussion in Section \ref{section:definition}.
\end{proof}
\begin{lemma}\label{lemma:goodratefinitetime}
$I^X_{\rho,z_0}:\mathcal K_T^X\to [0,\infty]$ is a good rate function.
\end{lemma}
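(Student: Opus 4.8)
The plan is to deduce goodness of $I^X_{\rho,z_0}$ on $\mathcal K^X_T$ from the already-established fact (Lemma \ref{lemma:driveenergygoodratefunction}) that $I^X_{\rho,v_0}$ is a good rate function on $C_0([0,T])$, by pushing the sub-level sets forward under the Loewner map $\mathcal L^X_T$. Fix $M\in[0,\infty)$ and set $E_M=\{w\in C_0([0,T]):I^X_{\rho,v_0}(w)\leq M\}$, which is compact in $C_0([0,T])$. The key claim is that the sub-level set $\{K\in\mathcal K^X_T:I^X_{\rho,z_0}(K)\leq M\}$ equals $\mathcal L^X_T(E_M)$; granting this, it is the image of a compact set under a continuous map (Carathéodory-continuity of $\mathcal L^X_T$), hence compact in the Carathéodory topology, and one then needs the Hausdorff topology — this is where the extra input comes in.

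First I would verify the set equality. By definition $I^X_{\rho,z_0}(K)$, for $K$ corresponding to a hull family $(K_t)_{t\in[0,T]}$ with driving function $w$, equals $I^X_{\rho,v_0}(w)$ when $z_0\notin K$, and we have already observed that $I^X_{\rho,v_0}(w)<\infty$ forces $z_0\notin K_T$ and that the hull corresponds to a simple curve (Propositions \ref{prop:integrated} and \ref{prop:coorchangeradialenergy}, together with the discussion closing Section \ref{section:definition}). So any $K$ with $I^X_{\rho,z_0}(K)\leq M<\infty$ has a well-defined driving function $w\in E_M$ with $\mathcal L^X_T(w)=K$, and conversely $\mathcal L^X_T$ maps $E_M$ into the sub-level set. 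Thus $\{I^X_{\rho,z_0}\leq M\}=\mathcal L^X_T(E_M)$.

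Next, compactness in the Hausdorff topology. Since $E_M$ is compact and $\mathcal L^X_T:C_0([0,T])\to\mathcal K^X_T$ is continuous when $\mathcal K^X_T$ carries the Carathéodory topology, $\mathcal L^X_T(E_M)$ is Carathéodory-compact. I would then argue that on $\mathcal L^X_T(E_M)$ the Carathéodory and Hausdorff topologies agree: all hulls in $E_M$ are simple curves, and for a Carathéodory-convergent sequence of hulls whose limit is a simple curve, the convergence is in fact Hausdorff (this is a standard consequence of Lemma \ref{lemma:HausCara} — the Hausdorff limit's complement component containing the reference point coincides with the Carathéodory limit's complement, and since the latter is a slit domain there is no extra ``phantom'' part in the Hausdorff limit). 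Hence the identity map from the Carathéodory-compact $\mathcal L^X_T(E_M)$ to $\mathcal C^X$ with the Hausdorff topology is continuous, so its image is Hausdorff-compact, and a continuous bijection from a compact space to a Hausdorff space is a homeomorphism. Therefore $\{I^X_{\rho,z_0}\leq M\}$ is Hausdorff-compact, proving that $I^X_{\rho,z_0}$ is a good rate function.

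I expect the main obstacle to be the last topological point — showing that, restricted to the sub-level set of curves of bounded $\rho$-Loewner energy, Carathéodory convergence upgrades to Hausdorff convergence. This rests on controlling how close the curve comes to the reference point (and force point), i.e. ruling out that a Carathéodory-convergent sequence ``loses mass at infinity'' in the Hausdorff sense; the finite-energy bound, via the integrated formulas of Proposition \ref{prop:integrated}, provides exactly the needed a priori control, as does the fact (from Lemma \ref{lemma:infiniteenergydrive}/Lemma \ref{lemma:martingale}) that bounded energy keeps $|W_t-z_t|$ bounded below on $[0,T]$, so $z_0$ stays uniformly away from the hulls. This is morally the same mechanism by which Lemma \ref{lemma:contfinitetime} and Lemma \ref{lemma:loewnermapinfinite} were used, so I would expect the argument to parallel the chordal case of \cite{PW21} closely.
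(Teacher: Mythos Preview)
Your approach is essentially the paper's: identify the sub-level set as $\mathcal L^X_T(E_M)$ with $E_M\subset C_0([0,T])$ compact (Lemma~\ref{lemma:driveenergygoodratefunction}), and then upgrade Carathéodory compactness to Hausdorff compactness using that every element of $\mathcal L^X_T(E_M)$ is a simple curve. The paper executes this via a direct sequential argument: given $(\gamma_n)$ in the sub-level set, extract $w_{n_k}\to w$ and set $\gamma=\mathcal L^X_T(w)$; then for the Hausdorff-closed tails $F_{k_0}=\overline{\{\gamma_{n_k}:k\ge k_0\}}\subset\mathcal K^X_T$, if $\gamma\notin F_{k_0}$ one has $w\in\overline{(\mathcal L^X_T)^{-1}(F_{k_0})}\setminus(\mathcal L^X_T)^{-1}(F_{k_0})$, so Lemma~\ref{lemma:contfinitetime} forces $\gamma$ to have non-empty interior --- a contradiction. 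Hence a further subsequence of $(\gamma_{n_k})$ converges to $\gamma$ in Hausdorff.

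The one gap in your write-up is the appeal to Lemma~\ref{lemma:HausCara} alone for the Carathéodory-to-Hausdorff step. That lemma only compares the complements \emph{inside} $\H$ (resp.\ $\D$): it yields $\tilde K\cap\H=K\cap\H$ when $K$ is a simple curve, but does not by itself rule out the Hausdorff limit $\tilde K$ acquiring extra boundary arcs (e.g.\ $\tilde K=K\cup[0,a]\subset\overline\H$ is connected and has the same unbounded complement in $\H$ as $K$). The correct tool is precisely Lemma~\ref{lemma:contfinitetime}, which you mention at the end; once you invoke it the argument closes, and no further a~priori control from Proposition~\ref{prop:integrated} or Lemma~\ref{lemma:infiniteenergydrive} is needed for this lemma.
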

\begin{proof}
Fix $M\in[0,\infty)$. We wish to show that the sub-level set 
$$E_M :=\{K\in \mathcal K_T^X: I^X_{\rho,z_0}(K)\leq M\}$$
is compact. Note that all $K\in E_M$ are simple curves (this follows from the discussion in Section \ref{section:definition}). Take a sequence $(\gamma_n)_n\subset E_M$. Let $w_n=(\mathcal L^X_T)^{-1}(\gamma_n)\in \mathcal W^\rho(C^X_{0+,\rho})$ be the corresponding driving functions, and note that $I^X_{\rho,v_0}(w_n)=I^X_{\rho,z_0}(\gamma_n)$. Since $I^X_{\rho,v_0}:C_0([0,T])\to [0,\infty]$ is a good rate function, there exists a subsequence $(w_{n_k})$ converging to some $w\in \mathcal W^\rho(C^X_{0+,\rho})$ with $I^X_{\rho,v_0}(w)\leq M$. Therefore, $K=\mathcal L_T^X(w)$ must be a simple curve $\gamma=K$, with $I^X_{\rho,z_0}(\gamma)=I^X_{\rho,v_0}(w)$. Now, let $F_{k_0}=\overline{\{\gamma_{n_k}\}_{k=k_0}^\infty}$, for $k_0\in\mathbb N$, where the closure is taken in $\mathcal K_{T}^X$ w.r.t. the Hausdorff topology. We now claim that $\gamma\in F_{k_0}$ for all $k_0$. Suppose the opposite. Then there exists $k_0$ such that $w\in \overline{(\mathcal L^{X}_T)^{-1}(F_{k_0})}\setminus (\mathcal L^{X}_T)^{-1}(F_{k_0})$. By Lemma \ref{lemma:contfinitetime}, $\gamma$ has non-empty interior, a contradiction. Therefore, since $\gamma\in F_{k_0}$ for all $k_0\in\mathbb N$, there is a further subsequence $(\gamma_{n_{k_l}})$ converging to $\gamma\in E_M$, as desired.
\end{proof}

\begin{proof}[Proof of Proposition \ref{prop:finitetimeLDP}] Let $\gamma^{\kappa,\rho}$ denote an $X$-SLE$_\kappa(\rho)$, and let $O$ be a Hausdorff-open subset of $\mathcal K_T^X$. Then by Proposition \ref{proposition:ldpdrive} and Lemma \ref{lemma:loewnermapinfinite} we have
\begin{align*}\liminf_{\kappa\to 0+}\kappa\log\P[\gamma_{T}^{\kappa,\rho}\in O]&=\liminf_{\kappa\to 0+}\kappa\log\P[\mathcal W^\rho(\sqrt{\kappa}B|_{[0,T]})\in(\mathcal L_T^X)^{-1}(O)]\\
&\geq\liminf_{\kappa\to 0+}\kappa\log\P[\mathcal W^\rho(\sqrt{\kappa}B|_{[0,T]})\in((\mathcal L_T^X)^{-1}(O))^\circ]\\
&\geq -\inf_{w\in (\mathcal L_T^X)^{-1}(O))^\circ}I^X_{\rho,v_0}(w)\\
&=-\inf_{w\in (\mathcal L_T^X)^{-1}(O)}I^X_{\rho,v_0}(w)\\
&=-\inf_{\gamma\in O}I^X_{\rho,z_0}(\gamma)
\end{align*}The closed sets and the case $\gamma^{\kappa,\kappa+\rho}$ can be treated in the same way. This finishes the proof.
\end{proof}
\subsection{LDP on infinite time curves}\label{section:ldpinftime}
Let $\mathcal X^C\subset \mathcal C^C$ and $\mathcal X^R\subset \mathcal C^R$ denote the spaces of simple curves in $\H$ from $0$ to $\infty$ and in $\D$ from $1$ to $0$ respectively, endowed with the subspace topology. Let
\begin{equation*}D^X=\begin{cases}\H,& X=C,\\ \D,& X=R,\end{cases}\qquad a^X=\begin{cases}0,& X=C,\\ 1,& X=R,\end{cases}\qquad b^X=\begin{cases}\infty,& X=C,\\ 0,& X=R.\end{cases}
\end{equation*}
Throughout this section we fix $\rho>-2,$ and $z_0\in\partial D^X\setminus \{a^X\}$. For $r>0$, let $$C_r=\{z\in D^X:d^h(\{z\},\{b^X\})=r\},$$ where $d^h$ is the (induced) Hausdorff distance (see Section \ref{subsection:ldpfinite}). For a curve $\gamma\in\mathcal X^X$, let $\hat\tau_r=\inf\{s:\gamma(s)\in C_r\}$.
If $\kappa<\min(2(\rho+2),4)$ then the $X$-SLE$_{\kappa}(\rho)$ curve is simple and approaches $b^X$. The first follows from absolute continuity with respect to SLE$_\kappa$ up to $T\land \tau_{0+}$ for all $T\geq 0$, and the second was shown in the chordal and radial settings in \cite[Theorem 1.3]{MS16} and \cite[Proposition 3.30]{MS17} respectively. 
For $\kappa<\min(2(\rho+2),4)$, let $\P^{\kappa,\rho}$ be the $X$-SLE$_\kappa(\rho)$ probability measure on $\mathcal X^X$.\par
The main ingredient needed to extend the large deviation principle from finite to infinite time is the following lemma.
\begin{lemma}\label{lemma:returnestimate} Let $R,M>0$. Then there exists an $r>0$ such that 
\begin{enumerate}[label=(\alph*)]
\item$\inf\{I^X_{\rho,z_0}(\gamma):\gamma\in\mathcal X^X, \gamma_{[\hat\tau_r,\infty)}\cap  C_R\neq \varnothing\}\geq M$,
\item$\limsup_{\kappa\to 0+}\kappa\log\P^{\kappa,\rho}[\gamma_{[\hat\tau_r,\infty)}\cap  C_R\neq \varnothing]\leq -M,$
\item[(b')]$\limsup_{\kappa\to 0+}\kappa\log\P^{\kappa,\kappa+\rho}[\gamma_{[\hat\tau_r,\infty)}\cap  C_R\neq \varnothing]\leq -M.$
\end{enumerate}
\end{lemma}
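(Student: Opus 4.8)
The plan is to prove the three assertions in turn, reducing (a) to a deterministic escape estimate for the \emph{ordinary} Loewner energy and reducing (b), (b') to a uniform return-probability estimate for SLE$_\kappa(\rho)$, which is precisely what Appendix \ref{section:escape} is designed to supply; the overall argument mirrors \cite{PW21}.

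For (a), let $\sigma=\inf\{s>\hat\tau_r:\gamma(s)\in C_R\}$ be the first return to $C_R$ after entering $C_r$. By the additive property of the $\rho$-Loewner energy, $I^X_{\rho,z_0}(\gamma)\geq I^X_{\rho,z_0}(\gamma_\sigma)$, so it suffices to bound the energy of the finite-time hull family $\gamma_\sigma$. Here I would use the comparison inequality extracted in the proof of Lemma \ref{lemma:infiniteenergydrive}, namely $I^X_{\rho,z_0}(\gamma_\sigma)\geq c(\rho)^2\,I^X(\gamma_\sigma)-c(\rho)|\rho|\log\frac{K^X(w_\sigma-v_\sigma)}{K^X(w_0-v_0)}$ with $c(\rho)=\min(1,\tfrac{\rho+2}{2})>0$. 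In the radial setting $K^R\leq 1$, so the correction term is bounded below by a constant depending only on $\rho$ and $v_0$, and (a) follows once the ordinary radial Loewner energy of such a ``return'' curve is shown to exceed $(M+C(\rho,v_0))/c(\rho)^2$ for $r$ small; in the chordal setting one argues as in \cite{PW21}, using the integrated formula (\ref{eq:rhoenergyboundary}) to keep the force-point term under control. This leaves the purely deterministic statement that the ordinary Loewner energy of a curve that enters $C_r$ and returns to $C_R$ tends to $\infty$ as $r\to0$; in the chordal case this is the escape estimate of \cite{PW21,W19a}, and the radial case follows from it via the chordal/radial comparison of Corollary \ref{cor:chordalradial}.

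For (b), I would condition on $\mathcal F_{\hat\tau_r}$ and invoke the domain Markov property of SLE$_\kappa(\rho)$: conditionally on $\gamma_{\hat\tau_r}$, the remaining curve is an SLE$_\kappa(\rho)$ in $D^X\setminus\gamma_{\hat\tau_r}$ from $\gamma(\hat\tau_r)$ to $b^X$ with force point $g_{\hat\tau_r}(z_0)$, still a boundary point since $\tau_{0+}=\infty$ for small $\kappa$. Mapping conformally back to the reference domain, the event $\{\gamma_{[\hat\tau_r,\infty)}\cap C_R\neq\varnothing\}$ becomes a return event of exactly the same nature, now started from a configuration in which the starting point sits at scale $\asymp r$ relative to the reference point, the force point is an arbitrary boundary point, and the target is a fixed macroscopic set. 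Claim (b) then reduces to a bound $\limsup_{\kappa\to0+}\kappa\log\P^{\kappa,\rho}[\,\cdot\,]\leq -M$ holding uniformly over all such starting configurations; this uniform estimate is the content of Appendix \ref{section:escape}. Claim (b') is obtained in the same way: the driving SDE of SLE$_\kappa(\kappa+\rho)$ differs from that of SLE$_\kappa(\rho)$ only through an additional drift term of order $\kappa$, and the estimates of Appendix \ref{section:escape} are set up to cover both families.

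The main obstacle is the uniform escape estimate underlying (b) and (b'): one needs the return probability to decay like $e^{-M/\kappa}$ with $M$ as large as we wish (for $r$ small), uniformly in the boundary force point. The naive route --- writing SLE$_\kappa(\rho)$ as SLE$_\kappa$ reweighted by the force-point martingale and quoting the known SLE$_\kappa$ escape estimate --- is delicate because that Radon--Nikodym density can grow like a $\kappa^{-1}$-power of $1/r$, so one would have to check the SLE$_\kappa$ bound decays fast enough to absorb it. I expect Appendix \ref{section:escape} instead controls the relevant hitting probability for SLE$_\kappa(\rho)$ directly, via a supermartingale built from $K^X(w_t-v_t)$ and the conformal radius (in the spirit of Lemma \ref{lemma:martingale}), which also yields the correct rate as $\kappa\to0+$.
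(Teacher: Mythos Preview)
You correctly recognize that the lemma is essentially a reformulation of Propositions \ref{prop:chordalescape} and \ref{prop:escaperadial}; the paper's proof is a single sentence pointing to those results. Where your proposal diverges is in the sketched arguments for how those appendix results are actually obtained.

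For (a), the reduction via the comparison inequality is only half the story, and in the radial case your appeal to Corollary \ref{cor:chordalradial} plus the chordal escape estimate of \cite{PW21} does not close the loop: after the curve has spiraled close to $0$, the portions of $C_R$ still accessible from $0$ in $\D\setminus\gamma_{\tau_r}$ can sit in topologically complicated pockets, and returning to one of them need not force the ordinary Loewner energy to be large. The paper therefore begins with a topological lemma (Lemma \ref{lemma:etatop}) partitioning the accessible arcs of $C_R$ into two types; type $B$ forces $\sin\theta_t$ to be small (handled by Proposition \ref{prop:radialbounds}, which is your comparison inequality), while type $A$ forces the chordal energy $I^{(\D;1,e^{iv_0})}$ to be large via a Beurling estimate. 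The chordal case likewise needs an additional case split (Lemma \ref{lemma:welding}) because the correction $|\rho|\log\frac{|W_T-x_T|}{|x_0|}$ is not uniformly bounded on the return event.

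For (b) and (b'), your speculation about the appendix's method is inverted. The appendix uses exactly the Radon--Nikodym route you flag as ``delicate'' and set aside: SLE$_\kappa(\rho)$ is compared to SLE$_\kappa$ (chordal case) or, via the coordinate change to SLE$_\kappa(\kappa-6)$, to chordal SLE$_\kappa$ (radial case), and the known SLE$_\kappa$ return bounds (Proposition \ref{prop:propA3}, Lemma \ref{lemma:escapeeta}) are invoked. The delicacy you anticipate is resolved by a good/bad decomposition at each dyadic scale: on the bad event that the driver--force-point gap becomes small at some scale, one uses Lemma \ref{lemma:martingale}-type bounds (Lemmas \ref{lemma:boundEn}, \ref{lemma:sinpoly}, \ref{lemma:sinexp}); on the good event, the Radon--Nikodym derivative is controlled scale by scale (Lemma \ref{lemma:boundmartingale}) well enough for the SLE$_\kappa$ return bound to absorb it.
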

\begin{proof}In the case $X=R$ this coincides exactly with the statement in Proposition \ref{prop:escaperadial}. In the case $X=C$, this is essentially the same statement as Proposition \ref{prop:chordalescape}, only expressed in a different metric. 
\end{proof}
\begin{corollary}\label{cor:returnestimate}
Let $\tilde R,M>0$. Then there exists $T>0$ such that 
\begin{enumerate}[label=(\alph*)]
\item$\inf\{I^X_{\rho,z_0}(\gamma):\gamma\in\mathcal X^X,d^h(\gamma_T,\gamma)\geq \tilde R\}\geq M,$
\item$\limsup_{\kappa\to 0+}\kappa\log\P^{\kappa,\rho}[d^h(\gamma_T,\gamma)\geq \tilde R]\leq -M,$
\item[(b')]$\limsup_{\kappa\to 0+}\kappa\log\P^{\kappa,\kappa+\rho}[d^h(\gamma_T,\gamma)\geq \tilde R]\leq -M.$
\end{enumerate}
\end{corollary}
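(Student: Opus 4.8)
The plan is to reduce the corollary to Lemma~\ref{lemma:returnestimate} by an elementary geometric argument, requiring no further probabilistic input. Given $\tilde R, M>0$, first apply Lemma~\ref{lemma:returnestimate} with $R:=\tilde R/2$ and this $M$ to obtain $r>0$ for which (a), (b), (b') hold. Since decreasing $r$ only delays $\hat\tau_r$ and hence shrinks the event $\{\gamma_{[\hat\tau_r,\infty)}\cap C_R\neq\varnothing\}$, the conclusions (a), (b), (b') only become easier for a smaller $r$, so I may additionally assume $r<\tilde R/2$ and $r<d^h(\{a^X\},\{b^X\})$. Then, for every $\gamma\in\mathcal X^X$, the function $s\mapsto d^h(\{\gamma(s)\},\{b^X\})$ is continuous, equals $d^h(\{a^X\},\{b^X\})>r$ at $s=0$, and tends to $0$ as $s\to\infty$ (since $\gamma$ ends at $b^X$); hence $\hat\tau_r<\infty$ and $d^h(\{\gamma(\hat\tau_r)\},\{b^X\})=r$.

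The core geometric observation is: if $\hat\tau_r(\gamma)\le T$ and $\gamma_{[\hat\tau_r,\infty)}\cap C_R=\varnothing$, then $d^h(\gamma_T,\gamma)<\tilde R$. Indeed, $C_R$ is a circle around the interior point $b^X$ when $X=R$, and a crosscut of $D^X$ landing at two points of $\partial D^X$ when $X=C$; in either case it separates the neighbourhood $U_R:=\{z\in D^X:d^h(\{z\},\{b^X\})<R\}$ of $b^X$ from the rest of $D^X$. Since $\gamma_{[\hat\tau_r,\infty)}$ is connected, starts at $\gamma(\hat\tau_r)\in U_R$ (as $r<R$), ends at $b^X\in\overline{U_R}$, and never meets $C_R$, it stays in $\overline{U_R}$, so $d^h(\{z\},\{b^X\})\le R$ for all $z\in\gamma_{[\hat\tau_r,\infty)}$. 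As $\hat\tau_r\le T$, the point $p:=\gamma(\hat\tau_r)$ lies in $\gamma_T$ and has $d^h(\{p\},\{b^X\})=r$, so for every $z\in\gamma_{[T,\infty)}$ the triangle inequality for $d^h$ gives $\inf_{y\in\gamma_T}d^h(\{z\},\{y\})\le d^h(\{z\},\{p\})\le d^h(\{z\},\{b^X\})+d^h(\{b^X\},\{p\})\le R+r<\tilde R$; since $\gamma_T\subset\gamma$ this yields $d^h(\gamma_T,\gamma)<\tilde R$.

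It remains to choose $T$ so that the hypothesis $\hat\tau_r\le T$ is automatic. If $\hat\tau_r(\gamma)>T$, then $\gamma_T$ is disjoint from $C_r$ and, being connected and emanating from $a^X\notin U_r$, is contained in $D^X\setminus U_r$, a fixed relatively compact subset of $D^X$. Monotonicity of the conformal radius (for $X=R$) resp.\ of the half-plane capacity (for $X=C$) under domain inclusion then bounds $\crad(D^X\setminus\gamma_T,b^X)$ from below resp.\ $\hcap(\gamma_T)$ from above by a constant depending only on $r$; equivalently, there is $T_0=T_0(r)$ with $\hat\tau_r(\gamma)\le T_0$ for every $\gamma\in\mathcal X^X$. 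I then fix any $T>T_0$.

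With this $T$ the corollary follows. If $d^h(\gamma_T,\gamma)\ge\tilde R$ then, since $\hat\tau_r\le T$ always holds, the contrapositive of the geometric observation forces $\gamma_{[\hat\tau_r,\infty)}\cap C_R\neq\varnothing$; thus (a) is immediate from Lemma~\ref{lemma:returnestimate}(a). For (b) and (b') we obtain the event inclusion $\{d^h(\gamma_T,\gamma)\ge\tilde R\}\subset\{\gamma_{[\hat\tau_r,\infty)}\cap C_R\neq\varnothing\}$, and the asserted bounds on $\limsup_{\kappa\to0+}\kappa\log\P^{\kappa,\rho}$ and $\limsup_{\kappa\to0+}\kappa\log\P^{\kappa,\kappa+\rho}$ follow from Lemma~\ref{lemma:returnestimate}(b) and (b') respectively. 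I do not expect a serious obstacle here: all the analytic content sits in Lemma~\ref{lemma:returnestimate} (i.e.\ in Propositions~\ref{prop:escaperadial} and~\ref{prop:chordalescape}), and the only points needing a little care are the elementary topology of the level set $C_r$ as a separating crosscut (where the chordal case needs a small additional argument to handle the two landing points of the crosscut on $\partial D^X$, e.g.\ by perturbing $R$ slightly or appealing directly to the Euclidean formulation in Proposition~\ref{prop:chordalescape}) and the capacity estimate for curves avoiding a neighbourhood of $b^X$.
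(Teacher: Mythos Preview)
Your proof is correct and follows essentially the same approach as the paper: apply Lemma~\ref{lemma:returnestimate} with $R=\tilde R/2$, use a deterministic upper bound $T_r$ on $\hat\tau_r$ for $T$, and derive the event inclusion $\{d^h(\gamma_T,\gamma)\ge\tilde R\}\subset\{\gamma_{[\hat\tau_r,\infty)}\cap C_R\neq\varnothing\}$ via the triangle inequality $d^h(\gamma_T,\gamma)\le d^h(\gamma_{\hat\tau_r},\gamma)<r+R<\tilde R$. Your write-up is more detailed (you justify the monotonicity in $r$, the existence of $T_r$ via capacity bounds, and the separation property of $C_R$), but the skeleton is identical to the paper's.
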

\begin{proof}
Let $r$ be as in Lemma \ref{lemma:returnestimate} with $R=\tilde R/2$. We claim that $T=T_r$ is sufficient, where $T_r$ is a deterministic upper bound on $\hat\tau_r$ (such upper bounds exist both in the radial and chordal setting). Suppose $\gamma\in\mathcal X^X$ such that $\gamma_{[\hat\tau_r,\infty]}\cap C_{R}=\varnothing$, then
$$d^h(\gamma_T,\gamma)\leq d^h(\gamma_{\hat\tau_r},\gamma)< r+R< \tilde R.$$ We therefore conclude that
$$\{\gamma\in\mathcal X^X:d^h(\gamma,\gamma_T)\subset \tilde R\}\leq\{\gamma\in\mathcal X^X:\gamma_{[\hat\tau_r,\infty)}\cap C_{R}\neq \varnothing\}.$$
The result now follows from Lemma \ref{lemma:returnestimate}.  
\end{proof}
\begin{theorem} The $\rho$-Loewner energy $I^X_{\rho,z_0}:\mathcal X^X\to[0,\infty]$ is a good rate function.
\end{theorem}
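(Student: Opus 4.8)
The plan is to reduce the statement to its finite-time counterpart. Since $I^X_{\rho,z_0}$ is to be a good rate function, and $\mathcal X^X$ is metrizable while $\mathcal C^X$ is compact, it suffices to show that for every $M\in[0,\infty)$ the sublevel set $E_M=\{\gamma\in\mathcal X^X:I^X_{\rho,z_0}(\gamma)\le M\}$ is sequentially compact; lower semicontinuity of $I^X_{\rho,z_0}$ then comes for free, because a compact subset of a Hausdorff space is closed. The two inputs are the finite-time version (Lemma \ref{lemma:goodratefinitetime}, equivalently Lemma \ref{lemma:driveenergygoodratefunction} at the level of driving functions) and the uniform tail control of finite-energy curves supplied by Corollary \ref{cor:returnestimate}(a); note that only part (a) of that corollary, which is a purely deterministic statement about $I^X_{\rho,z_0}$, is needed here.

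First I would pass to driving functions. A curve $\gamma\in E_M$ is simple (finite $\rho$-Loewner energy forces this, as discussed in Section \ref{section:definition}) and, parametrized by half-plane capacity (resp.\ conformal radius), is generated by a driving function $w_\gamma\in C_0([0,\infty))$ with $\tau_{0+}=\infty$; by definition and the additivity of the $\rho$-Loewner energy, $I^X_{\rho,v_0}(w_\gamma|_{[0,T]})=I^X_{\rho,z_0}(\gamma_T)\le I^X_{\rho,z_0}(\gamma)\le M$ for all $T>0$. Given a sequence $(\gamma_n)\subset E_M$ with driving functions $w_n$, I would use that, for each fixed $T$, $I^X_{\rho,v_0}$ is a good rate function on $C_0([0,T])$ (Lemma \ref{lemma:driveenergygoodratefunction}), and a diagonal extraction over $T=1,2,\dots$ to obtain a subsequence along which $w_n\to w$ uniformly on compact subsets of $[0,\infty)$, with $I^X_{\rho,v_0}(w|_{[0,T]})\le M$ for all $T$. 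Letting $T\to\infty$ and invoking the definition of the $\rho$-Loewner energy on $[0,\infty)$ as the increasing limit of its finite-time restrictions (Definition \ref{def:rhoenergyhp}) yields $I^X_{\rho,v_0}(w)\le M$; in particular $w$ is defined for all time, i.e.\ $\tau_{0+}(w)=\infty$.

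Next I would identify the limit. Let $\gamma$ be the curve generated by $w$ on $[0,\infty)$, so $I^X_{\rho,z_0}(\gamma)=I^X_{\rho,v_0}(w)\le M<\infty$. Then $\gamma$ is simple, and, since $\rho>-2$, it reaches the reference point $b^X$: in the chordal case this is automatic because $\hcap(\gamma_t)=2t\to\infty$, and in the radial case (as well as for the fact that $\gamma$ does not terminate at the force point) it is the deterministic analogue of the corresponding property of SLE$_\kappa(\rho)$ for small $\kappa$, cf.\ Section \ref{section:minimizers}. Hence $\gamma\in\mathcal X^X$ and $\gamma\in E_M$. It remains to check $\gamma_n\to\gamma$ in $\mathcal C^X$ along the chosen subsequence. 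Fix $\vare>0$. By Corollary \ref{cor:returnestimate}(a) with threshold $M+1$ there is $T$ such that every $\delta\in\mathcal X^X$ with $I^X_{\rho,z_0}(\delta)\le M$ satisfies $d^h(\delta_T,\delta)<\vare/3$; enlarging $T$ if necessary, which by monotonicity of $T'\mapsto d^h(\delta_{T'},\delta)$ does no harm, also $d^h(\gamma_T,\gamma)<\vare/3$. Since $w_n|_{[0,T]}\to w|_{[0,T]}$ and the Loewner map $\mathcal L^X_T$ is continuous at driving functions generating simple curves (a consequence of Lemma \ref{lemma:contfinitetime}, which locates the discontinuities of $\mathcal L^X_T$ at hulls with non-empty interior), we get $\gamma_{n,T}\to\gamma_T$ in $\mathcal K^X_T$, hence $d^h(\gamma_{n,T},\gamma_T)<\vare/3$ for $n$ large. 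Combining with $d^h(\gamma_n,\gamma_{n,T})<\vare/3$ (the choice of $T$ applied to $\delta=\gamma_n\in E_M$), the triangle inequality gives $d^h(\gamma_n,\gamma)<\vare$ for $n$ large, so $E_M$ is sequentially compact.

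The main obstacle is precisely this last step: the finite-time machinery only delivers convergence of the capacity prefixes $\gamma_{n,T}\to\gamma_T$ for each fixed $T$, which by itself does not imply Hausdorff convergence of the whole curves, since the tails $\gamma_{n,[T,\infty)}$ could a priori wander; Corollary \ref{cor:returnestimate}(a) is exactly what excludes this uniformly over $E_M$. A secondary point requiring care is the assertion that a curve of finite $\rho$-Loewner energy with $\rho>-2$ belongs to $\mathcal X^X$, i.e.\ that it neither terminates at the force point nor fails to reach the reference point; this is handled as in the analysis of the SLE$_0(\rho)$ trace in Section \ref{section:minimizers}.
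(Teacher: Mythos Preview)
Your argument is essentially the paper's: both pass to driving functions, use Lemma \ref{lemma:driveenergygoodratefunction} and a diagonal extraction to get $w_n\to w$ locally uniformly with $I^X_{\rho,v_0}(w)\le M$, identify the limiting curve $\gamma$, obtain $\gamma_{n,T}\to\gamma_T$ from continuity of $\mathcal L^X_T$ at simple hulls, and then close with the triangle inequality using Corollary \ref{cor:returnestimate}(a). The only cosmetic difference is that the paper phrases it as ``$E_M$ is closed in the compact space $\mathcal C^X$'' (starting from a Hausdorff-convergent sequence and showing the limit lies in $E_M$), whereas you show sequential compactness directly.

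There is one genuine gap in your write-up, at the step ``$\gamma\in\mathcal X^X$''. In the chordal case, $\hcap(\gamma_t)=2t\to\infty$ only tells you $\gamma$ is unbounded, not that $\gamma(t)\to\infty$; an unbounded simple Loewner curve can in principle oscillate without converging to $\infty$. And your reference to Section \ref{section:minimizers} for the radial case is misplaced: that section analyzes the minimizer $\mathrm{SLE}_0(\rho)$, not arbitrary finite-energy curves. The correct tool is exactly the one you already invoke for the tail control: the deterministic return estimate (Lemma \ref{lemma:returnestimate}(a)/Corollary \ref{cor:returnestimate}(a)) applied to $\gamma$ itself forces $\gamma(t)\to b^X$, hence $\gamma\in\mathcal X^X$. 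This is precisely how the paper handles the step. (Strictly speaking those statements are written for curves already in $\mathcal X^X$, but their proofs in Appendix \ref{section:escape} only use that the curve is simple and defined for all time, which you have.)
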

\begin{proof}Let $M\in[0,\infty)$ and consider $E_M=\{\gamma\in\mathcal X^X:I^X_{\rho,z_0}(\gamma)\leq M \}.$ Since $\mathcal C^X$ is compact we have that $E_M$ is compact if it is closed as a subset of $\mathcal C^X$.\par
Let $(\gamma_n)\subset E_M$ be a sequence converging to $K\in\mathcal C^X$, and let $(w^n)\subset C_0([0,\infty))$ be the sequence of corresponding driving functions. Since $I^X_{\rho,v_0}:C_0([0,T])\to[0,\infty]$ is a good rate function for all $T>0$ (see Lemma \ref{lemma:driveenergygoodratefunction}) there is a subsequence $(w_{n_k})$ converging uniformly on compact subsets of $[0,\infty)$ to some $w\in C_0([0,\infty))$. It follows that $I^X_{\rho,v_0}(w|_{[0,T]})\leq M$ for all $T\in(0,\infty)$. Therefore, $w$ encodes a simple curve $\gamma$ with $I^X_{\rho,z_0}(\gamma)\leq M$. Additionally, Corollary \ref{lemma:returnestimate} shows that $\gamma(T)\to b^X$ as $T\to\infty$. Hence $\gamma\in E_M\subset \mathcal X^X$. Note that, the continuity of $\mathcal L_T$ on simple curves (recall Lemma \ref{lemma:contfinitetime}) shows that $\gamma^{n_k}_{T}\to\gamma_{T}$ as $n\to\infty$ for each fixed $T>0$ (as in the proof of Lemma \ref{lemma:goodratefinitetime}).
We now wish to show that $\gamma^{n_k}\to \gamma$, i.e., $K=\gamma$. If $K\neq \gamma$, then $\vare:=d^h(K,\gamma)>0$. Fix $T>0$ such that $I^X_{\rho,z_0}(\eta)> M$ for all $\eta\in \mathcal X^X$ with $d^h(\eta_T,\eta)\geq\vare/4$ (such a $T$ exists by Corollary \ref{cor:returnestimate}). Next, fix $k$ such that $d^h(\gamma^{n_k},K)<\vare/4$ and $d^h(\gamma^{n_k}_T,\gamma_T)<\vare/4$. Then,
$$d^h(K,\gamma)\leq d^h(K,\gamma^{n_k})+d^h(\gamma^{n_k},\gamma^{n_k}_T)+d^h(\gamma^{n_k}_T,\gamma_T)+d^h(\gamma_T,\gamma)<\vare,$$
a contradiction. Therefore, $\gamma_{n_k}\to K=\gamma\in E_M$. We deduce that $E_M$ is closed. 
\end{proof}
\begin{theorem}\label{thm:ldpinf}The $X$-SLE$_\kappa(\rho)$ and $X$-SLE$_\kappa(\kappa+\rho)$ processes satisfy the large deviation principle, as $\kappa\to 0+$, with respect to the Hausdorff topology, with good rate function $I^X_{\rho,z_0}$. That is, for any Hausdorff-open subset $O\subset\mathcal X^X$ and Hausdorff-closed subset $F\subset\mathcal X^X$ we have
\begin{align}\liminf_{\kappa\to 0+}\kappa\log\P^{\kappa,\rho}[\gamma\in O]&\geq -\inf_{\gamma\in O}I^X_{\rho,z_0}(\gamma),\label{eq:ldpinfopen}\\
\limsup_{\kappa\to 0+}\kappa\log\P^{\kappa,\rho}[\gamma\in F]&\leq -\inf_{\gamma\in F}I^X_{\rho,z_0}(\gamma)\label{eq:ldpinfclosed}
\end{align}
and the same holds when $\rho$ is replaced by $\kappa+\rho$.
\end{theorem}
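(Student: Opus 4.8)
The plan is to derive Theorem~\ref{thm:ldpinf} from the finite-time large deviation principle (Proposition~\ref{prop:finitetimeLDP}) and the return estimates (Corollary~\ref{cor:returnestimate}) by the method of exponentially good approximations, i.e.\ by applying Theorem~\ref{thm:expapprox}, which delivers both bounds \eqref{eq:ldpinfopen}--\eqref{eq:ldpinfclosed} at once. Since the $\rho$- and $\kappa+\rho$-families are treated identically---one simply invokes the corresponding parts of Proposition~\ref{prop:finitetimeLDP} and Corollary~\ref{cor:returnestimate}---I describe the $\rho$ case. All measures will be regarded as living on the compact metric space $\mathcal C^X$; recall that $\mathcal X^X$ carries the subspace topology and that, by Lemma~\ref{lemma:HausCara} together with continuity of the inclusions $\mathcal K^X_T\hookrightarrow\mathcal C^X$, each $\mathcal K^X_T$ is a closed subset of $\mathcal C^X$. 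Extend $I^X_{\rho,z_0}$ from $\mathcal X^X$ to $\mathcal C^X$ by $+\infty$ off $\mathcal X^X$; by the theorem proved just above it is then a good rate function on $\mathcal C^X$, since its sub-level sets are compact in $\mathcal X^X$ and hence in $\mathcal C^X$.

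First I would fix the approximating family. For a deterministic sequence $T_m\uparrow\infty$ to be chosen below, let $P_{m,\kappa}$ denote the law of the $X$-SLE$_\kappa(\rho)$ curve stopped at time $T_m$; as this stopped curve is a measurable function of the full curve, the coupling required in the definition of an exponentially good approximation is automatic. By Proposition~\ref{prop:finitetimeLDP}, transported from $\mathcal K^X_{T_m}$ to $\mathcal C^X$ (legitimate because $\mathcal K^X_{T_m}$ is closed and $P_{m,\kappa}$ is carried by it), the family $(P_{m,\kappa})_\kappa$ satisfies the LDP with good rate function $J_m$, equal to the finite-time $\rho$-Loewner energy $I^X_{\rho,z_0}$ on $\mathcal K^X_{T_m}$ and to $+\infty$ elsewhere; goodness of $J_m$ follows from Lemma~\ref{lemma:goodratefinitetime}. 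To pin down $T_m$, apply Corollary~\ref{cor:returnestimate}(b) with $\tilde R=1/m$ and $M=m$ to obtain a $T_m$ (taken increasing in $m$) with $\limsup_{\kappa\to0+}\kappa\log\P^{\kappa,\rho}[d^h(\gamma_{T_m},\gamma)\ge 1/m]\le -m$. Because $T\mapsto d^h(\gamma_T,\gamma)$ is non-increasing, for each fixed $\delta>0$ and all $m$ with $1/m<\delta$ we get $\limsup_{\kappa\to0+}\kappa\log\P^{\kappa,\rho}[d^h(\gamma_{T_m},\gamma)>\delta]\le -m\to-\infty$, which is precisely the exponential-approximation hypothesis of Theorem~\ref{thm:expapprox}.

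It then remains to verify the last hypothesis of Theorem~\ref{thm:expapprox}: for every closed $F\subset\mathcal C^X$ one has $\inf_F I^X_{\rho,z_0}\le\limsup_{m\to\infty}\inf_F J_m$. There is nothing to prove unless the right-hand side is finite, say equal to $L$; then along a subsequence one can pick simple curves $K_j\in F\cap\mathcal K^X_{T_{m_j}}$ (simple by the discussion of Section~\ref{section:definition}) with $I^X_{\rho,z_0}(K_j)\to L$, and writing $w^j$ for the driving function of $K_j$ on $[0,T_{m_j}]$ we have $I^X_{\rho,v_0}(w^j)=I^X_{\rho,z_0}(K_j)$. This is exactly the situation handled in the proof of the preceding theorem: since $I^X_{\rho,v_0}$ is a good rate function on each $C_0([0,S])$ (Lemma~\ref{lemma:driveenergygoodratefunction}), a diagonal extraction yields a further subsequence along which $w^j\to w$ uniformly on compacts of $[0,\infty)$ with $I^X_{\rho,v_0}(w|_{[0,S]})\le L$ for all $S$, so that $w$ encodes a simple curve $\gamma$ with $I^X_{\rho,z_0}(\gamma)\le L$, and Lemma~\ref{lemma:returnestimate}(a) forces $\gamma(T)\to b^X$, whence $\gamma\in\mathcal X^X$. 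Passing to one more subsequence so that $K_j\to K$ in $\mathcal C^X$ (so $K\in F$, since $F$ is closed), the continuity of the Loewner map on finite-energy curves (Lemma~\ref{lemma:contfinitetime}) gives $\mathcal L^X_S(w^j|_{[0,S]})\to\gamma_S$ and hence $\gamma_S\subseteq K$ for every $S$, while Lemma~\ref{lemma:returnestimate}(a) confines each $K_j$ to $\mathcal L^X_{T_r}(w^j|_{[0,T_r]})\cup\{z:d^h(\{z\},\{b^X\})<R\}$ with $T_r$ and $R$ not depending on $j$, which in the limit forces $K\subseteq\overline\gamma$. Thus $K=\gamma\in F\cap\mathcal X^X$ and $\inf_F I^X_{\rho,z_0}\le I^X_{\rho,z_0}(\gamma)\le L$. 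Theorem~\ref{thm:expapprox} now yields the claimed LDP, and the version with $\kappa+\rho$ in place of $\rho$ follows by the same argument using Corollary~\ref{cor:returnestimate}(b') instead of (b).

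The step I expect to be the real obstacle is the verification of the hypothesis of Theorem~\ref{thm:expapprox} just described, namely that a Hausdorff limit of stopped curves of uniformly bounded $\rho$-Loewner energy is again a full simple curve in $\mathcal X^X$ of no greater energy. The two delicate points are that the truncation times $T_{m_j}$ grow to infinity, so one must argue that no part of the limiting curve ``escapes'' and that the limit really reaches $b^X$---both handled by the uniform tail control in the return estimate Lemma~\ref{lemma:returnestimate}(a)---and that Hausdorff convergence of the truncations must be reconciled with uniform-on-compacts convergence of the driving functions, which is where the continuity of the Loewner map on finite-energy curves (Lemma~\ref{lemma:contfinitetime}) is essential. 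A minor technical point is the transport of the finite-time LDP from $\mathcal K^X_T$ to $\mathcal C^X$, which uses that $\mathcal K^X_T$ is closed in $\mathcal C^X$ (Lemma~\ref{lemma:HausCara}).
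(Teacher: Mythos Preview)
Your approach is correct and genuinely different from the paper's. The paper proves the two LDP bounds directly: for closed $F$ it covers $\overline F$ by finitely many Hausdorff balls $B^h(K_j,\vare_j)$, bounds the probability of each ball via the finite-time LDP (Proposition~\ref{prop:finitetimeLDP}) and the return estimate (Corollary~\ref{cor:returnestimate}), and takes the maximum; for open $O$ it picks a near-minimizer $\gamma^\vare\in O$, works in a single ball around it, and uses the same two ingredients from the opposite side. You instead package the same inputs into the abstract machinery of Theorem~\ref{thm:expapprox}, with the truncated curves $\gamma_{T_m}$ as exponentially good approximations. Your route is cleaner conceptually---one invocation yields both bounds---but the price is the verification of the hypothesis $\inf_F I\le\limsup_m\inf_F J_m$, which forces you to essentially reprove the goodness argument (extracting a full curve from a Hausdorff-convergent sequence of truncations with bounded energy). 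The paper's direct argument avoids this duplication by never needing to identify the Hausdorff limit of truncated curves with a full curve; it works entirely with probabilities on balls.

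One small point: your claim that $\mathcal K^X_T$ is closed in $\mathcal C^X$ is neither what Lemma~\ref{lemma:HausCara} says nor obviously true (a sequence of hulls of fixed capacity can Hausdorff-converge to a set that is not a hull of that capacity). Fortunately you do not need it: since $P_{m,\kappa}$ is supported on $\mathcal K^X_{T_m}$ and $J_m=+\infty$ off $\mathcal K^X_{T_m}$, for any closed $F\subset\mathcal C^X$ one has $P_{m,\kappa}(F)=P_{m,\kappa}(F\cap\mathcal K^X_{T_m})$ and $\inf_F J_m=\inf_{F\cap\mathcal K^X_{T_m}} I^X_{\rho,z_0}$, so the LDP transports from $\mathcal K^X_{T_m}$ to $\mathcal C^X$ without any closedness assumption; goodness of $J_m$ on $\mathcal C^X$ follows because its sublevel sets are compact in $\mathcal K^X_{T_m}$ and hence in $\mathcal C^X$. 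Also, in your argument that $K\subseteq\overline\gamma$, Lemma~\ref{lemma:returnestimate}(a) is stated only for full curves $\gamma\in\mathcal X^X$; you should apply it to the $I^X_{\rho,z_0}$-optimal continuation of each $K_j$ (which has the same energy) and then use $K_j\subset\tilde K_j$ to conclude. With those two cosmetic fixes, your argument goes through.
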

\begin{proof}For $K\in \mathcal C^X$ and $r>0$ we use $B^h(K,r)\subset \mathcal C^X$ to denote the Hausdorff-open ball centered at $K$ and of radius $r$.\par
We start with the closed sets. Let $F\subset \mathcal X^X$ be a Hausdorff-closed set and let $\overline F$ denote its closure in $\mathcal C^X$. Note that $\overline F$ is compact. Moreover, denote 
$$N=\inf_{\gamma\in F}I^X_{\rho,z_0}(\gamma)\in[0,\infty].$$
If $N=0$, then (\ref{eq:ldpinfclosed}) is trivial, so we may assume that $N>0$. 
Fix $M\in(0,N)$.
Since $I^X_{\rho,z_0}$ is a good rate function on $\mathcal X^X$ there exists, for every $K\in\overline{{ F}}$, an $\vare_K>0$ such that 
\begin{equation}\gamma\in B^h(K,3\vare_K)\cap \mathcal X^X\implies I^X_{\rho,z_0}(\gamma)\geq M.\label{eq:vareK}\end{equation}
Since $\{ B^h(K,\vare_K)\}_{K\in\overline F}$ is an open cover of the compact set $\overline F$ we know that there is a finite sub-cover $\{B^h(K_1,\vare_1),...,B^h(K_n,\vare_n)\},$
where we write $\vare_j=\vare_{K_j}$. 
We deduce that 
$$\limsup_{\kappa\to 0+}\kappa\log\P^{\kappa,\rho}[\gamma\in F]\leq \max \{\limsup_{\kappa\to 0+}\kappa\log\P^{\kappa,\rho}[\gamma\in \overline{B^h(K_j,\vare_j)}\cap \mathcal X^X]\}_{j=1}^n.$$
For every $j=1,...,n$, let $T_j>0$ be chosen according to Corollary \ref{cor:returnestimate} with $M$ and $\tilde R=\vare_j$. By the triangle inequality we see that $$\gamma\in \overline{ B^h(K_j,\vare_j)}\cap \mathcal X^X \implies \gamma_{T_j}\in\overline{ B^h(K_j,2\vare_j)}\cap \mathcal K^X_{T_j}\text{ or } d^h(\gamma,\gamma_{T_j})\geq \vare_j.$$ Hence
$\limsup_{\kappa\to 0+}\kappa\log \P^{\kappa,\rho}[\gamma\in \overline{ B^h(K_j,\vare_j)}\cap \mathcal X^X]$
is bounded above by the maximum of
\begin{align}&\limsup_{\kappa\to 0+}\kappa\log\P^{\kappa,\rho}[\gamma_{T_j}\in \overline {B^h(K_j,2\vare_j)}\cap \mathcal K^X_{T_j}]\leq -\inf_{\gamma_{T_j}\in\overline {B^h(K_j,2\vare_j)}\cap \mathcal K^X_{T_j}}I^X_{\rho,z_0}(\gamma_{T_j}),\label{eq:infldp1}\\
&\limsup_{\kappa\to 0+}\kappa\log\P^{\kappa,\rho}[d^h(\gamma,\gamma_{T_j})\geq \vare_j]\leq - M.\label{eq:infldp2}\end{align}
The bound (\ref{eq:infldp1}) follows from Proposition \ref{prop:finitetimeLDP} and the fact that $\overline {B^h(K_j,2\vare_j)}\cap \mathcal K^X_{T_j}$ is a Hausdorff-closed subset of $\mathcal K^X_{T_j}$. The bound (\ref{eq:infldp2}) follows directly from Corollary \ref{cor:returnestimate}. We now show that \begin{equation}\inf_{\gamma_{T_j}\in\overline{ B^h(K_j,2\vare_j)} \cap \mathcal K^X_{T_j}}I^X_{\rho,z_0}(\gamma_{T_j})\geq M.\label{eq:geqM}\end{equation} Take $\gamma_{T_j}\in\overline{B^h(K_j,2\vare_j)} \cap \mathcal K^X_{T_j}$ and denote its $I^X_{\rho,z_0}$-optimal continuation by $\tilde\gamma$. If $d^h(\gamma_{T_j},\tilde\gamma)\geq \vare_j$, then the choice of $T_j$ gives that $I^X_{\rho,z_0}(\tilde\gamma)\geq M$. If instead $d^h(\gamma_{T_j},\tilde\gamma)<\vare$, then the triangle inequality implies that $d^h(\tilde\gamma,K_j)< 3\vare_j$ and the choice of $\vare_j$ then implies that $I^X_{\rho,z_0}(\tilde\gamma)\geq M$. Since $I^X_{\rho,z_0}(\gamma_{T_j})=I^X_{\rho,z_0}(\tilde\gamma)$, this shows (\ref{eq:geqM}) and we deduce that  
$$\limsup_{\kappa\to 0+}\kappa\log\P^{\kappa,\rho}[\gamma^\kappa\in F]\leq - M.$$
Taking the limit $ M\to N-$ we obtain (\ref{eq:ldpinfclosed}).\par
We move on to the open sets. Let $O$ be a Hausdorff-open subset of $\mathcal X^X$ and now let $$N=\inf_{\gamma\in O}I^X_{\rho,z_0}(\gamma).$$ If $N=\infty$, then (\ref{eq:ldpinfopen}) is trivial, so we may assume that $N<\infty$. 
Fix $\vare>0$. There exists a $\gamma^\vare\in O$ such that $I^X_{\rho,z_0}(\gamma^\vare)\leq N+\vare$. Moreover, there exists a $\delta=\delta(\vare)>0$ such that $B^h(\gamma^\vare,2\delta)\cap \mathcal X^X\subset O$. Let $T$ be as in Corollary \ref{cor:returnestimate} with $M=N+2\vare$ and $\tilde R=\delta$. Now, for $\gamma\in\mathcal X^X$
$$\gamma_{T}\in B^h(\gamma^\vare,\delta)\cap \mathcal K_T^X\implies \gamma\in B^h(\gamma^\vare,2\delta)\text{ or } d^h(\gamma_T,\gamma)\geq \delta,$$
by the triangle inequality.
Therefore,
$$ \P^{\kappa,\rho}[\gamma\in B^h(\gamma^\vare,2\delta)\cap \mathcal X^X]\geq \P^{\kappa,\rho}[\gamma_T\in B^h(\gamma^\vare,\delta)\cap \mathcal K^X_T]-\P^{\kappa,\rho}[d^h(\gamma,\gamma_T)\geq \delta].$$
Since $B^h(\gamma^\vare,\delta)\cap \mathcal K_T^X$ is a Hausdorff-open subset of $\mathcal K_T^X$, and since we must have $d^h(\gamma^\vare,\gamma_T^\vare)< \delta$ (by the choice of $T$), Proposition \ref{prop:finitetimeLDP} shows
$$\liminf_{\kappa\to 0+}\kappa\log\P^{\kappa,\rho}[\gamma_T\in B^h(\gamma^\vare,\delta)\cap \mathcal K_T^X]\geq - I^X_{\rho,z_0}(\gamma^\vare_T)\geq -(N+\vare).$$
By Corollary \ref{cor:returnestimate}
$$\limsup_{\kappa\to 0+}\kappa\log\P^{\kappa,\rho}[d^h(\gamma_T,\gamma)\geq \delta]\leq -(N+2\vare).$$
Combining the above we find,
$$\liminf_{\kappa\to 0+}\kappa\log\P^{\kappa,\rho}[\gamma\in O]\geq
-(N+\vare),$$
and by taking the limit $\vare\to 0+$ we obtain (\ref{eq:ldpinfopen}). The same arguments hold when $\rho$ is replaced by $\kappa+\rho$. 
\end{proof}
\section{The minimizers -- SLE\texorpdfstring{$_0(\rho)$}{}}\label{section:minimizers}
It is a direct consequence of the definition of the $\rho$-Loewner energy, that its unique minimizer is the SLE$_0(\rho)$ curve. In this section, we will study these curves. As we will see in Section \ref{section:sle0chordal}, some of these curves have been studied extensively before, but not, to our knowledge, under the name SLE$_0(\rho)$ \cite{LMR10,LR16}. SLE$_0(\rho)$ curves have also appeared in \cite{M23}, in the context of optimization problems for the Loewner energy, and in \cite{ABKM20}, where multichordal SLE$_0$ are described as SLE$_0(\overline\rho)$.\par
We will see that in both the radial and chordal setting the SLE$_0(\rho)$ curves exhibit three phases: a force point hitting phase, a boundary hitting phase, and a reference point hitting phase. This is illustrated in Figure \ref{fig:sle0}.\par In Section \ref{section:flowline} we will show that the interpretation of SLE$_\kappa(\rho)$, $\kappa>0$, as generalized flow-lines of the GFF (see, e.g., \cite{MS16,MS17}) has the expected deterministic analog when $\kappa=0$: SLE$_0(\rho)$ is a flow-line, in the classical sense, of the appropriate harmonic field. In \cite[Section 4.4]{ABKM20}, a similar statement was shown for multichordal SLE$_0$, and in \cite{VW20} the authors proved a finite Loewner energy analog of the SLE-GFF flow-line coupling.\par 
We proceed by studying chordal SLE$_0(\rho)$ with a boundary force point, and radial SLE$_0(\rho)$ with a boundary force point (equivalent to chordal SLE$_0(\rho)$ with an interior force point) separately in Section \ref{section:sle0chordal} and Section \ref{section:sle0radial}. In Section \ref{section:sle0radial} we also define and study a whole-plane variant of SLE$_0(\rho)$ starting at $\infty$, with reference point $0$ and force point $\infty$, when $\rho\leq-2$. 
\subsection{Flow-line property}\label{section:flowline}
For $z_0\in\C$, let $M_{z_0}$ be the logarithmic Riemann surface centered at $z_0$. By this we mean that $M_{z_0}=\{(r,\theta):\ r>0,\ \theta\in\R\}$  endowed with the complex structure induced by $\pi_{z_0}:M_{z_0}\to \C$, $(r,\theta)\mapsto z_0+re^{i\theta}$. For convenience we use the notation $z=z_0+re^{i\theta}$ for $z=(r,\theta)\in M_0$, so that $M_0$ is (locally) identified with $\C$. The main point is that $\arg(\cdot-z_0):M_{z_0}\to \R$ is (with the identification above) a single valued function.\par
We say that a $C^1$ curve $\eta$, parametrized by arc-length, is a flow line of a field $h$ if 
$$\eta'(t)=e^{ih(\eta(t))}.$$
\begin{proposition}\label{prop:flowline} Let $\eta^{\rho,z_0}$ denote an SLE$_0(\rho)$ in $\H,$ starting at $0$, with reference point $\infty$ and force point $z_0\in\overline\H\setminus\{0\},$ re-parametrized by arc-length. If $z_0\in\R\setminus\{0\}$, let $h_{\rho,z_0}:\H\to\R$ be the field defined by
$$h_{\rho,z_0}(z)=\begin{cases}
\pi(1+\tfrac{\rho}{2})-\arg(z)-\tfrac{\rho}{2}\arg(z-z_0),& z_0>0,\\
\pi-\arg(z)-\tfrac{\rho}{2}\arg(z-z_0),& z_0<0,\end{cases}$$ 
where the branches of $\arg(z)$ and $\arg(z-z_0)$ are chosen so that they take values in $(0,\pi)$. Then $\eta^{\rho,z_0}$ a flow-line of $h_{\rho,z_0}$.\par
If $z_0=re^{i\theta}\in\H$, $\theta\in(0,\pi)$, let $h_{\rho,z_0}:\H_{z_0}\to \R$, where $\H_{z_0}:=\pi_{z_0}^{-1}(\H)\subset M_{z_0}$, be the field defined by
$$h_{\rho,z_0}(z)=\pi-\arg(z)-\tfrac{\rho}{4}(\arg(z-z_0)+\arg(z-\bar z_0)),$$
where the branches of $\arg(z)$ and $\arg(z-\overline z_0)$ are chosen so that they take values in $(0,\pi).$ Then the lift of $\eta^{\rho,z_0}$ by $\pi_{z_0}$, starting at $(r,\theta+\pi)$, is a flow-line of $h_{\rho,z_0}$.
\end{proposition}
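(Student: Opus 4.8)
The plan is to verify the flow-line property directly from the SLE$_0(\rho)$ Loewner differential equations, in each of the three cases ($z_0>0$, $z_0<0$, and $z_0\in\H$). Recall that for $\kappa=0$ the driving function $W_t$ satisfies $\dot W_t=\rho\,\Re\frac{1}{W_t-z_t}$ (chordal, interior or boundary force point as appropriate), and the trace is recovered by $\eta(t)=\lim_{y\to 0+}g_t^{-1}(W_t+iy)$. The key observation is that one can compute the tangent direction $\eta'(t)$ by differentiating this relation. Writing $f_t=g_t^{-1}$, the Loewner equation for $f_t$ and the chain rule give a formula for $\partial_t f_t(W_t+iy)$; taking $y\to 0+$ and using $\dot W_t$, the tangent direction at $\eta(t)$ turns out to be $f_t'(W_t)^{?}$ times an explicit unimodular factor. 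So the heart of the argument is to identify $\arg\eta'(t)$ with $h_{\rho,z_0}(\eta(t))$.

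The cleanest route I would take is the following. First I would observe that $h_{\rho,z_0}$ is, up to an additive constant, of the form $c-\arg(z)-\frac{\rho}{2}\arg(z-z_0)$ (or its symmetrized version in the interior case), i.e.\ the imaginary part of $-\log z-\frac{\rho}{2}\log(z-z_0)$ — a harmonic function. Then I would check two things: (i) the boundary values of $h_{\rho,z_0}$ are locally constant on each boundary arc with the correct jumps, so that the curve, if it is a flow line, must leave $0$ in the right direction and the boundary conditions are consistent with the Imaginary Geometry $\kappa\to 0$ heuristic; and (ii) the flow-line ODE $\eta'(t)=e^{ih_{\rho,z_0}(\eta(t))}$ pushed forward through $g_t$ becomes exactly the $\kappa=0$ Loewner equation with the $\rho$-drift. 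Concretely, one uses that if $\eta$ is a flow line of the harmonic field $h=\Im\Phi$ with $\Phi$ holomorphic, then $g_t\circ\eta$ is (after the time change to half-plane capacity parametrization) a flow line of the transported field $h\circ g_t^{-1}+\arg (g_t^{-1})'$; conformal invariance of Imaginary Geometry at the level of the field, i.e.\ the transformation rule $\Phi\mapsto \Phi\circ g_t^{-1}-\chi\log(g_t^{-1})'$ with the appropriate constant for $\kappa=0$ (here $\chi$ degenerates and one just gets the $\arg$ term), shows the transported field near $W_t$ has the form $c_t-\arg(\,\cdot-W_t)-\frac{\rho}{2}\arg(\,\cdot-z_t)+o(1)$, and then the flow line starting at $W_t$ in $\H$ moving ``straight up'' forces $\dot W_t=\rho\,\Re\frac{1}{W_t-z_t}$ by matching the next-order term. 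I would actually run this in reverse: start from the SLE$_0(\rho)$ curve, apply $g_t$, and read off that the image is the vertical segment, which identifies the driving function; then invoke uniqueness of the field satisfying the transported boundary conditions to conclude $\eta$ is the flow line of $h_{\rho,z_0}$.

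For the interior force point case the extra subtlety is the multivaluedness of $\arg(z-z_0)$ and the need to pass to the logarithmic cover $M_{z_0}$: here the chordal SLE$_0(\rho)$ with interior force point $z_0=re^{i\theta}$ satisfies $\dot W_t=\rho\,\Re\frac{1}{W_t-z_t}$ with $z_t=g_t(z_0)\in\H$, and the relevant field is the symmetrization $h_{\rho,z_0}(z)=\pi-\arg z-\frac{\rho}{4}(\arg(z-z_0)+\arg(z-\bar z_0))$, reflecting that the Schwarz-reflected picture has force points at both $z_0$ and $\bar z_0$ each with weight $\rho/2$. I would check that the lift of $\eta^{\rho,z_0}$ starting at the prime end $(r,\theta+\pi)$ (the correct sheet, since the curve emanates from $0$ ``below'' $z_0$) is well-defined and single-valued because the curve does not wind around $z_0$ before the force point is absorbed, and that on $M_{z_0}$ the field $h_{\rho,z_0}$ is genuinely single-valued, so the flow-line ODE makes sense there. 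The rest of the computation is identical to the boundary case after transporting by $g_t$ (which lifts to a conformal map of the relevant subdomains of $M_{z_0}$ and $M_{z_t}$).

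The main obstacle I anticipate is bookkeeping the additive constants and branch choices so that the flow-line ODE produces a curve emanating from $0$ into $\H$ (not into the lower half-plane or along the boundary) — i.e.\ verifying that $h_{\rho,z_0}(0+)$ gives the correct initial angle, including the $\pi(1+\rho/2)$ versus $\pi$ discrepancy between $z_0>0$ and $z_0<0$, which encodes on which side of the curve the force point lies. A secondary technical point is justifying the interchange of limits (taking $y\to 0+$ inside $\partial_t$) to get a clean formula for $\eta'(t)$; this is standard for $\kappa=0$ since the curve is smooth away from $t=0$ and the endpoints, and one can cite the regularity of Loewner chains with $C^1$ (indeed smooth) driving functions, but it should be stated. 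Once the tangent-direction formula is in hand and the transported-field computation is done, matching it to $h_{\rho,z_0}$ is a short explicit calculation.
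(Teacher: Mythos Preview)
Your plan is essentially the paper's approach: the paper computes, from the Loewner equation and the SLE$_0(\rho)$ drift, that the quantity $h_{\rho,z_t-W_t}(w_t-W_t)-\arg(g_t'(w_0))$ is constant in $t$ (this is exactly your ``transported field'' transformation rule), and then reads off $\arg((\gamma^{\rho,z_0})'(t))=h_{\rho,z_0}(\gamma^{\rho,z_0}(t))$ by evaluating at the tip using that $h_{\rho,z_0}$ vanishes on the relevant boundary arc. The paper's implementation is slightly more direct than your ``run it in reverse'' description---it simply verifies the conservation law by differentiating $\log(w_t-W_t)$, $\log g_t'(w_0)$, $\log(w_t-z_t)$, $\log(w_t-\bar z_t)$ and combining---but the underlying idea and the handling of the interior case via the lift to $M_{z_0}$ are the same.
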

\begin{proof}
Let $\gamma^{\rho,z_0}$ denote $\eta^{\rho,z_0}$ parametrized by half-plane capacity. Since the driving function of $\gamma^{\rho,z_0}$ satisfies
$$\dot W_t = \Re\frac{\rho}{W_t-z_t}$$
and the right-hand side is continuous in $t$, we have that $\gamma^{\rho,z_0}$ is $C^1$ away from its end-points \cite{W14}. Therefore, for any $t<s$
$$\arg((\gamma^{\rho,z_0})'(t))=\lim_{w\to g_s(\gamma^{\rho,z_0}(t)-)}\arg((g_s^{-1})'(w)),$$
where $g_s$ is the mapping-out function of $\gamma_s^{\rho,z_0}$. We have, for $w_0\in\H\setminus\gamma^{\rho,z_0}$,
$$\partial_t \log(w_t-W_t)=\frac{1}{w_t-W_t}\bigg(\frac{2}{w_t-W_t}-\Re\frac{\rho}{W_t-z_t}\bigg),\quad \partial_t \log g_t'(w_0)=-\frac{2}{(w_t-W_t)^2},$$
$$\partial_t \log(w_t-z_t)=-\frac{2}{(w_t-W_t)(z_t-W_t)},\quad \partial_t \log(w_t-\overline z_t)=-\frac{2}{(w_t-W_t)(\overline z_t-W_t)},$$
where $z_t=g_t(z_0)$, $w_t=g_t(w_0)$, and $W_t$ is the driving function of $\gamma^{\rho,z_0}.$ By combining this and recalling that $\arg=\Im\log$, we find
\begin{equation}\partial_t (h_{\rho,z_t-W_t}(w_t-W_t) - \arg(g_t'(w_0)))=0.\label{eq:flowlinemarkov}\end{equation}
In the case where $z_0\in\H$ we make sense of $h_{\rho,z_t-W_t}(w_t-W_t)$ by lifting $g_t$ to a conformal map from $M_{z_0}$ to $M_{z_t}$. Since $\arg(g_0'(w_0))=0$, we thus have, 
$$g_t'(w_0)=h_{\rho,z_t-W_t}(w_t-W_t)-h_{\rho,z_0}(w_0).$$
Therefore, if $x_0\in\R\setminus\{0\}$,
\begin{align*}\arg((\gamma^{\rho,z_0})'(t))&=\lim_{w_s\to g_s(\gamma^{\rho,z_0}(t)-)}\arg((g_s^{-1})'(w_s)) \\&= \lim_{w_s\to g_s(\gamma^{\rho,z_0}(t)-)}(h_{\rho,z_0}(w_0) - h_{\rho,z_s-W_s}(w_s-W_s)) \\&= h_{\rho,z_0}(\gamma^{\rho,z_0}(t)),\end{align*}
since $h_{\rho,z_0}$ vanishes on $\R^-$ if $z_0>0$ and on $(z_0,0)$ if $z_0<0$. This shows that 
$$(\eta^{\rho,z_0})'(t)=e^{ih_{\rho,z_0}(\eta^{\rho,z_0}(t))},$$
so that $\eta^{\rho,z_0}$ is a flow-line of $h_{\rho,z_0}$.
If $z_0=re^{i\theta}\in\H$, let $\tilde\gamma^{\rho,z_0}$ denote the lift of $\gamma^{\rho,z_0}$ starting at $(r,\theta+\pi)$. Then $h_{\rho,z_0}$ vanishes along the lift of $\R^-$ which has $(r,\theta+\pi)$ as an end-point. We therefore deduce, in the same way as above, that
$$\arg((\gamma^{\rho,z_0})'(t))=\arg((\tilde\gamma^{\rho,z_0})'(t))= h_{\rho,z_0}(\tilde\gamma^{\rho,z_0}(t)).$$
We deduce that the arc-length parametrization, $\tilde\eta^{\rho,z_0}$, of $\tilde\gamma^{\rho,z_0}$ satisfies
$$(\tilde\eta^{\rho,z_0})'(t)=e^{ih_{\rho,z_0}(\tilde\eta^{\rho,z_0}(t))},$$
and it is therefore a flow-line of $h_{\rho,z_0}$.
\end{proof}
\subsection{Chordal SLE\texorpdfstring{$_0(\rho)$}{} with boundary force point}\label{section:sle0chordal}
The chordal SLE$_\kappa(\rho)$ with a boundary force point exhibits the following (probabilistic) self-similarity property: if one maps out an initial part of the curve and then rescales the picture so that the image of the force point is mapped to its initial location one gets a new SLE$_\kappa(\rho)$ with the same force point. When $\kappa=0$ this self-similarity property becomes deterministic.\par
Deterministically self-similar Loewner curves have been studied before in \cite{LMR10} and \cite{LR16}. In \cite{LR16} it was shown that the only curves $\gamma\in C^3$ satisfying 
$$\gamma=r(t)(g_t(\gamma_{t})-W_t),$$
for all $t$ and some $r(t)>0$, are curves driven by 
\begin{enumerate}[label=(\roman*)]
\item $W_t=0$, which corresponds to $\gamma=i\R^{+}$ (as a set),
\item $W_t=ct$, where $c\neq 0$, which corresponds to a curve $\gamma$ which approaches $\infty$ horizontally,
\item $W_t=c\sqrt{t+\tau}-c\sqrt{\tau}$, where $c\neq 0$ and $\tau>0$, corresponding to a curve which approaches $\infty$ at a certain angle $\theta_1(c)\in(0,\pi)$,
\item $W_t=c\sqrt{\tau}-c\sqrt{\tau-t}$, where $|c|\geq 4$ and $\tau>0$, corresponding to a curve which hits the real line at an angle $\theta_2(c)\in [0,\pi)$, and
\item $W_t=c\sqrt{\tau}-c\sqrt{\tau-t}$, where $|c|\in(0,4)$ and $\tau>0$, corresponding to a logarithmic spiral.
\end{enumerate}
These driving functions were also considered in \cite{KNK04}.
\begin{proposition}\label{prop:ChordalSLE0}
The driving function of SLE$_0(\rho)$ with force point $x_0>0$ is 
$$W^{\rho,x_0}_t=\begin{cases}\frac{\rho}{\rho+2}x_0-\frac{\rho}{\rho+2}\sqrt{x_0^2+2(2+\rho)t},& \rho\neq -2,\\
\frac{2}{x_0}t,& \rho=-2.
\end{cases}
$$
When $x_0<0$ we have $W_t^{\rho,x_0}=-W_t^{\rho,-x_0}.$ 
\end{proposition}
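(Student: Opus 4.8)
The plan is to exploit that SLE$_0(\rho)$ is, by construction, the unique curve on which the $\rho$-Loewner energy vanishes: by Definition \ref{def:rhoenergyhp} the integrand $\dot W_t-\rho\,\Re\frac{1}{W_t-z_t}$ must be identically zero, and since the force point $x_0>0$ lies on $\R$ its image $z_t=g_t(x_0)$ stays real for $t<\tau_{0+}$. Hence the driving function of SLE$_0(\rho)$ is characterised by the ODE $\dot W_t=\frac{\rho}{W_t-z_t}$, $W_0=0$, coupled with the Loewner equation $\dot z_t=\frac{2}{z_t-W_t}$, $z_0=x_0$, obtained from (\ref{eq:chordalloewner}).

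First I would decouple this system by introducing $X_t:=z_t-W_t$, so that $X_0=x_0>0$ and $\dot X_t=\frac{2}{X_t}+\frac{\rho}{X_t}=\frac{2+\rho}{X_t}$; equivalently $\frac{d}{dt}X_t^2=2(2+\rho)$, whence $X_t^2=x_0^2+2(2+\rho)t$. Since $t\mapsto X_t$ is continuous, positive at $t=0$, and $\tau_{0+}$ is precisely the time at which $X_t$ reaches $0$, we get $X_t=\sqrt{x_0^2+2(2+\rho)t}$ on $[0,\tau_{0+})$; in particular $\tau_{0+}=\infty$ when $\rho\geq -2$. Substituting back, $\dot W_t=-\frac{\rho}{X_t}=-\rho\,(x_0^2+2(2+\rho)t)^{-1/2}$, and a direct integration with $W_0=0$ gives, for $\rho\neq -2$, $W_t=\frac{\rho}{\rho+2}\big(x_0-\sqrt{x_0^2+2(2+\rho)t}\big)$, while for $\rho=-2$ one has $X_t\equiv x_0$ and hence $W_t=\frac{2}{x_0}t$ (which is also the $\rho\to -2$ limit of the previous formula). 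This is exactly the asserted expression.

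Finally, for $x_0<0$ I would invoke the reflection $z\mapsto -z$, which maps $\H$ to itself, fixes $0$ and $\infty$, sends $x_0$ to $-x_0$, and carries the pair $(W_t,z_t)$ for force point $-x_0$ to $(-W_t,-z_t)$ while preserving both the Loewner equation and the zero-energy ODE; by uniqueness this yields $W_t^{\rho,x_0}=-W_t^{\rho,-x_0}$. I do not expect a genuine obstacle: the only points needing a word of care are the justification that the driving function is characterised by the zero-energy ODE (immediate, the integrand being a perfect square) and the choice of the positive branch of the square root for $X_t$, valid up to $\tau_{0+}$.
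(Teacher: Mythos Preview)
Your proof is correct and follows essentially the same approach as the paper: both reduce to the separable ODE for $X_t=z_t-W_t$, solve $X_t=\sqrt{x_0^2+2(2+\rho)t}$, and then integrate $\dot W_t=-\rho/X_t$. Your write-up is slightly more explicit about the justification of the positive branch and the reflection for $x_0<0$, but the argument is the same.
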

\begin{proof}
Assume that $x_0>0$. By definition, the driving function of SLE$_0(\rho)$ satisfies
\begin{align}dW_t^{\rho,x_0}&=\frac{\rho}{W_t^{\rho,x_0}-x_t}dt,\quad W_0^{\rho,x_0}=0.\label{eq:sle0drive}
\end{align}
This gives the separable equation
$$d(x_t-W_t^{\rho,x_0})=\frac{2+\rho}{x_t-W_t^{\rho,x_0}}dt$$
which is solved by $$x_t-W_t^{\rho,x_0} = \sqrt{x_0^2+2(2+\rho)t}.$$
Plugging this into (\ref{eq:sle0drive}) and integrating yields the expression of the driving function. 
The case when $x_0<0$ follows by symmetry.
\end{proof}
\begin{corollary}
Fix $x_0>0$ and let $\gamma^{\rho,x_0}$ be the SLE$_0(\rho)$ with force point $x_0$. Then
\begin{enumerate}[label=(\roman*)]
\item If $\rho\in(-\infty,-4]$, then $\gamma^{\rho,x_0}(\tau)=x_0$ and the (outer) hitting angle is 
$\alpha\pi = \pi\frac{4+\rho}{2+\rho}$. Moreover the hitting time is $\tau = -\frac{x_0^2}{2(2+\rho)}.$
\item If $\rho\in(-4,-2)$, then $\gamma^{\rho,x_0}(\tau)=- \frac{2}{\rho+2}x_0>x_0$ and the (outer) hitting angle is 
$\alpha\pi = \pi\frac{4+\rho}{2}$. Moreover the hitting time is $\tau = -\frac{x_0^2}{2(2+\rho)}.$
\item If $\rho=-2$, then $\gamma^{\rho,x_0}$ approaches $\infty$ asymptotic to $\{x+i\pi x_0:x\in\R^+\}.$
\item If $\rho\in(-2,\infty)$, then $\gamma^{\rho,x_0}$ approaches $\infty$ at an angle $\alpha\pi = \pi\frac{2+\rho}{4+\rho}.$
\end{enumerate}
\label{cor:rhoflow}
\end{corollary}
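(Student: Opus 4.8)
The plan is to read off everything from the explicit driving function of Proposition~\ref{prop:ChordalSLE0} together with the flow-line description of Proposition~\ref{prop:flowline}. Write $W_t=W_t^{\rho,x_0}$ and $x_t=g_t(x_0)$. The Loewner equation gives $\partial_t(x_t-W_t)=(2+\rho)/(x_t-W_t)$, hence $(x_t-W_t)^2=x_0^2+2(2+\rho)t$. For $\rho>-2$ this stays positive, so $\gamma^{\rho,x_0}$ is defined on $[0,\infty)$; for $\rho<-2$ it vanishes exactly at $t=\tau:=-x_0^2/(2(2+\rho))$, which is therefore the absorption time $\tau_{0+}$, and $\gamma^{\rho,x_0}$ is defined on $[0,\tau]$ with $\hcap$ equal to $2\tau<\infty$, hence bounded. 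Up to the additive constant $\tfrac{\rho}{\rho+2}x_0$, the function $W_t^{\rho,x_0}$ is one of the self-similar driving functions classified in \cite{LR16} (see also \cite{KNK04,LMR10}): of the form $c\sqrt{t+\sigma}-c\sqrt{\sigma}$ when $\rho>-2$, linear when $\rho=-2$, and of the form $c\sqrt{\sigma}-c\sqrt{\sigma-t}$ with $|c|\ge 4$ when $\rho<-2$. This yields the qualitative trichotomy at once — the trace approaches $\infty$ at an angle, has a horizontal asymptote, or hits $\R$ transversally — and rules out the spiral, since $|c|\ge 4$ always, with equality only at $\rho=-4$. Since $\gamma^{\rho,x_0}$ has zero $\rho$-Loewner energy it is simple, so when $\rho<-2$ the fact that the force point is absorbed at time $\tau$ forces $\gamma^{\rho,x_0}(\tau)\in\R$.

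For the quantitative data I would use the flow line. By Proposition~\ref{prop:flowline} and $C^1$-regularity away from the endpoints (\cite{W14}), the arc-length parametrization $\eta=\eta^{\rho,x_0}$ is a flow line of $h_{\rho,x_0}$, and $h_{\rho,x_0}(z)=\arg\!\left(e^{i\pi(1+\rho/2)}z^{-1}(z-x_0)^{-\rho/2}\right)$ with the branch of $(z-x_0)^{-\rho/2}$ fixed by $\arg(z-x_0)\in(0,\pi)$. After a real-positive time change, $\eta$ is thus an integral curve of $z\mapsto e^{i\pi(1+\rho/2)}z^{-1}(z-x_0)^{-\rho/2}$, and integrating gives
\[
F(\eta):=\frac{2}{\rho+4}(\eta-x_0)^{(\rho+4)/2}+\frac{2x_0}{\rho+2}(\eta-x_0)^{(\rho+2)/2}=e^{i\pi(1+\rho/2)}t+C,
\]
with the expected logarithmic modifications when $\rho=-4$ or $\rho=-2$; so $F(\eta)$ traces a line of direction $e^{i\pi(1+\rho/2)}$. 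Inspecting the dominant term of $F$ at the terminal time then pins down the geometry. When $\rho>-2$, one has $\eta\to\infty$ and the first term dominates, so $\tfrac{\rho+4}{2}\arg\eta\to\pi\tfrac{\rho+2}{2}$, i.e.\ $\arg\eta\to\pi\tfrac{2+\rho}{4+\rho}$. When $\rho=-2$, the integrated relation $\eta+x_0\log(\eta-x_0)=\mathrm{const}-t$ has constant imaginary part, forcing $\Im\eta\to\pi x_0$. When $\rho\le-4$, the (bounded) curve terminates at a finite point of $\R$, the second term dominates, and $\tfrac{\rho+2}{2}\arg(\eta-x_0)\to\pi\tfrac{\rho+4}{2}$, i.e.\ $\arg(\eta-x_0)\to\pi\tfrac{4+\rho}{2+\rho}$, which is the outer hitting angle; moreover $F(0)$ is a positive real multiple of $e^{i\pi(1+\rho/2)}$ in this range, so the only admissible real terminal value of $F$ is $0$, and solving $F(p)=0$ gives $p=-2x_0/(\rho+2)\le x_0$ — which, for $\rho<-4$, would fail to enclose the force point, so in fact $\gamma^{\rho,x_0}(\tau)=x_0$ (the case $\rho=-4$ giving $p=x_0$ directly).

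The delicate case, where I expect the main obstacle, is $\rho\in(-4,-2)$: here $\gamma^{\rho,x_0}(\tau)$ is a finite real point, but one must exclude $\gamma^{\rho,x_0}(\tau)=x_0$. Computing $F$ at the starting point $z=0$ (using $\arg(-x_0)=\pi$) shows that $F(0)$ is now a \emph{negative} real multiple of $e^{i\pi(1+\rho/2)}$, so the line $\{F(\eta)\}$ passes through the origin; since $F$ at the real terminal point $p$ must be real, it is forced to vanish, which gives $p=-\tfrac{2}{\rho+2}x_0>x_0$, and the branch count rules out $\eta\to x_0$ because no $\beta\in(0,\pi)$ solves $\tfrac{\rho+2}{2}\beta\equiv\pi\tfrac{\rho+4}{2}\pmod{2\pi}$ when $\rho\in(-4,-2)$. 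The outer hitting angle at $p$ is then the direction in which $\eta$ leaves $p$, namely $-e^{i h_{\rho,x_0}(p)}=e^{i\pi(\rho+4)/2}$, i.e.\ $\pi\tfrac{4+\rho}{2}$. As a cross-check — and an alternative route to both the hitting point and the dichotomy between case~(i) and case~(ii) — the coordinate change (\ref{eq:coorchange}) and Proposition~\ref{prop:coorchangechordenergy} identify $\gamma^{\rho,x_0}$ with SLE$_0(-6-\rho)$ with the roles of $x_0$ and $\infty$ interchanged, reducing each case to the reflection of a case already handled. The one genuinely fiddly ingredient throughout is the bookkeeping of branches and of the limiting value of $\arg(\eta-x_0)$.
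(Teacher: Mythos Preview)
Your approach is correct and takes a genuinely different route from the paper. The paper's own proof is a single sentence: everything is read off from \cite{KNK04} after identifying $W_t^{\rho,x_0}$ (via Proposition~\ref{prop:ChordalSLE0}) with one of the explicit self-similar driving functions analyzed there, so the hitting points, asymptotic directions, and angles are simply quoted. You instead use the flow-line representation of Proposition~\ref{prop:flowline}: integrating the field $e^{ih_{\rho,x_0}}$ yields the primitive $F$, and the terminal geometry is then extracted from the fact that $F(\eta)$ traces a straight line of known direction. This is more self-contained --- it derives the angles and hitting points from machinery developed within the paper rather than deferring to an external computation --- at the cost of the branch bookkeeping you flag, and of having to argue separately (via the ``fail to enclose'' observation or the coordinate change of Proposition~\ref{prop:coorchangechordenergy}) why the candidate $p=-2x_0/(\rho+2)<x_0$ is inadmissible when $\rho<-4$. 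That last step is where your argument is thinnest: the flow-line analysis alone does not exclude hitting at a point in $(0,x_0)$, since $F$ evaluated there also lies on the correct line; one really needs that $x_t-W_t\to 0$ forces the hull to shield $x_0$ from $\infty$, hence $\gamma(\tau)\ge x_0$. The paper's citation-based proof sidesteps all of this by outsourcing the trace computation; yours trades brevity for an argument that actually explains the geometry.
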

\begin{proof}These statements all follow from \cite{KNK04} after an appropriate translation and scaling.
\end{proof}
\begin{remark}\label{remark:ray}For $\rho>-2$ one can consider the situation where the force point is placed infinitesimally close to the origin, at $0+$ (or similarly $0-$). From Proposition \ref{prop:ChordalSLE0} we can see that the driving function then becomes
$$W_t^{\rho,0+}=-\rho\sqrt{\frac{2t}{\rho+2}}$$
which corresponds to a ray from $0$ to $\infty$ with angle $\alpha\pi=\pi\frac{2+\rho}{4+\rho}$ (see, e.g., \cite{KNK04}).
\end{remark}
\subsection{Radial SLE\texorpdfstring{$_0(\rho)$}{} with boundary force point}\label{section:sle0radial}
The radial SLE$_0(\rho)$ curves are a bit harder to study. However, we can easily obtain their driving functions. 
\begin{proposition} \label{prop:radialsledrive} The driving function of radial SLE$_0(\rho)$ with force point at $e^{iv_0}$, $v_0\in(0,2\pi)$, is 
\begin{equation}w_t^{\rho,v_0} = \begin{cases}-\frac{\rho}{\rho+2}\Big(2\arccos \Big(\cos\frac{v_0}{2}e^{-\frac{\rho+2}{4}t}\Big) -v_0\Big),& \rho\neq -2,\\
t\cot\frac{v_0}{2},&\rho =-2.
\end{cases}\label{eq:radialdriving}\end{equation}
\end{proposition}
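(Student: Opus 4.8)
The plan is to reduce the system governing radial SLE$_0(\rho)$ to a single autonomous ODE and integrate it explicitly. With $\kappa=0$, the defining SDE (\ref{eq:radialdrive}) becomes the ODE $\dot w_t=\frac{\rho}{2}\cot\frac{w_t-v_t}{2}$ with $w_0=0$, while differentiating $e^{iv_t}=g_t(e^{iv_0})$ through the radial Loewner equation (\ref{eq:radialLoewner}) gives $\dot v_t=-\cot\frac{w_t-v_t}{2}$ (this is also recorded in (\ref{eq:vrho}) with $H^R(x)=\cot(x/2)$). So the first step is to set $u_t:=w_t-v_t$, which satisfies the closed equation
\[
\dot u_t=\frac{\rho+2}{2}\cot\frac{u_t}{2},\qquad u_0=-v_0.
\]

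The key step is to notice that this is linearized by the substitution $\phi_t:=\cos\frac{u_t}{2}$: one computes $\dot\phi_t=-\tfrac12\sin\frac{u_t}{2}\,\dot u_t=-\frac{\rho+2}{4}\cos\frac{u_t}{2}=-\frac{\rho+2}{4}\phi_t$, hence $\cos\frac{u_t}{2}=\cos\frac{v_0}{2}\,e^{-\frac{\rho+2}{4}t}$ using $\cos\frac{u_0}{2}=\cos\frac{v_0}{2}$. Since $t\mapsto u_t$ is continuous with $u_0/2=-v_0/2\in(-\pi,0)$ and the right-hand side never changes sign, a short check shows that $u_t/2$ stays in $(-\pi,0)$ on the interval of definition, so inverting the cosine with the principal branch of $\arccos$ yields $u_t=-2\arccos\!\big(\cos\frac{v_0}{2}\,e^{-\frac{\rho+2}{4}t}\big)$.

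To finish I would recover $w_t$. For $\rho\neq-2$, comparing the equations for $\dot w_t$ and $\dot u_t$ gives $\dot w_t=\frac{\rho}{\rho+2}\dot u_t$, hence $w_t=\frac{\rho}{\rho+2}(u_t-u_0)=\frac{\rho}{\rho+2}(u_t+v_0)$, which is exactly the claimed formula (\ref{eq:radialdriving}). For $\rho=-2$ the equation for $u_t$ degenerates to $\dot u_t\equiv0$, so $u_t\equiv-v_0$ and $\dot w_t=\cot\frac{v_0}{2}$, giving $w_t=t\cot\frac{v_0}{2}$.

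There is no substantial obstacle here; the computation is routine once the substitution $\phi_t=\cos(u_t/2)$ is spotted. The only points requiring a little care are the branch choice for $\arccos$ and, relatedly, the interval of validity: for $\rho>-2$ one has $\cos\frac{u_t}{2}\to0$, so $u_t$ never reaches $0$ and the formula holds for all $t\geq0$; for $\rho<-2$ the quantity $\cos\frac{v_0}{2}e^{-\frac{\rho+2}{4}t}$ leaves $[-1,1]$ at a finite time, which is precisely $\tau_{0+}$, the swallowing time of the force point, and the formula holds on $[0,\tau_{0+})$.
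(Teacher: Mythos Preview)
Your proof is correct and follows essentially the same route as the paper: both reduce to the autonomous separable ODE for $w_t-v_t$ (the paper uses $v_t-w_t$), solve it via $\cos\frac{u_t}{2}=\cos\frac{v_0}{2}\,e^{-(\rho+2)t/4}$, and integrate $\dot w_t=\frac{\rho}{\rho+2}\dot u_t$ to recover $w_t$. Your write-up is slightly more explicit about the substitution and the branch/validity issues, but the argument is the same.
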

\begin{proof}
The driving function of SLE$_0(\rho)$ satisfies
\begin{align}
dw_t^{\rho,v_0} & = \frac{\rho}{2}\cot\bigg(\frac{w_t^{\rho,v_0}-v_t}{2}\bigg)dt\label{eq:radialsle0drive}
\end{align}
so that $v_t-w_t^{\rho,v_0}$ satisfies the separable equation
$$d(v_t-w_t^{\rho,v_0})=\frac{\rho+2}{2}\cot\bigg(\frac{v_t-w^{\rho,v_0}_t}{2}\bigg)dt,$$ solved by
$$v_t-w_t^{\rho,v_0} = 2\arccos \Big(\cos\tfrac{v_0}{2}e^{-\frac{\rho+2}{4}t}\Big).$$
Plugging this into (\ref{eq:radialsle0drive}) and integrating gives the expression for (\ref{eq:radialdriving}).
\end{proof}
\begin{proposition}
For all $\rho>-2$ the radial SLE$_0(\rho)$ comes arbitrarily close to $0$.
\end{proposition}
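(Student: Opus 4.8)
The plan is to combine the explicit driving function from Proposition~\ref{prop:radialsledrive} with the conformal-radius parametrization and the Schwarz lemma. The point of the hypothesis $\rho>-2$ is that it forces the curve to be defined for all time $t\in[0,\infty)$; once that is known, the conclusion is immediate because the conformal radius of $\D\setminus K_t$ seen from $0$ equals $e^{-t}\to 0$.

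First I would record that, for $\rho>-2$, one has $\tau_{0+}=\infty$. From the proof of Proposition~\ref{prop:radialsledrive} we have $v_t-w_t^{\rho,v_0}=2\arccos\!\big(\cos\tfrac{v_0}{2}\,e^{-\frac{\rho+2}{4}t}\big)$. Since $v_0/2\in(0,\pi)$ we have $\cos\tfrac{v_0}{2}\in(-1,1)$, and since $\frac{\rho+2}{4}>0$ the factor $e^{-\frac{\rho+2}{4}t}$ lies in $(0,1]$ for $t\ge 0$ and tends to $0$; hence $\cos\tfrac{v_0}{2}\,e^{-\frac{\rho+2}{4}t}$ stays in a compact subset of $(-1,1)$, so $v_t-w_t^{\rho,v_0}$ stays in a compact subset of $(0,2\pi)$ (in fact tending to $\pi$). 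In particular $\cot\big(\tfrac{w_t-v_t}{2}\big)$ is uniformly bounded, so $w_t^{\rho,v_0}$ is a smooth function on all of $[0,\infty)$ and the force point is never hit. Consequently the disk-hull family $(K_t)_{t\ge 0}$ driven by $w^{\rho,v_0}$ is defined for every $t\ge 0$, is parametrized by conformal radius (so $g_{K_t}'(0)=e^{t}$), and, being driven by a smooth function of vanishing $\rho$-Loewner energy on each $[0,T]$, corresponds to a simple curve, namely the trace $\gamma^{\rho,v_0}$ with $K_t=\gamma^{\rho,v_0}([0,t])$.

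Next I would convert the statement that the conformal radius of $\D\setminus K_t$ from $0$ tends to $0$ into the statement that $\dist(0,K_t)\to 0$. Writing $r_t:=\dist(0,\partial(\D\setminus K_t))$, the ball $B(0,r_t)$ lies in $\D\setminus K_t$, so $z\mapsto g_{K_t}(r_t z)$ maps $\D$ into $\D$ fixing $0$, and the Schwarz lemma (equivalently Koebe's theorem) gives $r_t\,g_{K_t}'(0)\le 1$, i.e.\ $r_t\le e^{-t}$. On the other hand $\partial(\D\setminus K_t)\subseteq\partial\D\cup K_t$, so $r_t\ge\min\big(\dist(0,\partial\D),\dist(0,K_t)\big)=\min\big(1,\dist(0,K_t)\big)$. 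Hence for every $t$ with $e^{-t}<1$ we get $\dist(0,K_t)\le e^{-t}$, and therefore $\dist\big(0,\gamma^{\rho,v_0}\big)=\inf_{t\ge 0}\dist(0,K_t)=0$; that is, radial SLE$_0(\rho)$ comes arbitrarily close to $0$.

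The only delicate point is the first step: making sure the curve does not terminate (by hitting the force point or hitting back on the boundary) before $t=\infty$. This is exactly where the sign of $\rho+2$ is used; for $\rho\le -2$ the factor $e^{-\frac{\rho+2}{4}t}$ no longer decays, $\cos\tfrac{v_0}{2}e^{-\frac{\rho+2}{4}t}$ can reach $\pm 1$ in finite time, and the curve terminates away from $0$, consistent with the phase picture described after the statement. Everything else is a one-line application of the Schwarz lemma together with the conformal-radius normalization.
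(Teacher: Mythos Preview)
Your proof is correct and follows the same approach as the paper. The paper's proof is a single sentence, observing that the explicit driving function from Proposition~\ref{prop:radialsledrive} shows the curve is defined for all $t\in[0,\infty)$ when $\rho>-2$; the step that conformal radius $e^{-t}\to 0$ forces $\dist(0,K_t)\to 0$ is left implicit there, and you have simply spelled it out via the Schwarz lemma.
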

\begin{proof} The expression for the radial driving function tells us that the SLE$_0(\rho)$ is not stopped at a finite time for $\rho>-2$.
\end{proof}
\begin{remark}As a consequence of Corollary \ref{cor:infenergy}, which will be shown in Section \ref{section:finiteenergyradial}, this means that, for each $\rho>-2$, the SLE$_0(\rho)$ also approaches $0$.
\end{remark}
Now consider $\rho=-2$. The proof of Proposition \ref{prop:radialsledrive} shows that
$v_t-w_t^{-2,v_0}=v_0,$ for all $t$. This implies that the radial SLE$_0(-2)$, or equivalently, chordal SLE$_0(-4)$ is self-similar. Therefore we can easily compute its chordal driving function. 
\begin{proposition}\label{prop:logspiral}
The chordal SLE$_0(-4)$ with force point $z_0\in\H$ has driving function
$$W_t^{-4,z_0} = 2\cos\theta_0(|z_0|-\sqrt{|z_0|^2-4t}).$$
Hence, the chordal SLE$_0(-4)$ is a logarithmic spiral approaching $z_0$ (unless $\theta_0= \pi/2$). 
\end{proposition}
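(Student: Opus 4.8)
Here is a plan for proving the final statement (Proposition~\ref{prop:logspiral}).

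The plan is to compute directly in $\H$, using the separation-of-variables idea behind Proposition~\ref{prop:radialsledrive} (equivalently, the self-similarity of radial SLE$_0(-2)$ observed just above). By Definition~\ref{def:cslekapparho} the driving function of the chordal SLE$_0(-4)$ with interior force point $z_0$ satisfies $\dot W_t=\Re\frac{-4}{W_t-z_t}$, while the Loewner equation gives $\dot z_t=\frac{2}{z_t-W_t}$, where $z_t=g_t(z_0)$. First I would set $u_t:=z_t-W_t$, so $u_0=z_0$, and compute, using $W_t-z_t=-u_t$ and $4\,\Re w=2w+2\overline w$,
\[\dot u_t=\dot z_t-\dot W_t=\frac{2}{u_t}-4\,\Re\frac{1}{u_t}=\frac{2}{u_t}-\frac{2}{u_t}-\frac{2}{\overline u_t}=-\frac{2}{\overline u_t}.\]
This autonomous ODE is the crux: it linearizes the whole problem.

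From $\dot u_t=-2/\overline u_t$ two facts drop out. First, $\frac{d}{dt}|u_t|^2=\dot u_t\overline u_t+u_t\dot{\overline u}_t=-4$, so $|u_t|^2=|z_0|^2-4t$ and the curve is generated up to $\tau=|z_0|^2/4$. Second, writing $u_t=|u_t|e^{i\varphi_t}$ we have $\dot u_t=-\frac{2}{|u_t|}e^{i\varphi_t}$, a negative real multiple of $e^{i\varphi_t}$; comparing with $\dot u_t=\big(\frac{d}{dt}|u_t|+i|u_t|\dot\varphi_t\big)e^{i\varphi_t}$ forces $\dot\varphi_t\equiv0$, so $\varphi_t\equiv\arg z_0=\theta_0$. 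Hence $z_t-W_t=\sqrt{|z_0|^2-4t}\,e^{i\theta_0}$. Plugging this into $\dot z_t=2/(z_t-W_t)$ and taking real parts gives $\dot x_t=2\cos\theta_0/\sqrt{|z_0|^2-4t}$; integrating and using $\Re z_0=|z_0|\cos\theta_0$ yields $\Re z_t=\cos\theta_0\,(2|z_0|-\sqrt{|z_0|^2-4t})$, and therefore $W_t=\Re z_t-\cos\theta_0\sqrt{|z_0|^2-4t}=2\cos\theta_0(|z_0|-\sqrt{|z_0|^2-4t})$, the claimed formula.

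For the geometric statement I would rewrite, with $\tau=|z_0|^2/4$, $W_t=c\sqrt{\tau}-c\sqrt{\tau-t}$ where $c=4\cos\theta_0$ — a driving function of the form (v) (or (i) when $\theta_0=\pi/2$) in the classification of deterministically self-similar curves recalled before Proposition~\ref{prop:ChordalSLE0}. Since $z_0\in\H$ we have $|c|=4|\cos\theta_0|<4$, so for $\theta_0\neq\pi/2$ one has $|c|\in(0,4)$ and item~(v) gives that the trace is a logarithmic spiral; when $\theta_0=\pi/2$ one has $c=0$, $W_t\equiv0$, and the trace is the vertical ray $i\R^+$. Finally, since $z_t-W_t=u_t\to0$ as $t\to\tau-$, the force point $z_0$ is swallowed exactly at the continuation threshold $\tau_{0+}=\tau$, i.e.\ at the time the spiral's windings accumulate; a center different from $z_0$ would encircle and swallow $z_0$ strictly before $\tau$, so the spiral's center is $z_0$ and the trace accumulates there. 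The computation itself has no analytic obstacle once the substitution $u_t=z_t-W_t$ is in place; the only point deserving care is this last one — pinning the spiral's center to $z_0$ rather than merely invoking ``(v) gives a spiral'' — which is settled by matching the swallowing time of $z_0$ to the terminal time $\tau$.
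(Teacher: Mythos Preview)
Your proof is correct and takes a genuinely different route from the paper's. The paper first invokes the radial picture: from Proposition~\ref{prop:radialsledrive} it has $v_t-w_t^{-2,v_0}=v_0$, translates this via harmonic measure into the chordal statement $\sin\theta_t=\sin\theta_0$, and then solves the resulting ODEs for $|W_t-z_t|$ and $W_t$. You bypass the radial/chordal coordinate change entirely by computing directly that $u_t=z_t-W_t$ satisfies $\dot u_t=-2/\overline{u_t}=-2u_t/|u_t|^2$, from which both $|u_t|^2=|z_0|^2-4t$ and $\arg u_t\equiv\theta_0$ fall out in one step. This is cleaner and more self-contained; in fact you obtain the sharper conclusion $\theta_t=\theta_0$ rather than just $\sin\theta_t=\sin\theta_0$. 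For the spiral conclusion, the paper cites \cite{KNK04} directly, which already identifies the limit point as $\sqrt{k}+\sqrt{k-4}$ (hence $z_0$ after translation and scaling); your swallowing-time argument reaches the same conclusion but is a bit more roundabout --- citing \cite{KNK04} for the center as the paper does would tighten that last step.
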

\begin{proof}
We saw that the driving function of the radial SLE$_0(-2)$, which we denote here by $\gamma^R$, satisfies 
$v_t-w^{-2,v_0}_t=v_0,$ for all $t$. This is equivalent to
$$\omega(0,(\gamma^R_t)^+\cup a,\D\setminus\gamma^R_t)=\omega(0,a,\D),$$ where $a=\{e^{i\theta}:\theta\in[0,v_0]\}$ for all $t$. By conformal invariance of harmonic measure, this gives for chordal SLE$_0(-4)$, denoted here by $\gamma^C$, 
$$\omega(z_0,(\gamma^C_t)^+\cup [0,\infty),\H\setminus\gamma^C_t)=\omega(z_0,[0,\infty),\H),$$ for all $t$, which is equivalent to $\sin\theta_t=\sin\theta_0$ for all $t$.
We have, from the definition of the chordal SLE$_0(-4)$ and the chordal Loewner equation, that
$$\dot W_t^{-4,z_0}=-4\frac{W_t^{-4,z_0}-x_t}{|W_t^{-4,z_0}-z_t|^2},\qquad \dot x_t=2\frac{x_t-W_t^{-4,z_0}}{|W_t^{-4,z_0}-z_t|^2}.$$
Using $\sin\theta_t=\sin\theta_0$ we find 
$$\dot W_t^{-4,z_0}=4\frac{\cos\theta_0}{|W_t^{-4,z_0}-z_t|},\qquad \partial_t\log|W_t^{-4,z_0}-z_t|=-\frac{2}{|W_t^{-4,z_0}-z_t|^2},$$
so that 
$$|W_t^{-4,z_0}-z_t|=\sqrt{|z_0|^2-4t},\qquad W_t^{-4,z_0}=2\cos\theta_0(|z_0|-\sqrt{|z_0|^2-4t}).$$ 
In \cite{KNK04}, it was shown that the curve driven by $W_t=2\sqrt{k(1-t)},$ $k\in(0,4)$, is a logarithmic spiral approaching $\sqrt{k}+\sqrt{k-4}.$ After an appropriate translation, scaling, and reflection, this shows that the SLE$_0(\rho)$ curve is a logarithmic spiral approaching $z_0$. 
\end{proof}
In order to understand radial SLE$_0(\rho)$ with $\rho<-2$ we note that the expression (\ref{eq:radialdriving}) for $w_t^{\rho,v_0}$ can be extended to the interval $(-\infty,T)$ where $T\in \R\cup\{\infty\}$ such that $\pm e^{\frac{\rho+2}{4}T}=\cos\frac{v_0}{2}$ (where the sign depends on whether $v_0\in(0,\pi)$ or $v_0\in(\pi,2\pi)$). That is,
$$w_t^{\rho,v_0}(t)=-\frac{2\rho}{\rho+2}\arccos\Big(\pm e^{\frac{\rho+2}{4}(T-t)}\Big)+C=\frac{2\rho}{\rho+2}\arcsin\Big(\pm e^{\frac{\rho+2}{4}(T-t)}\Big)+\tilde C,\ t\in(-\infty,T)$$
where $C,$ and $\tilde C$ are constants. (This can also be done for $\rho=-2$. See Remark \ref{rmk:wholeplane-2}.) With this in mind, it is natural to make the following definition.
\begin{definition}\label{def:wholeplane}Fix $\rho\in(-\infty,-2)$. The whole-plane SLE$_0(\rho)$, started at $\infty$ in the direction $e^{i\theta}$, of conformal radius $e^T\in(-\infty,\infty)$, positive orientation, and with reference point $0$ and force point $\infty$, is the whole-plane Loewner chain with driving function $e^{iw_t}$ where
$$w_t = \frac{2\rho}{\rho+2}\arcsin(e^{\frac{\rho+2}{4}(T-t)})+\theta,\quad t\in(-\infty,T).$$
Similarly, the same object but with negative orientation is defined as the whole-plane Loewner chain with driving function
$$w_t = -\frac{2\rho}{\rho+2}\arcsin(e^{\frac{\rho+2}{4}(T-t)})+\theta,\quad t\in(-\infty,T).$$
\end{definition}
\begin{remark}\label{remark:wholeplane}This definition is natural since it has the ``deterministic domain Markov property'' that we expect from a whole-plane SLE$_0(\rho)$, that is, upon mapping out the initial part of the curve, we get a radial SLE$_0(\rho)$ with force point at the image of $\infty$. 
It follows from this property that the whole-plane SLE$_0(\rho)$ is a simple curve on $(-\infty,T)$ since we know that the radial SLE$_0(\rho)$ is a simple curve (since it has finite chordal Loewner energy for all times strictly before the force point is swallowed).  
\end{remark}
\begin{proposition}\label{prop:wholeplane}Fix $\rho\in(-\infty,-2)$. Let $\eta$ denote an arc-length parametrization of the whole-plane SLE$_0(\rho)$ starting at $\infty$ in the direction $1$, of any conformal radius, positive orientation, and with reference point $0$ and force point $\infty$. Then $\eta^{\rho,z_0}$ has a lift $\tilde\eta$ to $M_0$, such that $\arg(\tilde\eta(t))\to 0$ as $t\to -\infty$. The curve $\tilde\eta(t)$ is a flow-line of $h(z)=\frac{6+\rho}{4}\arg(z)+\pi$. This further implies that $\eta$ is the image of $\{x+iy_0:x\in(x_0,\infty)\}$, for some $y_0>0$ and $x_0\in[-\infty,\infty)$, under $z\mapsto z^{-\frac{4}{2+\rho}}$ where the branch is chosen so that 
$\arg((x+iy_0)^{-\frac{4}{\rho+2}})\to 0$ as $x\to +\infty$.
\end{proposition}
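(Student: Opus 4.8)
\emph{Flow-line property.} The plan is to establish the flow-line characterization first and then read off the explicit description by integrating the flow-line ODE. For the first part I would mimic the proof of Proposition~\ref{prop:flowline}. By Definition~\ref{def:wholeplane} and Remark~\ref{remark:wholeplane}, the whole-plane SLE$_0(\rho)$ is a whole-plane (radial-type, reference point $0$) Loewner chain whose mapping-out functions $g_t$ fix $0$ and send the force point $\infty$ to a boundary point $e^{iv_t}$, and whose trace, after mapping out any initial segment, is a radial SLE$_0(\rho)$. In particular its driving function $e^{iw_t}$, with $w_t$ as in Definition~\ref{def:wholeplane}, is $C^1$ on $(-\infty,T)$, so (by Remark~\ref{remark:wholeplane} this reduces to the radial, hence chordal, case, where $C^1$-regularity of a trace with $C^1$ driving function follows from \cite{W14}) the trace $\gamma$ is $C^1$ away from its two tips and, for $t<s<T$, $\arg\gamma'(t)=\lim_{u\to g_s(\gamma(t)-)}\arg((g_s^{-1})'(u))$. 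Fixing a point $u_0$ lying outside the whole curve and writing $u_t=g_t(u_0)$, the whole-plane Loewner equation yields explicit ODEs for $\log u_t$, $\log(u_t-e^{iw_t})$, $\log(u_t-e^{iv_t})$ and $\log g_t'(u_0)$; combining these with the defining relations $\dot w_t=\tfrac{\rho}{2}\cot\tfrac{w_t-v_t}{2}$ and $i\dot v_t=\tfrac{e^{iw_t}+e^{iv_t}}{e^{iw_t}-e^{iv_t}}$ of radial SLE$_0(\rho)$ shows that $h(u_t)-\arg g_t'(u_0)$ is constant in $t$, where $h(z)=\tfrac{6+\rho}{4}\arg z+\pi$ is viewed as a single-valued function on $M_0$; the coefficient $\tfrac{6+\rho}{4}$ and the additive $\pi$ are forced by this computation, and are consistent with the degenerate cases $\rho=-6$ ($h\equiv\pi$, trace a straight half-line) and $\rho=-2$ ($h=\arg z+\pi$, trace a logarithmic spiral, cf.\ Remark~\ref{rmk:wholeplane-2}), as well as with \eqref{eq:coorchange} and Remark~\ref{remark:ray}. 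Since $\arg g_0'(u_0)=0$, passing to the boundary gives $\arg\gamma'(t)=h(\gamma(t))$, provided the lift $\tilde\eta$ of the arc-length reparametrization $\eta$ to $M_0$ is chosen so that $\arg\tilde\eta(t)\to0$ as $t\to-\infty$ — which is exactly the meaning of ``starting at $\infty$ in the direction $1$''. This proves the first two assertions.

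\emph{Explicit description.} Write $c=\tfrac{6+\rho}{4}$ and $a=\tfrac{1}{1-c}=-\tfrac{4}{2+\rho}>0$. A direct check shows that, for any $y_0>0$, the curve $x\mapsto(x+iy_0)^a$ with the branch $\arg(x+iy_0)\in(0,\pi)$ satisfies $\arg\tfrac{d}{dx}(x+iy_0)^a=(a-1)\arg(x+iy_0)=c\,\arg\big((x+iy_0)^a\big)$; hence, traversed with $x$ decreasing, its unit tangent at a point $z$ equals $-e^{ic\arg z}=e^{ih(z)}$, so its lift to $M_0$ is a flow-line of $h$ with $\arg\to0$ as $x\to+\infty$. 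Solving the flow-line equation for $h$ in log-polar coordinates on $M_0$ shows that these are \emph{all} the flow-lines of $h$ approaching $\infty$ with $\arg\to0$ (a one-parameter family indexed by $y_0$). Therefore, by ODE uniqueness for the smooth field $h$ on $M_0\setminus\{0\}$, the curve $\tilde\eta$ coincides, as a subset of $M_0$, with an initial arc $\{(x+iy_0)^a:x\in(x_0,\infty)\}$ of one of them: $y_0>0$ is pinned down by the conformal radius $e^T$ (different conformal radii rescale the curve, hence $y_0$), and $x_0\in[-\infty,\infty)$ is the parameter value reached as $t\to T$. Since the trace is a simple curve on $(-\infty,T)$ by Remark~\ref{remark:wholeplane}, $x_0=-\infty$ when $a\le2$ (i.e.\ $\rho\le-4$, when the projected curve does not self-intersect) and otherwise $x_0$ is the parameter of the first self-intersection in $\C$. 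Projecting by $\pi_0$ from $M_0$ to $\C$ gives the stated form, with the branch convention inherited from $\arg\tilde\eta(t)\to0$ as $t\to-\infty$.

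\emph{Main obstacle.} The crux is the flow-line computation: one must set up the whole-plane Loewner equation with the correct normalization at the reference point $0$ and force point $\infty$, justify $C^1$-regularity of the trace up to but not including the two tips, and carry out the bookkeeping with branches of the argument that isolates the field $h$ together with its additive constant $\pi$ — in particular checking that $\arg$ does not jump when $\gamma$ winds around $0$, which is precisely why the statement is phrased on the logarithmic surface $M_0$. Once this is in place, integrating the ODE and invoking uniqueness is routine.
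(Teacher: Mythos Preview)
Your overall strategy matches the paper's: establish the flow-line property first, then integrate. But there is a genuine gap in the first part.

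You write ``Since $\arg g_0'(u_0)=0$, passing to the boundary gives $\arg\gamma'(t)=h(\gamma(t))$.'' In the whole-plane setting this is false: the mapping-out functions $g_t:\C\setminus\gamma_t\to\D$ are normalized by $g_t(0)=0$, $g_t'(0)=e^t$, and there is no time at which $g_t$ is the identity. What one has instead is $g_t(z)e^{-t}\to z$ uniformly on compacts as $t\to-\infty$, so $\arg g_t'(u_0)\to 0$ only in that limit. This limit is precisely where the specific form of $h$ (the coefficient $\tfrac{6+\rho}{4}$ and the additive $\pi$) must be extracted, and the paper devotes most of its proof to it: it invokes Proposition~\ref{prop:flowline} after mapping out $\gamma_t$, composes $g_t$ with an explicit M\"obius map $\varphi_t:\D\to\H$ sending $e^{iv_t}\mapsto\infty$, $e^{iw_t}\mapsto 0$, then with a shift-and-rescale $\psi_t(z)=i(z-z_t)e^{-t}$, and computes the limit of each constituent of the resulting field $\hat h_t(H_t(z))-\arg H_t'(z)$ term by term as $t\to-\infty$. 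Your claim ``$h(u_t)-\arg g_t'(u_0)$ is constant in $t$'' with $h$ already specified is circular: the ODE computation only tells you that some target-side field (depending on $e^{iw_t}$ and $e^{iv_t}$) minus $\arg g_t'(u_0)$ is constant; identifying that constant with $h(u_0)$ on $M_0$ is exactly the content of the limit argument you have not supplied.

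A related subtlety you skate over: the force point $\infty$ lies on the trace (it is the starting point), so the identification $e^{iv_t}=g_t(\infty)$ is not automatic. The paper handles this by treating $v_t$ purely as the continuous function determined by the domain Markov property together with $v_t\to\pi$ as $t\to-\infty$, rather than as the image of a point under $g_t$; your proposed ODE for $\log(u_t-e^{iv_t})$ implicitly assumes the identification.

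For the explicit description your argument is equivalent to the paper's (the paper shows $(\tilde\eta)^{-(2+\rho)/4}$ has constant tangent direction $\pi$, which is the same computation reversed). One minor correction: the sign $y_0>0$ is fixed by the \emph{orientation} (the paper uses that $(v_t-w_t)/(2\pi)\in(0,1/2)$), not by the conformal radius; the conformal radius determines only the magnitude of $y_0$.
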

\begin{proof}Let $\gamma$ denote $\eta$ parametrized by conformal radius. Let $g_t:\C\setminus \gamma_t\to \D$ with $g_t(0)=0$, $g_t'(0)=e^{t}$,
be its family of mapping-out functions. From the theory of whole-plane Loewner evolution, we have that $g_t(z) e^{-t}\to z$ as $t\to-\infty$ uniformly on compacts.\par
There is a unique continuous function $t\mapsto v_t\in \R$, such that, for every $t\in(-\infty,0)$ we have that $g_t(\gamma_{[t,T)})$ is a radial SLE$_0(\rho)$ from $e^{iw_t}$, with reference point $0$ and force point $e^{iv_t}$, and $v_t\to \pi$ as $t\to -\infty$. Once we show that $\gamma$ is ``well behaved" in the limit $t\to -\infty$ we will be able to say that $e^{iv_t} = g_t(\infty)$, but for now, we will simply treat $v_t$ as a continuous function with the given properties. For $t\in(-\infty,T)$, let $\varphi_t:\D\to\H$ be the conformal map with $\varphi_t(e^{iv_t})=\infty$, $\varphi_t(e^{iw_t})=0$, and $|\varphi'_t(0)|=1$. A computation gives $$\varphi_t(z)=-e^{i(v_t-w_t)/2}\frac{z-e^{iw_t}}{z-e^{iv_t}}.$$ Denote by $z_t=\varphi(0)=-e^{-i(v_t-w_t)/2}$. Further, let $\psi_t(z)=i(z-z_t)e^{-t}$, so that $\psi_t$ maps $\H$ onto $\{x+iy:x<e^{-t}\sin((v_t-w_t)/2),y\in\R\}$.\par
Let $\tilde\rho=-6-\rho$. For every $t\in(-\infty,T)$ we know that a lift of $\gamma_{(t,T)}$ to $M_0$, which we denote by $\tilde\gamma_{(t,T)}$, reparametrized by arc-length, is a flow-line of the field $$h_t(z)=h_{z_t,\tilde\rho}(\varphi_t\circ g_t(z)))-\arg((\varphi_t\circ g_t)'(z))$$ defined on $\pi_0^{-1}(\C\setminus \gamma_t)$, where $h_{z_t,\tilde\rho}$ is as defined in Proposition \ref{prop:flowline}. Here, as in the proof of Proposition \ref{prop:flowline}, we make sense of $h_t(z)$
by considering a lift of $\varphi_t\circ g_t$, mapping $\pi_0^{-1}(\C\setminus \gamma_t)$ onto $\pi_{z_t}^{-1}(\H)$.
Moreover, from (\ref{eq:flowlinemarkov}), we obtain that $h_t$ and $h_s$ coincide on the intersection of their domains. We thus obtain a field $h$ defined on all of $M_0$ coinciding with $h_t$ for each $t\in(-\infty,T)$ wherever the latter is defined. 
We now show that $h(z)=-\frac{\tilde\rho}{4}\arg(z)+\pi$. We have 
\begin{align}
\begin{split}h(z)&=h_{z_t,\tilde\rho}(\varphi_t\circ g_t(z))-\arg((\varphi_t\circ g_t)'(z)\\&=\lim_{t\to -\infty}(h_{z_t,\tilde\rho}(\varphi_t\circ g_t(z))-\arg((\varphi_t\circ g_t)'(z)))\\
&=\lim_{t\to -\infty}(\hat h_t(H_t(z))-\arg(H_t'(z))),\end{split}\label{eq:wholeplanefield}\end{align}
where $\hat h_t(z)=h_{z_t,\tilde\rho}(\psi^{-1}(z))-\arg((\psi^{-1})'(z))$ and $H_t(z)=\psi_t\circ \varphi_t \circ g_t$. Using that $g_t(z)e^{t}\to z$ uniformly on compacts as $t\to-\infty$ we find that
$H_t(z)\to 2z$ uniformly on compacts as $t\to-\infty$. Moreover,
$$\hat h_t(z)=\frac{3\pi}{2}-\arg(z_t-ie^{t}z)-\frac{\tilde\rho}{4}(\arg(-ize^t)+\arg(2i\Im z_t -ize^t)),$$
where the first and second arg-terms take values in $(0,\pi)$. Since $H_t(z)e^t\to 0$ and $z_t\to i$ as $t\to -\infty$, 
\begin{align*}&\lim_{t\to -\infty}\arg(2i\Im z_t -iH_t(\gamma(s))e^t)=\frac{\pi}{2},\\
&\lim_{t\to -\infty}\arg(z_t-ie^{t}H_t(z))=\frac{\pi}{2},\\
&\lim_{t\to -\infty}\arg(H_t'(z))=0,\\
&\lim_{t\to -\infty}\arg(-iH_t(z)e^t)=\arg(z)-\frac{\pi}{2}.
\end{align*}
Hence $h(z)=-\frac{\tilde\rho}{4}\arg(z)+\pi$.\par
To show the second part of the statement, consider $(\tilde\eta)^{1+\frac{\tilde\rho}{4}}$. Then $\tilde\eta'(t)=e^{ih(\tilde\eta(t))}$ and the chain rule yields 
$$\arg(((\tilde\eta(s))^{1+\frac{\tilde\rho}{4}})')=\pi.$$
Therefore, $(\tilde\eta)^{1+\frac{\tilde\rho}{4}}$ is (as a set) contained in a horizontal line on $M_0$. Thus, $\eta$ is the image of $\{x+iy_0:x\in(x_0,\infty)\}$, for some $y_0\in\R$ and $x_0\in[-\infty,\infty)$, under some branch of $z\mapsto z^{-\frac{4}{2+\rho}}$. Since $\gamma$ starts in the direction $1$, meaning that $w_t\to 0$ as $t\to -\infty$, the branch must be chosen so that 
$$\arg((x+iy_0)^{-\frac{4}{2+\rho}})\to 0,\quad\text{as }x\to +\infty.$$ Moreover, if $\rho\in(-\infty,-4]$, then $x_0=-\infty$, that is, $\gamma$ is a simple curve both starting and ending at $\infty$. If $\rho\in(-4,-2)$, then $x_0>-\infty$, that is, $\gamma$ is a curve starting at $\infty$ and ending at a self-intersection. Additionally, since $\gamma$ has positive orientation, the harmonic measure of the right side of the curve $(v_t-w_t)/(2\pi)\in(0,1/2)$ for all $t$. Hence, we must have $y_0>0$.
\end{proof}
\begin{remark}Since the driving function of SLE$_0(-2)$ is linear, one can similarly define whole-plane SLE$_0(-2)$ to be a whole-plane Loewner chain with driving function 
$$w_t = t\cot\frac{v_0}{2}+\theta,\quad t\in\R$$
for some $v_0\in(0,2\pi)$ and $\theta\in [0,2\pi)$. With Proposition \ref{prop:wholeplane} in mind one can guess that whole-plane SLE$_0(-2)$ should be a flow-line of $h(z)=\arg(z)+C$ for some $C$ (if we set $C=\pi$ the flow-lines will be rays from $\infty$ to $0$, which is clearly not what we want). Indeed, one can quite easily see, using Proposition \ref{prop:logspiral} and \cite[Proposition 3.3 and Figure 3]{LMR10}, that this is in fact the case. Using the explicit computations of \cite{LMR10,KNK04} one finds that $C=\frac{3\pi}{2}-\frac{v_0}{2}\in(\frac{\pi}{2},\frac{3\pi}{2})$. (Note that $C=\pi/2$ produces concentric circular flow-lines which are counter clockwise oriented, circles centered at $0$, and that $C=3\pi/2$ produces circles of the opposite orientation.) \label{rmk:wholeplane-2} 
\end{remark}
\begin{corollary}Suppose $v_0\in(0,\pi)\cup(\pi,2\pi)$. Let $\gamma:(0,T)\to\D$ be the radial SLE$_0(\rho)$ starting at $0$, with reference point $0$ and force point $e^{iv_0}$. Then $\gamma$ is a simple curve and can be continuously extended to $T$. If $\rho\in(-4,-2)$, then $\gamma(T-)\in\gamma_{(0,T)}\cup \partial \D\setminus\{e^{iv_0}\}$ such that $\D\setminus\gamma$ separates $e^{iv_0}$ from $0$. Moreover, the component of $\D\setminus\gamma$ containing $0$ has interior angle $\pi\frac{4+\rho}{2}$ at $\gamma(T-)$ (unless $\gamma(T-)=1$, in which case the interior angle is $\pi\frac{4+\rho}{4}$). If $\rho\leq-4$, then $\gamma(T-)=e^{iv_0}$ and the component of $\D\setminus\gamma$ containing $0$ has an interior angle $\pi\frac{\rho+4}{\rho+2}$ at $e^{iv_0}$.
\end{corollary}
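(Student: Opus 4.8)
The plan is to pass to the explicit whole-plane description of Proposition~\ref{prop:wholeplane}. By Remark~\ref{remark:wholeplane}, mapping out an initial segment of a suitably normalized whole-plane SLE$_0(\rho)$ curve $\eta$ (of either orientation) produces a radial SLE$_0(\rho)$ in $\D$ with force point at the image of $\infty$, and letting the stopping time and orientation vary realizes, for every $v_0\in(0,\pi)\cup(\pi,2\pi)$, our $\gamma$ as $\gamma=g_{t_0}(\eta|_{(t_0,T_{wp})})$, where $g_{t_0}\colon\hat\C\setminus\eta|_{(-\infty,t_0]}\to\D$ is the mapping-out function, normalized by rotation so that $g_{t_0}(0)=0$, $g_{t_0}(\eta(t_0))=1$, $g_{t_0}(\infty)=e^{iv_0}$. (The excluded value $v_0=\pi$ is the symmetric case; in the interior-force-point picture of (\ref{eq:coorchange}) and Proposition~\ref{prop:flowline} it corresponds to a force point on $i\R^+$, for which the curve degenerates to a half-line to the reference point.) Proposition~\ref{prop:wholeplane} then identifies $\eta$, up to this normalization, with the image of $\{x+iy_0:x_0<x<\infty\}$ under $z\mapsto z^\alpha$, $\alpha:=-\tfrac4{2+\rho}>0$, where $x_0=-\infty$ when $\rho\le-4$ (so $\alpha\le2$) and $x_0\in\R$ when $\rho\in(-4,-2)$ (so $\alpha>2$).

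Simplicity and the continuous extension to $T$ follow at once. The (half-)line $\{x+iy_0:x_0<x<\infty\}$ is a smooth simple arc; $z\mapsto z^\alpha$ is conformal on $\C\setminus\{0\}$ and moves the argument of a point of the line monotonically through an interval of length $\alpha\pi$, so its image is simple when $\alpha\le2$ and otherwise has exactly one self-intersection, at $p:=(x_0+iy_0)^\alpha=(-x_0+iy_0)^\alpha$ with $-x_0=y_0\tan(\pi/\alpha)>0$; finally $g_{t_0}$ is conformal and extends continuously to the relevant prime ends. For $\rho\le-4$ this gives $\gamma(T-)=g_{t_0}(\infty)=e^{iv_0}$. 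For $\rho\in(-4,-2)$ it gives $\gamma(T-)=g_{t_0}(p)$; writing $\eta(t_0)=(x_1+iy_0)^\alpha$, the point $p$ lies on $\eta|_{(t_0,T_{wp})}$, equals $\eta(t_0)$, or lies on $\eta|_{(-\infty,t_0]}$ according as $-x_0<x_1$, $=x_1$, or $>x_1$, so $\gamma(T-)$ lies on $\gamma_{(0,T)}$, equals $1$, or lies on $\partial\D\setminus\{e^{iv_0}\}$; and since the loop on $\eta$ between the two preimages of $p$ is bounded and winds once around $0$ (its argument sweeps length $2\pi$), $\D\setminus\gamma$ separates $e^{iv_0}=g_{t_0}(\infty)$ from $0$.

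For the interior angles, observe that the component of $\D\setminus\gamma$ containing $0$ corresponds, under $g_{t_0}^{-1}$, to the component of $\hat\C\setminus\eta$ containing $0$. If $\rho\le-4$: every point of $\eta=z^\alpha(\text{line})$ has argument in $(0,\alpha\pi)$, so a radial ray from $0$ in any direction of $(\alpha\pi,2\pi)$ misses $\eta$; hence the component containing $0$ is the one whose directions at $\infty$ fill that sector, of opening angle $2\pi-\alpha\pi$ there. Since $\eta|_{(-\infty,t_0]}$ reaches $\infty$ tangentially to a ray, $\hat\C\setminus\eta|_{(-\infty,t_0]}$ has a $2\pi$-corner at $\infty$ which $g_{t_0}$ sends to the smooth boundary of $\D$ at $e^{iv_0}$, halving angles; the image component thus subtends $\tfrac12(2\pi-\alpha\pi)=\pi-\tfrac{\alpha\pi}2=\pi\tfrac{\rho+4}{\rho+2}$ at $e^{iv_0}$. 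If $\rho\in(-4,-2)$: the component containing $0$ is the inside of the loop, and evaluating the tangent rays $\alpha(x+iy_0)^{\alpha-1}$ at $x=x_0$ and $x=-x_0$ (together with the tail) shows this region subtends $\pi-\tfrac{2\pi}\alpha=\pi\tfrac{\rho+4}2$ at $p$. Transporting through $g_{t_0}$: at an interior self-hit, or at a boundary hit where $g_{t_0}$ restricted to the relevant side of the slit is conformal, the angle is unchanged; at $\gamma(T-)=1=g_{t_0}(\eta(t_0))$ the slit tip again carries a $2\pi$-corner mapped to the smooth boundary of $\D$, so the angle halves to $\tfrac12(\pi-\tfrac{2\pi}\alpha)=\pi\tfrac{\rho+4}4$.

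The bulk of the work is in this last step: pinning down which complementary component contains $0$ and with what opening angle at the terminal point, and — the genuinely delicate point — tracking the angle distortion of $g_{t_0}$ at $\infty$ and at the slit tip $\eta(t_0)$, each of which is a $2\pi$-corner of the domain mapped onto a smooth arc of $\partial\D$. The conformality statements needed to transport angles only require the arcs involved to be smooth near the relevant point, which holds since they are images of a line under $z\mapsto z^\alpha$ composed with $g_{t_0}$; but the three sub-cases of $\rho\in(-4,-2)$ (interior self-hit, return to $1$, boundary hit) and the excluded symmetric value $v_0=\pi$ must each be handled separately.
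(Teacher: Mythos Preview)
Your argument is correct. For the topological claims and for the angle when $\rho\le-4$ you do exactly what the paper does: read everything off the explicit whole-plane description in Proposition~\ref{prop:wholeplane} and transport through $g_{t_0}$, halving angles at the $2\pi$-corner over $\infty$.

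Where you diverge from the paper is in the computation of the hitting angle for $\rho\in(-4,-2)$. The paper does \emph{not} stay in the whole-plane picture here: it changes coordinates to $(\H;0,z_0)$ with interior force point, views the curve as a chordal $\mathrm{SLE}_0(-6-\rho)$, reduces (by mapping out an initial segment) to the case where the curve terminates on $\R^+$, and then reads off the tangent direction at the hitting point as the value of the harmonic field $h_{-6-\rho,z_0}$ from Proposition~\ref{prop:flowline}. You instead compute the interior angle of the loop of $z\mapsto z^\alpha$ applied to the line directly from the two tangent directions $\alpha(x+iy_0)^{\alpha-1}$ at $x=\pm x_0$, and then push it through $g_{t_0}$, distinguishing the three sub-cases. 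Your route has the advantage of being uniform in $\rho$ and of treating the exceptional case $\gamma(T-)=1$ explicitly (the paper's proof does not spell this case out); the paper's route is shorter once one accepts Proposition~\ref{prop:flowline}, since the angle is literally the field value at the terminal lift. Both rely on the same regularity input, namely that $g_{t_0}$ behaves like a square root at the slit tip and at $\infty$; this is standard (the slit is analytic away from endpoints, so Schwarz reflection applies), but it is worth saying in one line rather than leaving it implicit.
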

\begin{proof}The topological properties of $\gamma$ follow directly from Proposition \ref{prop:wholeplane} and Remark \ref{remark:wholeplane}. When $\rho\leq -4$, the size of the intersection angle also follows easily from Proposition \ref{prop:wholeplane} since the whole-plane SLE$_0(\rho)$ forms an angle $2\pi\frac{\rho+4}{\rho+2}$ at $\infty$. For $\rho\in(-4,-2)$, we can find the intersection angle by using Proposition \ref{prop:flowline} in the following way. By changing coordinates we may instead consider $\hat\gamma$, a chordal SLE$_0(-6-\rho)$ in $\H$ with force point $z_0$. Suppose $\arg(z_0)\in(0,\pi/2)$. Then $\hat\gamma$ separates $z_0$ from $\infty$ by winding around $z_0$ clockwise. We may assume that $\hat\gamma$ ends upon hitting $\R^+$ (for otherwise we may achieve this by mapping out a portion of $\hat\gamma$). Now observe that $h_{-\rho-6,z_0}$ takes the value $-\pi\frac{\rho+4}{2}$ at the endpoint of the lift of $\hat\gamma$ starting at $(r,\theta+\pi)$. From this we deduce that the intersection angle is as claimed.
\end{proof}
\section{Finite energy curves and Dirichlet energy formulas}\label{section:finiteenergy}
This section is devoted to studying curves of finite $\rho$-Loewner energy when the force point is on the boundary and $\rho>-2$ as well as proving Theorems \ref{thm:radial} and \ref{thm:chordal}.\par 
In Section \ref{section:finiteenergyradial} we study fully grown curves $\gamma\subset\D$ starting at $1$, with respect to the reference point $0$ and force point $z_0\in\partial\D\setminus\{1\}$ (we will make precise what this means). We prove that $I^R_{\rho,z_0}(\gamma)<\infty$, $\rho>-2$, if and only if $\gamma$ approaches $0$ (that is, ends at $0$) and $I^{(\D;1,e^{iv_0})}(\gamma)<\infty$. Therefore, for a fully grown curve $\gamma$ and $\rho_1,\rho_2>-2$, we have 
$$I^R_{\rho_1,z_0}(\gamma)<\infty \iff I^R_{\rho_2,z_0}(\gamma)<\infty.$$
In Section \ref{section:globalchordal}, study fully grown curves $\gamma\subset\H$ starting at $0$, with respect to the reference point $\infty$ and force point $x_0>0$. We show that $I^C_{\rho,x_0}(\gamma)<\infty$ only if $\gamma$ is transient and approaches $\infty$ at an angle $\alpha\pi=\frac{\rho+2}{\rho+4}\pi$ (in the sense of Proposition \ref{proposition:angleapproach}). Therefore, for a fully grown curve $\gamma$ and $\rho_1,\rho_2>-2$, with $\rho_1\neq \rho_2$, we have
$$I^C_{\rho_1,x_0}(\gamma)<\infty \implies I^C_{\rho_2,x_0}(\gamma)=\infty.$$
Theorems \ref{thm:radial} and \ref{thm:chordal} are proved in Sections \ref{section:finiteenergyradial} and \ref{section:globalchordal} respectively. 
\subsection{Radial setting}\label{section:finiteenergyradial}
In order to benefit from the previous knowledge about the chordal Loewner energy (e.g., the bound (\ref{eq:anglebound}) and the Dirichlet energy formula (\ref{eq:chorDirichlet})) we change coordinates and instead consider the chordal setting with an interior force point and $\rho<-4$. All of our findings can be translated back to the radial setting using an appropriate conformal map. Throughout this section we fix $z_0=x_0+iy_0\in\H$. Let $\mathcal X^C_{z_0}$ be the class of simple curves $\gamma:(0,T)\to\H\setminus\{z_0\}$, with $\gamma(0+)=0$, which are maximal in the sense that $T=\tau_{0+}=\lim_{\vare\to 0+}\tau_{\vare}$ where $\tau_\vare=\inf\{t\in(0,T):|W_t-z_t|\leq \vare\}$. Note that this allows for $T=\infty$. We call $\gamma\in\mathcal X^C_{z_0}$  a fully grown curve with respect to $\infty$ and $z_0$.\par
The following proposition gives upper and lower bounds of the $\rho$-Loewner energy in terms of the first two terms of the right-hand side of (\ref{eq:rhoenergyinterior}).
\begin{proposition}\label{prop:radialbounds} Suppose $\rho<-4$ and suppose $\gamma:(0,T]\to\H\setminus\{z_0\}$ is a simple curve with $\gamma(0)=0$. Then
\begin{align*}I^C_{\rho,z_0}(\gamma)&\leq \max(1,-\tfrac{4+\rho}{4})\bigg(\max(1,-\tfrac{4+\rho}{4})I^C(\gamma)+\rho\log\frac{\sin\theta_T}{\sin\theta_0}\bigg),\\
I^C_{\rho,z_0}(\gamma)&\geq \min(1,-\tfrac{4+\rho}{4})\bigg(\min(1,-\tfrac{4+\rho}{4})I^C(\gamma)+\rho\log\frac{\sin\theta_T}{\sin\theta_0}\bigg).
\end{align*}
\end{proposition}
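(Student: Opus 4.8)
The plan is to reduce the statement to the integrated identity (\ref{eq:rhoenergyinterior}) of Proposition \ref{prop:integrated} plus one elementary estimate. Write $S:=\log\frac{\sin\theta_T}{\sin\theta_0}$ and $L:=\log\frac{|g_T'(z_0)|y_T}{y_0}$, so that (\ref{eq:rhoenergyinterior}) reads $I^C_{\rho,z_0}(\gamma)=I^C(\gamma)+\rho S-\frac{\rho(8+\rho)}{8}L$. Since $\gamma\subset\H\setminus\{z_0\}$ is simple we have $z_0\notin K_T$, so $g_T'(z_0)$, $y_T=\Im g_T(z_0)>0$ and $\theta_T=\arg g_T(z_0)\in(0,\pi)$ are well-defined; in particular $S$ and $L$ are finite. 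If $I^C(\gamma)=\infty$ the asserted inequalities are trivial (both sides are $+\infty$), so I would assume $I^C(\gamma)<\infty$, i.e.\ $W$ absolutely continuous with square-integrable derivative. Also note $-\frac{4+\rho}{4}>0$ because $\rho<-4$. The whole content of the proposition is thus a two-sided bound on the single term $-\frac{\rho(8+\rho)}{8}L$ in terms of $I^C(\gamma)$ and $S$.

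The key input is the chain $0\le -L\le \frac12 I^C(\gamma)-2S$. The first inequality holds because $\partial_t\log(y_t|g_t'(z_0)|)=-\frac{4(W_t-x_t)^2}{|W_t-z_t|^4}\le 0$ (this follows at once from the derivative computations in the proof of Proposition \ref{prop:integrated}), so $t\mapsto y_t|g_t'(z_0)|$ is non-increasing and $y_T|g_T'(z_0)|\le y_0$, i.e.\ $L\le 0$. For the second inequality I would apply (\ref{eq:rhoenergyinterior}) with $\rho=-4$ (legitimate, since that identity holds for every real $\rho$), obtaining $I^C_{-4,z_0}(\gamma)=I^C(\gamma)-4S+2L$; since $I^C_{-4,z_0}(\gamma)=\frac12\int_0^T(\dot W_t+4\,\Re\frac{1}{W_t-z_t})^2\,dt\ge 0$, this gives $I^C(\gamma)-4S+2L\ge 0$, which rearranges to $-L\le \frac12 I^C(\gamma)-2S$. (This last step is just the completion of a square inside $I^C_{-4,z_0}(\gamma)$.)

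To conclude, set $\beta:=\frac{\rho(8+\rho)}{8}$, so that $I^C_{\rho,z_0}(\gamma)=I^C(\gamma)+\rho S+\beta(-L)$; this is an affine function of $-L$, and $-L$ lies in the interval with endpoints $0$ and $\frac12 I^C(\gamma)-2S$. The endpoint value at $-L=0$ is $I^C(\gamma)+\rho S$; the endpoint value at $-L=\frac12 I^C(\gamma)-2S$ is, after simplification using $1+\frac{\beta}{2}=\frac{(4+\rho)^2}{16}=(-\frac{4+\rho}{4})^2$ and $\rho-2\beta=-\frac{\rho(4+\rho)}{4}=(-\frac{4+\rho}{4})\rho$, equal to $(-\frac{4+\rho}{4})^2 I^C(\gamma)+(-\frac{4+\rho}{4})\rho S=(-\frac{4+\rho}{4})((-\frac{4+\rho}{4})I^C(\gamma)+\rho S)$. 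Hence $I^C_{\rho,z_0}(\gamma)$ lies between these two expressions. Finally, recalling $\rho<-4$, one has $\beta<0$ exactly when $-\frac{4+\rho}{4}<1$ (i.e.\ $\rho\in(-8,-4)$), $\beta>0$ exactly when $-\frac{4+\rho}{4}>1$ (i.e.\ $\rho<-8$), and $\beta=0$ with $-\frac{4+\rho}{4}=1$ when $\rho=-8$; in every case the larger of the two endpoint expressions is the one carrying the factor $\max(1,-\frac{4+\rho}{4})$ and the smaller the one carrying $\min(1,-\frac{4+\rho}{4})$, which is exactly the asserted inequality.

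The proof is essentially algebraic and I do not expect a genuine obstacle; the only things that need care are the finiteness of $S$ and $L$ (guaranteed by $z_0\notin K_T$) and keeping the signs straight in the last paragraph, in particular verifying that the coefficient identities $1+\frac{\beta}{2}=(\frac{4+\rho}{4})^2$ and $\rho-2\beta=-\frac{\rho(4+\rho)}{4}$ are exact so that no slack is lost. As a byproduct, equality in the bound carrying the factor $-\frac{4+\rho}{4}$ occurs iff $-L=\frac12 I^C(\gamma)-2S$, i.e.\ iff $I^C_{-4,z_0}(\gamma)=0$, i.e.\ iff $\gamma$ is the SLE$_0(-4)$ with force point $z_0$ (Proposition \ref{prop:logspiral}).
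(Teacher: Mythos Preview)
Your proof is correct and follows essentially the same route as the paper: both reduce to the two-sided bound $0\le -L\le \tfrac12 I^C(\gamma)-2S$ on $L=\log\frac{|g_T'(z_0)|y_T}{y_0}$ and then plug into the integrated formula (\ref{eq:rhoenergyinterior}). The only cosmetic difference is that the paper obtains $-L\le\tfrac12 I^C(\gamma)-2S$ by the pointwise inequality $-4\frac{(W_t-x_t)^2}{|W_t-z_t|^4}\ge 2\,\partial_t\log\sin\theta_t-\tfrac14\dot W_t^2$ and integrating, whereas you get it in one stroke by invoking (\ref{eq:rhoenergyinterior}) at $\rho=-4$ and using $I^C_{-4,z_0}(\gamma)\ge 0$; as you note, these are the same completion-of-the-square computation.
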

\begin{proof}
We estimate $|g_t'(z_0)|$ in terms of the chordal Loewner energy as in \cite{FS17}. That is, we have
\begin{align*}\partial_t \log|g_t'(z_0)|y_t &= -4\frac{(W_t-x_t)^2}{|W_t-z_t|^4}=2\partial_t \log\sin\theta_t+4\frac{(W_t-x_t)^2}{|W_t-z_t|^4}+2\dot W_t\frac{W_t-x_t}{|W_t-z_t|^2}\\&\geq 2\partial_t \log\sin\theta_t-\frac{1}{4}\dot W_t^2,
\end{align*}
where the equalities follow from the chordal Loewner equation. This shows
$$2\log\frac{\sin\theta_T}{\sin\theta_0}-\frac{1}{2}I^C(\gamma)\leq\log\frac{|g_T'(z_0)|y_t}{y_0}\leq 0.$$
Plugging this into (\ref{eq:rhoenergyinterior}) one obtains the desired bounds.
\end{proof}
\begin{corollary}
\label{cor:infenergy}
Let $\gamma\in\mathcal X^C_{z_0}$ and let $\rho<-4$. Then,
\begin{enumerate}[label=(\alph*)]
\item If $I^C(\gamma)=\infty$, then $I^C_{\rho,z_0}(\gamma)=\infty$.
\item If $\gamma$ is unbounded, then $I^C_{\rho,z_0}(\gamma)=\infty$.
\end{enumerate}
\end{corollary}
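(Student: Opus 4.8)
The plan is to deduce both statements from the lower bound in Proposition~\ref{prop:radialbounds}, applied to the truncations $\gamma_\tau=\gamma|_{(0,\tau]}$ for $\tau<T$, and then to pass to the limit $\tau\to T$. By Definition~\ref{def:rhoenergyhp} together with the additive property at the end of Section~\ref{section:definition}, the map $\tau\mapsto I^C_{\rho,z_0}(\gamma_\tau)$ is nondecreasing and $I^C_{\rho,z_0}(\gamma)=\lim_{\tau\to T-}I^C_{\rho,z_0}(\gamma_\tau)$, so it suffices to show that $I^C_{\rho,z_0}(\gamma_\tau)$ is unbounded along some sequence $\tau\to T$. Since $\rho<-4$, the constant $\lambda:=\min(1,-\tfrac{4+\rho}{4})$ is strictly positive and $\rho<0$; write $\theta_\tau=\arg g_\tau(z_0)$ as in Proposition~\ref{prop:radialbounds}, so that the lower bound reads $I^C_{\rho,z_0}(\gamma_\tau)\ge\lambda\big(\lambda I^C(\gamma_\tau)+\rho\log\tfrac{\sin\theta_\tau}{\sin\theta_0}\big)$.

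For part~(a): for every $\tau$ we have $\sin\theta_\tau\in(0,1]$ and $\rho<0$, hence $\rho\log\tfrac{\sin\theta_\tau}{\sin\theta_0}\ge\rho\log\tfrac{1}{\sin\theta_0}$, a constant independent of $\tau$; meanwhile $I^C(\gamma_\tau)\uparrow I^C(\gamma)=\infty$ as $\tau\to T$. Plugging these into the lower bound gives $I^C_{\rho,z_0}(\gamma_\tau)\to\infty$, which is the claim.

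For part~(b): an unbounded $\gamma\in\mathcal X^C_{z_0}$ must have $T=\infty$ (a curve stopped at finite half-plane capacity is bounded). Discarding the nonnegative term $\lambda I^C(\gamma_t)$, the lower bound yields $I^C_{\rho,z_0}(\gamma_t)\ge\lambda\rho\log\tfrac{\sin\theta_t}{\sin\theta_0}$, and since $\lambda\rho<0$ this tends to $+\infty$ along any sequence $t_n\to\infty$ with $\sin\theta_{t_n}\to0$. So it remains to prove that $\liminf_{t\to\infty}\sin\theta_t=0$. Writing $\sin\theta_t=\Im z_t/|z_t|$ with $z_t=g_t(z_0)$, and recalling $\partial_t\log\Im z_t=-2/|W_t-z_t|^2<0$, so that $\Im z_t\le\Im z_0$, it is enough to show $|g_t(z_0)|\to\infty$. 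I would split on $I^C(\gamma)$: if $I^C(\gamma)=\infty$ then part~(a) already gives $I^C_{\rho,z_0}(\gamma)=\infty$; if $I^C(\gamma)<\infty$ then $\int_0^\infty\dot W_t^2\,dt<\infty$ forces $W_t$ to converge to some $W_\infty\in\R$ and $\gamma$ to be transient, and then comparison of the flow $\dot z_t=2/(z_t-W_t)$ with the flow driven by the constant $W_\infty$ (whose trace is the vertical half-line through $W_\infty$, along which $g_t(z_0)\to\infty$) shows $g_t(z_0)\to\infty$, so that $\sin\theta_t\le\Im z_0/|g_t(z_0)|\to0$. A more hands-on version: if $z_t$ remained in a compact subset of $\H$ along a subsequence $t_n\to\infty$, then, $W_t$ being bounded and $|z_t-W_t|$ bounded below (since $\tau_{0+}=\infty$), one would have $\partial_t\Im z_t\le-c\,\Im z_t$ along that subsequence, forcing $\Im z_t\to0$, and ruling out the remaining degenerate possibilities ($z_t\to0$, or $\Im z_t\to0$ with $|z_t|\not\to0$) recovers $\liminf\sin\theta_t=0$.

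The main obstacle is precisely this last step of part~(b): controlling the asymptotics of $g_t(z_0)$ for an unbounded curve; everything else is a mechanical application of the bounds of Proposition~\ref{prop:radialbounds} and the monotone-limit definition of $I^C_{\rho,z_0}$. A possibly cleaner route to the obstacle passes through the radial picture: by \eqref{eq:coorchange} and Proposition~\ref{prop:coorchangeradialenergy}, $I^C_{\rho,z_0}$ with $\rho<-4$ equals $I^R_{-6-\rho,\,\cdot}$ with $-6-\rho>-2$, and under this coordinate change an unbounded chord in $\H$ corresponds to a curve in $\D$ accumulating at the \emph{force} point; since Section~\ref{section:minimizers} shows that for parameter $-6-\rho>-2$ the energy minimizer instead terminates at the reference point, one expects such curves to have infinite energy, and, phrasing $\sin\theta_t=\sin(\pi\,\omega(z_0,A_t,\H\setminus\gamma_t))$ for the appropriate boundary arc $A_t$, the vanishing of this harmonic measure as the curve escapes to $\infty$ becomes essentially a monotonicity statement.
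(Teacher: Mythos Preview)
Your argument for part~(a) is correct and is exactly the paper's: since $\rho<0$ and $\sin\theta_\tau\le1$, the term $\rho\log(\sin\theta_\tau/\sin\theta_0)$ is bounded below by a constant, so the lower bound of Proposition~\ref{prop:radialbounds} forces $I^C_{\rho,z_0}(\gamma_\tau)\to\infty$.

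Part~(b), however, has a genuine gap. The step ``$\int_0^\infty\dot W_t^2\,dt<\infty$ forces $W_t$ to converge to some $W_\infty\in\R$'' is false: take for instance $W_t=t^{1/4}$ (then $\dot W_t^2\asymp t^{-3/2}$ is integrable but $W_t\to\infty$), or $W_t=\sin(\log(1+t))$ (integrable square derivative, yet $W_t$ oscillates forever). Your comparison-of-flows argument and the ``hands-on'' alternative both lean on $W_t$ being bounded, so neither goes through as written. Even granting boundedness of $W_t$, the hands-on version only controls $\partial_t\Im z_t$ along a subsequence, which does not by itself force $\Im z_t\to0$; and the assertion that $|z_t-W_t|$ is bounded below because $\tau_{0+}=\infty$ is also not automatic (one can have $\tau_\varepsilon<\infty$ for every $\varepsilon>0$ with $\tau_\varepsilon\to\infty$).

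The clean route is precisely the harmonic-measure idea you sketch at the very end, and it is what the paper does---with no case split on $I^C(\gamma)$. One writes
\[
\frac{\theta_t}{\pi}=\omega\big(z_0,\,\R^-\cup\gamma_t^-,\,\H\setminus\gamma_t\big),\qquad
\frac{\pi-\theta_t}{\pi}=\omega\big(z_0,\,\R^+\cup\gamma_t^+,\,\H\setminus\gamma_t\big),
\]
takes $D_R=\{z\in\H:|z-x_0|<R\}$ and the first hitting time $T_R$ of $\partial D_R$, notes that $D_R\setminus\gamma_{T_R}$ splits into a left and a right component, and then monotonicity of harmonic measure bounds whichever of $\theta_{T_R}/\pi$ or $(\pi-\theta_{T_R})/\pi$ corresponds to the component \emph{not} containing $z_0$ by $\omega(z_0,\partial D_R\cap\H,D_R)$, which tends to $0$ as $R\to\infty$. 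This gives $\sin\theta_{T_R}\to0$ directly, and the lower bound of Proposition~\ref{prop:radialbounds} (with the nonnegative $I^C$-term dropped) finishes the proof. I would recommend replacing your flow-comparison paragraph by this argument.
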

\begin{proof}Note that $\min(1,-\frac{4+\rho}{4})>0$ and recall that $I^C_{\rho,z_0}(\gamma_t)$ is increasing in $t$. Statement (a) holds since $\rho\log\sin\theta_t$ is non-negative. Thus, if $I^C(\gamma_t)$ diverges, then $I^C_{\rho,z_0}(\gamma_t)$ must also do so. Consider statement (b). We claim that if $\gamma$ is unbounded, then for every $\vare>0$ there is a $t\in(0,T)$ such that
$\sin \theta_t < \vare.$ If so, then (b) follows, since $I^C(\gamma_t)$ is non-negative. We now prove the claim. Observe that $$\frac{\theta_t}{\pi}=\omega(z_t,(-\infty,W_t],\H)=\omega(z_0,\R^-\cup\gamma^-_t,\H\setminus\gamma_t),$$
$$\frac{\pi-\theta_t}{\pi}=\omega(z_t,[W_t,\infty),\H)=\omega(z_0,\R^+\cup\gamma^+_t,\H\setminus\gamma_t).$$
For $R>|z_0|$, let $T_R=\inf\{t:|\gamma(t)-x_0|\geq R\}$ and $D_R=\{z\in\H:|z-x_0|<R\}$. Note that $D_R\setminus\gamma_{T_R}$ consists of two components, $D_R^-$ to the left of $\gamma$ and $D_R^+$ to the right of $\gamma$. Suppose $z_0\in D_R^+$. Then, by monotonicity of harmonic measure
$$\omega(z_0,\R^-\cup\gamma^-_t,\H\setminus\gamma_t)\leq \omega(z_0,\partial D_R\cap \H,D_R^+)\leq \omega(z_0,\partial D_R\cap \H,D_R).$$
By symmetry, we have $\omega(z_0,\R^+\cup\gamma^+_t,\H\setminus\gamma_t)\leq \omega(z_0,\partial D_R\cap \H,D_R)$ if $z_0\in D_R^-$. Since
$$\omega(z_0,\partial D_R\cap \H,D_R)\to 0\quad\text{as }R\to\infty$$ (this can be seen by an explicit computation) it follows that $\sin\theta_{T_R}\to 0$ as $R\to\infty$.
\end{proof}
Suppose $\gamma\in\mathcal X^C_{z_0}$ has finite $\rho$-Loewner energy. Then $T<\infty$ since $T=\infty$ implies that $\gamma$ is unbounded. We can conclude that $\gamma\in\mathcal X^C_{z_0}$ can have finite $\rho$-Loewner energy only if $T<\infty$ and $\gamma$ can be continuously extended by $\gamma(T)=z_0$: 
we can assume that the driving function $W$ can be continuously extended to $T$, for otherwise $I^C(\gamma)=\infty$ which implies $I^C_{\rho,z_0}(\gamma)=\infty$. This also means that $\gamma$ can be extended continuously to $T$ and must be simple on $[0,T]$ (otherwise it could not have finite chordal Loewner energy). By the definition of $T=\tau_{0+}$ and Loewner's theorem, we must have $\gamma(T)=z_0$. 
\begin{proposition}\label{prop:finiteenergy}
Fix $\rho<-4$. Let $\gamma:(0,T)\to\H\setminus\{z_0\}$ be a simple curve with $\gamma(0+)=0$ and with $\gamma(T-)=z_0$. Then 
$I^C_{\rho,z_0}(\gamma)$ is finite if and only if $I^C(\gamma)$ is finite. Moreover, if they are finite then $\sin\theta_t\to 0$ as $t\to T-$.
\end{proposition}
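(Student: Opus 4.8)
The plan is to read the equivalence and the ``moreover'' off Corollary~\ref{cor:infenergy}, the integrated identity~(\ref{eq:rhoenergyinterior}), and an analysis of the boundary behaviour of $\theta_t$ as $t\to T-$.

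One direction of the equivalence is immediate. The hypothesis $\gamma(T-)=z_0$ is exactly the statement that $|W_t-z_t|\to0$, that is $\tau_{0+}=T$, so $\gamma\in\mathcal X^C_{z_0}$; Corollary~\ref{cor:infenergy}(a) then gives that $I^C(\gamma)=\infty$ implies $I^C_{\rho,z_0}(\gamma)=\infty$, so finiteness of $I^C_{\rho,z_0}(\gamma)$ forces $I^C(\gamma)<\infty$.

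To prove the ``moreover'' --- that $\sin\theta_t\to0$ when $I^C(\gamma)<\infty$ --- I would use the harmonic-measure identities $\theta_t/\pi=\omega(z_0,\R^-\cup\gamma_t^-,\H\setminus\gamma_t)$ and $(\pi-\theta_t)/\pi=\omega(z_0,\R^+\cup\gamma_t^+,\H\setminus\gamma_t)$ recalled in the proof of Corollary~\ref{cor:infenergy}. Under these, $\sin\theta_t\to0$ amounts to saying that Brownian motion from $z_0$ in $\H\setminus\gamma_t$ exits through one fixed side of $\gamma_t$ with probability tending to $1$ as the tip $\gamma(t)$ closes up on $z_0$. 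Since $I^C(\gamma)<\infty$, completing $\gamma$ by the hyperbolic geodesic $\eta$ from $z_0$ to $\infty$ in $\H\setminus\gamma$ produces a curve $\gamma\cup\eta$ through $z_0$ of the same finite energy, so $\gamma\cup\eta\cup\R^+$ is a Weil-Petersson quasicircle; this gives quantitative control of the two prime ends of $\H\setminus\gamma$ at $z_0$, which I would transfer to the domains $\H\setminus\gamma_t$ and combine with a Beurling-type (extremal-length) estimate near $z_0$ to obtain the required degeneration of harmonic measure. Turning the Weil-Petersson regularity of the completed curve at $z_0$ into this harmonic-measure statement is the step I expect to be the main obstacle; it is also where the hypothesis $\rho<-4$ enters, through the structure of the $I^C_{\rho,z_0}$-minimizer.

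Finally, granting $\sin\theta_t\to0$, the converse direction of the equivalence follows from~(\ref{eq:rhoenergyinterior}): applying it to $\gamma_t$ for $t<T$ and letting $t\to T-$ gives
$$I^C_{\rho,z_0}(\gamma)=I^C(\gamma)+\lim_{t\to T-}\Big(\rho\log\frac{\sin\theta_t}{\sin\theta_0}-\frac{\rho(8+\rho)}{8}\log\frac{|g_t'(z_0)|y_t}{y_0}\Big).$$
By the computation in the proof of Proposition~\ref{prop:radialbounds}, $|g_t'(z_0)|y_t$ is non-increasing and bounded below by $y_0(\sin\theta_t/\sin\theta_0)^2e^{-I^C(\gamma)/2}$, hence converges to some $L\in[0,y_0]$. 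Since $\rho<0$, the two logarithmic terms in the limit diverge in opposite directions, and the remaining point is that their rates match: the square-root behaviour of $g_t$ near its tip ties $\sin\theta_t$ to $|g_t'(z_0)|y_t$ by $\log\sin\theta_t-\tfrac{8+\rho}{8}\log(|g_t'(z_0)|y_t)=O(1)$, which is part of the same boundary analysis. With this the limit above is finite, so $I^C_{\rho,z_0}(\gamma)<\infty$, completing the proof.
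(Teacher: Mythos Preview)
Your proposal has a fundamental problem: you are trying to prove the wrong limit. The ``moreover'' clause as printed contains a typo; the correct conclusion (and what the paper actually proves and later uses in the proof of Theorem~\ref{thm:radial}) is that $\log\sin\theta_t\to 0$, i.e.\ $\sin\theta_t\to 1$, not $\sin\theta_t\to 0$. Your harmonic-measure picture, in which Brownian motion from $z_0$ exits through one fixed side with probability tending to $1$, describes $\theta_t\to 0$ or $\pi$, which is precisely what does \emph{not} happen for finite-energy curves ending at $z_0$. Consequently the Weil--Petersson/Beurling program you outline is aimed at a false statement, and the ``delicate cancellation'' you set up in the last paragraph --- where you claim the two logarithmic terms in~(\ref{eq:rhoenergyinterior}) diverge in opposite directions and must be matched --- never occurs: once $\sin\theta_t\to 1$, the term $\rho\log(\sin\theta_t/\sin\theta_0)$ has a finite limit, and $\log(|g_t'(z_0)|y_t/y_0)$ is already bounded by the estimate from Proposition~\ref{prop:radialbounds}, so no cancellation is needed.

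The paper's argument for $\sin\theta_t\to 1$ is short and uses a tool you did not invoke: the bound~(\ref{eq:anglebound}). Since $g_t(\gamma_{[t,T]})-W_t$ is a curve from $0$ passing through $z_t-W_t$, which has argument $\theta_t$, (\ref{eq:anglebound}) gives $I^C(g_t(\gamma_{[t,T]})-W_t)\geq -8\log\sin\theta_t$. But the left side equals $\tfrac12\int_t^T\dot W_s^2\,ds$, which tends to $0$ when $I^C(\gamma)<\infty$; hence $-\log\sin\theta_t\to 0$. Feeding this into the upper bound of Proposition~\ref{prop:radialbounds} immediately yields $I^C_{\rho,z_0}(\gamma)<\infty$. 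Note also that $\rho<-4$ plays no role in the $\sin\theta_t\to 1$ step; it enters only through Corollary~\ref{cor:infenergy} and Proposition~\ref{prop:radialbounds}.
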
  
\begin{proof}
From Corollary \ref{cor:infenergy} we already have that $I^C(\gamma)$ is finite if $I^C_{\rho,z_0}(\gamma)$ is finite. To show the other direction, assume $I^C(\gamma)$ is finite. Note that $g_t(\gamma_{[t,T]})-W_t$ is a curve from $0$ to $z_t-W_t$, with $\arg(z_t-W_t)=\theta_t$. Therefore, (\ref{eq:anglebound}) implies 
$I^C(g_t(\gamma_{[t,T]})-W_t)\geq -8\log\sin\theta_t$. The assumption $I^C(\gamma)<\infty$ gives 
$$I^C(g_t(\gamma_{[t,T]})-W_t)=\frac{1}{2}\int_t^T\dot W_t^2dt\to 0\text{ as } t\to T-.$$
It follows that $-\log\sin\theta_t\to 0$ as $t\to T-$ and by Proposition \ref{prop:radialbounds}, it follows that $I^C_{\rho,z_0}(\gamma)$ is finite. 
\end{proof}
\begin{remark}The statement that $I^C(\gamma)<\infty$ implies that $\sin\theta_t\to 0$ also appears in \cite[Lemma 3.2]{M23}.
\end{remark}
\begin{corollary}\label{cor:chordalradial}Let $\gamma$ be a simple curve from $1$ to $0$ in $\D$. Then
$$\frac{1}{4}I^{(\D;1,-1)}(\gamma)\leq I^R(\gamma)\leq I^{(\D;1,-1)}(\gamma).$$ \end{corollary}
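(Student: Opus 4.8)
The plan is to deduce the corollary from Propositions~\ref{prop:radialbounds} and~\ref{prop:finiteenergy} by a single application of the coordinate change~(\ref{eq:coorchange}). First I would rewrite the radial energy: since the force point is immaterial when $\rho=0$, (\ref{eq:coorchange}) gives $I^R(\gamma)=I^{(\D;1,0)}(\gamma)=I^{(\D;1,-1)}_{-6,0}(\gamma)$. Fixing a conformal map $\varphi\colon\D\to\H$ with $\varphi(1)=0$ and $\varphi(-1)=\infty$, and writing $\hat\gamma=\varphi(\gamma)$ and $z_0=\varphi(0)$, this reads $I^R(\gamma)=I^C_{-6,z_0}(\hat\gamma)$, whereas by definition $I^{(\D;1,-1)}(\gamma)=I^C(\hat\gamma)$. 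Any such $\varphi$ sends $0$ to a point of the positive imaginary axis and is determined up to a positive dilation, under which the $\rho$-Loewner energy is invariant, so I may normalise $z_0=i$; then $\sin\theta_0=1$. Note that $\hat\gamma$ is a simple curve in $\H$ from $0$ to its force point $i$, exactly the situation treated in Section~\ref{section:finiteenergyradial}, and the asserted inequalities become
\[
\tfrac14\,I^C(\hat\gamma)\ \le\ I^C_{-6,i}(\hat\gamma)\ \le\ I^C(\hat\gamma).
\]

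Since $-6<-4$, Proposition~\ref{prop:finiteenergy} applies: if $I^C(\hat\gamma)=\infty$ then $I^C_{-6,i}(\hat\gamma)=\infty$ and both inequalities hold trivially, so I may assume $I^C(\hat\gamma)<\infty$. Let $T$ be the half-plane capacity lifetime of $\hat\gamma$. For each $t<T$ the truncation $\hat\gamma_t$ is a simple curve in $\H\setminus\{i\}$ starting at $0$, so Proposition~\ref{prop:radialbounds} with $\rho=-6$ (whence $-\tfrac{4+\rho}{4}=\tfrac12$, so the relevant factors are $1$ in the upper bound and $\tfrac12$ in the lower) yields
\[
\tfrac14\,I^C(\hat\gamma_t)-3\log\tfrac{\sin\theta_t}{\sin\theta_0}\ \le\ I^C_{-6,i}(\hat\gamma_t)\ \le\ I^C(\hat\gamma_t)-6\log\tfrac{\sin\theta_t}{\sin\theta_0}.
\]
As $t\to T-$, the quantities $I^C(\hat\gamma_t)$ and $I^C_{-6,i}(\hat\gamma_t)$ increase to $I^C(\hat\gamma)$ and $I^C_{-6,i}(\hat\gamma)$ respectively, by the definition of the energies of growing hull families (Definition~\ref{def:rhoenergyhp}), so it only remains to control the boundary term in the limit.

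The one point requiring care is therefore the limiting behaviour of $\log(\sin\theta_t/\sin\theta_0)$ as $t\to T-$, and this is precisely where the finiteness of $I^C(\hat\gamma)$ enters, through the angle bound~(\ref{eq:anglebound}). As in the proof of Proposition~\ref{prop:finiteenergy}, the curve $g_t(\hat\gamma_{[t,T]})-W_t$ starts at $0$ and passes through $z_t-W_t$, so (\ref{eq:anglebound}) gives $0\le -8\log\sin\theta_t\le I^C(g_t(\hat\gamma_{[t,T]})-W_t)=\tfrac12\int_t^T\dot W_s^2\,ds$, and the right-hand side tends to $0$ since $\int_0^T\dot W_s^2\,ds<\infty$; hence $\log\sin\theta_t\to 0=\log\sin\theta_0$. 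Passing to the limit in the displayed two-sided bound then gives $\tfrac14\,I^C(\hat\gamma)\le I^C_{-6,i}(\hat\gamma)\le I^C(\hat\gamma)$, and unwinding the coordinate change yields $\tfrac14\,I^{(\D;1,-1)}(\gamma)\le I^R(\gamma)\le I^{(\D;1,-1)}(\gamma)$. I do not expect any genuine obstacle: the substance is carried entirely by Propositions~\ref{prop:radialbounds} and~\ref{prop:finiteenergy}, and the content of the corollary is just the twofold observation that $\rho=-6$ recovers the radial Loewner energy and that the correction terms in~(\ref{eq:rhoenergyinterior}) vanish in the limit because a finite-energy chord approaches $i$ ``straight on'', i.e.\ $\sin\theta_t\to1$.
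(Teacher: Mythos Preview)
Your proof is correct and follows exactly the approach sketched in the paper: change coordinates to $\H$ so that the force point becomes $z_0=i$ (hence $\theta_0=\pi/2$), apply Proposition~\ref{prop:radialbounds} with $\rho=-6$, and use Proposition~\ref{prop:finiteenergy} to make the boundary term $\log\sin\theta_t$ vanish in the limit $t\to T-$. Your write-up simply makes explicit the details the paper compresses into one sentence.
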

\begin{proof}The corollary follows by changing coordinates to $\H$, applying Proposition \ref{prop:radialbounds} with $\rho=-6$, and $\theta_0=\pi/2$, and finally applying Proposition \ref{prop:finiteenergy}.
\end{proof}
\subsubsection{Proof of Theorem \ref{thm:radial}}
Recall that $\Sigma=\C\setminus\R^+$. Fix $z_0\in\H$ and $\rho>-4$.
Let $\gamma:(0,T)\to\H\setminus \{z_0\}$ be a simple curve with $\gamma(0+)=0,$ $\gamma(T-)=z_0$. Let $\tilde\gamma(t)=(\gamma(t))^2$ and for each $t\in[0,T]$, and let $h_t:\Sigma\setminus\tilde\gamma_t\to\Sigma$ defined by $h_t(z)=(g_t(\sqrt{z})-W_t)^2$. It follows from Proposition \ref{prop:finiteenergy} and (\ref{eq:chorDirichlet}) that
$$I^{(\Sigma;0,z_0^2)}_{-\rho-6,\infty}(\tilde\gamma)<\infty \iff I^{(\Sigma;0,\infty)}(\tilde\gamma)<\infty\iff \mathcal D(h)<\infty.$$
We now aim to show (\ref{eq:globalradial}). Assume, that $I^C_{\rho,z_0}(\gamma)<\infty$. Using the chain rule we obtain
$$\frac{|g_t'(z_0)|y_t}{y_0}=\frac{|h_t'(z_0^2)|\sin\theta_t}{\sin\theta_0},$$
so that (\ref{eq:rhoenergyinterior}) yields
\begin{equation}I^C_{\rho,z_0}(\gamma)=\lim_{t\to T-}\bigg(I^C(\gamma_t)-\frac{\rho^2}{8}\log\frac{\sin\theta_t}{\sin\theta_0}-\frac{\rho(8+\rho)}{8}\log |h'_t(z_0^2)|\bigg).\label{eq:altrep0}
\end{equation}
Proposition \ref{prop:finiteenergy} gives $I^C(\gamma)<\infty$ and $\lim_{t\to T-}\log\sin\theta_t=0$. Thus $\lim_{t\to T-}\log|h'_t(z_0^2)|$ must also exist and 
$$I^C_{\rho,z_0}(\gamma)=I^C(\gamma)+\frac{\rho^2}{8}\log\sin\theta_0-\frac{\rho(8+\rho)}{8}\lim_{t\to T-}\log |h'_t(z_0^2)|\bigg.$$ 
We define for $0\leq s\leq t\leq T$, $h_{s,t}:= h_{t}\circ h_{s}^{-1},$ and $w_t=(z_t-W_t)^2=h_t(z_0^2)$. 
\begin{lemma}\label{lemma:derivativeexists}For a curve $\gamma$ as above, we have that $\lim_{\delta\to 0+} |(h^{-1}_T)'(-\delta)|$ exists and 
\begin{equation}\label{eq:derivativeexists}\lim_{\delta\to 0+} |(h^{-1}_T)'(-\delta)|=\lim_{t\to T-}|(h^{-1}_t)'(w_t)|.\end{equation}
where $w_t=(z_t-W_t)^2.$
\end{lemma}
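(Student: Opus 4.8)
The plan is to control the conformal maps $h_{s,t}$ near the slit tip and show that the derivative $|(h_T^{-1})'(-\delta)|$ stabilizes as $\delta \to 0+$. First I would set up the comparison: for $0 \le s \le t \le T$ we have $h_t = h_{s,t} \circ h_s$, so $(h_T^{-1})'(w) = (h_s^{-1})'(h_{s,T}^{-1}(w)) \cdot (h_{s,T}^{-1})'(w)$ at the relevant points. The key observation is that $h_{s,T}$ is a conformal map of $\Sigma$ (or rather of $\Sigma$ minus the tail $h_s(\tilde\gamma_{[s,T]})$) onto $\Sigma$, normalized at $\infty$, whose driving data is controlled by the portion of the Loewner energy accumulated on $[s,T]$. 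Since $I^C(\gamma) < \infty$ (by Proposition \ref{prop:finiteenergy}), the tail energy $\frac{1}{2}\int_s^T \dot W_t^2\,dt \to 0$ as $s \to T-$, so $h_{s,T}$ is close to the identity in a quantitative, capacity-controlled sense. In particular, $h_{s,T}^{-1}(w_T) \to w_s$ and the ratio $|(h_{s,T}^{-1})'(w_T)| \to 1$, uniformly on the relevant neighborhood of the tip, as $s \to T-$.

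The heart of the argument is to identify the two limits in \eqref{eq:derivativeexists}. On the right-hand side, $w_t = (z_t - W_t)^2 = h_t(z_0^2)$, and since $z_t - W_t \to 0$ as $t \to T-$ (this is the definition of $T = \tau_{0+}$), we have $w_t \to 0$ along a curve in $\Sigma$, namely along $h_T(\eta)$ where $\eta$ is the hyperbolic geodesic from $z_0^2$ to the tip in $\Sigma \setminus \tilde\gamma$ — more precisely, $w_t$ traces the image under $h_T$ of the boundary approach. So I would argue that $\lim_{t \to T-} |(h_t^{-1})'(w_t)|$ is really a boundary derivative of $h_T^{-1}$ at the prime end $0$, approached along the specific path $\{w_t\}$. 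On the left-hand side, $\lim_{\delta \to 0+} |(h_T^{-1})'(-\delta)|$ is the boundary derivative approached along the negative real axis. The point is that both $-\delta$ and $w_t$ approach the same prime end $0^-$ of $\Sigma$ (the slit lies along $\R^+$, and the tip of $\tilde\gamma$ is mapped to $0$), and the existence and equality of the radial-type limit and the limit along $\{w_t\}$ follows because the Loewner flow gives a concrete comparison. Writing $|(h_T^{-1})'(-\delta)| = |(h_t^{-1})'(h_{t,T}^{-1}(-\delta))| \cdot |(h_{t,T}^{-1})'(-\delta)|$, I choose $t = t(\delta) \to T-$ so that $h_{t,T}^{-1}(-\delta)$ is comparable to $w_t$; then the first factor is $\approx |(h_t^{-1})'(w_t)|$ (using Koebe distortion, since $h_t^{-1}$ is conformal on a definite neighborhood) and the second factor $\to 1$ by the tail-energy smallness.

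To make the matching of scales rigorous, I would use that near the tip, $h_{t,T}$ is (after the square-root uniformization built into the definition $h_t(z) = (g_t(\sqrt z) - W_t)^2$) comparable to the chordal Loewner map $z \mapsto (g_{t,T}(\sqrt{z} \cdot(\text{shift})) - (\text{shift}))^2$ for the hull $\gamma_{[t,T]}$, whose half-plane capacity is $2(T-t) \to 0$. The standard estimate $|g_{t,T}(\zeta) - \zeta| \lesssim \sqrt{T-t}$ for $\zeta$ bounded away from the hull, together with derivative versions from Koebe, gives $|(h_{t,T}^{-1})'(-\delta)| = 1 + o(1)$ as $t \to T-$, uniformly for $\delta$ in the appropriate range, and simultaneously $h_{t,T}^{-1}(-\delta) = -\delta(1+o(1))$, which (comparing with $w_t = (z_t - W_t)^2$, also going to $0$) lets me pick $\delta = \delta(t)$ with $h_{t,T}^{-1}(-\delta(t)) = w_t$ exactly or asymptotically. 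The main obstacle I anticipate is precisely this uniform boundary-regularity control: I need Koebe distortion estimates for $h_t^{-1}$ that are uniform in $t$ near $T$, which requires knowing that the domains $\Sigma \setminus h_t(\tilde\gamma_{[t,T]})$ do not degenerate badly — but this is exactly where the finite chordal energy hypothesis enters, via Proposition \ref{prop:finiteenergy} and the fact that $\sin\theta_t \to 0$ controls the geometry of the approach, plus the Weil--Petersson quasicircle regularity implied by $\mathcal D(h) < \infty$. Once the uniform distortion bound is in hand, the equality \eqref{eq:derivativeexists} and the existence of the limit follow by the factorization argument above.
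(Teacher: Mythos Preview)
Your factorization strategy via $h_T = h_{t,T}\circ h_t$ is exactly the paper's approach, and your intuition that the tail energy $I^C(\gamma_{[t,T]})\to 0$ forces $h_{t,T}$ to be close to the identity is correct. However, your proposed execution has a gap precisely at the obstacle you identify, and the paper sidesteps it rather than confronting it.

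You plan to control $|(h_{t,T}^{-1})'(-\delta)|$ via Koebe distortion and the half-plane capacity bound $|g_{t,T}(\zeta)-\zeta|\lesssim\sqrt{T-t}$ for $\zeta$ bounded away from the hull. But as $\delta\to 0$ the point $h_{t,T}^{-1}(-\delta)$ approaches the tip of the slit $h_t(\tilde\gamma_{[t,T]})$, so you are never bounded away from the hull, and your proposed scale-matching $t=t(\delta)$ forces you into the delicate boundary regime. The paper instead obtains the clean two-sided bound
\[
e^{-\tfrac12 I^C(\gamma_{[t,T]})}\ \le\ |h_{t,T}'(w^{t,\delta})|\ \le\ 1,\qquad w^{t,\delta}:=h_{t,T}^{-1}(-\delta),
\]
valid \emph{uniformly for all} $\delta>0$. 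This comes directly from the inequality $2\log\frac{\sin\theta_T}{\sin\theta_0}-\tfrac12 I^C\le \log\frac{|g_T'(z_0)|y_T}{y_0}\le 0$ (the estimate underlying Proposition~\ref{prop:radialbounds}), applied to $g_{t,T}$ with base point $z^{t,\delta}=g_{t,T}^{-1}(i\sqrt\delta+W_T)$ and using $\arg(i\sqrt\delta)=\pi/2$ together with the chain-rule identity $|h_t'(z_0^2)|\sin\theta_t/\sin\theta_0=|g_t'(z_0)|y_t/y_0$. No Koebe or boundary regularity is needed.

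With this uniform bound in hand, the order of limits is simpler than what you sketch: fix $t<T$ and send $\delta\to 0$ \emph{first}. Then $w^{0,\delta}=h_T^{-1}(-\delta)\to z_0^2$, and since $z_0^2$ is an \emph{interior} point of $\Sigma\setminus\tilde\gamma_t$ for $t<T$, ordinary continuity of $h_t'$ gives $|h_t'(w^{0,\delta})|\to|h_t'(z_0^2)|$. The chain rule $|h_T'(w^{0,\delta})|=|h_t'(w^{0,\delta})|\cdot|h_{t,T}'(w^{t,\delta})|$ then sandwiches $\liminf$ and $\limsup$ of $|h_T'(w^{0,\delta})|$ between $e^{-\tfrac12 I^C(\gamma_{[t_0,T]})}|h_t'(z_0^2)|$ and $|h_t'(z_0^2)|$; sending $t\to T-$ and $t_0\to T-$ finishes. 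The existence of $\lim_{t\to T-}|h_t'(z_0^2)|$ is already established just before the lemma from \eqref{eq:altrep0}, so you need not reprove it. In short: replace your Koebe/scale-matching step by the direct energy bound on $|h_{t,T}'|$, and take the limits in the order $\delta\to 0$ then $t\to T-$.
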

\begin{proof} 
\begin{figure}
\includegraphics[width=\textwidth]{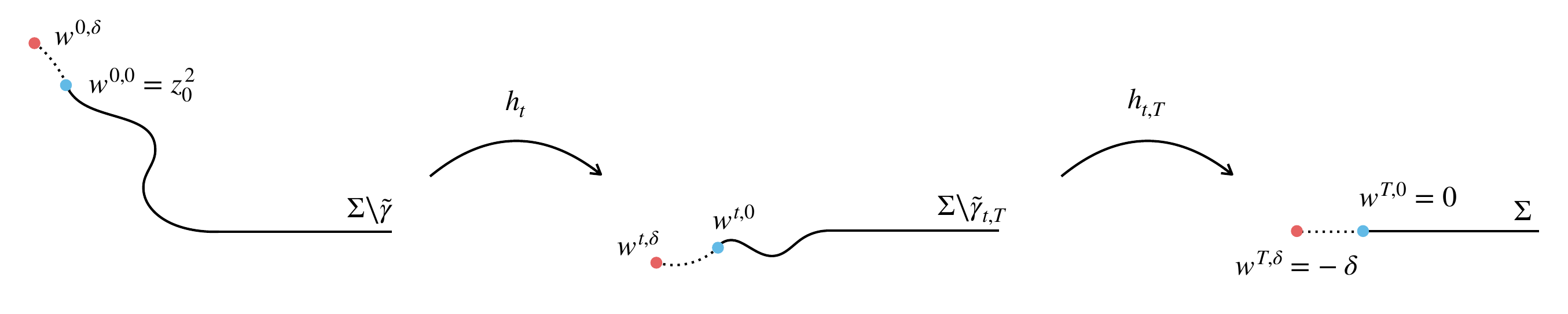}
\caption{This figure illustrates the set-up for the proof of Lemma \ref{lemma:derivativeexists}.}\label{fig:setupGlobalRadial}
\end{figure}
For every $t\in[0,T]$, $\delta\geq 0$, let $w^{t,\delta}=h_{t,T}^{-1}(-\delta)$, $g_{t,T}=g_{T}\circ g_t^{-1}$, and $z^{t,\delta}=g_{t,T}^{-1}(i\sqrt{\delta}+W_T)$ (see Figure \ref{fig:setupGlobalRadial}). By the same estimate as in the proof of Proposition \ref{prop:radialbounds} we have
$$\bigg(\frac{\sin\pi/2}{\sin\arg(z^{t,\delta}-W_t)}\bigg)^2 e^{-\frac{1}{2}I^C(\gamma_{t,T})}\leq|g_{t,T}'(z^{t,\delta})|\frac{\sqrt{\delta}}{\Im z^{t,\delta}}\leq 1,\ \forall t\in[0,T],\ \delta>0.$$ 
Therefore,
$$e^{-\frac{1}{2}I^C(\gamma_{t,T})}\leq |h'_{t,T}(w^{t,\delta})|\leq 1,\ \forall t\in[0,T],\ \delta>0,$$
and hence, $e^{-\frac{1}{2}I^C(\gamma_{t_0,T})}\leq |h'_{t,T}(w^{t,\delta})|\leq 1,$ for all $t\in(t_0,T)$ and $\delta>0$.
By the chain rule $|h_T'(w^{0,\delta})|= |h_t'(w^{0,\delta})||h_{t,T}'(w^{t,\delta})|$, and thus
$$e^{-\frac{1}{2}I^C(\gamma_{t_0,T})}|h_t'(w^{0,\delta})|\leq |h_T'(w^{0,\delta})|\leq |h_t'(w^{0,\delta})|,$$
for all $t\in(t_0,T)$ and $\delta>0$. Taking the limit $\delta\to 0+$ we obtain
\begin{align*}e^{-\frac{1}{2}I^C(\gamma_{t_0,T})}|h_t'(z_0^2)|\leq\liminf_{\delta\to 0+}|h_T'(w^{0,\delta})|\leq\limsup_{\delta\to 0+}|h_T'(w^{0,\delta})|\leq |h_t'(z_0^2)|,\end{align*}
since for each $t<T$, $z_0^2\in \Sigma\setminus\tilde\gamma_t$ so that $h'_t$ is continuous at $z_0^2$. Taking the limit $t\to T-$ followed by $t_0\to T-$ yields
\begin{align*}\lim_{t\to T-}|h_t'(z_0^2)|\leq \liminf_{\delta\to 0+}|h_T'(w^{0,\delta})|\leq\limsup_{\delta\to 0+}|h_T'(w^{0,\delta})|\leq \lim_{t\to T-}|h_t'(z_0^2)|.\end{align*}
Recall from the discussion above that the limit on the left- (and right-)hand side exists. This completes the proof.
\end{proof}
Let $\eta(s):=h_T^{-1}(-s)$, $s\in[0,\infty)$, be the hyperbolic geodesic from $z_0^2$ to $\infty$ in $\Sigma\setminus\tilde \gamma$. Since $$I^{(\Sigma;0,\infty)}(\tilde\gamma\cup\eta)=I^C(\gamma)<\infty,$$ $\tilde\gamma\cup\eta$ is asymptotically smooth (see \cite[Theorem 2.18]{W22}). In particular, this implies that $\lim_{s\to 0}\frac{|\eta(s)-z_0^2|}{\ell(\eta_s)}=1,$ where $\ell(\eta_s)$ denotes the length of $\eta_s$, and as a consequence
\begin{align*}|(h^{-1}_T)'(0)|_{\R^-}:=&\lim_{\delta\to 0+}\frac{|h^{-1}_T(-\delta)-h^{-1}_T(0)|}{\delta}=\lim_{\delta\to 0+}\frac{|\eta(\delta)-z_0^2|}{\delta}=\lim_{\delta\to 0+}\frac{\ell(\eta_\delta)}{\delta}\\=&\lim_{\delta\to 0+}\frac{1}{\delta}\int_0^\delta|(h_T^{-1})'(-s)|ds =\lim_{\delta\to 0+}|(h_T^{-1})'(-\delta)|,\end{align*}
so $|(h^{-1}_T)'(0)|_{\R^-}$ as well as $|h_T'(z_0^2)|_\eta:=\lim_{\delta\to 0+}|h_T(\eta(\delta))|/|\eta(\delta)-\eta(0)|$ exist and we finally deduce
\begin{equation}I^{(\Sigma;0,\infty)}_{\rho, z_0^2}(\tilde\gamma)=I^C_{\rho,z_0}(\gamma)=I^C(\gamma)+\frac{\rho^2}{8}\log\sin\theta_0-\frac{\rho(8+\rho)}{8}\log|h_T'(z_0^2)|_\eta.\label{eq:dirichletradialnonnormalized}\end{equation}
In particular this means that, if $\tilde\gamma^{0}$ denotes the SLE$_0(\rho)$ in $\Sigma$ from $0$, with reference point $\infty$ and force point $z_0^2$, then
$$I^{(\Sigma;0,\infty)}_{\rho,z_0}(\tilde\gamma)=I^{(\Sigma;0,\infty)}(\tilde\gamma)-I^{(\Sigma,0,\infty)}(\tilde\gamma^{0})-\frac{\rho(8+\rho)}{8}\log |H'(z_0^2)|_{\eta},$$
where $H:\Sigma\setminus\tilde\gamma\to\Sigma\setminus \tilde\gamma^{0}$ is the conformal map with $H(z_0^2)=z_0^2$, $H(\infty)=\infty$ and $|H'(\infty)|=1$. Using (\ref{eq:chorDirichlet}) and recalling that
$$I^{(\Sigma;0,z_0^2)}_{-6-\rho,\infty}(\tilde\gamma)=I^{(\Sigma;0,\infty)}_{\rho,z_0^2}(\tilde\gamma)$$
we obtain (\ref{eq:globalradial}). This finishes the proof of Theorem \ref{thm:radial}.
\subsection{Chordal setting}\label{section:globalchordal}
Throughout this section $\rho>-2$ and the force point $x_0>0$. Since $I^C_{\rho,x_0}(\gamma)=I^C_{\rho,-x_0}(-\overline\gamma)$ where $-\overline\gamma$ denotes the reflection of $\gamma$ in the imaginary axis, the assumption on $x_0$ can be imposed without loss of generality. Consider the class $\mathcal X^C_{x_0}$, of simple curves $\gamma:(0,T)\to\H$, with $\gamma(0+)=0$, and maximal in the sense that $T=\tau_{0+}:=\lim_{\vare\to 0+}\tau_{\vare}$ where $\tau_\vare=\inf\{t\in(0,T):|W_t-x_t|\leq \vare\}$. We call $\gamma\in\mathcal X^C_{x_0}$ a fully grown curve with respect to $x_0$ and $\infty$.\par
We have the following analog of Proposition \ref{prop:radialbounds}. As the proof is very similar we omit it.
\begin{proposition} \label{prop:chordalbounds} Suppose $\rho\in(-2,\infty)$ and suppose $\gamma:(0,T]\to\H$ is a simple curve with $\gamma(0)=0$. Then 
\begin{align*}
I^C_{\rho,x_0}(\gamma)& \leq\max(\tfrac{\rho+2}{2},1)\bigg(\max(\tfrac{\rho+2}{2},1)I^C(\gamma)+|\rho|\log\frac{|W_t-x_t|}{|x_0|}\bigg),\\
I^C_{\rho,x_0}(\gamma)& \geq \min(\tfrac{\rho+2}{2},1)\bigg(\min(\tfrac{\rho+2}{2},1)I^C(\gamma)-|\rho|\log\frac{|W_t-x_t|}{|x_0|}\bigg).
\end{align*}
\end{proposition}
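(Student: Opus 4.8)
The plan is to follow the proof of Proposition~\ref{prop:radialbounds}, replacing its estimate on the interior derivative $|g_t'(z_0)|y_t$ by the corresponding estimate on the boundary derivative $|g_t'(x_0)|$. If $W$ is not absolutely continuous then $I^C(\gamma)=I^C_{\rho,x_0}(\gamma)=\infty$ and both inequalities hold trivially (note $\log\frac{|W_T-x_T|}{|x_0|}$ is finite, since $x_0\notin\gamma_T$), so I would assume $W$ absolutely continuous and start from the integrated formula (\ref{eq:rhoenergyboundary}),
\[
I^C_{\rho,x_0}(\gamma)=I^C(\gamma)-\rho\log\frac{|W_T-x_T|}{|x_0|}-\frac{\rho(4+\rho)}{4}\log|g_T'(x_0)|.
\]
Writing $u_t=x_t-W_t$ (so that $u_0=|x_0|$ and $u_t=|W_t-x_t|>0$ for $t<\tau_{0+}$), the chordal Loewner equation gives $\dot x_t=2/u_t$ and $\partial_t\log|g_t'(x_0)|=-2/u_t^2$, hence $\partial_t\log u_t=2/u_t^2-\dot W_t/u_t$, and therefore $\partial_t\log|g_t'(x_0)|=-\partial_t\log u_t-\dot W_t/u_t$ almost everywhere.

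The next step is to estimate $\log|g_T'(x_0)|$ by completing the square, exactly as in \cite{FS17}: from $\big(u_t^{-1}\mp\tfrac12\dot W_t\big)^2\ge0$ one gets $\pm\dot W_t/u_t\le u_t^{-2}+\tfrac14\dot W_t^2$, and feeding the two sign choices into the identity above and integrating yields
\[
-2\log\frac{u_T}{u_0}-I^C(\gamma)\ \le\ \log|g_T'(x_0)|\ \le\ 0,
\]
the upper bound being immediate from $\partial_t\log|g_t'(x_0)|=-2/u_t^2\le0$ and $\log|g_0'(x_0)|=0$. This is the exact analog of the bound $2\log\frac{\sin\theta_T}{\sin\theta_0}-\tfrac12 I^C(\gamma)\le\log\frac{|g_T'(z_0)|y_T}{y_0}\le0$ used in the proof of Proposition~\ref{prop:radialbounds}.

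Finally I would substitute this two-sided bound into the integrated formula. Since $I^C_{\rho,x_0}(\gamma)$ is affine in $\log|g_T'(x_0)|$ and $\log|g_T'(x_0)|$ ranges over the interval with endpoints $0$ and $-2\log\frac{u_T}{u_0}-I^C(\gamma)$, it lies between $V_0:=I^C(\gamma)-\rho\log\frac{u_T}{u_0}$ (the value at $\log|g_T'(x_0)|=0$) and, using the identities $1+\tfrac{\rho(4+\rho)}{4}=\big(\tfrac{\rho+2}{2}\big)^2$ and $-\rho+\tfrac{\rho(4+\rho)}{2}=\tfrac{\rho+2}{2}\rho$,
\[
V_1:=\Big(\tfrac{\rho+2}{2}\Big)^2 I^C(\gamma)+\tfrac{\rho+2}{2}\,\rho\log\frac{u_T}{u_0}
\]
(the value at the other endpoint). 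A short computation gives $V_1-V_0=\tfrac{\rho(\rho+4)}{4}\big(I^C(\gamma)+2\log\frac{u_T}{u_0}\big)$, and $I^C(\gamma)+2\log\frac{u_T}{u_0}\ge0$ (this is precisely the inequality that makes the interval above nonempty), so $V_1\ge V_0$ if and only if $\rho\ge0$. Combining this with $\tfrac{\rho+2}{2}\ge1\iff\rho\ge0$ and $|\rho|=\pm\rho$ according to the sign of $\rho$, one checks directly, in each of the two regimes $\rho\in(-2,0)$ and $\rho\ge0$, that
\[
\min(V_0,V_1)=\min\!\big(\tfrac{\rho+2}{2},1\big)\!\Big(\min\!\big(\tfrac{\rho+2}{2},1\big)I^C(\gamma)-|\rho|\log\tfrac{u_T}{u_0}\Big),
\]
\[
\max(V_0,V_1)=\max\!\big(\tfrac{\rho+2}{2},1\big)\!\Big(\max\!\big(\tfrac{\rho+2}{2},1\big)I^C(\gamma)+|\rho|\log\tfrac{u_T}{u_0}\Big),
\]
which are exactly the two asserted inequalities. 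The only genuinely fiddly step is this last piece of sign bookkeeping — it is the reason the coefficients are written with $\max$, $\min$, and $|\rho|$ — but there is no analytic obstacle beyond the FS17-type estimate already used for Proposition~\ref{prop:radialbounds}.
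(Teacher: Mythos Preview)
Your proof is correct and follows exactly the route the paper intends: it omits the proof of Proposition~\ref{prop:chordalbounds} precisely because it is the boundary analog of the proof of Proposition~\ref{prop:radialbounds}, replacing the interior estimate on $\log\frac{|g_T'(z_0)|y_T}{y_0}$ by the boundary estimate $-2\log\frac{u_T}{u_0}-I^C(\gamma)\le\log|g_T'(x_0)|\le0$ and then plugging into the integrated formula~(\ref{eq:rhoenergyboundary}). Your sign bookkeeping in the last step (identifying $\min(V_0,V_1)$ and $\max(V_0,V_1)$ with the stated bounds in the two regimes $\rho\ge0$ and $\rho\in(-2,0)$) is accurate.
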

This immediately shows that $\tau_{0+}<\infty$ implies that $I^C_{\rho,z_0}(\gamma)=\infty$. So, if $I^C_{\rho,z_0}(\gamma)<\infty$ for a $\gamma\in\mathcal X^C_{x_0}$, then $\gamma$ is unbounded. We saw in Corollary \ref{cor:rhoflow} that the SLE$_0(\rho)$ curve approaches $\infty$ with an angle $\alpha\pi$ where $\alpha=\alpha(\rho)=\frac{2+\rho}{4+\rho}$. For the remainder of this section $\alpha$ refers to $\alpha(\rho)$.
\begin{proposition}Fix $\rho>-2$ and $x_0>0$.  \label{proposition:angleapproach}Let $\gamma\in\mathcal X^C_{x_0}$ and suppose that $I^C_{\rho,x_0}(\gamma)<\infty$. Then for each $0<\alpha_-<\alpha(\rho)<\alpha_+<1$ there is an $R>0$ such that $\gamma\setminus B(0,R)\subset C(\alpha_-,\alpha_+)$ where $$C(\alpha_-,\alpha_+)=\{re^{i\theta}:r>0,\ \theta\in(\alpha_-\pi,\alpha_+\pi)\}.$$
\end{proposition}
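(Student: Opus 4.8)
The plan is to combine the additivity of the $\rho$-Loewner energy with a comparison to the SLE$_0(\rho)$ profile, whose asymptotic direction at $\infty$ is $\alpha(\rho)\pi$ by Corollary~\ref{cor:rhoflow}(iv).

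Recall that $I^C_{\rho,x_0}(\gamma)<\infty$ already forces $\gamma$ to be transient (observed just after Proposition~\ref{prop:chordalbounds}), so $\tau_{0+}=\infty$, $x_t-W_t>0$ for all $t$, and $\gamma$ is an unbounded simple curve parametrized by half-plane capacity on $[0,\infty)$. By the additive property, $I^C_{\rho,x_0}(\gamma)=I^C_{\rho,x_0}(\gamma_t)+I^{(\H\setminus\gamma_t;\gamma(t),\infty)}_{\rho,x_0}(\gamma_{[t,\infty)})$; since the first term is nondecreasing in $t$ and the total is finite, the tail energy $\varepsilon(t):=I^{(\H\setminus\gamma_t;\gamma(t),\infty)}_{\rho,x_0}(\gamma_{[t,\infty)})$ tends to $0$ as $t\to\infty$. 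Mapping $\H\setminus\gamma_t$ back to $\H$ by $g_t$, translating by $W_t$ and rescaling by $x_t-W_t$, we obtain a simple curve $\widehat\gamma^t$ from $0$ to $\infty$ in $\H$ with force point at $1$ and $I^C_{\rho,1}(\widehat\gamma^t)=\varepsilon(t)$. Because $g_t^{-1}(w)=w+O(1)$ near $\infty$ and multiplication by the positive constant $x_t-W_t$ preserves arguments, the location of $\gamma$ outside a large ball coincides, up to a bounded additive shift, with that of $\widehat\gamma^t$; hence it suffices to prove the following cone estimate, applied to $\widehat\gamma^t$ for a single large $t$: for all $0<\alpha_-<\alpha(\rho)<\alpha_+<1$ there are $\varepsilon_0>0$ and $R_0>0$ such that $I^C_{\rho,1}(\widehat\gamma)\le\varepsilon_0$ implies $\widehat\gamma\setminus B(0,R_0)\subset C(\alpha_-,\alpha_+)$.

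For the cone estimate I would argue as follows. The unique minimizer of $I^C_{\rho,1}$ over simple curves from $0$ to $\infty$ is the SLE$_0(\rho)$ curve $\widehat\gamma^0$, which has energy $0$ (its integrand vanishes) and, by Corollary~\ref{cor:rhoflow}(iv), satisfies $\widehat\gamma^0\setminus B(0,R_1)\subset C(\alpha_-,\alpha_+)$ for some $R_1$; since $I^C_{\rho,1}$ is a good rate function, its sublevel sets are compact and shrink, in the Hausdorff topology, to $\{\widehat\gamma^0\}$. The heart of the matter is a quantitative rigidity: a curve of small $\rho$-energy cannot make a macroscopic excursion out of the cone far from the origin, nor drift to a wrong asymptotic direction. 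For the first point, note that for a piece of $\widehat\gamma$ situated at scale comparable to a large $R$ the force point $1$ is far away and is displaced only by a bounded amount, so by the integrated formula (\ref{eq:rhoenergyboundary}) the correction terms $-\rho\log|W-x|$ and $-\tfrac{\rho(4+\rho)}{4}\log|g'(x_0)|$ vary by $O(1)$ across that scale, whence the $\rho$-energy of the piece differs from its chordal Loewner energy by $O(1)$; combined with the scale-invariant lower bound for the chordal Loewner energy of a curve segment that protrudes from a cone by a fixed angular amount (with endpoints inside the cone), this gives a fixed positive lower bound for the $\rho$-energy cost of any such excursion, and taking $\varepsilon_0$ below that cost excludes excursions beyond some $R_0$. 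The same accounting over a bounded range of scales bounds below the cost of following a direction $\beta\pi$ with $\beta$ away from $\alpha(\rho)$; together with the Hausdorff-closeness to $\widehat\gamma^0$ this forces the curve to stay in $C(\alpha_-,\alpha_+)$ outside $B(0,R_0)$.

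An equivalent framing, paralleling Section~\ref{section:finiteenergyradial}, is to apply the coordinate change of Proposition~\ref{prop:coorchangechordenergy}: with $\varphi\colon\H\to\H$ fixing $0$ and sending $x_0$ to $\infty$, the curve $\varphi(\gamma)$ runs from $0$ to the real point $q=\varphi(\infty)$, ends at its force point $q$, and has finite $(-6-\rho)$-Loewner energy with $-6-\rho<-4$; since $\varphi(z)-q\sim c/z$ at $\infty$, the asymptotic direction of $\gamma$ at $\infty$ is converted into the angle at which $\varphi(\gamma)$ hits $q$, which one then identifies by the analog of Proposition~\ref{prop:finiteenergy} for a boundary force point. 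In either formulation the main obstacle is the quantitative rigidity: a bare compactness argument only yields Hausdorff-closeness to $\widehat\gamma^0$, which is too weak near $\infty$ — in the Hausdorff topology two curves can both escape to $\infty$ while keeping very different arguments — so one genuinely needs the energy-cost-of-deviation estimate, with careful bookkeeping of the interaction between the fixed force point and the growing scale of the curve. A possible shortcut for the cone estimate is to write $\dot W_t=\rho/(W_t-x_t)+e_t$ with $e_t\in L^2$, compare with the explicit SLE$_0(\rho)$ driving function, and use that $\int_T^{2T}e_t\,dt=o(\sqrt T)$ to conclude that $W_t/\sqrt t$ and $x_t/\sqrt t$ have the same limits as for SLE$_0(\rho)$, which pins the asymptotic direction.
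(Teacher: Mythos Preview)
Your high-level architecture matches the paper's: reduce to a tail with arbitrarily small $\rho$-energy, map it out and rescale so the force point sits at $1$, and then argue that a curve of small $I^C_{\rho,1}$ must stay in the cone $C(\alpha_-,\alpha_+)$ outside a fixed ball. You are also right that Hausdorff-compactness of sublevel sets is too coarse at infinity and that the real work is the quantitative cone estimate.

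The gap is in your justification of that estimate. Your argument runs the integrated formula~(\ref{eq:rhoenergyboundary}) to say that on a piece of the curve at scale $R$ the $\rho$-energy differs from the chordal Loewner energy by $O(1)$, and then invokes a ``scale-invariant lower bound for the chordal Loewner energy of an excursion out of the cone''. This does not work as stated: the chordal Loewner energy is centered at angle $\pi/2$, not at $\alpha(\rho)\pi$, so it does not distinguish the cone $C(\alpha_-,\alpha_+)$ around $\alpha\pi$ from any other. In fact, staying near the \emph{correct} direction $\alpha\pi$ already costs a fixed positive amount of chordal energy per doubling of scale, while the correction terms in~(\ref{eq:rhoenergyboundary}) also change by an amount of the same order per doubling; you therefore cannot isolate the cost of an angular excursion out of $C(\alpha_-,\alpha_+)$ from these two competing $O(1)$ effects. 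Your $e_t\in L^2$ shortcut is more promising but, as written, only yields $\int_0^t e_s\,ds=O(\sqrt t)$ via Cauchy--Schwarz, not $o(\sqrt t)$; and even after getting asymptotics for $W_t/\sqrt t$ you would still need an argument passing from the driving function to the geometric cone condition for $\gamma$.

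The paper closes exactly this gap with two lemmas that bypass the chordal comparison. First (Lemma~\ref{lemma:welding}) it introduces the auxiliary boundary point $y_0=-\tfrac{2x_0}{\rho+2}$ and the ratio $r_t=(W_t-y_t)/(x_t-y_t)\in(0,1)$, and by manipulating the $\rho$-energy integrand directly obtains the explicit lower bound
\[
I^C_{\rho,x_0}(\gamma)\ \ge\ -(2+\rho)\log\frac{1-r_T}{1-r_0}\;-\;2\log\frac{r_T}{r_0},
\]
which is strictly positive whenever $r_T\ne r_0=1-\alpha$. The point $y_0$ is not arbitrary: it is the image of $0-$ under the map $\varphi_{x_0}$ that straightens the SLE$_0(\rho)$ ray with force point $0+$, so $r_t$ is a conformally natural measure of asymptotic direction. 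Second (Lemma~\ref{lemma:conebound}), a harmonic-measure comparison shows that if $\varphi_{x_0}^{-1}(\gamma)$ first exits $C(\alpha_-,\alpha_+)$ at time $T$, then $r_T\notin(1-\alpha_+,1-\alpha_-)$, and feeding this into the previous bound gives a fixed $M(\alpha_-,\alpha_+)>0$ below the energy. With this cone estimate in hand, the proof concludes exactly along the lines you sketched: choose $T$ so the tail energy is below $M(\tilde\alpha_-,\tilde\alpha_+)$ for a slightly smaller cone, apply the lemma to the mapped-out tail, and use that $g_T^{-1}$ and $\varphi_{x_T-W_T}$ are analytic at $\infty$ to transfer $C(\tilde\alpha_-,\tilde\alpha_+)$ back into $C(\alpha_-,\alpha_+)$ outside a large ball.
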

\begin{remark}As a consequence of Proposition \ref{prop:chordalescape}, curves of finite $\rho$-Loewner energy are continuous at the end point. This means that we may strengthen the above: for each $\gamma$, $\alpha_-$, and $\alpha_+$ as above there exists a $T>0$ such that $\gamma_{[T,\infty)}\subset C(\alpha_-,\alpha_+)$. 
\end{remark} 
\begin{lemma}\label{lemma:welding}Fix $\rho>-2$ and $x_0>0$, and let $y_0=\frac{-2x_0}{2+\rho}$. Suppose $\gamma:(0,T]\to\H$ driven by $W_t$ and define $r_t:=\frac{W_t-y_t}{x_t-y_t}\in(0,1)$ where $x_t=g_t(x_0)$ and $y_t=g_t(y_0)$. Then,
\begin{equation}I^C_{\rho,x_0}(\gamma)\geq -(2+\rho)\log\frac{1-r_T}{1-r_0}-2\log \frac{r_T}{r_0},\label{eq:weldingineq}\end{equation}
where the right hand side is positive whenever $r_T\neq r_0=1-\alpha$. 
\end{lemma}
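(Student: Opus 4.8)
The plan is to differentiate the right-hand side of \eqref{eq:weldingineq} along the Loewner flow and bound its derivative pointwise by the $\rho$-Loewner energy density. If $W$ is not absolutely continuous there is nothing to prove, so I assume it is. I would set $a_t:=x_t-W_t>0$ and $c_t:=W_t-y_t>0$ (positivity being exactly the statement $r_t\in(0,1)$, which in particular records that $y_0$ is not swallowed, so $y_t$ is defined on $[0,T]$), so that $x_t-y_t=a_t+c_t$, $r_t=c_t/(a_t+c_t)$, $1-r_t=a_t/(a_t+c_t)$, and, from the chordal Loewner equation, $\dot x_t=2/a_t$ and $\dot y_t=-2/c_t$.

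Next I would compute, writing $\tfrac{d}{dt}\log r_t=\tfrac{d}{dt}\log(W_t-y_t)-\tfrac{d}{dt}\log(x_t-y_t)$ and similarly for $1-r_t$, that
$$\frac{d}{dt}\log r_t=\frac{B_t}{c_t},\qquad \frac{d}{dt}\log(1-r_t)=-\frac{B_t}{a_t},\qquad\text{where }B_t:=\dot W_t+\frac{2}{c_t}-\frac{2}{a_t}.$$
Setting $\Psi(r):=-(2+\rho)\log(1-r)-2\log r$, this gives $\tfrac{d}{dt}\Psi(r_t)=A_tB_t$ with $A_t:=\tfrac{2+\rho}{a_t}-\tfrac{2}{c_t}$. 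The point of the computation — and the step I expect to be the only real content — is the algebraic identity $A_t+B_t=\dot W_t+\rho/a_t=\dot W_t-\rho/(W_t-x_t)$: that is, $A_t+B_t$ is exactly the quantity whose square (times $1/2$) is the $\rho$-Loewner energy density, since the force point is on $\R$.

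Granting this, the inequality is immediate: pointwise
$$\tfrac12\Big(\dot W_t-\tfrac{\rho}{W_t-x_t}\Big)^2=\tfrac12(A_t+B_t)^2=A_tB_t+\tfrac12\big(A_t^2+B_t^2\big)\ \geq\ A_tB_t=\frac{d}{dt}\Psi(r_t),$$
so integrating over $[0,T]$ (legitimate because $r_t$ stays in a compact subset of $(0,1)$, hence $\Psi(r_t)$ is absolutely continuous) yields $I^C_{\rho,x_0}(\gamma)\ge\Psi(r_T)-\Psi(r_0)$, which is precisely the right-hand side of \eqref{eq:weldingineq}. For the last assertion I would note that, since $\rho>-2$, $\Psi$ is strictly convex on $(0,1)$ with $\Psi'(r)=\tfrac{2+\rho}{1-r}-\tfrac2r$ vanishing only at $r=\tfrac{2}{4+\rho}$, and that $r_0=\tfrac{-y_0}{x_0-y_0}=\tfrac{2}{4+\rho}=1-\alpha$; hence $\Psi(r_0)$ is the strict minimum of $\Psi$, and $\Psi(r_T)>\Psi(r_0)$ whenever $r_T\ne r_0$.

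Essentially all the work is in guessing the telescoping combination $-(2+\rho)\log(1-r)-2\log r$; after that, the only estimate used is $A_t^2+B_t^2\ge0$. (One could instead use $(A_t-B_t)^2\ge0$ to gain a factor $2$, i.e. $I^C_{\rho,x_0}(\gamma)\ge 2(\Psi(r_T)-\Psi(r_0))$, but the weaker bound above is all that is needed in what follows.)
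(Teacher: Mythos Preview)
Your proof is correct and in fact cleaner than the paper's. Both arguments rest on the same identity $A_t+B_t=\dot W_t+\rho/(x_t-W_t)$ (in your notation) together with $(A_t+B_t)^2\ge 4A_tB_t$; the paper, however, does not recognize this as a pointwise bound. Instead it splits into the cases $r_T\gtrless r_0$, restricts to the set $E$ of running maxima (resp.\ minima) of $r_t$, writes the energy integrand as $\frac{(\dot W_t+\rho/X_t)^2}{\dot r_t}\,\dot r_t$ on $\{\dot r_t>0\}\cap E$, proves the quotient bound $\frac{(\dot W_t+\rho/X_t)^2}{\dot r_t}\ge 4\big(\tfrac{2+\rho}{1-r_t}-\tfrac{2}{r_t}\big)$ there, and then argues that the integral of $f'(t)=2\Psi'(r_t)\dot r_t$ over $E$ coincides with its integral over $[0,T]$ because the excursions in $[0,T]\setminus E$ contribute zero. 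Your observation that $\tfrac12(A_t+B_t)^2\ge A_tB_t=\tfrac{d}{dt}\Psi(r_t)$ holds for every $t$ bypasses the case split and the set $E$ entirely. Your parenthetical remark is also on point: using $(A_t-B_t)^2\ge0$ instead recovers the factor $2$ that the paper's argument in fact produces (it proves $I^C_{\rho,x_0}(\gamma)\ge 2(\Psi(r_T)-\Psi(r_0))$ but only states the weaker bound).
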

\begin{remark}The role of the point $y_0$ in Lemma \ref{lemma:welding} might seem artificial, but by viewing $\gamma$ as the (mapped out) continuation of an SLE$_0(\rho)$ with force point at $0+$ we see that is is quite natural: Recall from Remark \ref{remark:ray} that $\{re^{i\pi\alpha}:r\in[0,R]\}$ is an initial part of the SLE$_0(\rho)$ with force point at $0+$. There exists, for each $x_0>0$ a unique $R>0$ such that there is a (unique) conformal map $$\varphi_{x_0}:\H\setminus\{re^{i\pi\alpha}:r\in[0,R]\}\to\H$$ satisfying $\varphi_{x_0}(Re^{i\pi\alpha})=0$, $\varphi_{x_0}(\infty)=\infty$, $\varphi_{x_0}'(\infty)=1$, and $\varphi_{x_0}(0+)=x_0$. Then $\varphi_{x_0}(0-)=y_0$ (this can be checked by an explicit computation).\label{remark:y0}\end{remark}
\begin{remark}With Remark \ref{remark:y0} in mind, Lemma \ref{lemma:welding} is reminiscent of \cite[Theorem 4.4]{M23} where Mesikepp answers the question: For $X,Y>0$ fixed, what is the minimal chordal Loewner energy required for a curve to have $g_T(0+)-W_T=X$, $W_T-g_T(0-)=Y$? Mesikepp also finds that the optimal energy is attained for a unique curve corresponding to an SLE$_0(-4,-4)$ with force points at $(0-,0+)$. To fully see the analogy with the Lemma above we would have to define, in the natural way, the $\rho$-Loewner energy when the force point $x_0=0+$. If one does this, the above gives a lower bound on the minimal $\rho$-Loewner energy, with respect to $x_0=0+$, required to obtain $g_T(0+)-W_T=X$, $W_t-g_T(0-)=Y$. If one works a little more (e.g., by following the proof of \cite[Theorem 4.4(i)]{M23}), one finds that the bound above is optimal and that the unique curve of optimal energy is an SLE$_0(-4,-4-\rho)$, again with force points $(0-,0+)$. \label{remark:welding}
\end{remark}
\begin{proof}
We may suppose that $W_t$ is absolutely continuous, for otherwise (\ref{eq:weldingineq}) is trivial. We may also assume that $T=\inf\{t\in[0,T]:r_t=r_T\}$. Let $X_t=x_t-W_t$, $Y_t=W_t-y_t$, and note that they, as well as $r_t$, are  absolutely continuous with
$$\dot X_t = \frac{2}{X_t}-\dot W_t,\qquad \dot Y_t = \frac{2}{Y_t}+\dot W_t,\qquad \dot r_t =\frac{\dot W_t+\frac{2}{Y_t}-\frac{2}{X_t}}{X_t+Y_t},\quad \text{ a.e.}$$
Assume $r_T>r_0$. In this case, we may assume that $r_t\geq r_0$ for all $t\in[0,T]$. Let $$E=\{t\in[0,T]:r_t=\sup_{s<t}r_s\}.$$ Note that $\dot r_t\geq 0$ a.e. on $E$, that $E$ is closed, and that $0,T\in E$. Therefore, $[0,T]\setminus E$ consists of countably many disjoint open intervals $I_n=(\alpha_n,\beta_n)$ for which $r_{\alpha_n}=r_{\beta_n}$. We have
$$I^C_{\rho,x_0}(\gamma)\geq \frac{1}{2}\int_E\bigg ( \dot W_t+\frac{\rho}{X_t}\bigg)^2 \mathds 1_{\{\dot r_t>0\}} dt = \frac{1}{2}\int_E\frac{(\dot W_t+\frac{\rho}{X_t})^2}{\dot r_t} \dot r_t \mathds 1_{\{\dot r_t>0\}} dt.$$ 
Under the assumption $\dot r_t>0$ (and $r_t>r_0$), a computation shows that, 
$$\frac{(\dot W_t+\frac{\rho}{X_t})^2}{\dot r_t}\geq 4\bigg(\frac{2+\rho}{1-r_t}-\frac{2}{r_t}\bigg)$$ where equality is attained when $\dot W_t+\frac{\rho}{X_t}=2(\frac{2+\rho}{X_t}-\frac{2}{Y_t})$. Define $f(t)=-2(2+\rho)\log(1-r_t)-4\log r_t$ and observe that $f$ is absolutely continuous with $f'(t)=(2\frac{2+\rho}{1-r_t}-\frac{4}{r_t})\dot r_t$ a.e. Thus,
$$I^C_{\rho,x_0}(\gamma)\geq \int_E f'(t) dt = \int_0^T f'(t)dt$$
since $f(\beta_n)-f(\alpha_n)=\int_{I_n}f'(t)=0$ and $r_{\beta_n}=r_{\alpha_n}$ for all $n$. This shows (\ref{eq:weldingineq}) in the case $r_T>r_0$. The case $r_T<r_0$ is shown in the same way by replacing $r_t$ with $1-r_t$.
\end{proof}
\begin{lemma}\label{lemma:conebound}Let $R>0$ and $\varphi_{x_0}$ be as in Remark \ref{remark:y0}. For each $0<\alpha_-<\alpha<\alpha_+<1$ there is an $M>0$, such that for any unbounded simple curve $\gamma$ in $\H$ starting at $0$ 
$$\varphi_{x_0}^{-1}(\gamma)\not\subset C(\alpha_-,\alpha_+)\implies I^C_{\rho,z_0}(\gamma)\geq M.$$
\end{lemma}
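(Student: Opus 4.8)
The plan is to reduce the claim to the welding inequality of Lemma~\ref{lemma:welding} by passing to a ``completed'' curve. Let $\gamma\subset\H$ be an unbounded simple curve from $0$. By Remark~\ref{remark:y0}, $\varphi_{x_0}^{-1}$ maps $\H$ conformally onto $\H\setminus\{re^{i\pi\alpha}:r\in[0,R]\}$ and fixes $\infty$; for $\gamma$ parametrized by half-plane capacity set
\[
\hat\gamma_t:=\{re^{i\pi\alpha}:r\in[0,R]\}\cup\varphi_{x_0}^{-1}(\gamma_t),
\]
a bounded simple arc from $0$ (the two pieces meet only at the ray tip $\varphi_{x_0}^{-1}(0)=Re^{i\pi\alpha}$), and let $\hat\gamma=\bigcup_t\hat\gamma_t$. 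Since $\alpha_-<\alpha<\alpha_+$, the ray segment lies in $C(\alpha_-,\alpha_+)$, so the hypothesis $\varphi_{x_0}^{-1}(\gamma)\not\subset C(\alpha_-,\alpha_+)$ forces $\hat\gamma$ to leave $C(\alpha_-,\alpha_+)$; let $t^\ast$ be the first capacity time at which $\hat\gamma_{t^\ast}$ meets $\partial C(\alpha_-,\alpha_+)$, at a point $z^\ast$. Writing $E_L(t),E_R(t)$ for the left and right sides of $\hat\gamma_t$, and using that $g_t^{-1}:\H\to\H\setminus\gamma_t$ and $\varphi_{x_0}^{-1}$ both fix $\infty$ (so ratios of harmonic measures from $\infty$ are preserved), one obtains
\[
\frac{1-r_t}{r_t}=\frac{x_t-W_t}{W_t-y_t}=\frac{\omega_\infty\big(E_R(t),\H\setminus\hat\gamma_t\big)}{\omega_\infty\big(E_L(t),\H\setminus\hat\gamma_t\big)},\qquad r_t:=\frac{W_t-y_t}{x_t-y_t},
\]
with $x_t=g_t(x_0)$, $y_t=g_t(y_0)$, $y_0=\tfrac{-2x_0}{2+\rho}$ as in Lemma~\ref{lemma:welding}; here one checks that $g_t^{-1}([W_t,x_t])$ and $g_t^{-1}([y_t,W_t])$ map under $\varphi_{x_0}^{-1}$ onto $E_R(t)$ and $E_L(t)$ (a consistency check: when $\gamma_t$ is trivial this ratio equals $\tfrac{x_0}{-y_0}=\tfrac{2+\rho}{2}=\tfrac{1-r_0}{r_0}$ with $r_0:=1-\alpha$). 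By the additivity of $I^C_{\rho,x_0}$ and Lemma~\ref{lemma:welding}, for every $t$,
\[
I^C_{\rho,x_0}(\gamma)\ \ge\ I^C_{\rho,x_0}(\gamma_t)\ \ge\ \Phi(r_t),\qquad \Phi(r):=-(2+\rho)\log\tfrac{1-r}{\alpha}-2\log\tfrac{r}{1-\alpha},
\]
where $\Phi$ is continuous on $(0,1)$, blows up at $0^+$ and $1^-$, and is nonnegative with its unique zero at $r_0$.

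Hence it is enough to exhibit $\delta=\delta(\alpha_-,\alpha_+)>0$ with $|r_t-r_0|\ge\delta$ for \emph{some} $t$; then $M:=\inf\{\Phi(r):r\in(0,1),\,|r-r_0|\ge\delta\}$ is positive (as $\Phi>0$ off $r_0$ and $\to\infty$ at the endpoints of $(0,1)$) and does the job. Assume, as we may up to the reflection $z\mapsto-\bar z$, that $\hat\gamma$ first exits $C(\alpha_-,\alpha_+)$ through the upper edge $\{\arg=\alpha_+\pi\}$; the lower-edge case is analogous, yielding $r_t\ge r_0+\delta$ instead. What must be proved is then a \emph{quantitative} statement: there is $\lambda=\lambda(\alpha_-,\alpha_+)>\tfrac{\alpha}{1-\alpha}$ such that some $\hat\gamma_t$ — itself a simple arc from $0$ confined to $\{\alpha_-\pi\le\arg\le\tfrac{1}{2}(\alpha+\alpha_+)\pi\}$ but reaching angle $\tfrac{1}{2}(\alpha+\alpha_+)\pi$ — has $\tfrac{\omega_\infty(E_R(t))}{\omega_\infty(E_L(t))}\ge\lambda$, i.e.\ $r_t\le r_0-\delta$.

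I expect this harmonic-measure estimate to be the main obstacle. The intuition is that when $\hat\gamma_t$ intrudes upward toward $\{\arg=\alpha_+\pi\}$, its right side ``sees'' more of $\H$ and its left side less than the bare ray at angle $\pi\alpha$; one makes this quantitative by monotonicity of harmonic measure, comparing $\H\setminus\hat\gamma_t$ with the slit domain $\H\setminus S$ for the straight segment $S$ from $0$ to the highest-angle point of $\hat\gamma_t$ (the bounded region between $E_L(t)$ and $S$ can only be reached from $\infty$ by crossing $S$), and with a wedge of opening $\tfrac{1}{2}(\alpha+\alpha_+)\pi$ from the other side, together with the explicit harmonic measure of straight slits. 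The reason one should evaluate at the first time the curve reaches the intermediate angle $\tfrac{1}{2}(\alpha+\alpha_+)\pi$, rather than at the exit time $t^\ast$, is that a curve can reach $\{\arg=\alpha_+\pi\}$ while its overall left/right balance, hence $r_{t^\ast}$, stays near $r_0$ (by also dipping toward the lower edge); but the excursion that first carries the curve from its lower angles up to the intermediate level is a macroscopic arc — its diameter is comparable to the distance of its tip from the origin — and so shifts the harmonic-measure ratio by a definite amount, uniformly. Granting $|r_t-r_0|\ge\delta$ for some $t$, the displayed inequalities give $I^C_{\rho,x_0}(\gamma)\ge M$; the remaining ingredients — the reduction to Lemma~\ref{lemma:welding}, the harmonic-measure identity for $r_t$, and the extraction of $M$ — are routine.
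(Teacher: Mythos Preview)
Your reduction to Lemma~\ref{lemma:welding} and the harmonic-measure identity for $r_t$ are exactly right, and match the paper's approach. But the proof is incomplete and rests on a mistaken worry.

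Your central concern --- that at the first exit time $t^\ast$ the curve could have ``dipped toward the lower edge'' and thereby kept $r_{t^\ast}$ near $r_0$ --- is unfounded, and this sends you on an unnecessary detour through the intermediate angle $\tfrac12(\alpha+\alpha_+)\pi$ and a heuristic you never actually prove. The paper's argument shows directly that $r_{t^\ast}$ is bounded away from $r_0$ by the \emph{exit angle alone}, regardless of what the curve does inside the cone. Here is the point you are missing. Suppose the first exit is through the lower edge, so $\arg\tilde\gamma(t^\ast)=\alpha_-\pi$ and $\tilde\gamma_{[0,t^\ast)}\subset C(\alpha_-,\alpha_+)$. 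Then
\[
\Gamma:=s_{\alpha,R}\cup\tilde\gamma_{t^\ast}\cup s_{\alpha_-,|\tilde\gamma(t^\ast)|}
\]
is a \emph{Jordan curve} (the three arcs meet only pairwise at their common endpoints, since $\tilde\gamma$ stays strictly above the ray $s_{\alpha_-}$ until time $t^\ast$). The bounded component of $\C\setminus\Gamma$ lies in $\H$ and its boundary contains exactly the right side of $\hat\gamma_{t^\ast}$ and the left side $s_{\alpha_-}^-$ of the comparison slit. Hence any Brownian path from $\infty$ that hits $\hat\gamma_{t^\ast}^+$ must first cross $s_{\alpha_-}^+$, and any path hitting $s_{\alpha_-}^-$ must first cross $\hat\gamma_{t^\ast}^-$. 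Monotonicity then gives
\[
\frac{1}{r_{t^\ast}}-1=\frac{\omega_\infty(\hat\gamma_{t^\ast}^+,\H\setminus\hat\gamma_{t^\ast})}{\omega_\infty(\hat\gamma_{t^\ast}^-,\H\setminus\hat\gamma_{t^\ast})}\le\frac{\omega_\infty(s_{\alpha_-}^+,\H\setminus s_{\alpha_-})}{\omega_\infty(s_{\alpha_-}^-,\H\setminus s_{\alpha_-})}=\frac{\alpha_-}{1-\alpha_-},
\]
so $r_{t^\ast}\ge 1-\alpha_-> 1-\alpha=r_0$. The case of exit through $\alpha_+$ is symmetric and gives $r_{t^\ast}\le 1-\alpha_+$. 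Feeding these into Lemma~\ref{lemma:welding} yields the explicit
\[
M=\min\Big\{-(2+\rho)\log\tfrac{\alpha_-}{\alpha}-2\log\tfrac{1-\alpha_-}{1-\alpha},\ -(2+\rho)\log\tfrac{\alpha_+}{\alpha}-2\log\tfrac{1-\alpha_+}{1-\alpha}\Big\}>0.
\]
No intermediate angle, no ``macroscopic excursion'' argument, and no unproved estimate are needed: the Jordan-curve comparison at the first exit time is the whole story.
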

\begin{proof}For $\beta\in(0,1)$ and $\tilde R>0$ and let $s_{\beta,\tilde R}=\{re^{i\pi\beta}:r\in[0,\tilde R]\}.$
Then
\begin{equation}
    \frac{\omega_\infty(s^-_{\beta,\tilde R},\H\setminus s_{\beta,\tilde R})}{\omega_\infty(s_{\beta,\tilde R},\H\setminus s_{\beta,\tilde R})}=1-\beta.\label{eq:harmonicmslit}
\end{equation}
This is easily checked using an (inverse) Schwarz-Christoffel map to map out the slit. Suppose $\gamma$ is an unbounded simple curve in $\H$ staring at $0$ and denote $\tilde\gamma:=\varphi_{x_0}^{-1}(\gamma)$. Suppose further that $\tilde\gamma\not\subset C(\alpha_-,\alpha_+)$ and let $T=\inf\{t:\tilde\gamma(t)\notin C(\alpha_-,\alpha_+)\}$. Assume that $\arg(\tilde\gamma(T))=\pi\alpha_-$. We claim that this implies, in the notation of Lemma \ref{lemma:welding}, that $\gamma$ has  $r_{T}\geq 1-\alpha_-.$ To prove this claim, it suffices to show that $\frac{1}{r_{T}}-1\leq \frac{\alpha_-}{1-\alpha_-}.$ First note that conformal covariance of $\omega_\infty$ gives
\begin{equation}\frac{1}{r_T}-1=\frac{x_T-W_T}{W_T-y_T}=\frac{\omega_\infty(\varphi_{x_0}^{-1}([W_T,x_T]),\varphi_{x_0}^{-1}(\H))}{\omega_\infty(\varphi_{x_0}^{-1}([y_t,W_T]),\varphi_{x_0}^{-1}(\H))}=\frac{\omega_\infty(s_{\alpha,R}^+\cup\tilde\gamma^+_T,\H\setminus(s_{\alpha,R}\cup\tilde\gamma_T))}{\omega_\infty(s_{\alpha,R}^-\cup\tilde\gamma^-_T,\H\setminus(s_{\alpha,R}\cup\tilde\gamma_T))}.\label{eq:harmonicmeasure}\end{equation}
Observe that $s_{\alpha,R}\cup\tilde\gamma_T\cup s_{\alpha_-,\tilde\gamma(T)}$ is a Jordan curve, see Figure \ref{fig:conesetup}.
\begin{figure}
\centering
\includegraphics[width=0.38\textwidth]{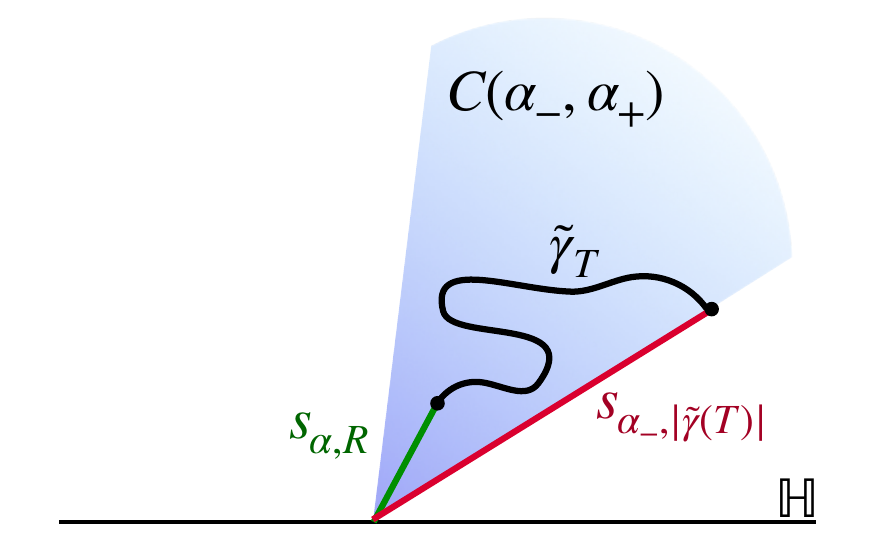}
\caption{Illustration of the set-up in Lemma \ref{lemma:conebound}.\label{fig:conesetup}}
\end{figure}
Monotonicity of harmonic measure then implies
\begin{align*}&\omega_\infty(s_{\alpha,R}^+\cup\tilde\gamma^+_T,\H\setminus(s_{\alpha,R}\cup\tilde\gamma_T))\leq\omega_{\infty}(s_{\alpha_-,\tilde\gamma(T)}^+,\H\setminus s_{\alpha_-,\tilde\gamma(T)}),\\
& \omega_\infty(s_{\alpha,R}^-\cup\tilde\gamma^-_T,\H\setminus(s_{\alpha,R}\cup\tilde\gamma_T))\geq\omega_{\infty}(s_{\alpha_-,\tilde\gamma(T)}^-,\H\setminus s_{\alpha_-,\tilde\gamma(T)}).\end{align*}
Inserting this into (\ref{eq:harmonicmeasure}) and using (\ref{eq:harmonicmslit}) yields $\frac{1}{r_T}-1 \leq \frac{\alpha_-}{1-\alpha_-}$
as desired. By a similar argument, one finds that $\arg(\tilde\gamma(T))=\pi\alpha_+$ implies $r_T\leq 1-\alpha_+$. Applying Lemma \ref{lemma:welding} shows the desired implication with
$$M=\min\Big\{-(2+\rho)\log\frac{\alpha_-}{\alpha}-2\log\frac{1-\alpha_-}{1-\alpha},-(2+\rho)\log\frac{\alpha_+}{\alpha}-2\log\frac{1-\alpha_+}{1-\alpha}\Big\}>0.$$
\end{proof}
\begin{proof}[Proof of Proposition \ref{proposition:angleapproach}]
Suppose $\gamma\in\mathcal X^C_{x_0}$ and that $I^C_{\rho,x_0}(\gamma)<\infty$. Suppose also, for contradiction, that there exists $\alpha_-$ and $\alpha_+$  for which the statement fails. Take $\vare>0$ sufficiently small so that $\tilde\alpha_-:=\alpha_-+\vare>0$, $\tilde\alpha_+:=\alpha_+-\vare<1$. Now, let $M=M(\tilde\alpha_-,\tilde\alpha_+)$ as in Lemma \ref{lemma:conebound}. Since $I^C_{\rho,x_0}(\gamma)<\infty$ there exists a $T>0$ such that $I^C_{\rho,x_T-W_T}(g_T(\gamma_{[T,\infty)})-W_T)<M.$ Therefore, $$\tilde\gamma:=\varphi_{x_T-W_T}^{-1}(g_T(\gamma_{[T,\infty)})-W_T)\subset C(\tilde\alpha_-,\tilde\alpha_+)$$
which implies that 
$$\gamma_{[T,\infty)}\subset g_T^{-1}(\varphi_{x_T-W_T}(C(\tilde\alpha_-,\tilde\alpha_+))+W_T).$$
Since both $\varphi_{x_T-W_T}$ and $g_T$ and are analytic at $\infty$ the rays $\{re^{i\pi\tilde\alpha_-},r>0\}$ and $\{re^{i\pi\tilde\alpha_+},r>0\}$ are mapped by $g_T^{-1}( \varphi_{x_T-W_T}-W_T)$ to smooth curves approaching infinity at the same angle. Hence, there exists an $R>0$ such that 
$$g_T^{-1}(\varphi_{x_T-W_T}(C(\tilde\alpha_-,\tilde\alpha_+))+W_T)\setminus B(0,R)\subset C(\alpha_-,\alpha_+).$$
Since, by choosing $R$ larger, we can ensure that $\gamma_{T}\subset B(0,R)$ this finishes the proof. 
\end{proof}
\subsubsection{Proof of Theorem \ref{thm:chordal}}
Let $\gamma:(0,\infty)\to\Sigma$ be a simple curve from $0$ to $\infty$. Let $h:\Sigma\setminus\gamma\to\Sigma\setminus\R^-$ be a map which is conformal on each component of $\Sigma\setminus\gamma$ mapping the upper (lower) component to $\H$ ($\H^\ast$) with $\infty\mapsto \infty$. For $\beta\in(0,1)$, we define
$$\mathcal D_\beta(\gamma):=\mathcal D_\beta(h)$$ 
whenever the right-hand side (defined in (\ref{eq:renormalizedDir})) exists. Since $|\nabla \log |h'(z)||=|h''(z)/h'(z)|$, and $h$ is unique up to post-composition by translation and scaling on each component, $\mathcal D_\beta(\gamma)$ is well-defined. We will write $\sigma_h := \log|h'|$ for conformal maps $h$. 
\begin{lemma}Fix $\rho>-2$ and $x_0>0$ and let $\gamma^{0}$ be the SLE$_0(\rho)$ from $0$ to $\infty$ in $\Sigma$ with force point $x_0^+$. Then, $\mathcal D_\alpha(\gamma^{\rho,x_0})$ exists and is finite.\label{lemma:direnergy}
\end{lemma}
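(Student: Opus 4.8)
}
The plan is to exploit the explicit form of $\gamma^0$ and the fact that the only source of divergence in the integral defining $\mathcal D_\alpha$ in (\ref{eq:renormalizedDir}) is the corner at $\infty$. Via the map $z\mapsto\sqrt z\colon\Sigma\to\H$ (with $0\mapsto0$, $\infty\mapsto\infty$, $x_0^+\mapsto\sqrt{x_0}$), the curve $\gamma^0$ is the image under $z\mapsto z^2$ of the SLE$_0(\rho)$ in $\H$ from $0$ with reference point $\infty$ and force point $\sqrt{x_0}$, call it $\delta$. By Proposition \ref{prop:ChordalSLE0} the driving function of $\delta$ is $W_t=\tfrac{\rho}{\rho+2}\sqrt{x_0}-\tfrac{\rho}{\rho+2}\sqrt{x_0+2(2+\rho)t}$, real-analytic on $[0,\infty)$; hence $\delta$ is a simple curve from $0$ to $\infty$ in $\H$, disjoint from $\R\setminus\{0\}$, real-analytic on $(0,\infty)$, leaving $0$ vertically (since the driving function is $C^1$ at $0$, cf.\ \cite{W14}) and approaching $\infty$ at angle $\alpha\pi$ by Corollary \ref{cor:rhoflow}. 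Thus $\gamma^0$ is a chord in $\Sigma$ from $0$ to $\infty$, real-analytic on $(0,\infty)$, tangent to $\R^-$ at $0$, and $\gamma^0\cup\R^+$ forms a $2\alpha\pi$ corner at $\infty$; in particular $h^0$ is well defined. I would then split $\tfrac1\pi\int_{B(0,R)\setminus(\gamma^0\cup\R^+)}|\nabla\sigma_{h^0}|^2$, where $\sigma_{h^0}=\log|(h^0)'|$, over (i) the annulus $B(0,R)\setminus B(0,R_0)$ for a fixed $R_0$, (ii) the region $B(0,R_0)$ minus a neighbourhood of $0$, and (iii) a neighbourhood of $0$.

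Pieces (ii) and (iii) are routine. For (ii): on compact subsets of $\overline\Sigma\setminus(\{0\}\cup\{\infty\})$ the arcs $\gamma^0$ and $\R^+$ are analytic and disjoint, so $h^0$ extends conformally across them by reflection and $|\nabla\sigma_{h^0}|$ is bounded there. For (iii): write $h^0|_{D_\pm}=\psi_\pm\circ\sqrt{\,\cdot\,}$, where $D_\pm$ are the components of $\Sigma\setminus\gamma^0$ and $\psi_\pm$ maps the corresponding component of $\H\setminus\delta$ conformally onto a half-plane. The relevant corner of $\H\setminus\delta$ at $0$ is an \emph{analytic} corner of opening $\pi/2$ (the two analytic arcs $\delta$ and $\R^+$ meeting at right angle), so by the classical expansion at an analytic corner $\psi_\pm(w)=w^2A_\pm(w)$ with $A_\pm$ analytic and $A_\pm(0)\neq0$; hence $\sigma_{\psi_\pm}(w)=\log|w|+\varrho_\pm(w)$ with $\varrho_\pm$ real-analytic near $0$. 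Substituting $w=\sqrt z$, the $\log|z|$-terms cancel against $\log|(\sqrt z)'|$, so $\sigma_{h^0}$ is bounded near $0$ with $|\nabla\sigma_{h^0}(z)|=O(|z|^{-1/2})$, which is square-integrable.

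The heart of the matter is piece (i). Let $L=\{re^{i2\pi\alpha}:r\ge0\}$ be the straight ray; it is $\ell^2$, where $\ell$ is the SLE$_0(\rho)$ in $\H$ with force point $0+$ (Remark \ref{remark:ray}), and its welding map $h_L$ is, on each of the two sectors of $\Sigma\setminus L$ of openings $2\pi\alpha$ and $2\pi(1-\alpha)$, an explicit power map; a direct computation gives $\tfrac1\pi\int_{(B(0,R)\setminus B(0,R_0))\setminus(L\cup\R^+)}|\nabla\sigma_{h_L}|^2=c_\alpha\log(R/R_0)$ with $c_\alpha=\tfrac{(1-2\alpha)^2}{2\alpha(1-\alpha)}$. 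I would then show that $\gamma^0$ is asymptotic to $L$ near $\infty$ with a power rate: rescaling, $\mu^{-1}\delta$ has driving function $W^{\rho,\sqrt{x_0}/\mu}$ in the notation of Proposition \ref{prop:ChordalSLE0}, and $W^{\rho,\sqrt{x_0}/\mu}_t=W^{\rho,0+}_t+\tfrac{\rho}{\rho+2}\tfrac{\sqrt{x_0}}{\mu}+O(\mu^{-2})$ uniformly on compact $t$-intervals. Since a constant shift of the driving function translates the curve, stability of the Loewner transform yields $\delta(t)=\ell(t)+\tfrac{\rho}{\rho+2}\sqrt{x_0}+O(t^{-1/2})$ as $t\to\infty$, hence $\gamma^0(t)=\delta(t)^2=\ell(t)^2+O(t^{1/2})$, i.e.\ $\gamma^0$ deviates from $L$ by $O(|z|^{1/2})$ at distance $|z|$ from the origin. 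In the coordinate $w=1/z$ the slit $\gamma^0$ is therefore an $O(|w|^{3/2})$ perturbation of $h_L$'s slit near $0$; composing $h^0$ with the associated conformal identification $\Phi$ of neighbourhoods of $\infty$ (which is $\mathrm{id}+O(|z|^{-1/2})$), so that $\sigma_{h^0}=\sigma_{h_L}\circ\Phi+\log|\Phi'|$ up to an additive constant, and using that $\nabla\sigma_{h_L}$ is an explicit constant multiple of $\nabla\log|z|$, one gets $\nabla\sigma_{h^0}(z)-\nabla\sigma_{h_L}(z)=O(|z|^{-5/2})$ near $\infty$. Consequently $|\nabla\sigma_{h^0}|^2-|\nabla\sigma_{h_L}|^2=O(|z|^{-7/2})$ is integrable over $\{|z|>R_0\}$, so piece (i) equals $c_\alpha\log(R/R_0)+C(R_0)+o(1)$ for a finite $C(R_0)$. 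Combining (i)--(iii), $\tfrac1\pi\int_{B(0,R)\setminus(\gamma^0\cup\R^+)}|\nabla\sigma_{h^0}|^2=c_\alpha\log R+C+o(1)$, so $\mathcal D_\alpha(\gamma^0)=\mathcal D_\alpha(h^0)$ exists and equals the finite constant $C$.

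I expect the main obstacle to be the quantitative control near $\infty$: upgrading ``$\gamma^0$ approaches $\infty$ at angle $2\pi\alpha$'' to the explicit deviation rate $O(|z|^{1/2})$, and transferring it through the welding map $h^0$, which is not conformal at $\infty$. If one prefers to avoid the stability-of-Loewner-transform input, the curves driven by $W_t=c\sqrt{t+\tau}-c\sqrt\tau$ admit a semi-explicit solution (\cite{KNK04}), from which the asymptotics of $\gamma^0$ and $h^0$ near $\infty$ can be read off directly; the near-$0$ argument is unchanged.
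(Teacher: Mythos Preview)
Your overall architecture is sound and matches the paper's: reduce to a neighbourhood of $\infty$, compare $h^0$ to the explicit welding $h_L$ of the straight ray $L=\{re^{2\pi i\alpha}:r\ge0\}$, and check that the discrepancy is integrable so that only the $c_\alpha\log R$ term survives. Where your argument becomes shaky is precisely in the two steps you flag yourself.

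\textbf{Near $\infty$.} You deduce $\delta(t)=\ell(t)+\tfrac{\rho}{\rho+2}\sqrt{x_0}+O(t^{-1/2})$ from the driving-function expansion by an appeal to ``stability of the Loewner transform''. This is not a standard off-the-shelf estimate: the curve at time $t$ depends on the entire driver on $[0,t]$, and converting an $O(t^{-1/2})$ driver discrepancy into a pointwise $O(t^{-1/2})$ curve discrepancy near $\infty$ needs a genuine argument. Even granting the curve-level deviation, you then need the conformal identification $\Phi$ together with the derivative bound $|\nabla\sigma_{h^0}-\nabla\sigma_{h_L}|=O(|z|^{-5/2})$; this you assert but do not construct. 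The paper sidesteps both issues by exploiting the \emph{exact} self-similarity rather than an asymptotic one. By Remark~\ref{remark:ray} the SLE$_0(\rho)$ with force point $0+$ is the ray $\{re^{2\pi i\alpha}:r\ge0\}$, and mapping out an initial segment of it with the (hydrodynamically normalized) mapping-out function produces an SLE$_0(\rho)$ with force point $x_0>0$. In $\Sigma$-coordinates this gives a conformal map $\tilde h:\Sigma\setminus\{re^{2\pi i\alpha}:r\in[0,\tilde R]\}\to\Sigma$ sending the remaining ray onto $\gamma^0$, with $\sqrt{\tilde h(z^2)}=z+O(1)$ analytic at $\infty$. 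That single line yields $|\tilde h(z)|=|z|+O(|z|^{1/2})$, $\log|\tilde h'|=O(|z|^{-1/2})$ and $|\nabla\sigma_{\tilde h}|=O(|z|^{-3/2})$ for free, i.e.\ exactly the $\Phi$ you want, with estimates. Writing $h^0=\hat h\circ\tilde h^{-1}$ (your $h_L$ is the paper's $\hat h$) and expanding $|\nabla\sigma_{h^0}|^2$, the cross term is handled by Stokes (using $\partial_n\sigma_{\hat h}=0$ along both sides of $\R^+$ and of $L$), and the $|\nabla\sigma_{\hat h}(\tilde h^{-1})|^2$ versus $|\nabla\sigma_{\hat h}|^2$ discrepancy is $O(1)$ because the symmetric difference of $\tilde h^{-1}(A(r,R))$ and $A(r,R)$ lies in an annulus of radius $R$ and thickness $O(\sqrt R)$. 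This is cleaner than trying to bound $\nabla\sigma_{h^0}-\nabla\sigma_{h_L}$ pointwise.

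\textbf{Near $0$.} Your analytic-corner argument requires $\delta$ to be real-analytic \emph{up to} $t=0$; the driver is analytic on $[0,\infty)$, but you should justify why this transfers to the curve at the initial point. The paper avoids this altogether: since $I^{(\Sigma;0,\infty)}(\gamma^0_{[0,T]})<\infty$ for every finite $T$, (\ref{eq:chorDirichlet}) gives $\int_{B(0,r)\setminus(\gamma^0\cup\R^+)}|\nabla\sigma_{h^0}|^2<\infty$ directly, with no local regularity analysis needed.

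In short: replace your perturbative comparison and the unconstructed $\Phi$ by the exact conformal map coming from the domain-Markov/self-similarity of SLE$_0(\rho)$, and replace the corner analysis at $0$ by the finite-Loewner-energy bound. Both substitutions shorten the proof and close the gaps.
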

\begin{proof}
By Remark \ref{remark:ray}, there exists a $\tilde R>0$ and a conformal map $\tilde h:\Sigma\setminus\{re^{i2\alpha\pi}:r\in[0,\tilde R]\}\to\Sigma$ satisfying $$\tilde h(\{re^{i2\alpha\pi}:r\in[\tilde R,\infty)\})=\gamma^{0}\quad \text{ and }\quad \sqrt{\tilde h(z^2)}=z+O(1)\text{ analytic at $\infty$}.$$ 
The second property gives
\begin{align}
|\tilde h(z)|&=|z|+O(|z|^{1/2}),\label{eq:dirbnd3}\\
\log|\tilde h'(Re^{i\theta})| &= O(|z|^{-1/2}),\label{eq:dirbnd2}\\
|\nabla\sigma_{\tilde h} (z)|&=O(|z|^{-3/2}).\label{eq:dirbnd1}
\end{align}
(For the same reason (\ref{eq:dirbnd3})-(\ref{eq:dirbnd1}) also hold for $\tilde h^{-1}$.) Further, let 
$$\hat h:\Sigma\setminus\{re^{i2\alpha\pi}:r\in\R^+\}\to\Sigma\setminus\R^{-}$$
$$\hat h(re^{i\theta}) = \begin{cases}r^{1/(2\alpha)}e^{i\theta/(2\alpha)},&\theta\in(0,2\alpha\pi),\\
-r^{1/(2-2\alpha)}e^{i(\theta-2\alpha\pi)/(2-2\alpha)}, &\theta\in(2\alpha\pi,2\pi),
\end{cases}$$
and set $h=\hat h\circ\tilde h^{-1}$. 
Then $\mathcal D_\alpha(\gamma^{0})=\mathcal D_\alpha(h)$. Since $\gamma^{0}_{[0,T]}$ has finite chordal Loewner energy for every $T<\infty$, we have
$$\int_{B(0,r)\setminus (\gamma^{0}\cup\R^+)}|\nabla\sigma_h(z)|^2dz^2<\infty$$
for every $r>0$. Hence, it suffices to show that
\begin{equation}\lim_{R\to\infty}\bigg(\frac{1}{\pi}\int_{A(r,R)\setminus(\gamma^{0}\cup\R^+)}|\nabla\sigma_h(z)|^2dz^2-c_\alpha\log R\bigg)\label{eq:dirlimit}
\end{equation} exists for some $r$, where $A(r,R)=\{r<|z|<R\}$. We write
$$|\nabla \sigma_h(z)|^2=|\nabla \sigma_{\tilde h^{-1}}(z)|^2+ 2\nabla\sigma_{\tilde h^{-1}}(z)\cdot\nabla\sigma_{\hat h}(\tilde h^{-1}(z)) +|\nabla\sigma_{\hat h}(\tilde h^{-1}(z))|^2,$$
and study the integral over these terms separately.
First of all,
$$\lim_{R\to\infty}\frac{1}{\pi}\int_{A(r,R)\setminus(\gamma^{0}\cup\R^+)}|\nabla\sigma_{\tilde h^{-1}}(z)|^2 dz^2$$
exists and is finite for sufficiently large $r>0$ by (\ref{eq:dirbnd1}). Second, by conformal invariance of the Dirichlet inner product and Stokes' theorem, we have
\begin{align*}\int_{A(r,R)\setminus(\gamma^{0}\cup\R^+)}\nabla\sigma_{\tilde h^{-1}}(z)\cdot \nabla\sigma_{\hat h}(\tilde h^{-1}(z)) dz^2
&=-\int_{\tilde h^{-1}(A(r,R))\setminus(\{re^{i2\alpha}\}\cup\R^+)} \nabla\sigma_{\tilde h}(z)\cdot\nabla\sigma_{\hat h}(z) dz^2\\
 &= -\int_{\Gamma^+_{R}\cup\Gamma^-_{R}} \sigma_{\tilde h}\partial_n\sigma_{\hat h}(z)d\ell,\end{align*}
 where $\Gamma^+_{R}$ and $\Gamma^-_{R}$ are the boundaries of the upper and lower connected components of $\tilde h^{-1}(A(r,R))\setminus(\{re^{i2\alpha}\}\cup\R^+)$ respectively. Note that $r$ can be chosen sufficiently large so that $\tilde h^{-1}$ is conformal on $\C\setminus (\R^+\cup B(0,r))$, and can be analytically extended across $\R^+$ (from both sides separately). This ensures that $\Gamma^\pm_R$ is piece-wise smooth, and that $\sigma_{\tilde h}$ and $\sigma_{\hat h}$ are smooth up to (and including) $\Gamma_R^\pm$, so Stokes' theorem may be used. Note that $\partial_n\sigma_{\hat h}$ vanishes along both sides of $\R^+$ and $\{re^{i2\alpha\pi}:r\in\R^+\}$. Thus, we are left with integrals along $\tilde h^{-1}(\partial B(0,r))$ and $\tilde h^{-1}(\partial B(0,R))$. We have 
$$|\nabla\sigma_{\hat h}(Re^{i\theta})|=\begin{cases}|\frac{1}{2\alpha}-1|\frac{1}{R},&\theta\in(0,2\alpha\pi),\\ |\frac{1}{2-2\alpha}-1|\frac{1}{R},&\theta\in(2\alpha\pi,2\pi), 
\end{cases}$$
and the length of $\tilde h^{-1}(\partial B(0,R))$ is $O(R)$. From this and (\ref{eq:dirbnd2}), we conclude that
$$\lim_{R\to\infty}\int_{A(r,R)\setminus(\gamma^{0}\cup\R^+)}\nabla \sigma_{\tilde h^{-1}}(z)\cdot \nabla\sigma_{\hat h}(\tilde h^{-1}(z)) dz^2=\int_{\tilde h^{-1}(\partial B(0,r))}\sigma_{\tilde h}(z)\partial_n\sigma_{\hat h}(z)d\ell.
$$
Finally, using the computation of $|\nabla\sigma_{\hat h}|$ and (\ref{eq:dirbnd3}) we obtain
$$\bigg(\int_{\tilde h^{-1}(A(r,R))\setminus(\{re^{i2\alpha}\}\cup\R^+)}-\int_{A(r,R)\setminus(\{re^{i2\alpha}\}\cup\R^+)}\bigg)|\nabla\sigma_{\hat h}(z)|^2 dz^2=O(1),\quad\text{as }R\to\infty $$
since the symmetric difference of the sets to be integrated over consists of a bounded part and a part contained in an annulus of radius $R$ and thickness $O(\sqrt{R}).$
Moreover, the second term is easily computed and equals 
$\pi c_\alpha\log\frac{R}{r}.$ We conclude that (\ref{eq:dirlimit}) exists.
\end{proof}
We are now ready to prove Theorem \ref{thm:chordal}. Let $\gamma\subset\Sigma$ be a simple curve from $0$ to $\infty$ such that $\gamma_{[T,\infty)}$ is the $\rho$-Loewner energy optimal continuation of $\gamma_T$. 
Let $h_T:\Sigma\setminus\gamma_T\to\Sigma$ be the conformal map with $h_T(\infty)=\infty$, $h_T(\gamma(T))=0$ and $|h_T'(\infty)|=1$. We may assume that $h_T(x_0^+)\geq x_0$ (for otherwise we may achieve this by increasing $T$). Under this assumption, there exists a $\tilde T\geq 0$ such that
$$\tilde h_{\tilde T}(x_0^+) =  h_T(x_0^+)=:x_T,$$
where $\tilde h_{\tilde T}:\Sigma\setminus\gamma_{\tilde T}^{0}\to\Sigma$ is the conformal map with the same normalization as $h_T$. Note that $H=\tilde h_{\tilde T}^{-1}\circ h_T$ and that (\ref{eq:rhoenergyboundary}) gives
$$I^{(\Sigma;0,\infty)}_{\rho,x_0^+}(\gamma) = I^{(\Sigma;0,\infty)}_{\rho,x_0^+}(\gamma_T)-I^{(\Sigma;0,\infty)}_{\rho,x_0^+}(\gamma^{0}_{\tilde T})=I^{(\Sigma;0,\infty)}(\gamma_T)-I^{(\Sigma;0,\infty)}(\gamma^{0}_{\tilde T})-\frac{\rho(4+\rho)}{4}\log|H'(x_0)|.$$
So, it only remains to show that $I^{\Sigma,0,\infty}(\gamma_T)-I^{\Sigma,0,\infty}(\gamma^{0}_{\tilde T})=\mathcal D_\alpha(\gamma)-\mathcal D_\alpha(\gamma^{0})$ and from (\ref{eq:chorDirichlet}) we already know that
$$I^{(\Sigma;0,\infty)}(\gamma_T) -I^{(\Sigma;0,\infty)}(\gamma^{0}_{\tilde T}) = \frac{1}{\pi}\int_{\Sigma\setminus\gamma}|\nabla \sigma_{h_T}(z)|^2 dz^2- \frac{1}{\pi}\int_{\Sigma\setminus\gamma^{0}}|\nabla \sigma_{\tilde h_{\tilde T}}(z)|^2 dz^2.$$
Let $\tilde\gamma^0:=\tilde h_{\tilde T}(\gamma^0_{[\tilde T,\infty)})= h_{T}(\gamma_{[T,\infty)})$ and let $h_{\tilde\gamma^{0}}:\Sigma\setminus\tilde\gamma^{0}\to\Sigma\setminus\R^-$ be a conformal map, fixing $\infty$ and mapping the components appropriately. See, Figure \ref{fig:chordalsetup}.
\begin{figure}
\includegraphics[width=\linewidth]{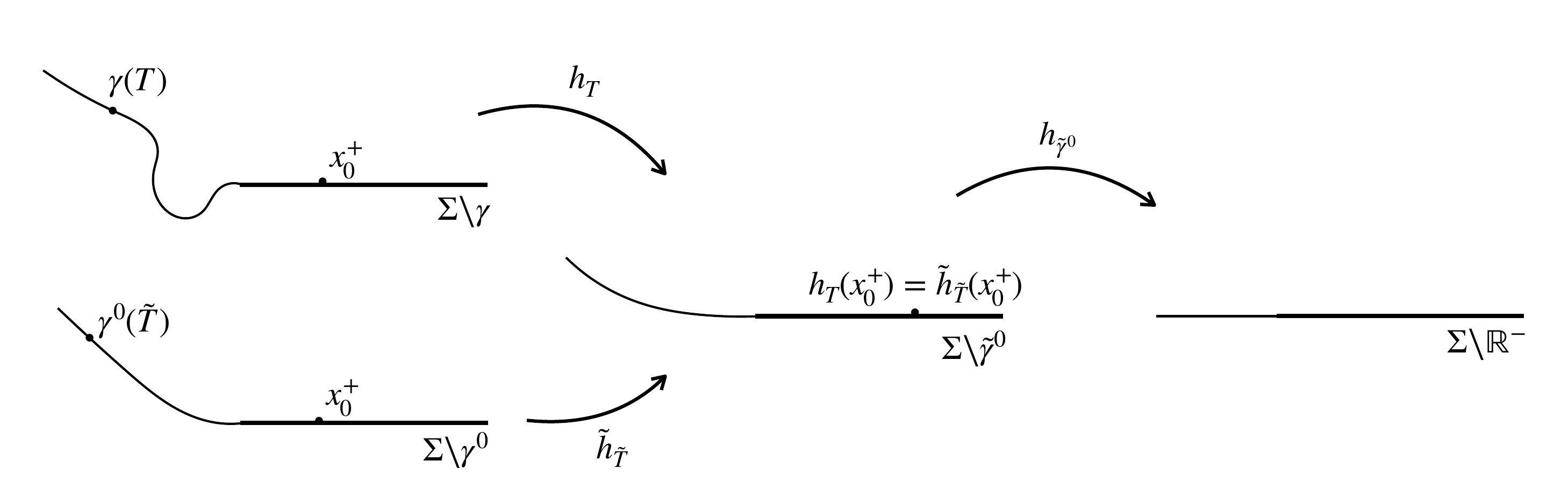}
\caption{Illustration of the conformal maps $h_{T}$, $\tilde h_{\tilde T}$, and $h_{\tilde\gamma^0}$ from the proof of Theorem \ref{thm:chordal}.\label{fig:chordalsetup}}
\end{figure}
Define $$h_{\gamma}:=h_{\tilde\gamma^{0}}\circ h_T\quad\text{ and }\quad h_{\gamma^{0}}:=h_{\tilde\gamma^{0}}\circ \tilde h_{\tilde T}$$ and note that 
$\mathcal D_\alpha(\gamma)=\mathcal D_\alpha(h_{\gamma})$ and $\mathcal D_\alpha(\gamma^{0})=\mathcal D_\alpha(h_{\gamma^{0}})$.
If $I^{(\Sigma,0,\infty)}(\gamma)=\infty$ we have that $\mathcal D_\alpha(\gamma)=\infty$, since for each sufficiently large $R$ we have that 
$$\int_{B(0,R)\setminus(\gamma\cup \R^+)}|\nabla \sigma_{h_T}(z)|^2 dz^2=\infty,\qquad \int_{B(0,R)\setminus(\gamma\cup \R^+)}|\nabla \sigma_{h_{\tilde\gamma^0}}(h_T(z))|^2 dz^2<\infty.$$
Hence, we may assume that $I^{(\Sigma,0,\infty)}(\gamma)<\infty$. As we already know that $\mathcal D_\alpha(\gamma^{0})$ exists and is finite (by Lemma \ref{lemma:direnergy}), we need only to check that
\begin{align}
\lim_{R\to\infty}&\int_{B(0,R)\setminus(\gamma\cup\R^+)}\nabla \sigma_{h_{\tilde\gamma^{0}}}(h_T)\cdot\nabla \sigma_{h_T}dz^2=0\label{eq:Dirichlet2}\\
\lim_{R\to\infty}&\int_{B(0,R)\setminus(\gamma^{0}\cup\R^+)}\nabla \sigma_{h_{\tilde\gamma^{0}}}(\tilde h_{\tilde T})\cdot\nabla \sigma_{\tilde h_{\tilde T}} dz^2=0\label{eq:Dirichlet3}\\
\lim_{R\to\infty}&\Bigg(\int_{B(0,R)\setminus(\gamma\cup\R^+)}|\nabla \sigma_{h_{\tilde\gamma^{0}}}(h_T)|^2dz^2 -\int_{B(0,R)\setminus(\gamma^{0}\cup\R^+)}|\nabla \sigma_{h_{\tilde\gamma^{0}}}( \tilde h_{\tilde T})|^2dz^2\Bigg)=0\label{eq:Dirichlet1}.\end{align}
We start by showing (\ref{eq:Dirichlet2}). Let $\phi\in C^\infty_c(\C)$ is some function with $\phi|_{h_T(B(0,R))}\equiv 1$. It then follows from \cite[Lemma 5.3]{W19a} and Stokes' theorem that
\begin{align*}\int_{B(0,R)\setminus(\gamma\cup\R^+)}\nabla \sigma_{h_{\tilde\gamma^{0}}}(h_T)\cdot\nabla \sigma_{h_T}dz^2&=-\int_{\C\setminus(B(0,R)\cup\gamma\cup\R^+)}\nabla (\phi\sigma_{h_{\tilde\gamma^{0}}})(h_T)\cdot\nabla \sigma_{h_T}dz^2\\
&=\int_{\partial B(0,R)}\sigma_{h_T}\partial_n \sigma_{h_{\tilde\gamma^{0}}}(h_T)d\ell.
\end{align*}
Here we are using that $\sigma_{h_T}$ and $\sigma_{\tilde\gamma^0}(h_T)$ are smooth on $\C\setminus(B(0,R)\cup\R^+)$ and that $\partial_n\sigma_{h_{\tilde\gamma^0}}$ vanishes along both sides of $\R^+$ since
$$\partial_n\sigma_{h_{\tilde\gamma^0}}(re^{i\theta})|_{\theta=0} =\partial_r\arg(h_{\tilde\gamma^0}'(r))=\partial_r(0) =0.$$
Further, since  $\sqrt{h_T(z^2)}=z+O(1)$ is analytic at $\infty$
$$|h_T(Re^{i\theta})|=R+O(1/\sqrt{R}),\qquad |h_T'(Re^{i\theta})|=1+O(1/\sqrt{R}),$$ as $R\to\infty$ and by the proof of Lemma \ref{lemma:direnergy} we have $|\nabla \sigma_{h_{\tilde\gamma^{0}}}(Re^{i\theta})|=O(1/R),$ as $R\to\infty$. Applying these estimates to the right-hand side above shows (\ref{eq:Dirichlet2}). Since (\ref{eq:Dirichlet3}) is a special case of (\ref{eq:Dirichlet2}), we have also shown (\ref{eq:Dirichlet3}). 
We move on to (\ref{eq:Dirichlet1}). Using conformal invariance of the Dirichlet inner product we get
\begin{align*}&\int_{B(0,R)\setminus(\gamma\cup\R^+)}|\nabla \sigma_{h_{\tilde\gamma^{0}}}(h_T(z))|^2dz^2
-\int_{B(0,R)\setminus(\gamma^{0}\cup\R^+))}|\nabla \sigma_{h_{\tilde\gamma^{0}}}(\tilde h_{\tilde T}(z))|^2dz^2\\
=&\bigg(\int_{h_T(B(0,R))\setminus(\tilde\gamma^{0}\cup \R^+)}-\int_{\tilde h_{\tilde T}(B(0,R))\setminus(\tilde\gamma^{0}\cup \R^+)}\bigg)|\nabla\sigma_{h_{\tilde\gamma^{0}}}(z)|^2 dz^2.\end{align*}
Since, $|h_T(Re^{i\theta})|=R+O(1/\sqrt{R})$ and $|\tilde h_{\tilde T}(Re^{i\theta})|=R+O(1/\sqrt{R})$, the symmetric difference $h_T(B(0,R))\triangle \tilde h_{\tilde T}(B(0,R))$ is contained in an annulus of thickness $O(1/\sqrt{R})$ and radius $R$. Combining this with the estimate $|\nabla \sigma_{h_{\tilde\gamma^{0}}}(Re^{i\theta})|=O(1/R)$, we conclude that (\ref{eq:Dirichlet1}) holds. This finishes the proof.
\section{\texorpdfstring{$\zeta$}{}-regularized determinants of Laplacians}\label{section:laplacians}
In this section, we prove Proposition \ref{prop:laplacians}. We will give a detailed proof of the chordal case and an outline of the proof of the radial case (the details in the radial case are almost identical to those in the chordal case). Recall from Section \ref{section:mainresults}, the construction of the smooth slit structures $(\overline\D,\varphi)$. We say that a Riemannian metric $g_0$ on $\D$ is a $(\overline\D,\varphi)$-smooth conformal metric if the following holds:
\begin{itemize}
    \item In the $z$-coordinate $g_0(z)=e^{2\sigma_0(z)}dz^2$, where $\sigma_0\in C^\infty(\overline\D\setminus\{a\}).$
    \item In the $w=\varphi(z)$-coordinate $g_0(\varphi^{-1}(w))=e^{2\hat\sigma_0(w)}dw^2,$ where $\hat\sigma_0\in C^\infty(\D\setminus\Gamma)$ and $\hat\sigma_0$ extends smoothly to both sides of $\Gamma$ separately.
\end{itemize}
Before presenting the proof of Proposition \ref{prop:laplacians} we prove two lemmas. 
\begin{lemma}\label{lemma:cpd}Let $a$, $b$, $c$, $\Gamma$, $\varphi$ be as in Proposition \ref{prop:laplacians}. Then, for each $(\overline \D,\varphi)$-smooth conformal metric $g_0$, $(\overline\D,g_0,(a),(2))$ is a curvilinear polygonal domain. Moreover, if $\gamma\in \hat{\mathcal X}$ then (the closure of) each of the components of $\D\setminus\gamma$ and $\D\setminus(\gamma\cup\eta)$, endowed with $g_0$ along with the appropriate $n$-tuples $(p_j)$ and $(\beta_j)$, are curvilinear polygonal domains. 
\end{lemma}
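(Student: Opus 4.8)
The plan is to verify the four conditions in Definition \ref{def:cpd} at each relevant corner, treating the corner at $a$ (where the smooth slit structure is built in) separately from the corners created by the tip of $\gamma$ (and, in the bounded case, by the point where $\gamma$ meets $\eta$ or $\partial\D$).

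First I would handle $(\overline\D,g_0,(a),(2))$. Near $a$ the curve $\Gamma$ together with the boundary arc of $\partial\D$ through $a$ forms, in the $w=\varphi(z)$-coordinate, a configuration where $\varphi(\partial\D\cap U)$ is a smooth arc and $\Gamma$ emanates smoothly from $0=\varphi(a)$; by the definition of the slit structure the boundary of $\overline\D$ viewed through $\varphi$ is smooth on each side and the two sides meet at an interior angle $2\pi$ (the slit is approached from both sides). Condition (i) is then immediate from smoothness of $\Gamma$ and of $\partial\D$, and the angle is $2\pi$, so $\beta=2$. Condition (ii) is exactly the second bullet in the definition of a $(\overline\D,\varphi)$-smooth conformal metric: in the $w$-coordinate $g_0=e^{2\hat\sigma_0(w)}dw^2$ with $\hat\sigma_0$ extending smoothly to both sides of $\Gamma$. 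Conditions (iii) and (iv) — existence of smooth Jordan-curve extensions $\Gamma_1,\Gamma_2$ of the two boundary arcs together with smooth extensions of $\hat\sigma_0$ to smoothly bounded domains — follow by extending $\Gamma$ past $0$ into $\D$ to a smooth Jordan curve and using that $\hat\sigma_0\in C^\infty(\D\setminus\Gamma)$ extends smoothly across; away from $a$ one uses $\sigma_0\in C^\infty(\overline\D\setminus\{a\})$ and that $\partial\D$ is real-analytic, so the extension and the metric extension present no difficulty. Thus $(\overline\D,g_0,(a),(2))$ is a curvilinear polygonal domain.

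Next, for $\gamma\in\hat{\mathcal X}$, I would consider the components of $\D\setminus\gamma$ and $\D\setminus(\gamma\cup\eta)$. Each such component is a Jordan subdomain of $\overline\D$ whose boundary consists of finitely many arcs: pieces of $\partial\D$, pieces of $\gamma$, and (in the $\D\setminus(\gamma\cup\eta)$ case) pieces of $\eta$. Away from the special points its boundary is smooth — $\partial\D$ is real-analytic, $\gamma$ is $(\overline\D,\varphi)$-smooth hence smooth in the interior, and $\eta$, being a hyperbolic geodesic in a domain bounded by a finite-energy (hence asymptotically smooth) curve, is smooth. So the only corners sit at: the tip $a$ (corner $2\pi$, handled as above, now for the sub-slit-structure induced on the component); the endpoint $\gamma(T)$ or $\gamma(\infty)$; and, along $\partial\D$, the points where $\gamma$ or $\eta$ land. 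At the tip $a$ the sub-domain inherits the slit structure via restriction of $\varphi$, and the same verification as in the first paragraph applies because $\gamma$ is \emph{smoothly attached at $a$}, i.e. $\varphi(\gamma)\cup\Gamma$ is smooth at $a$, which gives the smooth boundary arcs and the angle $2\pi$ on the side of the component that is bounded by the slit. At the terminal point: if $b\in\D$, smoothness of $\gamma\cup\eta$ at $b$ (part of the definition of $\hat{\mathcal X}$) gives a smooth boundary arc through $b$ in the component containing $b$ on its boundary, with some interior angle in $(0,2\pi]$; if $b\in\partial\D$ the assumption is that $\gamma_{[T,\infty)}$ is the $I^{(\D,a,b)}_{\rho,c}$-optimal extension, i.e. an SLE$_0(\rho)$ arc, which by Section \ref{section:minimizers} / Corollary \ref{cor:rhoflow} approaches $b$ at a definite angle and is real-analytic, so again the boundary arcs are smooth and the corner angle lies in $(0,2\pi]$ — here one uses that $\rho>-2$ so the optimal extension does end at the reference point with the angle $\alpha\pi$ in the radial case (resp. behaves as in Proposition \ref{proposition:angleapproach} mapped appropriately in the chordal case). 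At a landing point on $\partial\D$ the boundary of the component is locally the union of a real-analytic arc of $\partial\D$ and a smooth arc of $\gamma$ (or $\eta$), meeting at an interior angle in $(0,2\pi]$; conditions (ii)--(iv) at every one of these corners hold because $g_0$ is $(\overline\D,\varphi)$-smooth — smooth in the $z$-coordinate away from $a$ — and the bounding arcs are real-analytic or smooth, so they extend to smooth Jordan curves with the metric extending smoothly to the enclosed smoothly bounded domains.

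The routine-but-slightly-delicate point, which I would spell out carefully, is the verification at the tip $a$ for the sub-domains: one must check that restricting the slit structure $(\overline\D,\varphi)$ to a component $D_i$ of $\D\setminus\gamma$ still yields, near $a$, the local picture of Definition \ref{def:cpd} with angle $2\pi$, and that "$\gamma$ smoothly attached at $a$" is precisely what makes the two boundary arcs of $D_i$ at $a$ smooth and the enclosing Jordan-curve extensions and metric extensions available. I expect this — reconciling the abstract slit structure with the concrete corner data at $a$ — to be the main obstacle; everything away from $a$ reduces to standard facts about real-analytic boundary arcs, smoothness of $\gamma$ and $\eta$ in the interior and at the non-tip endpoints, and the defining smoothness of a $(\overline\D,\varphi)$-smooth conformal metric.
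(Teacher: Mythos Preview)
Your overall strategy matches the paper's, and the verification of $(\overline\D,g_0,(a),(2))$ as a curvilinear polygonal domain is essentially the same. However, you misidentify where the real work lies, and there is a genuine gap.

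You assert that $\eta$ is smooth because it is ``a hyperbolic geodesic in a domain bounded by a finite-energy (hence asymptotically smooth) curve,'' and later that at each landing point on $\partial\D$ the boundary is locally a smooth arc of $\eta$. But this is precisely what needs proof. A hyperbolic geodesic is automatically smooth in the interior of the domain, but smoothness \emph{up to its endpoints} on $\partial(\D\setminus\gamma)$ is not automatic --- it depends on the local boundary geometry at those endpoints, and ``asymptotically smooth'' is far weaker than $C^\infty$. The paper devotes most of its proof of this lemma to exactly this point:
\begin{itemize}
\item At $c\in\partial\D$, the boundary of $\D\setminus\gamma$ is locally just the analytic arc $\partial\D$, so the uniformizing map extends by Schwarz reflection and $\eta$ is smooth at $c$. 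You never mention this mechanism.
\item At $b$ when $b\in\partial\D$, the boundary of $\D\setminus\gamma$ has a \emph{corner} at $b$ (formed by $\gamma$ and $\partial\D$), and a hyperbolic geodesic landing at a corner is not smooth in general. The paper uses crucially that $\gamma_{[T,\infty)}$ is the $\rho$-optimal extension: it constructs an explicit conformal map $\psi:\D\setminus\gamma_{[0,T]}\to\Sigma\setminus\{re^{2\alpha\pi i}:r\in[0,1]\}$ sending $b\mapsto\infty$, $c\mapsto 0^+$, under which both $\gamma_{[T,\infty)}$ and $\eta$ become straight rays. Since $\psi$ extends across $\partial\D$ near $b$ by Schwarz reflection, this shows $\eta$ is $C^\infty$ at $b$. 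Your proposal discusses the angle of approach of $\gamma$ at $b$ but says nothing about $\eta$ there.
\end{itemize}

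Conversely, the verification at $a$ --- which you flag as the ``main obstacle'' --- is handled in the paper in one line: the chart is $\varphi$ itself, and the required properties are exactly the defining properties of a $(\overline\D,\varphi)$-smooth metric and of smooth attachment. So the emphasis should be shifted from $a$ to the endpoints of $\eta$.
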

\begin{proof}
It is immediate from the construction of the smooth slit structure $(\overline\D,\varphi)$ that $g_0$ is a smooth Riemannian metric on $\overline\D\setminus\{a\}$. For the corner point $p_1=a$ we choose $\varphi_1=\varphi$. We check that (i-iv) of Definition \ref{def:cpd} hold. By construction of $\varphi$ (i) holds with $\beta_1=2$. As $g_0$ is a $(\overline \D,\varphi)$-smooth conformal metric (ii) is also immediate. 
Moreover, let $\Gamma_1$ and $\Gamma_2$ be smooth Jordan curves as in (iii) and (iv). Let $g=e^{2\hat\sigma(w)}dw^2$. We may assume that $\Gamma_1,\Gamma_2\subset\D$ and thus $\sigma_1=\hat\sigma$ is already defined on $V_{1,1}^\circ$ and $V_{1,2}^\circ$. Since $g_0$ is $(\overline\D,\varphi)$-smooth it follows that $\hat\sigma=\sigma_1\in C^\infty(V_{1,1})$ and $\hat\sigma=\sigma_1\in C^\infty(V_{1,2})$. Thus $(\overline\D,g_0,(a),(2))$ is a curvilinear polygonal domain.\par 
Let $\gamma\in\hat{\mathcal X}.$ Then, $\gamma$ is $(\overline\D,\varphi)$-smooth. Moreover, as $\eta$ is a hyperbolic geodesic it is smooth, at least away from its endpoints. If $b\in \D$ smoothness of $\eta$ at $b$ is forced by the definition of $\hat{\mathcal X}$ and smoothness at $c$ follows from the fact that a conformal map $\psi:\D\setminus\gamma\to \D$, with $\psi(b)=1$ and $\psi(c)=-1$, maps $\eta$ onto $[-1,1]$ and can be extended conformally to neighbourhood of $c$ using Schwarz reflection. If $b\in\partial\D$, then we may assume, by symmetry that $c$ lies on the counter-clockwise circular arc from $a$ to $b$. We consider the conformal map
$$\psi:\D\setminus\gamma_{[0,T]}\to\Sigma\setminus\{re^{\alpha\pi i}:r\in[0,1]\}$$
with $\alpha=\alpha(\rho)=\frac{2+\rho}{4+\rho}$, $\psi(\gamma(T))=e^{2\alpha\pi i}$, $\psi(b)=\infty$ and $\psi(c)=0+$, where $T$ is chosen so that $\gamma_{[T,\infty)}$ is the $\rho$-Loewner energy optimal continuation of $\gamma_T$. It follows from Remark \ref{remark:ray} that 
$$\psi(\gamma_{[T,\infty)})=\{re^{\alpha\pi i}:r\in[1,\infty)\},\quad \psi(\eta)=\{re^{\alpha\pi i/2}:r\in[0,\infty)\}.$$
Since $\psi$ can be extended to a neighborhood of $b$ (again using Schwarz reflection) we deduce that $\eta$ is smooth at $b$. Using the same type of argument as in the radial case one can show that $\eta$ is smooth at $c$.\par 
Since $(\D,g,(a),(2))$ is a curvilinear polygonal domain and $\gamma$ and $\eta$ are $(\overline\D,\varphi)$-smooth and intersect $\partial\D$ non-tangentially it follows that the components of $\D\setminus\gamma$ and $\D\setminus(\gamma\cup\eta)$ are curvilinear polygonal. 
\end{proof}
It follows from Lemma \ref{lemma:cpd} that, for each choice of $a$, $b$, $c$, $\varphi$, and $(\overline\D,\varphi)$-smooth conformal metric $g_0$, the $\rho$-Loewner potential, with respect to $g_0$, is defined for all curves $\gamma\in\hat{\mathcal X}$.
\begin{lemma}\label{lemma:smooth}
Let $a$, $b$, $c$, $\Gamma$, $\varphi$ be as in Proposition \ref{prop:laplacians}. Let $\gamma:(0,T]\to\D$, with $\gamma(0+)=a$ be a $(\overline \D,\varphi)$-smooth slit, smoothly attached at $a$. Let $g_0$ be a $(\overline\D,\varphi)$-smooth conformal metric. Then $(\overline{\D\setminus\gamma},g_0,(\gamma(T)),(2))$ is a curvilinear polygonal domain (here $\partial(\D\setminus\gamma)$ refers to the set of prime ends). If $\sigma\in C^\infty(\D\setminus\gamma)$ extends smoothly to both sides of $\gamma((0,T])$ and to $\partial\D\setminus\{a\}$, and $\sigma(\varphi^{-1})$ extends smoothly to both sides of $\varphi(\gamma)\cup\Gamma$ locally at $0$, then $\sigma\in C^\infty(\overline{\D\setminus\gamma},g_0,(\gamma(T)),(2))$. 
\end{lemma}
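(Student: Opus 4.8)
The plan is to verify, essentially by hand, that both conditions in Definition~\ref{def:cpd} and Definition~\ref{def:smooth} survive when we remove the slit $\gamma$ from $\overline\D$, using the two coordinate charts that already make $(\overline\D,\varphi)$ a smooth slit structure. First I would observe that the only point of $\overline{\D\setminus\gamma}$ where smoothness can fail is the tip $\gamma(T)$, since away from $a$ and the endpoints of $\gamma$ the boundary of $\D\setminus\gamma$ is manifestly smooth in the $z$-coordinate (the two sides of $\gamma$ and the arc of $\partial\D$), while at $a$ the point has been \emph{removed} from the domain by slitting, so it does not enter as a corner at all---this is precisely where the hypothesis "smoothly attached at $a$" is used, because after slitting, the curve $\varphi(\gamma)\cup\Gamma$ being smooth at $0$ means the $w$-chart exhibits $\overline{\D\setminus\gamma}$ locally near $a$ as a half-plane with smooth boundary, i.e.\ an interior angle $\pi$, not a corner in the sense of Definition~\ref{def:cpd}. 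So the statement reduces to checking the single corner $p_1=\gamma(T)$ with claimed angle $\beta_1\pi=2\pi$.

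For that corner I would take the chart $\varphi_1$ to be any conformal map sending a neighborhood of $\gamma(T)$ in $\overline{\D\setminus\gamma}$ to a neighborhood of $0$ in $\overline\H$ with $\gamma(T)\mapsto 0$; concretely one can use a branch of a square-root-type map straightening the slit, precomposed with the local chart that is smooth at $\gamma(T)$ (which exists because $\gamma$ is $(\overline\D,\varphi)$-smooth and $\gamma(T)\in\D$, so locally $\gamma$ is an honest smooth arc in a Euclidean coordinate and the two sides of it make an angle $2\pi$). Then (i) of Definition~\ref{def:cpd} holds with $\beta_1=2$ because the two edges $\gamma((0,T])^+$ and $\gamma((0,T])^-$ are smooth arcs meeting at $\gamma(T)$ with interior angle $2\pi$; (ii) holds because $g_0=e^{2\sigma_0}dz^2$ with $\sigma_0\in C^\infty(\overline\D\setminus\{a\})$, and pulling back by the (smooth) local chart and then by the square-root straightening keeps the conformal factor smooth up to each of the two edges separately (the square root is a local diffeomorphism on each side away from the tip and its logarithm is smooth there); (iii) and (iv) are obtained by extending each side of $\gamma$ near $\gamma(T)$ to a smooth Jordan curve inside $\D$ (possible since $\gamma$ is a smooth arc there) and extending $\sigma_0$, which is already smooth on a full $\D$-neighborhood of $\gamma(T)$, to the bounded complementary domains---no genuine extension is needed, $\sigma_0$ is already defined and smooth there. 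This establishes that $(\overline{\D\setminus\gamma},g_0,(\gamma(T)),(2))$ is a curvilinear polygonal domain.

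For the second half, given $\sigma$ with the stated extension properties, I would run the identical chart analysis: in the $z$-coordinate $\sigma$ is smooth on $\D\setminus\gamma$ and extends smoothly to both sides of $\gamma((0,T])$ and to $\partial\D\setminus\{a\}$, which is exactly the requirement of Definition~\ref{def:smooth} at every point except possibly the corner $\gamma(T)$ and the removed point $a$; at $\gamma(T)$ the same square-root straightening used above shows the partial derivatives of $\sigma\circ\varphi_1^{-1}$ extend continuously to the two edges and that $\sigma$ extends smoothly to the bounded domains $V_{j,1}$, $V_{j,2}$ cut out by the Jordan-curve extensions, again because $\sigma$ is already smooth on a punctured $\D$-neighborhood of $\gamma(T)$ and the hypothesis gives it smooth boundary values from each side of $\gamma$; and near $a$ the hypothesis that $\sigma(\varphi^{-1})$ extends smoothly to both sides of $\varphi(\gamma)\cup\Gamma$ locally at $0$ is exactly what is needed in the $w=\varphi(z)$ chart, since in that chart $a$ is not a corner of $\overline{\D\setminus\gamma}$ but a smooth boundary point. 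Hence $\sigma\in C^\infty(\overline{\D\setminus\gamma},g_0,(\gamma(T)),(2))$.

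The main obstacle, and the only place requiring care, is the behavior at the tip $\gamma(T)$: one must be sure that the $2\pi$-angle chart (the inverse of a square root) does not destroy smoothness of the conformal factor or of $\sigma$ when they are restricted to one side of the slit. This is handled by noting that the square-root map is a genuine conformal (hence smooth) diffeomorphism from each side of the slit, slit removed, onto a half-disc, and that all the relevant functions ($\sigma_0$, $\sigma$, and the logarithm of the Jacobian) are assumed smooth up to the slit from each side; composing a smooth-up-to-the-boundary function with a diffeomorphism that is smooth up to the boundary preserves that property, so no singularity is introduced. The secondary point worth stating explicitly is the role of "smoothly attached at $a$": without it, $\varphi(\gamma)\cup\Gamma$ could form a corner at $0$ in the $w$-chart, and then $a$ would have to be treated as a second corner of $\overline{\D\setminus\gamma}$, which would change the curvilinear polygonal structure; the hypothesis rules this out and lets us keep a single marked point $\gamma(T)$.
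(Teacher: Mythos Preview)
Your overall plan—verifying Definitions~\ref{def:cpd} and~\ref{def:smooth} chart by chart, singling out the tip $\gamma(T)$ as the only corner—is correct and is exactly the paper's strategy. However, your choice of corner chart at $\gamma(T)$ is wrong, and this is not a cosmetic slip.

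You propose taking $\varphi_1$ to be ``a conformal map sending a neighborhood of $\gamma(T)$ in $\overline{\D\setminus\gamma}$ to a neighborhood of $0$ in $\overline\H$'', concretely a square-root straightening. But Definition~\ref{def:cpd}(i) measures the interior angle $\beta_1\pi$ \emph{in the chart image}: if the image lies in $\overline\H$, the two boundary arcs meet at angle $\pi$, forcing $\beta_1=1$ and contradicting the claimed $\beta_1=2$. Worse, condition~(ii) also fails with this chart: pulling back $g_0=e^{2\sigma_0}dz^2$ by $w\mapsto w^2$ gives conformal factor $\sigma_0(w^2)+\log|2w|$, whose gradient blows up at $w=0$, so the partial derivatives do \emph{not} extend continuously to the endpoint of the boundary arcs as Definition~\ref{def:cpd}(ii) requires. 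Your later claim that the square root ``does not destroy smoothness'' at the tip is therefore false for precisely the quantity that matters.

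The correct chart, which the paper uses, is simply the $z$-coordinate (translated so $\gamma(T)\mapsto 0$). The image $V_1$ is then a disc with a smooth slit removed, the two sides of the slit form the required $2\pi$ angle, and the pulled-back conformal factor is just $\sigma_0$, which is smooth on a full $\D$-neighborhood of $\gamma(T)$ since $\gamma(T)\in\D$. Conditions (ii)--(iv) and the corresponding checks in Definition~\ref{def:smooth} are then immediate, exactly along the lines you sketch: for (iii)--(iv) one extends each side of the slit to a smooth Jordan curve inside $\D$, and $\sigma_0$ (respectively $\sigma$, by its assumed two-sided extension) is already smooth there. With this one correction your argument is essentially the paper's proof.

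Your treatment of $a$ is correct in substance—the ``smoothly attached'' hypothesis turns the two prime ends at $a$ into ordinary smooth boundary points in the $w$-chart rather than corners—though the phrasing ``removed from the domain by slitting'' is inaccurate, since $a$ still sits on $\partial(\D\setminus\gamma)$ as two prime ends.
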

\begin{proof}The proof of Lemma \ref{lemma:cpd} shows that $(\overline{\D\setminus\gamma},g_0,(\gamma(T)),(2))$ is a curvilinear polygonal domain. The $z$-coordinate is smooth in a neigborhood of $p_1=\gamma(T)$. Since $\sigma$ extends smoothly to both sides of $\gamma$ it is immediate that $\sigma$ can be smoothly extended to any $V_{1,1},V_{1,2}\subset\D$ as in Definition \ref{def:smooth}. Since the boundary of $\D\setminus\gamma$ (treated as prime ends) is $(\overline\D,\varphi)$-smooth away from $\gamma(T)$, the inherited smooth structure on $\overline{\D\setminus\gamma}\setminus\{\gamma(T)\}$ is a smooth structure in the classical sense. Since $\sigma$ extends smoothly to both sides of $\gamma_{(0,T)}$ and to $\partial\D\setminus\{a\}$, and $\sigma(\varphi^{-1})$ extends smoothly (locally at $0$) to both sides of $\varphi(\gamma)\cup\Gamma$ we have that $\sigma\in C^\infty(\overline{\D\setminus\gamma}\setminus\{\gamma(T)\})$ with respect to the inherited smooth structure. This shows that $\sigma\in C^\infty(\overline{\D\setminus\gamma},g_0,(\gamma(T)),(2))$.
\end{proof}
We are now ready to prove Proposition \ref{prop:laplacians}. 
\begin{proof}[Proof of Proposition \ref{prop:laplacians}, chordal case]
Fix $\rho>-2$, $a$, $b\in\partial \D$, $c$, $\Gamma$, and $\varphi$ as in Proposition \ref{prop:laplacians}. Consider $\gamma_1,\gamma_2\in\hat{\mathcal X}$ and a $(\overline\D,\varphi)$-smooth conformal metric $g_0=e^{2\sigma_0}dz^2$. Since $\gamma_j$, $\eta_j=\eta(\gamma_j),$ $j=1,2$, are $(\overline\D,\varphi)$-smooth the $\rho$-Loewner potential of $\gamma_j$ with respect to $g_0$ is defined (see, Lemma \ref{lemma:cpd}).\par
Let $\psi:\D\to\Sigma$ be a conformal map with $\psi(a)=0$, $\psi(b)=\infty$. By symmetry, we may assume that $\psi(c)=x_0^+$, where $x_0^+$ is the upper prime end at $x_0>0$. For $j=1,2$, write $\tilde\gamma_j=\psi(\gamma_j)$ and $\tilde\eta_j=\psi(\eta_j)$, $j=1,2$, and let $T_j>0$ be such that $\gamma_j([T_j,\infty))$ is the $I^{(\D,a,b)}_{\rho,c}$-optimal continuation of $\gamma_j([0,T_j])$. Next, let
$$h:\Sigma\setminus\tilde\gamma_1([0,T_1])\to\Sigma\setminus\tilde\gamma_2([0,T_2])$$
be the conformal map with $h(\tilde\gamma_1(T_1))=h(\tilde\gamma_2(T_2))$, $h(\infty)=h(\infty)$, and $h(x_0^+)=x_0^+$. Observe that this has the effect that $\tilde\gamma_2([T_2,\infty))=h(\tilde\gamma_1([T_1,\infty)))$ and $\tilde\eta_2=h(\tilde\eta_1)$, where $\tilde\eta_j=\psi(\eta_j)$ and $\eta_j=\eta(\gamma_j)$. 
Hence, we have, for each component $D$ of $\D\setminus\gamma_1$ or $\D\setminus(\gamma_1\cup\eta_1)$, that
\begin{equation}\log\zdet\Delta_{((\psi^{-1}\circ h\circ\psi)(D),g_0)}=\log\zdet\Delta_{(D,(\psi^{-1}\circ h\circ\psi)^\ast g_0)}.\label{eq:confchange}\end{equation}
Write $H=\psi^{-1}\circ h\circ\psi$ and set $g=H^\ast g_0$. Then $g=e^{2\sigma}g_0$ with $\sigma=\sigma_0(H)+\sigma_H-\sigma_0.$ We wish to apply Theorem \ref{thm:polalv} on the components of $\D\setminus\gamma_1$ and $\D\setminus(\gamma_1\cup\eta_1)$, with the conformal change of metric $g=e^{2\sigma}g_0$. To do so we must, in light of Lemma \ref{lemma:smooth}, show that $\sigma$ extends smoothly to both sides of $\gamma_1((0,T_1])$ and to $\partial\D\setminus\{a\}$, and that $\sigma(\varphi^{-1})$ extends smoothly, locally at $0$, to both sides of $\varphi(\gamma)\cup\Gamma$.
We immediately have that $\sigma$ and extends smoothly to both sides of $\gamma_1((0,T_1])$ and to $\partial\D\setminus\{a\}$ with the possible exception of $\{H^{-1}(a+),H^{-1}(a-)\}$, since $\sigma_0$ is smooth on $\overline\D\setminus\{a\}$ and $H$ is smooth on $\overline\D\setminus\{a+,a-,H^{-1}(a+),H^{-1}(a-))\}$ (here we use Kellogg's theorem, see, e.g., \cite[Theorem II.4.3]{GM05}). Close to $\varphi(a+)$ we have
$$\sigma(\varphi^{-1}(w))=\sigma_0((H\circ\varphi^{-1})(w))+\sigma_{H\circ\varphi^{-1}}(w)-(\sigma_0(\varphi^{-1}(w))+\sigma_{\varphi^{-1}}(w))$$
where $H\circ\varphi^{-1}$ is a smooth conformal map (unless $H$ fixes $a+$) and $\hat\sigma_0=\sigma_0(\varphi^{-1})+\sigma_{\varphi^{-1}}$ is smooth since $g_0$ is smooth. Hence we have that $\sigma$ is $(\overline\D,\varphi)$-smooth at $a+$ (the case where $H$ fixes $a+$ is covered in a similar way). The same type of argument shows that $\sigma$ is $(\overline\D,\varphi)$-smooth at $a-$, $H^{-1}(a+)$, and $H^{-1}(a-)$.\par
Let $D_1$ denote the component of $\D\setminus\gamma_1$ containing $\eta_1$ and let $D_2$ denote the other component. Observe that the $\rho$-Loewner potential of $\gamma_1$ can be re-written as
$$\mathcal H^{(\D;a,b)}_{\rho,c}(\gamma_{1};g_0)=\mathcal H_{(\D,g_0)}(\gamma_1)+\frac{\rho(\rho+4)}{12}\mathcal H_{(D_1,g_0)}(\eta_1),$$
and similarly for $\gamma_2$.
We apply Theorem \ref{thm:polalv}, which along with (\ref{eq:confchange}) yields
\begin{align}\begin{split}12\Big(\mathcal H_{(D_1,g_0)}(\eta_2)-\mathcal H_{(H(D_1),g_0)}(\eta_1)\Big) & = -\Big(\frac{1}{\alpha}-\alpha\Big)\sigma(b) + 2\Big(\frac{2}{\alpha}-\frac{\alpha}{2}\Big)\sigma(b)+2\Big(2-\frac{1}{2}\Big)\sigma(c)\end{split}\label{eq:ChorLapl1}\end{align}
and 
\begin{align}\begin{split}12\Big(\mathcal H_{(\D;g_0)}(\gamma_2)-&\mathcal H_{(\D;g_0)}(\gamma_1)\Big)\\
=&\frac{1}{\pi}\int_{D_1\cup D_2}|\nabla_{g_0}\sigma|^2d\Vol_{g_0}+\frac{2}{\pi}\int_{D_1\cup D_2}\sigma K_{g_0} d\Vol_{g_0} + \frac{2}{\pi}\int_{\partial D_1\cup\partial D_2} \sigma k_{g_0}d\ell_{g_0}\\& +
\frac{3}{\pi}\int_{\partial D_1\cup\partial D_2} \partial_{n_{g_0}} \sigma d\ell_{g_0}+\Big(\frac{1}{\alpha}-\alpha\Big)\sigma(b)+\Big(\frac{1}{1-\alpha}-(1-\alpha)\Big)\sigma(b).\end{split}\label{eq:ChorLapl2} 
\end{align}
Note that
$$\int_{\partial D_1\cup\partial D_2}\partial_{n_{g_0}}\sigma d\ell_{g_0}=\int_{\partial H(D_1)\cup\partial H(D_2)} k_{g_0} d\ell_{g_0}-\int_{\partial D_1\cup\partial D_2}k_{g_0} d\ell_{g_0}=0,$$
since the contributions from integration along the two sides of $\gamma_1$ and $\gamma_2$ cancel.\par
Since we aim to relate the $\rho$-Loewner potential to the $\rho$-Loewner energy, we consider (\ref{eq:ChorLapl2}) in $\psi$-coordinates. Denote by $\tilde g_0=(\psi^{-1})^\ast g_0$, $\tilde g=e^{2\sigma(\psi^{-1})}\tilde g_0$, $\tilde D_1 = \psi(D_1)$, and $\tilde D_2=\psi(D_2)$. We have
$\tilde g_0=e^{\tilde\sigma_0(z)}dz^2$ where $\tilde\sigma_0 = \sigma_0(\psi^{-1})+\sigma_{\psi^{-1}}$ and $\sigma(\psi^{-1})=\tilde\sigma_0(h)+\sigma_h-\tilde\sigma_0$, so that
$$\int_{D_1\cup D_2}|\nabla_{g_0}\sigma|^2d\Vol_{g_0} = \int_{\tilde D_1\cup \tilde D_2}|\nabla_{\tilde g_0}(\tilde\sigma_0(h)+\sigma_h-\tilde\sigma_0)|^2d\Vol_{\tilde g_0}.$$
We wish to compute the right-hand side by expanding the square and studying each term separately. Note that $\tilde\sigma_0(h),$ $\sigma_h$ and $\tilde\sigma_0$ are not necessarily smooth at $$x\in\{0+,0-,h^{-1}(0+),h^{-1}(0-),\infty\}$$ when considered separately. Let $D_\vare=\tilde D_1\cup\tilde D_2\setminus\cup B_{x,\vare}$ where $B_{x,\vare}=B_{(\tilde D_1\cup\tilde D_2,\tilde g_0)}(x,\vare)$ if $x\in\{0+,0-,\infty\}$ and $B_{h^{-1}(0\pm),\vare}=h^{-1}(B_{0\pm,\vare})$. (Here we assume that $0+$ and $0-$ are not fixed by $h$. The other case can be treated in a similar way.) By applying Stokes' theorem on $D_\vare$ and using (\ref{eq:transrulemetric}) we find
\begin{align*}\int_{D_\vare}\nabla_{\tilde g_0}\sigma_h\cdot\nabla_{\tilde g_0}\tilde\sigma_0(h) d\Vol_{\tilde g_0} &= \int_{\partial h(D_\vare)}\tilde \sigma_0 kd\ell - \int_{\partial D_\vare}\tilde \sigma_0(h) k d\ell,\\
\int_{D_\vare}\nabla_{\tilde g_0} \sigma_h\cdot\nabla_{\tilde g_0}\tilde \sigma_0 d\Vol_{\tilde g_0}&= \int_{\partial D_\vare}\sigma_h\partial_{n}\tilde\sigma_0 d\ell+\int_{D_\vare}\sigma_h K_{\tilde g_0} d\Vol_{\tilde g_0},\\
\int_{D_\vare}\nabla_{\tilde g_0}\tilde \sigma_0(h)\cdot\nabla_{\tilde g_0}\tilde \sigma_0(h) d\Vol_{\tilde g_0} &= \int_{\partial h(D_\vare)}\tilde \sigma_0\partial_n\tilde \sigma_0 d\ell+\int_{h(D_\vare)}\tilde \sigma_0  K_{\tilde g_0} d\Vol_{\tilde g_0},\\
\int_{D_\vare}\nabla_{\tilde g_0}\tilde \sigma_0\cdot\nabla_{\tilde g_0}\tilde \sigma_0 d\Vol_{\tilde g_0} &= \int_{\partial D_\vare}\tilde \sigma_0\partial_n\tilde \sigma_0 d\ell+\int_{ D_\vare}\tilde \sigma_0 K_{\tilde g_0} d\Vol_{\tilde g_0},\\
\int_{D_\vare}\nabla_{\tilde g_0}\tilde \sigma_0(h)\cdot\nabla_{\tilde g_0}\tilde \sigma_0 d\Vol_{\tilde g_0} &= \int_{\partial D_\vare}\tilde \sigma_0(h)\partial_n\tilde \sigma_0 d\ell+\int_{ D_\vare}\tilde \sigma_0(h)  K_{\tilde g_0} d\Vol_{\tilde g_0},\end{align*}
(here we consider $\partial D_\vare$ in terms of prime ends).
By combining the above and 
$$\int_{\partial D_\vare}\sigma(\psi^{-1})\partial_n\tilde \sigma_0 d\ell  = \int_{\partial D_\vare} \sigma(\psi^{-1}) k_{\tilde g_0}d\ell_{\tilde g_0} - \int_{\partial D_\vare}\sigma k d\ell,$$
we find that
\begin{align}\begin{split}\int_{D_\vare}|\nabla_{\tilde g_0}\sigma(\psi^{-1})|^2d\Vol_{\tilde g_0} 
=& \int_{D_\vare}|\nabla \sigma_h|^2dz^2 
+2 \int_{\partial D_\vare}\sigma_h k d\ell-2\int_{ D_\vare}\sigma(\psi^{-1}) K_{\tilde g_0} d\Vol_{\tilde g_0}
\\
&-2\int_{\partial D_\vare} \sigma(\psi^{-1}) k_{\tilde g_0}d\ell_{\tilde g_0} +\bigg(\int_{h(D_\vare)}-\int_{D_\vare}\bigg) \tilde \sigma_0 K_{\tilde g_0} d\Vol_{\tilde g_0}
\\
&
+\bigg(\int_{\partial h(D_\vare)}-\int_{\partial D_\vare}\bigg)\tilde \sigma_0k_{\tilde g_0}d\ell_{\tilde g_0} + \bigg(\int_{\partial h(D_\vare)}-\int_{\partial D_\vare}\bigg)\tilde \sigma_0 kd\ell.\end{split}\label{eq:chorlapl3}
\end{align}
We now study the limit as $\vare\to 0+$.
First of all, 
\begin{align*}\int_{D_\vare}|\nabla_{\tilde g_0}\sigma(\psi^{-1})|^2d\Vol_{\tilde g_0}&\to\int_{\tilde D_1\cup\tilde D_2}|\nabla_{\tilde g_0}\sigma|^2d\Vol_{\tilde g_0},\\
\int_{D_\vare}|\nabla\sigma_h|^2dz^2&\to\int_{\tilde D_1\cup\tilde D_2}|\nabla\sigma_h|^2dz^2,\\
\int_{D_\vare}\sigma K_{\tilde g_0}d\Vol_{\tilde g_0}&\to\int_{\tilde D_1\cup\tilde D_2}\sigma K_{\tilde g_0}d\Vol_{\tilde g_0}.\end{align*}
Next, observe that
\begin{equation}\psi^{-1}(z)=b+Cz^{-1/2}+o(z^{-1/2}),\quad (\psi^{-1})'(z)=-\frac{C}{2}z^{-3/2}+o(z^{-3/2}),\quad\text{ as }z\in\overline\Sigma\to\infty\label{eq:psiasymp}\end{equation}
for some $C\neq 0$. Similarly, \cite[Theorem 3]{W65} implies that
$$\varphi\circ\psi^{-1}(z)=Dz+o(z),\quad (\varphi\circ\psi^{-1})'(z)=D+o(1),\quad\text{ as }z\in\overline\Sigma\to 0,$$
for some $D\neq 0$.
Moreover $\psi^{-1}$ is smooth at $h^{-1}(0+)$, and $ h^{-1}(0-)$. Hence,
$$\lim_{\vare\to 0+}\bigg(\int_{h(D_\vare)}-\int_{D_\vare}\bigg)\tilde\sigma_0 K_{\tilde g_0} d\Vol_{\tilde g_0}=0.$$
For each $x\in\{0+,0-,h^{-1}(0+),h^{-1}(0-),\infty\}$, let $C_{x,\vare}= \partial B_{x,\vare}\cap(\tilde D_1\cup\tilde D_2)$ and $L_{x,\vare}=\partial B_{x,\vare}\setminus C_{x,\vare}.$
Then
$$\lim_{\vare\to0+}\bigg(\int_{\partial D_\vare}-\int_{\partial \tilde D_1\cup\partial \tilde D_2}\bigg)\sigma(\psi^{-1}) k_{\tilde g_0}d\ell_{\tilde g_0} =\lim_{\vare\to 0+}\int_{\cup C_{x,\vare}}\sigma(\psi^{-1}) k_{\tilde g_0}d\ell_{\tilde g_0},$$
and the same holds when replacing $\sigma(\psi^{-1})k_{\tilde g_0}d\ell_{\tilde g_0}$ with $\sigma_h kd\ell.$
Similarly,
$$\lim_{\vare\to 0+}\bigg(\int_{\partial h(D_\vare)}-\int_{\partial D_\vare}\bigg)\tilde\sigma_0 k_{\tilde g_0} d\ell_{\tilde g_0} = \lim_{\vare\to 0+}\bigg(\int_{\cup h(C_{x,\vare})}-\int_{\cup C_{x,\vare}}\bigg)\tilde\sigma_0 k_{\tilde g_0} d\ell_{\tilde g_0},$$
which also holds when replacing $k_{\tilde g_0} d\ell_{\tilde g_0}$ with $k d\ell$.
To compute the integrals over $C_{x,\vare}$ we note that $\ell_{\tilde g_0}(C_{x,\vare})\sim \pi\vare$ (since in the limit $C_{x,\vare}$ is a semicircle of $\tilde g_0$-radius $\vare$) and $k_{\tilde g_0}\sim-\frac{1}{\vare}$ (since the semicircle is traversed clockwise). We therefore obtain
\begin{align*}\frac{1}{\pi}\int_{\cup_x C_{x,\vare}}\sigma(\psi^{-1}) k_{g_0}d\ell_{g_0}\to & -\sigma(\psi^{-1}(\infty))-\sigma(\psi^{-1}(0+)) -\sigma(\psi^{-1}(0-))\\& -\sigma(\psi^{-1}(h^{-1}(0+))) -\sigma(\psi^{-1}(h^{-1}(0-))),\end{align*}
as $\vare\to 0+$.
In a similar manner
\begin{align*}
\frac{1}{\pi}\int_{\cup_x C_{x,\vare}}\sigma_h kd\ell\to & -2\sigma_h(\infty)-\sigma_h(0+)-\sigma_h(0-)\\ &-\sigma_h(h^{-1}(0+))-\sigma_h(h^{-1}(0-)),\\
\frac{1}{\pi}\bigg(\int_{\cup h(C_{x,\vare})}-\int_{\cup C_{x,\vare}}\bigg)\tilde\sigma_0 k_{\tilde g_0}d\ell_{\tilde g_0} \to &  -\tfrac{3}{2}\sigma_h(\infty)-\tilde\sigma_0(h(0+))-\tilde\sigma_0(h(0-))\\ &+\tilde\sigma_0(h^{-1}(0+))+\tilde\sigma_0(h^{-1}(0-)),\\
\frac{1}{\pi}\bigg(\int_{\cup h(C_{x,\vare})}-\int_{\cup C_{x,\vare}}\bigg)\tilde \sigma_0 k d\ell \to & 3\sigma_{h}(\infty)-\tilde\sigma_0(h(0+))-\tilde\sigma_0(h(0-))\\ &+\tilde\sigma_0(h^{-1}(0+))+\tilde\sigma_0(h^{-1}(0-)),\end{align*}
where we use the convention $\sigma_h(\infty)=\log|(1/h(1/z))'|_{z=0}$. In the bottom two limits we have used that $h(C_{h^{-1}(0\pm),\vare})=C_{0\pm,\vare}$ and that $\tilde\sigma_0$ is smooth at $h^{-1}(0\pm)$ and $h(0\pm)$. 
Combining the computed limits with (\ref{eq:ChorLapl2}) and (\ref{eq:chorlapl3}) yields
\begin{align}\begin{split}12\Big(\mathcal H_{(\D;g_0)}&(\gamma_2)-\mathcal H_{(\D;g_0)}(\gamma_1)\Big)\\=&\frac{1}{\pi}\int_{\tilde D_1\cup\tilde D_2}|\nabla \sigma_h|^2dz^2 +\frac{2}{\pi}\int_{\partial\tilde D_1\cup\partial\tilde D_2}\sigma_h kd\ell+\frac{(2\alpha-1)^2}{2\alpha(1-\alpha)}\sigma_h(\infty)\end{split}\label{eq:chorlapl4}\end{align}
where we have used that $\sigma(\psi^{-1}(\infty))=\sigma_h(\infty)/2$ which is found by using (\ref{eq:psiasymp}). Since $\tilde\sigma_0$ is smooth at $x_0$ we find that $\sigma(c)=\sigma_h(x_0)$. Then, (\ref{eq:ChorLapl1}) and (\ref{eq:chorlapl4}) imply
\begin{align*}12\Big(&\mathcal H^{(\D;a,b)}_{\rho,c}(\gamma_2;g_0)-\mathcal H^{(\D;a,b)}_{\rho,c}(\gamma_1;g_0)\Big)\\ &=\frac{1}{\pi}\int_{\tilde D_1\cup\tilde D_2}|\nabla \sigma_h|^2dz^2 +\frac{2}{\pi}\int_{\partial\tilde D_1 \cup\partial\tilde D_2}\sigma_h kd\ell+\frac{\rho(\rho+4)}{4}\sigma_h(x_0)+\frac{\rho(8+\rho)}{8}\sigma_h(\infty).
\end{align*}
Now, for the final step, let $h_1:\Sigma\setminus\tilde\gamma_1([0,T_1])\to\Sigma$ with $h_1(\tilde\gamma_1(T_1))=0$, $h_1(\infty)=\infty$ and $h_1'(\infty)=1$. Let $h_2:=h_1\circ h^{-1}:\Sigma\setminus\tilde\gamma([0,T_2])\to\Sigma$. We may assume that $h_2'(\infty)=1$ since this can be achieved by increasing $T_1$ or $T_2$ by an appropriate amount. Then it follows, from the Dirichlet energy formula (\ref{eq:chorDirichlet}) and (\ref{eq:rhoenergyboundary}), that
\begin{align*}I^{(\D;a,b)}_{\rho,c}(\gamma_2)-I^{(\D;a,b)}_{\rho,c}(\gamma_1)=&I^{(\Sigma;0,\infty)}_{\rho,x_0^+}(\tilde\gamma_2)-I^{(\Sigma;0,\infty)}_{\rho,x_0^+}(\tilde\gamma_1)\\
=&\frac{1}{\pi}\int_{\Sigma}|\nabla\sigma_{h_2^{-1}}|^2dz^2-\frac{1}{\pi}\int_{\Sigma}|\nabla\sigma_{h_1^{-1}}|^2dz^2+\frac{\rho(\rho+4)}{4}\log|h'(x_0)|.
\end{align*}
Since, $\sigma_h(\infty)=0$ by assumption, it only remains to show that
\begin{equation}
\int_{\tilde D_1\cup\tilde D_2}|\nabla \sigma_h|^2dz^2 +\int_{\partial\tilde D_1\cup\partial\tilde D_2}\sigma_h kd\ell = \int_{\Sigma}|\nabla\sigma_{h_2^{-1}}|^2dz^2-\int_{\Sigma}|\nabla\sigma_{h_1^{-1}}|^2dz^2\label{eq:chorlapl5}
\end{equation}
Write $\sigma_{h}=\sigma_{h_1}+\sigma_{h_2^{-1}}(h_1)$. We have,
$$\int_{\tilde D_1\cup\tilde D_2}|\nabla\sigma_h|^2dz^2 = \int_{\tilde D_1\cup\tilde D_2}\Big(|\nabla\sigma_{h_2^{-1}}(h_1)|^2-|\nabla\sigma_{h_1}|^2+2\nabla\sigma_{h_1}\cdot\nabla\sigma_h\Big)dz^2.$$
By conformal invariance of the Dirichlet inner product, the 
first two terms on the right-hand side equal the right-hand side of (\ref{eq:chorlapl5}). We apply Stokes' theorem to the third term of the right-hand side and use the change of variable formula for the geodesic curvature (\ref{eq:transrulemetric}), yielding
$$2\int_{\tilde D_1\cup\tilde D_2}\nabla\sigma_{h_1}\cdot\nabla\sigma_hdz^2=2\int_{\partial\tilde D_1\cup\partial\tilde D_2}\sigma_h\partial_n\sigma_{h_1}d\ell = 2\int_{\partial \Sigma}\sigma_h(h_1^{-1}) kd\ell-  2\int_{\partial\tilde D_1\cup\partial\tilde D_2}\sigma_h kd\ell.$$
Since $k=0$ along $\partial\Sigma$, this shows (\ref{eq:chorlapl5}) and finishes the proof.
\end{proof}
\begin{proof}[Proof of Proposition \ref{prop:laplacians}, radial case]
The proof of the radial case of Proposition \ref{prop:laplacians} is very similar. We therefore give an outline of the proof and refer the reader to the chordal proof for details. Fix $\rho>-2$, $a$, $b\in\D$, $c$, $\Gamma$, and $\varphi$ as in Proposition \ref{prop:laplacians}. Fix $\gamma_1,\gamma_2\in\hat{\mathcal X}$ and a $(\overline\D,\varphi)$-smooth conformal metric $g_0=e^{2\sigma_0}dz^2$. Since $\gamma_j$ and $\eta_j=\eta(\gamma_j),$ $j=1,2$, are $(\overline\D,\varphi)$-smooth the $\rho$-Loewner potential of $\gamma_j$ with respect to $g_0$ is defined.\par
As in the chordal case we fix a conformal map $\psi:\D\to\Sigma$, $\psi(a)=0$, $\psi(c)=\infty$ and write $\tilde\gamma_j=\psi(\gamma_j)$ and $\tilde\eta_j=\psi(\eta_j)$, $j=1,2$. Denote by $z_0=\psi(b)$. We let $h:\Sigma\setminus\tilde\gamma_1\to\Sigma\setminus\tilde\gamma_2$ be the conformal map with $h(z_0)=z_0$, $h(\infty)=\infty$, and $h'(\infty)=1$. Then $\tilde\eta_2=h(\tilde\eta_1)$ so that (\ref{eq:confchange}) holds if $D$ is $\D\setminus\gamma_1$ or a component of $\D\setminus(\gamma_1\cup\eta_1)$. We introduce a new metric $g=H^\ast g_0=e^{2\sigma}g_0$ with $H=\psi^{-1}\circ h\circ \psi$ and $\sigma=\sigma_0(H)+\sigma_H-\sigma_0$. By the same type of argument as in the chordal case, we find that $\sigma$ is $(\overline\D,\varphi)$-smooth. We have
$$\mathcal H^{(\D;a,b)}_{\rho,c}(\gamma_j;g_0)=\mathcal H_{(\D;g_0)}(\gamma_j)+\frac{\rho(\rho+4)}{12}\mathcal H_{(\D\setminus\gamma_j,g_0)}(\eta_j)$$
for $j=1,2$. Theorem \ref{thm:polalv} implies
$$12\Big(\mathcal H_{(\D\setminus\gamma_2,g_0)}(\eta_2)-\mathcal H_{(\D\setminus\gamma_1,g_0)}(\eta_1)\Big)=-\bigg(\frac{1}{2}-2\bigg)\sigma(b)+2\bigg(2-\frac{1}{2}\bigg)\sigma(c),$$
and
\begin{align*}12\Big(\mathcal H_{(\D\setminus\gamma_2,g_0)}(\eta_2)-\mathcal H_{(\D\setminus\gamma_1,g_0)}(\eta_1)\Big)=&\frac{1}{\pi}\int_{\D\setminus\gamma_1}|\nabla_{g_0}\sigma|^2d\Vol_{g_0}+\frac{2}{\pi}\int_{\D\setminus\gamma_1}\sigma K_{g_0}d\Vol_{g_0}\\&+\frac{2}{\pi}\int_{\partial(\D\setminus\gamma_1)}\sigma k_{g_0}d\ell_{g_0}+\frac{3}{\pi}\int_{\partial(\D\setminus\gamma_1)}\partial_{n_{g_0}}\sigma d\ell_{g_0} -\frac{3}{2}\sigma(b).\end{align*}
By carrying out the same type of computation as in the chordal case (the computation is almost identical) one finds
$$12\Big(\mathcal H_{(\D\setminus\gamma_2,g_0)}(\eta_2)-\mathcal H_{(\D\setminus\gamma_1,g_0)}(\eta_1)\Big) = \frac{1}{\pi}\int_{\Sigma\setminus\tilde\gamma_1}|\nabla\sigma_h|^2dz^2+\frac{2}{\pi}\int_{\partial\Sigma\setminus\tilde\gamma_1}\sigma_h kd\ell-\frac{3}{2}\sigma_h(\infty)-\frac{3}{2}\sigma_h(z_0).$$
A computation shows that $\sigma_h(\infty)=\sigma(c)/2$, but by the normalization of $h$, $\sigma_h(\infty)=0$. We also have $\sigma_h(z_0)=\sigma(b)$. Combining the above yields,
$$12\Big(\mathcal H^{(\D;a,b)}_{\rho,c}(\gamma_2;g_0)-\mathcal H^{(\D;a,b)}_{\rho,c}(\gamma_2;g_0)\Big)=\frac{1}{\pi}\int_{\Sigma\setminus\tilde\gamma_1}|\nabla\sigma_h|^2dz^2+\frac{2}{\pi}\int_{\partial\Sigma\setminus\tilde\gamma_1}\sigma_h kd\ell +\frac{(\rho+6)(\rho-2)}{8}\sigma_h(z_0).$$
Finally, by letting $h_1:\Sigma\setminus\tilde\gamma_1\to \Sigma$ be the conformal map with $h_1(z_0)=0$, $h_1(\infty)=\infty$, and $h_1'(\infty)=1$ and setting $h_2=h_1\circ h^{-1}$ we find 
\begin{align*}12\Big(\mathcal H^{(\D;a,b)}_{\rho,c}&(\gamma_2;g_0)-\mathcal H^{(\D;a,b)}_{\rho,c}(\gamma_2;g_0)\Big)\\=&\frac{1}{\pi}\int_{\Sigma\setminus\tilde\gamma_2}|\nabla\sigma_{h_2}|^2dz^2-\frac{1}{\pi}\int_{\Sigma\setminus\tilde\gamma_1}|\nabla\sigma_{h_1}|^2dz^2 -\frac{(\rho+6)(\rho-2)}{8}\log\frac{|h_2'(z_0)|}{|h_1'(z_0)|}\\=&I^{(\Sigma;0,z_0)}_{\rho,\infty}(\tilde \gamma_2)-I^{(\Sigma;0,z_0)}_{\rho,\infty}(\tilde \gamma_1) = I^{(\D;a,b)}_{\rho,c}( \gamma_2)-I^{(\D;a,b)}_{\rho,c}( \gamma_1),\end{align*}
where Stokes' theorem is used in the first equality and Theorem \ref{thm:radial} is used in the second. 
\end{proof}
\begin{remark}If $b\in\partial\D$ we can guarantee that the $I^{(\D;a,b)}_{\rho,c}$-minimizer, denoted by $\gamma^0$, is in $\hat{\mathcal X}$ by choosing $\varphi$ in the following way. We may assume $c$ lies on the counter-clockwise circular arc from $a$ to $b$ (the other case is covered by symmetry). Consider $\Sigma_{\alpha}=\Sigma\setminus\{re^{2\alpha\pi i}:r\in[0,1]\}$ and let $\tilde\varphi:\D\to\Sigma_\alpha$ be the conformal map with $\tilde\varphi(a)=e^{2\alpha\pi i}$, $\tilde\varphi(b)=\infty$, and $\tilde\varphi(c)=0+$. Then $$\tilde\varphi(\gamma^0)=\{re^{2\alpha\pi i}:r\in[1,\infty)\}.$$
So if we choose $\varphi(z)=1-\tilde\varphi(z)e^{-2\alpha\pi i}$, then $\gamma^0\in \hat{\mathcal X}(\varphi)$.\par 
If instead $b=0\in\D$ and $a$ and $c$ are antipodal then the $I^{(\D;a,b)}_{\rho,c}$-minimizer, $\gamma^0$, is the line segment from $a$ to $b$. If we then choose $\varphi:\D\to\Sigma$ with $\varphi(a)=0$, $\varphi(c)=\infty$, and $\varphi(b)=-1$ then $\varphi(\gamma^0)=[-1,0]$ and $\varphi(\eta(\gamma^0))=(-\infty,-1]$. Hence, $\gamma^0\in\hat{\mathcal X}(\varphi)$. \label{rmk:gamma0inXhat}
\end{remark}
\appendix
\section{Return estimates}\label{section:escape}
In this section, we will study the energy return estimates and return probability estimates which are used to show the large deviation principle on the infinite time SLE$_\kappa(\rho)$ curves. This is done by combining some ideas from \cite{FL15} and \cite{PW21}. In particular, the following proposition and lemmas will be useful. 
\begin{propositionCite}[{{\cite[Proposition A.3]{PW21}}}]\label{prop:propA3} Let $\kappa\in(0,4]$. There exists constants, $c_\kappa>0$ such that $\lim_{\kappa\to 0+}\kappa\log c_\kappa=C\in(-\infty,\infty)$, and the following holds. Let $K\subset\overline\H$ be a hull such that $$K\cap(\H\setminus \D)=\{z\},$$ and let $\gamma^\kappa$ be an SLE$_\kappa$ from $z$ to $\infty$ in $\H\setminus K$, then, for any $r\in(0,1/3)$ we have 
$$\P[\gamma^\kappa\cap S_r\neq \varnothing]\leq c_\kappa r^{8/\kappa-1}.$$\end{propositionCite}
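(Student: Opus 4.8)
The estimate is the boundary one–arm bound for $\mathrm{SLE}_\kappa$ at small $\kappa$, made uniform over the hull $K$; the hull enters only through the universal constraint $K\subset\overline{\D}$, which forces the relevant conformal modulus to be $\asymp\log(1/r)$. The plan has three steps, and I expect the geometric Step~2 to be the main obstacle. \emph{Step 1 (conformal reduction).} Set $\phi:=g_K-g_K(z)\colon\H\setminus K\to\H$, so $\phi(z)=0$ and $\phi(\infty)=\infty$; by conformal invariance $\widetilde\gamma:=\phi(\gamma^\kappa)$ is a chordal $\mathrm{SLE}_\kappa$ in $\H$ from $0$ to $\infty$. Since $|z|=1>3r$, the starting point lies outside $\overline{B(0,r)}$, so if $\gamma^\kappa$ meets $S_r=\partial B(0,r)\cap\overline{\H}$ it meets the compact set $E_r:=\phi\bigl(S_r\cap(\H\setminus K)\bigr)$; it therefore suffices to bound $\P[\widetilde\gamma\cap E_r\ne\varnothing]$.

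\emph{Step 2 (conformal geometry of $E_r$).} This is where $K\subset\overline{\D}$ enters. First, by monotonicity of harmonic measure (enlarging the domain to $\H\setminus\overline{B(0,r)}$, which shields $S_r$ no less) together with the explicit computation via the Joukowski map $w\mapsto w+r^2/w$, one gets $\omega(\infty,E_r,\H)=\omega(\infty,S_r,\H\setminus K)\le\omega(\infty,S_r,\H\setminus\overline{B(0,r)})\le Cr$. Second — and more importantly — since $S_r\subset\{|w|<1/3\}$ while $z\in\partial\D$, the semicircle $S_r$ and the point $z$ are separated in $\H\setminus K$ by an annular region (contained in $\{r<|w|<1\}\setminus K$, with inner side $S_r$); because $K\subset\overline{\D}$ this annulus has modulus $m\ge\tfrac1{2\pi}\log(1/r)-C'$, and $\phi$ maps it to an annular region in $\H$ separating $E_r$ from $0=\phi(z)$. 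Thus $E_r$ is a small, boundary–attached set ``hidden from the start point $0$'' at conformal depth $\gtrsim\log(1/r)$.

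\emph{Step 3 ($\mathrm{SLE}_\kappa$ boundary estimate with controlled constants).} It remains to prove: for chordal $\mathrm{SLE}_\kappa$ in $\H$ from $0$ to $\infty$, $\kappa\in(0,4]$, if $E$ is separated from $0$ by an annulus of modulus $m$ then $\P[\widetilde\gamma\cap E\ne\varnothing]\le c_\kappa e^{-2\pi m(8/\kappa-1)}$ with $\kappa\log c_\kappa\to C\in(-\infty,\infty)$; combined with Steps~1--2 this gives $\P[\gamma^\kappa\cap S_r\ne\varnothing]\le c_\kappa r^{8/\kappa-1}$. The core is the one–point version: running the Loewner flow $(g_t)$ with $W_t=\sqrt\kappa B_t$, for $x_0>0$ on the far side of $E$ put $A_t=g_t(x_0)-W_t$ and $h_t=g_t'(x_0)\in(0,1]$; from $dA_t=\tfrac2{A_t}dt-\sqrt\kappa\,dB_t$ and $d\log h_t=-\tfrac2{A_t^2}dt$, It\^o's formula shows that $M_t:=(A_t/h_t)^{1-8/\kappa}$ is a nonnegative local martingale (the exponents $(a,b)=(1-8/\kappa,\,8/\kappa-1)$ solve $b=a+\tfrac{\kappa a(a-1)}4$ and satisfy $a+b=0$). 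Since Koebe distortion gives $\mathrm{dist}(x_0,\widetilde\gamma_t)\le\delta\Rightarrow A_t/h_t\le 2\delta$, and since $M_{t\wedge\tau_\ell}$ is bounded (with $\tau_\ell=\inf\{t:A_t/h_t=\ell\}$, $\ell<x_0$) with $M_\infty=0$ on $\{\tau_\ell=\infty\}$ (as $A_t/h_t\to\infty$, $x_0$ never being swallowed for $\kappa\le4$), optional stopping gives $\P[\tau_\ell<\infty]=(\ell/x_0)^{8/\kappa-1}$, hence $\P[\widetilde\gamma\text{ enters }B(x_0,\delta)]\le(2\delta/x_0)^{8/\kappa-1}$. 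Feeding in the modulus bound from Step~2 (or iterating this estimate across dyadic scales of that annulus) produces the claimed bound, and the constant is a fixed power of universal constants raised to the exponent $a(\kappa)=O(1/\kappa)$, so $\kappa\log c_\kappa$ stays bounded as $\kappa\to0+$.

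\emph{Main obstacle.} Step~2: controlling the image $E_r=\phi(S_r\cap(\H\setminus K))$ under a map anchored to $K$ precisely at the point $z$ where the semidisc $B(0,r)\cap\H$ can be most severely shielded — in particular ruling out that $\phi$ drags $E_r$ close to $0=\phi(z)$ without a compensating gain, which is exactly what must be excluded for Step~3 to close. Packaging this as the modulus bound $m\ge\tfrac1{2\pi}\log(1/r)-C'$ is the clean route (it records ``small'' and ``far from $0$'' in a single conformal invariant), but establishing it uniformly over all admissible hulls $K$ — including degenerate configurations in which $K$ crosses $S_r$ so that part of $B(0,r)\cap\H$ is shielded — is the technical heart; a secondary, routine point is the bookkeeping of the $\kappa$–dependence of the martingale exponent so that $\kappa\log c_\kappa$ genuinely converges.
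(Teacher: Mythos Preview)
The statement you are asked to prove is not proved in the paper at all: it is Proposition~A, a \emph{cited} result from \cite[Proposition~A.3]{PW21} (note the \texttt{propositionCite} environment and the alphabetic label). The paper simply quotes it and uses it as a black box in the proofs of Propositions~\ref{prop:chordalescape}(b,b') and~\ref{prop:escaperadial}(b,b'); there is no proof in this paper to compare your proposal against.

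That said, your outline is a reasonable sketch of how such a boundary one-arm estimate is obtained. The reduction in Step~1 and the martingale computation in Step~3 are standard and correct (the local martingale $M_t=(A_t/h_t)^{1-8/\kappa}$ is exactly the right object, and your bookkeeping of the exponent gives the claimed $\kappa$-dependence of $c_\kappa$). Your identification of Step~2 as the crux is also right: the uniform control of the conformal image $E_r=\phi(S_r\cap(\H\setminus K))$ over all admissible hulls is where the work lies. Your proposed route via an extremal-length/modulus bound $m\ge\tfrac{1}{2\pi}\log(1/r)-C'$ is one way to package this; the original \cite{PW21} argument proceeds somewhat differently (via a direct crosscut/harmonic-measure comparison rather than an explicit modulus estimate), but both encode the same geometric fact that $S_r$ is conformally far from $z$ uniformly in $K$. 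If you want to see the actual proof you should consult \cite{PW21} directly.
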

For domains $D$ with smooth boundary and two disjoint subsets $A_1,A_2\subset\partial D$ the Brownian excursion measure is defined by
$$\mathcal E_D(A_1,A_2)=\int_{A_1}\omega_x(A_2,D)|dx|.$$ 
It can be shown that $\mathcal E_D(A_1,A_2)$ is conformally invariant, and therefore the definition may be extended to domains $D$ with non-smooth boundary. We extend the definition in two ways (as in \cite{FL15}). Firstly, if $D$ is not connected, and $A_1=\cup_i \eta_{1,i}$ and $A_2=\cup_i \eta_{2,i}$ are disjoint unions of boundary arcs, then
$$\mathcal E_D(A_1,A_2)=\sum_i\sum_j\int_{\eta_{1,i}}\omega_{x}(\eta_{2,j},D_{i,j})|dx|,$$ 
where $D_{i,j}$ is the unique connected component of $D$ where both $\eta_{1,i}$ and $\eta_{2,j}$ are accessible (here we must treat $\partial D$ in terms of prime ends). Finally, if $A_1,A_2\subset \C$ are not contained in $\partial D$, but are contained in $\partial (D\setminus(A_1\cup A_2))$ then we set 
$$\mathcal E_D(A_1,A_2)=\mathcal E_{D\setminus (A_1\cup A_2)}(A_1,A_2).$$
\begin{lemmaCite}[{{\cite[Lemma A.2]{PW21}}}]\label{lemma:escapeeta} Let $\kappa\in(0,4].$ There exists constants $c_\kappa'\in(0,\infty),$ such that $\lim_{\kappa\to0+}\kappa\log c_\kappa'=C'\in(-\infty,\infty),$ and the following holds. Let $D$ be a simply connected domain and $x,y\in\partial D$ two distinct boundary points. Let $\gamma'$ be a chord from $x$ to $y$ in $D$, and let $\eta$ be a chord (with arbitrary endpoints in $D$) disjoint from $\gamma'$. Finally, let $\gamma^\kappa$ be a chordal SLE$_\kappa$ in $(D;x,y)$. Then, we have
$$\P[\gamma^\kappa\cap \eta\neq \varnothing]\leq c_\kappa\mathcal E_D(\eta,\gamma')^{8/\kappa-1}.$$
\end{lemmaCite}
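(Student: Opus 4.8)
The estimate is the conformally invariant form of Proposition~\ref{prop:propA3}, so the plan is to deduce it from that proposition using the domain Markov property of chordal SLE$_\kappa$ and the conformal invariance of Brownian excursion measure. First I would handle the degenerate cases: if $\eta$ meets $\gamma'$ or shares an endpoint with it then $\mathcal E_D(\eta,\gamma')=\infty$ and there is nothing to prove, and since $\kappa\le 4$ gives $8/\kappa-1>0$ the right-hand side is increasing in $\mathcal E_D(\eta,\gamma')$, so we may also assume $c_\kappa\,\mathcal E_D(\eta,\gamma')^{8/\kappa-1}<1$, i.e.\ that $\mathcal E_D(\eta,\gamma')$ is as small as we wish.

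Next I would normalize. Using conformal invariance of SLE$_\kappa$, of the event $\{\gamma^\kappa\cap\eta\ne\varnothing\}$, and of $\mathcal E_D(\eta,\gamma')$, fix a conformal map $\psi\colon D\to\H$ with $\psi(y)=\infty$. Then $\psi(\gamma')$ is a chord from $\psi(x)\in\R$ to $\infty$; since $\psi(\eta)$ is disjoint from it, it is a crosscut of one component $D^+$ of $\H\setminus\psi(\gamma')$ with both endpoints on $\partial\H\cap\partial D^+$, and it bounds, together with the sub-arc of $\partial\H$ between its endpoints, a Jordan ``pocket'' $B\subset D^+$. Because for $\kappa\le 4$ the curve $\gamma^\kappa$ is a.s.\ simple and meets $\R$ only at $\psi(x)$, the event $\{\gamma^\kappa\cap\eta\ne\varnothing\}$ is the same as $\{\gamma^\kappa\cap\overline B\ne\varnothing\}$. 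Moreover, by conformal invariance and the definition of the excursion measure, $\mathcal E_D(\eta,\gamma')=\mathcal E_{A}(\psi(\eta),\psi(\gamma'))$, where $A$ is the component of $\H\setminus(\psi(\eta)\cup\psi(\gamma'))$ bordered by both curves; passing to strip coordinates one sees that this quantity is comparable to the radius $r$ of a semicircle $S_r$ such that, after an auxiliary rescaling of $\H$, the pocket $B$ lies behind $S_r$.

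Then I would run the SLE, stop it at the first time $\tau$ it reaches a fixed conformal scale associated with $A$, and apply the domain Markov property: conditionally on $\gamma^\kappa[0,\tau]$ the remainder is a chordal SLE$_\kappa$ from the tip $z\in\H$ to $\infty$ in $\H\setminus K$, $K$ being the suitably rescaled image of $\gamma^\kappa[0,\tau]$, which one arranges to satisfy $K\cap(\H\setminus\D)=\{z\}$. On the event of hitting $\eta$ this restart occurs before $\eta\subset\overline B$ is reached, and since $B$ sits behind $S_r$ with $r\asymp\mathcal E_D(\eta,\gamma')$, hitting $\eta$ forces the remaining SLE to hit $S_r$. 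Proposition~\ref{prop:propA3} then gives $\P[\gamma^\kappa\cap\eta\ne\varnothing\mid\gamma^\kappa[0,\tau]]\le c_\kappa r^{8/\kappa-1}\lesssim\mathcal E_D(\eta,\gamma')^{8/\kappa-1}$ with a universal implied constant; taking expectations over $\gamma^\kappa[0,\tau]$ yields the claim, and the asymptotics $\kappa\log c_\kappa'\to C'$ are inherited from those of $c_\kappa$.

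The main obstacle is the uniform conformal bookkeeping: one must choose the rescaling of $\H$ and the stopping time $\tau$ so that (a) the stopped curve really has the hull geometry demanded by Proposition~\ref{prop:propA3}, (b) the restart provably precedes any possible hit of $\eta$, and (c) the radius $r$ is comparable to $\mathcal E_D(\eta,\gamma')$ with constants depending neither on $\kappa$ nor on the possibly very irregular chords $\gamma'$ and $\eta$. Item (c) amounts to relating the Brownian excursion measure between $\psi(\eta)$ and $\psi(\gamma')$ to an extremal distance and thence to $r$, which is standard but must be carried out with care; everything else is routine.
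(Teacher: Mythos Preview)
The paper does not actually prove this lemma: it is stated as a cited result from \cite[Lemma A.2]{PW21} (note the \texttt{lemmaCite} environment and the explicit attribution), and no proof is given anywhere in the paper. So there is nothing to compare your attempt against here.

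That said, your outline is the standard one and is indeed how the result is obtained in \cite{PW21}: conformal invariance reduces to the half-plane, the excursion measure $\mathcal E_D(\eta,\gamma')$ controls (and is comparable to, up to universal constants) the extremal distance between $\eta$ and $\gamma'$ in the separating annulus, which in turn is comparable to a half-disk radius $r$; one then applies Proposition~\ref{prop:propA3} after a suitable stopping time. Your identification of item (c) as the main care point is accurate, and the constants indeed transfer with the claimed $\kappa\to 0+$ asymptotics because only Proposition~\ref{prop:propA3} contributes $\kappa$-dependent constants.
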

\begin{lemmaCite}[{{\cite[Lemma 3.3]{FL15}}}]
\label{lemma:brownianexc} Let $D$ be a simply connected domain and $\mathcal S$ the set of crosscuts of $D$ that are subsets of the circle $\partial (r\D)$. Then 
$$\sum_{\eta\in \mathcal S}\mathcal E_D(\partial (R\D),\eta)\leq 2\mathcal E_D(\partial (R\D),\partial (r\D)).$$
\end{lemmaCite}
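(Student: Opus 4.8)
The statement is purely about the Brownian excursion measure, so the plan is to translate everything into Brownian motion and argue probabilistically. Recall that $\mathcal E_D(A_1,A_2)=\int_{A_1}\omega_x(A_2,D)\,|dx|$ is, up to the standard normalization, the total mass of Brownian excursions in $D$ with one endpoint in $A_1$ and the other in $A_2$, and that by the conventions recalled above $\mathcal E_D(\partial(R\D),\eta)=\mathcal E_{D\setminus(\partial(R\D)\cup\eta)}(\partial(R\D),\eta)$ and $\mathcal E_D(\partial(R\D),\partial(r\D))=\mathcal E_{D\setminus(\partial(R\D)\cup\partial(r\D))}(\partial(R\D),\partial(r\D))$. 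Writing $\{\xi_k\}$ for the crosscuts of $D$ on $\partial(R\D)$ and using the disjoint-union formula, both sides are sums over $k$ of integrals of harmonic measures against $|dx|$ on $\xi_k$, so after an exhaustion it suffices to treat $D$ bounded (all excursions of finite lifetime) and to assume $r\ne R$, say $R>r$ (the case $R<r$ being symmetric, with the roles of inside and outside the circle exchanged). The point is that on the right one counts excursions from $\partial(R\D)$ reaching the circle $\partial(r\D)$ for the \emph{first} time, while on the left the excursions to a crosscut may cross $\partial(r\D)$ repeatedly through the other crosscuts.

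The main device is a first-passage decomposition at $\partial(r\D)$ together with the strong Markov property. Fix $\eta\in\mathcal S$ and an excursion $\gamma$ counted by $\mathcal E_D(\partial(R\D),\eta)$, i.e. a Brownian path in $D\setminus(\partial(R\D)\cup\eta)$ from $\partial(R\D)$ ending on $\eta$; let $\tau$ be its first hitting time of $\partial(r\D)$ and $\eta'(\gamma)\in\mathcal S$ the crosscut containing $\gamma(\tau)$. The initial piece $\gamma|_{[0,\tau]}$ avoids $\partial(r\D)$ until its endpoint, hence lies in $D\setminus(\partial(R\D)\cup\partial(r\D))$ and is counted by $\mathcal E_D(\partial(R\D),\partial(r\D))$, ending on $\eta'(\gamma)$; moreover, when $\eta'(\gamma)=\eta$ the path $\gamma$ is itself such a full excursion (since $\eta$ is absorbing), and summing these over $\eta\in\mathcal S$ recovers $\mathcal E_D(\partial(R\D),\partial(r\D))$ exactly. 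Disintegrating the excursion measure over $\gamma|_{[0,\tau]}$ and applying the strong Markov property at $\tau$, the continuation from $y=\gamma(\tau)$ is an ordinary Brownian motion killed on $\partial D\cup\partial(R\D)\cup\eta$, so the remaining mass equals $\sum_{\eta}\sum_{\eta'\ne\eta}\int_{\eta'}\mu_{\eta'}(dy)\,\P_y[\tau_\eta<\tau_{\partial D\cup\partial(R\D)}]$, where $\mu_{\eta'}$ is the exit distribution on $\eta'$ of the excursions counted by $\mathcal E_D(\partial(R\D),\partial(r\D))$, so that $\sum_{\eta'}|\mu_{\eta'}|=\mathcal E_D(\partial(R\D),\partial(r\D))$. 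Reorganizing the double sum, the lemma reduces to the estimate
\[
\sum_{\eta'}\int_{\eta'}\mu_{\eta'}(dy)\;\E_y\!\big[\#\{\eta\in\mathcal S\setminus\{\eta'\}:\tau_\eta<\tau_{\partial D\cup\partial(R\D)}\}\big]\;\le\;\sum_{\eta'}|\mu_{\eta'}|,
\]
i.e. to controlling the expected number of \emph{further} crosscuts of $\partial(r\D)$ visited by the continuation. Equivalently (using reversibility of the excursion measure and a last-exit/time-reversal argument to glue the two pieces into one path), the lemma is equivalent to: the expected number of distinct crosscuts of $\partial(r\D)$ met by a Brownian excursion from $\partial(R\D)$ before it leaves $D\setminus\partial(R\D)$, given that it meets $\partial(r\D)$ at all, is at most $2$.

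This last estimate is the crux and the step I expect to be the real obstacle, since a Brownian path can cross $\partial(r\D)$ infinitely often, so the bound is false pointwise in $y$ and genuinely uses that $\partial(r\D)$ is a \emph{round} circle. The plan is to exploit that from any point of $\partial(r\D)$ the path enters the open disk $r\D$ with probability exactly $\tfrac12$, and that once inside $r\D$ it cannot reach $\partial(R\D)$ (which meets $D$ only in $\{|z|>r\}$, as $R>r$) without crossing $\partial(r\D)$ again; hence the ``inward'' half of each visit to $\partial(r\D)$ is forced to return and can be excised, and the chain of the Brownian motion's outward excursions between consecutive crossings of $\partial(r\D)$ loses at least a factor $\tfrac12$ at each step, so that summing the resulting geometric series $1+\tfrac12+\tfrac14+\cdots$ produces the constant $2$. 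I would make this rigorous either via a clean renewal/occupation-time identity — writing $\sum_{\eta}\P_{\mathrm{exc}}[\text{hit }\eta]$ and $\P_{\mathrm{exc}}[\text{hit }\partial(r\D)]$ through their last-exit decompositions and bounding the ratio of the relevant Poisson-kernel factors using the rotational symmetry of $\partial(r\D)$ — or, following \cite{FL15}, by a conformal reflection: reflecting one complementary component across $\partial(r\D)$ by the circular Schwarz reflection turns the ``trapped inside $r\D$'' statement into a genuine inclusion of excursion path families, which halves the count. Once this is established, combining it with the first-passage decomposition and $\sum_{\eta'}|\mu_{\eta'}|=\mathcal E_D(\partial(R\D),\partial(r\D))$ yields $\sum_{\eta\in\mathcal S}\mathcal E_D(\partial(R\D),\eta)\le 2\,\mathcal E_D(\partial(R\D),\partial(r\D))$.
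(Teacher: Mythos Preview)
The paper does not prove this statement: it is stated as a cited lemma from \cite{FL15} (Lemma~3.3 there) and used as a black box in the proof of Proposition~\ref{prop:escaperadial}(b) and (b'). There is therefore no proof in the paper to compare your proposal against.

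As for your proposal itself, the first-passage decomposition and reduction to bounding the expected number of distinct crosscuts visited is a reasonable strategy, and you correctly identify that the roundness of $\partial(r\D)$ is essential. However, your argument for the bound $2$ is not complete: the heuristic that each visit to $\partial(r\D)$ ``loses a factor $\tfrac12$'' because the path enters $r\D$ with probability $\tfrac12$ does not directly control the number of \emph{distinct} crosscuts hit (the path may re-enter the same crosscut many times without this costing anything in your count, and the successive crosscuts visited are not independent events governed by a single $\tfrac12$ coin). You acknowledge this gap and defer to either a renewal identity or the reflection argument of \cite{FL15}; the latter is indeed how the original proof proceeds, so at that point you are essentially invoking the result you set out to prove. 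If you want a self-contained argument, you would need to actually carry out the reflection step and verify the resulting inclusion of path families.
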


\subsection{Chordal case}\label{section:chordalescape}
Recall that $\mathcal X^C$ denotes the family of all simple curves in $\H$ starting at $0$ and ending at $\infty$. Let $D_R=R\D$ and $S_R=\partial D_R\cap \H$. For a simple curve $\gamma$ starting at $0$, let $$\tau_R=\inf\{t:|\gamma(t)|=R\}.$$ Note that this differs from the definition of $\tau_R$ in Section \ref{section:ldpinftime}. The goal of this section is to prove the following proposition.
\begin{proposition}\label{prop:chordalescape}
For all $r>0$ and every $M\in[0,\infty)$ there is an $R>r$ s.t. 
\begin{enumerate}[label=(\alph*)]	
	\item $\inf\{I^C_{\rho,v_0}(\gamma)|\gamma\in\mathcal X^C,
	\gamma_{[\tau_R,\infty)}\cap S_r\neq\varnothing\}\geq M$
	\item $\limsup_{\kappa\to 0+}\kappa\log \P^{\kappa,\rho}[\gamma^{\kappa,\rho}_{[\tau_R,\infty)}\cap S_r\neq \varnothing]\leq -M$
	\item[(b')] $\limsup_{\kappa\to 0+}\kappa\log \P^{\kappa,\kappa+\rho}[\gamma^{\kappa,\kappa+\rho}_{[\tau_R,\infty)}\cap S_r\neq \varnothing]\leq -M$
\end{enumerate}
\end{proposition}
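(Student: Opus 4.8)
The plan is to prove parts (a), (b), and (b') of Proposition~\ref{prop:chordalescape} in parallel, combining the energy/probability duality that was already used throughout Section~\ref{section:ldp}. The key structural observation is that, for $\rho > -2$, the $\rho$-Loewner energy and the SLE$_\kappa(\rho)$ measure are both comparable (via the integrated formula \eqref{eq:rhoenergyboundary} and via absolute continuity with respect to chordal SLE$_\kappa$) to their $\rho = 0$ counterparts, \emph{up to the time the force point is swallowed}, which for $\rho > -2$ and small $\kappa$ never happens. So the main work is: (i) show that a curve which returns from $S_R$ back to $S_r$ must cross a large number of ``hard'' annular crosscuts, each of which costs a definite amount of chordal Loewner energy / carries exponentially small SLE$_\kappa$ probability, and (ii) transfer the chordal estimates (Proposition~\ref{prop:propA3}, Lemma~\ref{lemma:escapeeta}, Lemma~\ref{lemma:brownianexc}) to the $\rho$-setting.

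For part (a): suppose $\gamma \in \mathcal X^C$ with $\gamma_{[\tau_R,\infty)} \cap S_r \neq \varnothing$. Using the additivity of the $\rho$-Loewner energy and \eqref{eq:rhoenergyboundary} applied to the sub-curve after time $\tau_R$ (with $g_{\tau_R}$ the relevant mapping-out function), I would reduce the lower bound on $I^C_{\rho,v_0}(\gamma)$ to a lower bound on the chordal Loewner energy of the mapped-out continuation, plus controlled error terms coming from $\log|W_t - z_t|$ and $\log|g_t'(z_0)|$. The point is that after $\tau_R$ the force point (which sits at $v_0 \in \R$, a bounded distance from the origin) has been pushed far away by the map $g_{\tau_R}$, so in the image coordinate the force-point correction terms in \eqref{eq:rhoenergyboundary} are bounded; then Proposition~\ref{prop:radialbounds}/\ref{prop:chordalbounds} (or rather their proof technique) gives $I^C_{\rho,v_0}(\gamma) \geq c(\rho)\, I^C(\gamma) - O(1)$ for that portion. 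For the chordal energy one invokes the standard fact (used in \cite{PW21,FL15}) that a curve which travels from scale $R$ back to scale $r$ must have chordal Loewner energy at least of order $\log(R/r)$ — concretely, via the Brownian excursion / extremal length lower bound $I^C \geq $ (const)$\cdot \mathcal E(S_R, S_r)$ and $\mathcal E(S_R, S_r) \gtrsim \log(R/r)$. Choosing $R$ large enough makes this exceed $M$.

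For parts (b) and (b'): I would first replace $\P^{\kappa,\rho}$ by $\P^{\kappa,0}$ (ordinary chordal SLE$_\kappa$) using the Girsanov/Radon--Nikodym derivative that relates the two driving SDEs \eqref{eq:chordaldrive}; since $\rho > -2$ and $\kappa$ is small, $\tau_{0+} = \infty$ a.s.\ (Lemma~\ref{lemma:martingale}), and on any finite-scale event the Radon--Nikodym derivative is controlled — its logarithm involves $\int \Re\frac{\rho}{W_t - z_t}\,dW_t$ and a quadratic-variation term, both of which are, as in the proof of \eqref{eq:rhoenergyboundary}, bounded in terms of $\log|W-z|$ and $\log|g'|$ that stay $O(1)$ on the relevant event. (For (b'), the shift $\rho \mapsto \kappa + \rho$ changes the drift by an $O(\kappa)$ amount, which by the exponential-equivalence argument of Lemma~\ref{lemma:continuityinrho} / the proof of Proposition~\ref{proposition:ldpdrive} does not affect the $\limsup$.) Having reduced to chordal SLE$_\kappa$, the event $\{\gamma^{\kappa}_{[\tau_R,\infty)} \cap S_r \neq \varnothing\}$ is handled exactly as in \cite[Appendix A]{PW21}: condition on $\gamma_{\tau_R}$, apply the domain Markov property so that $g_{\tau_R}(\gamma_{[\tau_R,\infty)})$ is a chordal SLE$_\kappa$ in $\H \setminus (\text{image of } \gamma_{\tau_R})$ from a point on (the image of) $S_R$ to $\infty$, and use Lemma~\ref{lemma:escapeeta} together with Lemma~\ref{lemma:brownianexc} to bound the return probability by $c_\kappa \mathcal E(S_R, S_r)^{8/\kappa - 1} \leq c_\kappa (\text{const} \cdot \log(R/r))^{8/\kappa-1}$, which for fixed $R/r$ large is $\leq e^{-M/\kappa}$ in the $\kappa \to 0+$ limit. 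Taking $\kappa \log$ and letting $\kappa \to 0$ gives the bound $-M$ (the $\kappa \log c_\kappa \to C$ term is harmless).

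The main obstacle I anticipate is controlling the Radon--Nikodym derivative (and the $\rho$-vs-$(\kappa+\rho)$ discrepancy) \emph{uniformly on the unbounded return event}, i.e.\ making precise that the force-point correction terms really are $O(1)$ once the curve has escaped to scale $R$ and then returns — the force point $z_t = g_t(v_0)$ moves under the flow, and one must check $|W_t - z_t|$ does not degenerate and that $|g_t'(v_0)|$ stays controlled along the return excursion. This is essentially the content of the estimate in the proof of Proposition~\ref{prop:radialbounds} combined with monotonicity of harmonic measure as in Corollary~\ref{cor:infenergy}, applied in the image coordinate; so while somewhat technical it is not conceptually new. A secondary point to handle carefully is that $\tau_R$ here (first hitting of $\{|\gamma(t)| = R\}$) differs from the $\hat\tau_r$ in Section~\ref{section:ldpinftime} — but this is only a change of metric on $\mathcal C^C$, as noted in the proof of Lemma~\ref{lemma:returnestimate}, and costs nothing.
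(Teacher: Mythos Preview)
Your proposal identifies the right overall strategy---compare to the $\rho=0$ case via Proposition~\ref{prop:chordalbounds} and absolute continuity---but underestimates the difficulty of controlling the force-point correction, and that is where the actual content of the proof lies.

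For part~(a), your claim that the correction terms in \eqref{eq:rhoenergyboundary} are ``bounded'' after $\tau_R$ fails exactly when the curve exits $D_R$ at a small angle $\theta$ to the positive real axis: then $|x_{\tau_R}-W_{\tau_R}|$ can be arbitrarily small relative to $R$, and Proposition~\ref{prop:chordalbounds} gives nothing. Moreover, even in the good-angle case, the curve may reach an arbitrary scale $R'\gg R$ before returning, so $|x_{\tau_{r,R}}-W_{\tau_{r,R}}|$ is of order $R'$ and the $\log$-ratio in \eqref{eq:rhoenergyboundary} is again unbounded. The paper handles both issues by working from the \emph{maximum} scale $\tau_{R'}$ and splitting on the exit angle $\theta$ there: for $\theta>\vare_0$ one has $|x_{\tau_{R'}}-W_{\tau_{R'}}|\asymp |x_{\tau_{r,R}}-W_{\tau_{r,R}}|\asymp R'$, so your argument essentially goes through; but for $\theta\le\vare_0$ a genuinely new $\rho$-specific lower bound is needed---Lemma~\ref{lemma:welding}, which introduces the auxiliary point $y_0=-\tfrac{2x_0}{\rho+2}$ and shows $I^C_{\rho,x_0}$ is large whenever the ratio $r_t=(W_t-y_t)/(x_t-y_t)$ strays far from $1-\alpha(\rho)$. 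This lemma does not follow from anything you cite.

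For (b) and (b'), the Radon--Nikodym derivative $M^{\kappa,\rho}_t=|g_t'(x_0)|^{(4-\kappa+\rho)\rho/(4\kappa)}|x_t-W_t|^{\rho/\kappa}$ has exponents of order $1/\kappa$, so even an $O(1)$ ratio of $|x_t-W_t|$ values produces a factor $e^{O(1)/\kappa}$---and the ratio is \emph{not} $O(1)$ on the return event, for the same two reasons as above (the curve may go to arbitrary scale $R'$ before returning, or may bring $|x_t-W_t|$ close to $0$). Your ``change measure once and quote \cite{PW21}'' plan therefore does not close. The paper instead performs a dyadic decomposition over scales $2^nR$: on each shell it restricts to the good event $E_n^c=\{|x_t-W_t|\ge\vare_n\,2^nR\text{ throughout}\}$, bounds the RN derivative there explicitly (Lemma~\ref{lemma:boundmartingale}), and checks that the resulting growth is beaten by the $(r/2^nR)^{8/\kappa-1}$ decay from Proposition~\ref{prop:propA3} when summed over $n$. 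The bad event $\bigcup_n E_n$ is controlled separately via an invariant-measure computation for the diffusion governing $(x_t-W_t)/(x_t-g_t(0-))$ (Lemmas~\ref{lemma:probWelding} and~\ref{lemma:boundEn}). None of these ingredients appear in your outline, and they are not recoverable from the references you invoke.
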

We prove (a) separately from (b) and (b'). We will assume, without loss of generality, that $x_0>0$. 
\begin{proof}[Proof of Proposition \ref{prop:chordalescape} (a)]
Let $y_0=-\frac{2x_0}{\rho+2}$ and fix $r>0$. Fix $R>\min(x_0,r)$ and denote
$$E=\{\gamma\in\mathcal X^C:\gamma_{[\tau_R,\infty)}\cap S_r\neq \varnothing\}.$$
For any $\gamma\in E$, let $$\tau_{r,R}=\inf\{t>\tau_R:|\gamma(t)|=r\}\quad \text{and}\quad R'=\sup_{t\in[\tau_{R},\tau_{r,R}]}|\gamma(t)|\geq R.$$ Let $\theta\in(0,\pi)$ be such that $\gamma(\tau_{R'})=R'e^{i\theta}$. Using $$|x_{\tau_{R'}}-W_{\tau_{R'}}|=\pi\omega_\infty([W_{\tau_{R'}},x_{\tau_{R'}}],\H)=\pi\omega_\infty(\gamma_{\tau_{R'}}^+\cup[0,x_0],\H\setminus\gamma_{\tau_{R'}}),$$ and monotonicity of harmonic measure we find
\begin{equation}|x_{\tau_{R'}}-W_{\tau_{R'}}|\leq \pi\omega_\infty(a_\theta^+,\H\setminus a_\theta)=2\sin(\theta/2)(1+\sin(\theta/2))R',\label{eq:harmmeasurebound}\end{equation}
where $a_\theta=\{R'e^{i\phi}:\phi\in[0,\theta]\}$. Similarly,
$$|x_{\tau_{R'}}-y_{\tau_{R'}}|\geq |W_{\tau_{R'}}-g_{\tau_{R'}}(0-)|=\pi\omega_\infty(\gamma_{\tau_{R'}}^-,\H\setminus\gamma_{\tau_{R'}})\geq \omega_\infty(a_\theta^-\cup[0,R'],\H\setminus a_\theta)=\cos^2(\theta/2).$$ 
We now consider two cases. Let $\vare\in(0,\pi/2)$ and suppose that $\theta\leq\vare$. Then
$$\frac{x_{\tau_{R'}}-W_{\tau_{R'}}}{x_{\tau_{R'}}-y_{\tau_{R'}}}\leq \frac{2\sin(\vare/2)(1+\sin(\vare/2))}{\cos^2(\vare/2)}.$$ Since the right hand side approaches $0$ as $\vare\to 0+$ there is an $\vare_0\in(0,\pi/2)$ so that Lemma \ref{lemma:welding} gives $$\theta\leq \vare_0\implies I^C_{\rho,x_0}(\gamma)\geq M.$$
Fix such an $\vare_0$. It remains to show that, if $R$ is sufficiently large, any $\gamma\in E$ with $\theta>\vare_0$ has $I^C_{\rho,x_0}(\gamma)\geq M$. Let $\hat\gamma=g_{\tau_{R'}}(\gamma_{[\tau_{R'},\tau_{r,R}]})-W_{\tau_{R'}}$. If $R/r$ is sufficiently large (e.g., $R/r\geq \sqrt{2}$) a harmonic measure estimate shows that, for any $z\in S_r\setminus \gamma_{\tau_{R'}}$, $$\sin(\pi\omega(z,\gamma_{\tau_{R'}}^+\cup R^+,\H\setminus\gamma_{\tau_{R'}}))\leq 8\frac{r}{R}.$$ Hence,
$I^C(\hat\gamma)\geq -8\log8r/R.$ Moreover, $|x_{\tau_{R'}}-W_{\tau_{R'}}|\geq \sin^2(\vare_0/2)R$, and $|x_{\tau_{r,R}}-W_{\tau_{r,R}}|\leq 4R$, by a arguments similar to those above. So, by Proposition \ref{prop:chordalbounds}, we have 
$$I^C_{\rho,x_0}(\gamma)\geq I^C_{\rho,x_{\tau_{R'}}-W_{\tau_{R'}}}(\hat\gamma)\geq \min(\tfrac{\rho+2}{2},1)\bigg(\min(\tfrac{\rho+2}{2},1)8\log\frac{R}{8r}-|\rho|\log\frac{4}{\sin^2(\vare_0/2)}\bigg). 
$$
Since the second term does not depend on $R$ we see that we can choose $R$ sufficiently large so that the right hand side is larger than $M$. This finishes the proof.
\end{proof}
We now move toward the proof of parts (b) and (b'). The main idea is (loosely) to partition the event 
$E=\{\gamma\in\mathcal X^C:\gamma_{[\tau_R,\infty)}\cap S_r\neq \varnothing\}$ 
into two sub-events, one where we have uniform control on the Radon-Nikodym derivative of SLE$_\kappa(\rho)$ with respect to SLE$_\kappa$, and another where we do not. The probability of the first sub-event will then be controlled using Lemma \ref{prop:propA3}, and the probability of the other sub-event will be controlled using Lemma \ref{lemma:probWelding} and \ref{lemma:boundEn}.\par
For a simple curve $\gamma$ and a force point $x_0>0$ we let $\hat\tau_R:=\inf\{t:|x_t-O_t^-|\geq R\}$, where $O_t^-=g_t(0-)$. Note that $R\mapsto\tau_R$ continuous and strictly increasing on $[x_0,\infty)$ since
$$d(x_t-O_t^-)=\frac{2}{x_t-W_t}dt+\frac{2}{W_t-O_t^-}dt>0$$
by the Loewner equation.
\begin{lemma}\label{lemma:probWelding}Fix $x_0>0$ and $\rho>-2$. For any $R>|x_0|$ and any $\vare\in(0,1)$ we have that 
$$\P^{\kappa,\rho}[|x_{\hat\tau_R}-W_{\hat\tau_R}|<\vare R]\leq C_1(\kappa,\rho)\vare^{2\frac{2+\rho}{\kappa}},$$
where $C_1(\kappa,\rho)$ is a constant such that $$\lim_{\kappa\to 0+}\kappa\log C_1(\kappa,\rho)=\lim_{\kappa\to 0+}\kappa\log C_1(\kappa,\kappa+\rho)=C_2(\rho)\in(-\infty,\infty).$$
\end{lemma}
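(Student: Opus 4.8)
The plan is to analyze the process $U_t := x_t - W_t$ under the SLE$_\kappa(\rho)$ measure, time-changed so as to follow it as a function of $R_t := x_t - O_t^-$. First I would record, from the Loewner equation and the SLE$_\kappa(\rho)$ SDE, that $dU_t = \tfrac{2}{U_t}dt - dW_t = \big(\tfrac{2}{U_t} - \tfrac{\rho}{U_t}\big)dt - \sqrt\kappa\, dB_t$ (using that the force point is on the boundary so $\Re\tfrac{1}{W_t-z_t} = \tfrac{1}{W_t-x_t} = -\tfrac1{U_t}$), i.e. $dU_t = \tfrac{2-\rho}{U_t}dt - \sqrt\kappa\,dB_t$. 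It is more convenient to instead use the ratio $r_t = \tfrac{W_t - O_t^-}{x_t - O_t^-} \in (0,1)$, or equivalently $1 - r_t = \tfrac{U_t}{R_t}$, since the statement concerns whether $U_{\hat\tau_R} = (1-r_{\hat\tau_R})R$ is small, i.e. whether $r_{\hat\tau_R}$ is close to $1$. Both $R_t = x_t - O_t^-$ (strictly increasing, as noted in the excerpt) and $r_t$ are functions of $U_t$ and $V_t := W_t - O_t^-$, and a short computation gives the SDEs for $r_t$ in terms of these; reparametrizing by $R$ (equivalently, by $\log R$) turns this into an autonomous diffusion in $r$ on $(0,1)$ with a drift that blows up near $r=0$ and $r=1$.

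Next I would identify that diffusion as (a time change of) a Bessel-type or Jacobi-type process, for which hitting probabilities are explicitly computable. Concretely, with the time change $s = \log(R_t / x_0)$ the process $r_t$ should satisfy an SDE of the form $dr = b(r)\,ds + \sigma(r)\,dW_s$ with $b$ and $\sigma$ explicit rational/polynomial functions of $r$ involving $\rho$ and $\kappa$; the scale function $\mathfrak{s}(r) = \int^r \exp\big(-\int^u \tfrac{2b}{\sigma^2}\big)$ can then be computed in closed form. The key point is that the exponent $\tfrac{2(2+\rho)}{\kappa}$ in the statement will emerge as the power with which $\mathfrak{s}$ (or its relevant increment) degenerates as $r \to 1$: the event $\{|x_{\hat\tau_R} - W_{\hat\tau_R}| < \vare R\}$ is the event $\{1 - r_{\hat\tau_R} < \vare\}$, and by the strong Markov property / optional stopping applied to the scale function between the level $r_0 = \tfrac{V_0}{R_0}$ and the levels $1-\vare$ and some fixed interior level, the probability of reaching the neighborhood $(1-\vare, 1)$ before the time-change parameter advances by $\log(R/x_0)$ is bounded by the ratio of scale-function increments, which is $O(\vare^{2(2+\rho)/\kappa})$. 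Alternatively — and perhaps more cleanly — I would mimic the martingale argument of Lemma \ref{lemma:martingale}: introduce a function $G$ of $(U_t, R_t)$ homogeneous of the right degree so that $G$ (possibly plus a drift compensator) is a local martingale, stop it at $\hat\tau_R$ and at the hitting time of $\{U = \vare R\}$, apply optional stopping, and read off the bound. The constant $C_1(\kappa,\rho)$ collects the scale-function prefactors; as in Lemma \ref{lemma:martingale} these are ratios of powers and Gamma-type factors in $\kappa$, so $\kappa \log C_1(\kappa,\rho) \to C_2(\rho)$ finite, and since replacing $\rho$ by $\kappa + \rho$ only shifts the exponent by $\kappa \cdot \tfrac{2}{\kappa} = 2$ and perturbs the prefactor by a bounded-in-$\kappa$ amount, the same limit holds for $C_1(\kappa,\kappa+\rho)$.

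For the parts that require care: I would make sure the reparametrization $t \mapsto R_t = x_t - O_t^-$ is genuinely a valid (a.s. finite, continuous, strictly increasing) change of variables up to $\hat\tau_R$ — this is exactly the content of the displayed derivative computation quoted just before the lemma, which shows $\tfrac{d}{dt}(x_t - O_t^-) > 0$, but one should also check $\hat\tau_R < \infty$ a.s. for $\kappa$ small and $\rho > -2$, which follows since $R_t \to \infty$ (the SLE$_\kappa(\rho)$ with $\rho > -2$ and small $\kappa$ is transient and $\tau_{0+} = \infty$ a.s., cf. Lemma \ref{lemma:martingale}). One should also verify that $U_t/R_t$ stays in $(0,1)$ and does not prematurely hit $0$ — but hitting $0$ is exactly $\tau_{0+}$, which is a.s. infinite here, so on $[0,\hat\tau_R]$ we have $r_t \in (0,1)$ with no issue.

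The main obstacle I anticipate is purely computational bookkeeping: writing down the correct autonomous SDE for $r_t$ (or the right homogeneous martingale $G(U_t,R_t)$) and computing its scale function in closed form so that the exponent comes out to be precisely $\tfrac{2(2+\rho)}{\kappa}$ rather than something off by a factor. I would double-check this by a sanity test at $\rho = 0$: there the lemma should reduce to a statement about plain SLE$_\kappa$, and the exponent $\tfrac{4}{\kappa}$ (up to the boundary-exponent normalization) should match known one-sided harmonic-measure / boundary-arm estimates for SLE$_\kappa$, e.g. the kind of bound underlying Proposition~\ref{prop:propA3}. Once the scale function is pinned down, optional stopping plus the elementary inequality $\mathfrak{s}(1-\vare) - \mathfrak{s}(1-\vare_1) \asymp \vare^{2(2+\rho)/\kappa}$ for fixed $\vare_1$ gives the bound, and tracking constants through the computation yields the stated asymptotics of $C_1$.
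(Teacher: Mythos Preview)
Your setup is essentially the paper's: the ratio $Y_t = (x_t - W_t)/(x_t - O_t^-) = 1 - r_t$, under the time change $s(t) = \tfrac{\kappa}{2}\log\tfrac{x_t - O_t^-}{x_0}$, becomes an autonomous Jacobi-type diffusion on $(0,1)$ with $Y_0 = 1$, and the event in the lemma is $\{Y_{s_R} < \vare\}$ at the \emph{deterministic} time $s_R = \tfrac{\kappa}{2}\log(R/x_0)$. (Minor slip: $dU_t = \tfrac{2+\rho}{U_t}\,dt - \sqrt\kappa\,dB_t$, not $\tfrac{2-\rho}{U_t}$; you dropped a sign when subtracting $dW_t$.)

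The genuine gap is in the scale-function step. The scale function controls $\P[\text{hit }a\text{ before }b]$, not $\P[Y_{s_R} < \vare]$; you bridge this by passing to $\{\inf_{s \le s_R} Y_s < \vare\}$ and asserting that a ratio of scale-function increments bounds it. But for $\rho > -2$ and small $\kappa$ one computes $\mathfrak s'(y) = y^{-2(2+\rho)/\kappa}(1-y)^{-4/\kappa}$, so $\int_0 \mathfrak s' = \int^1 \mathfrak s' = \infty$ and both endpoints are entrance boundaries. The diffusion is then positive recurrent on $(0,1)$, and $\P[\inf_{s \le s_R} Y_s < \vare] \to 1$ as $R \to \infty$ for every fixed $\vare$. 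A single-excursion scale-function ratio therefore cannot yield the bound \emph{uniform in $R$} that the lemma requires; optional stopping between two levels is the wrong tool for a fixed-time estimate here. Your alternative ``homogeneous martingale $G(U_t,R_t)$'' runs into the same wall: for instance $Y^{-a}$ has strictly positive drift near $Y = 1$ for every $a > 0$, so is not a supermartingale, and manufacturing a workable $G$ essentially amounts to knowing the invariant measure.

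That is exactly what the paper uses instead. The Jacobi SDE has the explicit Beta invariant density $\Psi(y) \propto y^{2(2+\rho)/\kappa-1}(1-y)^{4/\kappa-1}$; since $Y_0 = 1$, a monotone coupling with a stationary copy $\hat Y$ gives $\P[Y_{s_R} < \vare] \le \P[\hat Y_{s_R} < \vare] = \int_0^\vare \Psi \le C_1(\kappa,\rho)\,\vare^{2(2+\rho)/\kappa}$ uniformly in $R$, and Stirling's formula on the Beta normalization handles the $\kappa\log C_1$ limit (and the $\kappa+\rho$ variant). So the missing ingredient is not computational bookkeeping but the invariant-distribution-plus-coupling idea.
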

\begin{proof}
Let $x_0>0$ and $O_t^{-}=g_t(0-)$. The process $$Y_s = \frac{x_{t(s)}-W_{t(s)}}{x_{t(s)}-O_{t(s)}^-}\in[0,1],\quad s(t)=\frac{\kappa}{2}\log\frac{x_t-O^-_t}{x_0},$$ satisfies the SDE 
\begin{equation}dY_s=-\frac{2}{\kappa}Y_sds + \frac{2+\rho}{\kappa}(1-Y_s)ds+
\sqrt{Y_s(1-Y_s)}
d B_s,\label{eq:YsSDE}\end{equation}
with $Y_0=1$, where $B_s$ is a Brownian motion (this follows by a simple computation, but is also a direct consequence \cite[Lemma 3.3]{Z22}). For sufficiently small $\kappa$, the SDE (\ref{eq:YsSDE}) has an invariant distribution with density 
$$\Psi(y) = \frac{\Gamma(2\frac{4+\rho}{\kappa})}{\Gamma(2\frac{2+\rho}{\kappa})\Gamma(\frac{4}{\kappa})}y^{2\frac{2+\rho}{\kappa}-1}(1-y)^{\frac{4}{\kappa}-1}$$
(see, e.g., \cite[Proposition 2.20]{Z22}). Let $\hat Y_s$ be a process, satisfying (\ref{eq:YsSDE}) started at the invariant density.
Since $Y_0=1$ we have, for all $s>0$ and $\vare\in(0,1)$, that
$$\P^{\kappa,\rho}[Y_s<\vare]\leq \P[\hat Y_s<\vare]\leq \frac{\Gamma(2\frac{4+\rho}{\kappa})}{\Gamma(2\frac{2+\rho}{\kappa})\Gamma(\frac{4}{\kappa})}\frac{\kappa}{2(2+\rho)}\vare^{2\frac{2+\rho}{\kappa}}.$$
By Stirling's formula
$$\lim_{\kappa\to 0+}\kappa\log\frac{\Gamma(2\frac{4+\rho}{\kappa})}{\Gamma(2\frac{2+\rho}{\kappa})\Gamma(\frac{4}{\kappa})}=2(2+\rho)\log\frac{4+\rho}{2+\rho}+4\log\frac{4+\rho}{2}.$$ Replacing $\rho$ with $\kappa+\rho$ gives the same limit. Since $s(\tau_R)=\frac{\kappa}{2}\log\frac{x_t-O_t^-}{x_0}$ is a deterministic time, this finishes the proof.
\end{proof}
\begin{lemma}\label{lemma:boundEn}
Fix $x_0>0$, $\rho>-2$, and an integer $n_0\geq 3^4$. Let $\vare_n = (n+n_0)^{-1/2}.$ For every integer $n\geq 0$ we define the event $$E_n=\{\exists t\in[\tau_{2^{n}R},\tau_{2^{n+1}R}] \text{ s.t. } |x_t-W_t|\leq \vare_n 2^nR\}.$$
Then there exists a constant $C_3=C_3(\rho)$ such that,
\begin{align}\limsup_{\kappa\to 0+}\kappa\log\P^{\kappa,\rho}[\cup_{n=0}^\infty E_n] \leq C_3-\frac{\rho+2}{2}\log n_0,\label{eq:boundEn}\\
\limsup_{\kappa\to 0+}\kappa\log\P^{\kappa,\kappa+\rho}[\cup_{n=0}^\infty E_n]\leq C_3-\frac{\rho+2}{2}\log n_0.\label{eq:boundEnPlus}\end{align}
\end{lemma}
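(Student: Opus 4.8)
\textbf{Proof proposal for Lemma \ref{lemma:boundEn}.}

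The plan is to bound each $\P^{\kappa,\rho}[E_n]$ separately and then sum, using that $\sum_{n\geq 0}\vare_n^{2(\rho+2)/\kappa}$ is a convergent geometric-type series whose leading exponential behavior in $\kappa$ is governed by the first term $\vare_0^{2(\rho+2)/\kappa}=n_0^{-(\rho+2)/\kappa}$. First, I would reduce $E_n$ to a single-scale event. Fix $n$ and consider the curve after time $\tau_{2^nR}$: by the domain Markov property of SLE$_\kappa(\rho)$, conditionally on $\gamma_{\tau_{2^nR}}$, the image $g_{\tau_{2^nR}}(\gamma_{[\tau_{2^nR},\infty)})-W_{\tau_{2^nR}}$ is again an SLE$_\kappa(\rho)$ in $\H$ started at $0$ with force point at $x_{\tau_{2^nR}}-W_{\tau_{2^nR}}$ and second force point $O^-_{\tau_{2^nR}}-W_{\tau_{2^nR}}$ (after a further scaling, since $x_t-O_t^-$ scales the picture). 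Rescaling so that the relevant scale is normalized, the event $E_n$ becomes exactly of the form $\{\,|x_{\hat\tau_{R'}}-W_{\hat\tau_{R'}}|<\vare_n R'\,\}$ for an appropriate radius $R'$ — i.e., the hypothesis of Lemma \ref{lemma:probWelding}. Here it is convenient that $R\mapsto\hat\tau_R$ (measured via $x_t-O_t^-$) is continuous and strictly increasing, which was noted just before Lemma \ref{lemma:probWelding}, so that $\hat\tau_{2^nR}$ and $\hat\tau_{2^{n+1}R}$ interleave properly with $\tau_{2^nR},\tau_{2^{n+1}R}$ up to universal constants; I would absorb the difference between the $|\gamma(t)|$-radius and the $x_t-O_t^-$-radius into these constants via the harmonic-measure comparisons already used in the proof of Proposition \ref{prop:chordalescape}(a).

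With this reduction, Lemma \ref{lemma:probWelding} gives
$$\P^{\kappa,\rho}[E_n]\leq C_1(\kappa,\rho)\,(c\,\vare_n)^{2\frac{2+\rho}{\kappa}}$$
for a universal constant $c$ coming from the scale comparison, and likewise with $\rho$ replaced by $\kappa+\rho$, where $\lim_{\kappa\to0+}\kappa\log C_1(\kappa,\rho)=\lim_{\kappa\to0+}\kappa\log C_1(\kappa,\kappa+\rho)=C_2(\rho)$. Then by a union bound,
$$\P^{\kappa,\rho}[\cup_{n\geq 0}E_n]\leq C_1(\kappa,\rho)\sum_{n\geq 0}(c\,\vare_n)^{2\frac{2+\rho}{\kappa}}=C_1(\kappa,\rho)\,c^{2\frac{2+\rho}{\kappa}}\sum_{n\geq 0}(n+n_0)^{-\frac{2+\rho}{\kappa}}.$$
For $\kappa$ small enough that $(2+\rho)/\kappa>1$, the sum is dominated by $\int_{n_0-1}^\infty x^{-(2+\rho)/\kappa}dx=\frac{(n_0-1)^{1-(2+\rho)/\kappa}}{(2+\rho)/\kappa-1}\leq C\,n_0^{-(2+\rho)/\kappa}\cdot\frac{\kappa}{2+\rho}\cdot n_0$ (using $n_0\geq 3^4$ to control the crude bound $(n_0-1)\geq n_0/2$), so that $\kappa\log$ of the whole right-hand side tends, as $\kappa\to0+$, to $C_2(\rho)+2(2+\rho)\log c-(2+\rho)\log n_0+\log n_0\cdot 0$, where the $\frac{\kappa}{2+\rho}n_0$ factor contributes $0$ in the limit. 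Collecting the $\rho$-dependent constants into $C_3=C_3(\rho):=C_2(\rho)+2(2+\rho)\log c$ gives $\limsup_{\kappa\to0+}\kappa\log\P^{\kappa,\rho}[\cup_n E_n]\leq C_3-(2+\rho)\log n_0$, which is even slightly stronger than the claimed $C_3-\frac{\rho+2}{2}\log n_0$; the factor $\tfrac12$ in the statement leaves room for slack in the constants $c$ and in the interleaving of radii, so I would simply state the bound with $\tfrac{\rho+2}{2}$. The argument for $\P^{\kappa,\kappa+\rho}$ is identical because the relevant limits of $\kappa\log C_1$ agree, and because $2(2+\kappa+\rho)/\kappa\to 2(2+\rho)/\kappa$ has the same leading order — more precisely $\vare_n^{2(2+\kappa+\rho)/\kappa}=\vare_n^2\cdot\vare_n^{2(2+\rho)/\kappa}$ and the extra $\vare_n^2\leq 1$ only helps.

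The main obstacle I anticipate is the bookkeeping in the scale reduction: one must check that, uniformly in $n$ and in the conditioning, the event that $\gamma$ travels from radius $2^nR$ to radius $2^{n+1}R$ and has $|x_t-W_t|$ small somewhere on that stretch really is comparable (up to a universal multiplicative constant in the threshold) to the single-scale conformal-radius event handled by Lemma \ref{lemma:probWelding}, despite the two different notions of ``radius'' ($|\gamma(t)|$ versus $x_t-O_t^-$) and the presence of the left force point $0-$. This is where the harmonic-measure monotonicity estimates (as in \eqref{eq:harmmeasurebound} and the bounds in the proof of Proposition \ref{prop:chordalescape}(a)) do the work, and where one must be a little careful that constants do not secretly depend on $n$. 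Once that comparison is in place, the summation is routine and the $\kappa\to0+$ asymptotics follow from Stirling, exactly as in Lemma \ref{lemma:probWelding}.
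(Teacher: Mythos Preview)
There is a genuine gap in your reduction step. Lemma~\ref{lemma:probWelding} bounds the probability that $|x_t-W_t|$ is small \emph{at the single deterministic stopping time} $\hat\tau_R$. The event $E_n$, however, asks whether $|x_t-W_t|$ is small at \emph{some} time in the interval $[\tau_{2^nR},\tau_{2^{n+1}R}]$. These are not comparable up to a universal constant: the process $x_t-W_t$ can start the interval large and still dip below $\vare_n 2^nR$ somewhere inside it. Your sentence ``the event $E_n$ becomes exactly of the form $\{|x_{\hat\tau_{R'}}-W_{\hat\tau_{R'}}|<\vare_n R'\}$ for an appropriate radius $R'$'' is where the argument breaks down; no amount of harmonic-measure bookkeeping converts a running-minimum event into a single-time event. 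This is also why you end up with the suspiciously strong bound $-(2+\rho)\log n_0$ rather than $-\tfrac{\rho+2}{2}\log n_0$.

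The paper handles this by splitting $E_n$ (after enlarging to $\hat E_n$ via $\hat\tau_{r/2}\le\tau_r$) into two pieces. On $\hat E_n^1=\{|x_{\hat\tau_{2^{n-1}R}}-W_{\hat\tau_{2^{n-1}R}}|\le\sqrt{\vare_n}\,2^nR\}$ the process is already small at the entry time, and Lemma~\ref{lemma:probWelding} applies directly with threshold $\sqrt{\vare_n}$ (not $\vare_n$), which is what produces the factor $\tfrac12$ in the final exponent. On the complement $\hat E_n^2$ the process starts at level at least $\sqrt{\vare_n}\,2^nR$ and must hit level $\vare_n 2^nR$ within time $\le(2^{n+1}R)^2/2$; this is a Bessel-type hitting estimate, and the paper uses the explicit bound~\eqref{eq:explicitconstant} from Lemma~\ref{lemma:martingale} on the auxiliary diffusion $Z_t^y$ with $dZ_t^y=\frac{\rho+2}{Z_t^y}dt+\sqrt\kappa\,dB_t$. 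Both pieces then sum to give the stated $-\tfrac{\rho+2}{2}\log n_0$. The missing idea in your proposal is precisely this second ingredient: a hitting-time estimate for the gap process within the interval, which cannot be extracted from Lemma~\ref{lemma:probWelding} alone. (A minor side remark: there is no second force point at $O_t^-$; the point $0-$ enters only through the definition of $\hat\tau_R$.)
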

\begin{proof}
We begin by noting that for any $r>0$ we have $\hat \tau_{r/2}\leq \tau_r$ (this follows using (\ref{eq:harmmeasurebound})). Hence, $E_n\subset \hat E_n$, where 
$$\hat E_n=\{\exists t\in[\hat\tau_{2^{n-1}R},\tau_{2^{n+1}R}] \text{ s.t. } |x_t-W_t|\leq\vare_n 2^nR\}.$$
Let
$$\hat E_n^1=\{|x_{\hat\tau_{2^{n-1}R}}-W_{\hat\tau_{2^{n-1}R}}|\leq \sqrt{\vare_n} 2^nR\},\quad \hat E_n^2=\hat E_n\setminus \hat E_n^1,$$
so that
$$\limsup_{\kappa\to 0+}\kappa\log\P^{\kappa,\rho}[\cup_{n=0}^\infty E_n]\leq \max\bigg\{\limsup_{\kappa\to 0+}\kappa\log\sum_{n=0}^\infty\P^{\kappa,\rho}[\hat E_n^1],\limsup_{\kappa\to 0+}\kappa\log\sum_{n=0}^\infty\P^{\kappa,\rho}[\hat E_n^2]\bigg\}.$$
We estimate the two limits separately. Using Lemma \ref{lemma:probWelding}
we obtain
$$\limsup_{\kappa\to 0+}\kappa\log\sum_{n=0}^\infty\P^{\kappa,\rho}[\hat E_n^1]\leq \limsup_{\kappa\to 0+}\kappa\log C_1(\kappa,\rho)2^{2\frac{2+\rho}{\kappa}}\sum_{n=0}^\infty\vare_n^{\frac{2+\rho}{\kappa}}\leq C_4(\rho)-\frac{2+\rho}{2}\log n_0,$$ where $C_4(\rho)$ is a constant. 
We now study $\hat E^2_n$. For $y>0$ we let $Z_t^{y}$ be the solution of 
$$dZ^y_t = \frac{\rho+2}{Z^y_t}+\sqrt{\kappa}dB_t,\ Z^y_0=y,$$
i.e., a stochastic process satisfying the same SDE as $x_t-W_t$, started at $y$. By the strong Markov property of Itô diffusions, the bound $\tau_{2^{n+1}R}\leq (2^{n+1}R)^2/2$, and
$$\P[\exists t\in[0,(2^{n+1}R)^2/2]:Z^X_t<\vare_n 2^nR]\leq \P[\exists t\in[0,(2^{n+1}R)^2/2]:Z^y_t<\vare_n 2^nR]$$
for all $y>\vare_n 2^n R$ and random variables $X\in[y,\infty)$
 (since $Z^y_t$ and $Z^X_y$ can be coupled to coincide after their collision), we have
$$\P^{\kappa,\rho}[\hat E_n^2]\leq \P[\exists t\in[0,(2^{n+1}R)^2/2]:Z^{\sqrt{\vare_n} 2^n R}_t<\vare_n 2^nR].$$ By using Lemma \ref{lemma:martingale} (and the explicit form of the constant, see (\ref{eq:explicitconstant})) on the right-hand side we obtain
$$\limsup_{\kappa\to 0+}\kappa\log\sum_{n=0}^\infty\P^{\kappa,\rho}[\hat E_n^2]\leq \limsup_{\kappa\to 0+}\kappa\log C_5(\kappa,\rho)\sum_{n=n_0}^\infty n^{-\frac{\rho+2}{2\kappa}+\frac{3}{4}}=C_6(\rho)-\frac{\rho+2}{2}\log n_0,$$
whenever $n_0^{1/4}\geq 3$ (this comes from the condition $\vare<\vare_0$ of Lemma \ref{lemma:martingale}), and where $\limsup_{\kappa\to 0+} \kappa\log C_5=:C_6\in\R$. 
Thus,
$$\limsup_{\kappa\to 0+}\kappa\log\P^{\kappa,\rho}[\cup_{n=0}^\infty E_n]\leq \max\{ C_4(\rho),C_6(\rho)\}-\frac{\rho+2}{2}\log(n_0),$$
which shows (\ref{eq:boundEn}). Following the same steps we see that the same computation with $\rho$ replaced with $\kappa+\rho$ holds, which shows (\ref{eq:boundEnPlus}). 
\end{proof}
Define $M^{\kappa,\rho}_t=|g_t'(x_0)|^{(4-\kappa+\rho)\rho/(4\kappa)}|x_t-W_t|^{\rho/\kappa}$.
\begin{lemma}On the event $E_n^c$ (with $E_n$ as in Lemma \ref{lemma:boundEn}) we have $M^{\kappa,\rho}_{\tau_{2^{n+1}R}}/M^{\kappa,\rho}_{\tau_{2^{n}R}}\leq f_n(\kappa,\rho)$ where 
$$f_n(\kappa,\rho)=\begin{cases}
\big(\frac{8}{\vare_n}\big)^{\frac{\rho}{\kappa}},&\rho\in[0,\infty),\\
e^{-(n+n_0)\frac{\rho(4-\kappa+\rho)}{\kappa}}\big(\frac{4}{\vare_n}\big)^{-\frac{\rho}{\kappa}},&\rho\in(-2,0),
\end{cases}
$$
for all $\kappa<2$.\label{lemma:boundmartingale}
\end{lemma}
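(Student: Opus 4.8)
The plan is to bound the two factors of $M^{\kappa,\rho}_t=|g_t'(x_0)|^{a}|x_t-W_t|^{b}$, with $a=(4-\kappa+\rho)\rho/(4\kappa)$ and $b=\rho/\kappa$, separately across the interval $[\tau_{2^nR},\tau_{2^{n+1}R}]$. The argument rests on three pathwise inputs. First, by definition of $E_n^c$ we have $|x_t-W_t|>\vare_n 2^nR$ for all $t\in[\tau_{2^nR},\tau_{2^{n+1}R}]$. Second, at a first-passage time $\tau_s$ to radius $s$ — so that $\gamma_{\tau_s}\subset\overline{s\D}$ and $|\gamma(\tau_s)|=s$ — the harmonic-measure estimate behind \eqref{eq:harmmeasurebound} gives $|x_{\tau_s}-W_{\tau_s}|=\pi\omega_\infty(\gamma_{\tau_s}^+\cup[0,x_0],\H\setminus\gamma_{\tau_s})\le 2\sin(\theta_s/2)(1+\sin(\theta_s/2))\,s\le 4s$ with $\theta_s=\arg\gamma(\tau_s)$; since we may assume $R>x_0$, this applies at both $\tau_{2^nR}$ and $\tau_{2^{n+1}R}$ (both first-passage radii exceeding $x_0$, hence running maxima of $|\gamma|$). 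Third, the capacity/time bound $2\tau_{2^{n+1}R}=\hcap(\gamma_{\tau_{2^{n+1}R}})\le(2^{n+1}R)^2$, valid because $\gamma_{\tau_{2^{n+1}R}}\subset\overline{2^{n+1}R\,\D}$.

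I would begin with the derivative factor. The chordal Loewner equation gives $\partial_t\log|g_t'(x_0)|=-2/(x_t-W_t)^2\le0$, so $t\mapsto|g_t'(x_0)|$ is nonincreasing, and on $E_n^c$,
\[
0\ \ge\ \log\frac{|g_{\tau_{2^{n+1}R}}'(x_0)|}{|g_{\tau_{2^nR}}'(x_0)|}\ =\ -2\int_{\tau_{2^nR}}^{\tau_{2^{n+1}R}}\frac{dt}{(x_t-W_t)^2}\ \ge\ -\frac{2\,\tau_{2^{n+1}R}}{(\vare_n 2^nR)^2}\ \ge\ -\frac{(2^{n+1}R)^2}{(\vare_n 2^nR)^2}\ =\ -\frac{4}{\vare_n^2}\ =\ -4(n+n_0),
\]
using $\vare_n^2=(n+n_0)^{-1}$. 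For the factor $|x_t-W_t|$, combining the first two inputs at $t=\tau_{2^nR}$ and $t=\tau_{2^{n+1}R}$ gives control in both directions:
\[
\frac{\vare_n}{4}\ \le\ \frac{|x_{\tau_{2^{n+1}R}}-W_{\tau_{2^{n+1}R}}|}{|x_{\tau_{2^nR}}-W_{\tau_{2^nR}}|}\ \le\ \frac{4\cdot 2^{n+1}R}{\vare_n 2^nR}\ =\ \frac{8}{\vare_n}.
\]

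It then remains to assemble the pieces while tracking signs. For $\kappa<2$ and $\rho>-2$ one has $4-\kappa+\rho>0$, so $a$ and $b$ carry the sign of $\rho$. When $\rho\in[0,\infty)$: $a\ge0$ together with $|g_t'(x_0)|$ nonincreasing gives $(|g_{\tau_{2^{n+1}R}}'(x_0)|/|g_{\tau_{2^nR}}'(x_0)|)^{a}\le1$, while $b\ge0$ with the upper bound above gives $(|x_{\tau_{2^{n+1}R}}-W_{\tau_{2^{n+1}R}}|/|x_{\tau_{2^nR}}-W_{\tau_{2^nR}}|)^{b}\le(8/\vare_n)^{\rho/\kappa}$; the product is $f_n(\kappa,\rho)$. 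When $\rho\in(-2,0)$: $a<0$, so the displayed $g'$ estimate gives $(|g_{\tau_{2^{n+1}R}}'(x_0)|/|g_{\tau_{2^nR}}'(x_0)|)^{a}\le e^{-4(n+n_0)a}=e^{-(n+n_0)\rho(4-\kappa+\rho)/\kappa}$, and $b<0$ with the lower bound above gives $(|x_{\tau_{2^{n+1}R}}-W_{\tau_{2^{n+1}R}}|/|x_{\tau_{2^nR}}-W_{\tau_{2^nR}}|)^{b}\le(\vare_n/4)^{\rho/\kappa}=(4/\vare_n)^{-\rho/\kappa}$; the product is again $f_n(\kappa,\rho)$. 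I do not anticipate a genuine obstacle: the content is just the monotonicity of $|g_t'(x_0)|$, the capacity bound, and the harmonic-measure estimate \eqref{eq:harmmeasurebound}. The two points needing care are checking that \eqref{eq:harmmeasurebound} really applies at both stopping times (it does, as noted) and not transposing the roles of $a$ and $b$ — bookkeeping rather than analysis.
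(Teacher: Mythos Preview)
Your proof is correct and follows essentially the same approach as the paper: bound the $|g_t'(x_0)|$ ratio via the Loewner ODE together with the $E_n^c$ lower bound on $|x_t-W_t|$ and the half-plane capacity bound $\tau_{2^{n+1}R}\le (2^{n+1}R)^2/2$, bound the $|x_t-W_t|$ ratio via the harmonic-measure estimate and $E_n^c$, and then assemble according to the sign of $\rho$. Your presentation is in fact slightly more systematic in that you record both the upper and lower bounds on the $|x_t-W_t|$ ratio up front and explicitly justify why the harmonic-measure estimate applies at both first-passage times.
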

\begin{proof}
We start by studying the case $\rho\in[0,\infty)$. In this case the exponents of $M_t^{\kappa,\rho}$, are non-negative (at least when $\kappa<4$). Firstly, recall that $|g_t'(x_0)|$ is decreasing in $t$. 
Moreover, since $x_0<R,$ we have
$|x_{\tau_{2^{n+1}R}}-W_{\tau_{2^{n+1}R}}|\leq 4\cdot 2^{n+1}R.$
Finally, on the event $E_n^c$ we have that
$|x_{\tau_{2^{n}R}}-W_{\tau_{2^{n}R}}|\geq \vare_n 2^nR.$
Therefore,
$$\frac{M^{\kappa,\rho}_{\tau_{2^{n+1}R}}}{M^{\kappa,\rho}_{\tau_{2^{n}R}}}\leq \bigg(\frac{8}{\vare_n}\bigg)^{\frac{\rho}{\kappa}}.$$
If $\rho\in(-2,0)$, the exponents of $M_t^{\kappa,\rho}$ are negative (at least when $\kappa<2$). Since $$\log\frac{|g_{\tau_{2^{n+1}R}}'(x_0)|}{|g_{\tau_{2^{n}R}}'(x_0)|}=-2\int_{\tau_{2^{n}R}}^{\tau_{2^{n+1}R}}\frac{1}{(x_s-W_s)^2}ds,$$
we have on the event $E_n^c$ that
$$\log\frac{|g_{\tau_{2^{n+1}R}}'(x_0)|}{|g_{\tau_{2^{n}R}}'(x_0)|}\geq -2\frac{\tau_{2^{n+1}R}-\tau_{2^{n}R}}{(\vare_n 2^n R)^2}\geq -2\frac{(2^{n+1}R)^2/2}{(\vare_n 2^n R)^2}=-4(n+n_0).$$
Moreover, $|x_{\tau_{2^nR}}-W_{\tau_{2^nR}}|\leq 4\cdot 2^nR$, and 
$|x_{\tau_{2^{n+1}R}}-W_{\tau_{2^{n+1}R}}|\geq \vare_n 2^nR$ on $E_n^c$.
This yields,
$$\frac{M^{\kappa,\rho}_{\tau_{2^{n+1}R}}}{M^{\kappa,\rho}_{\tau_{2^{n}R}}}\leq e^{-(n+n_0)\frac{\rho(4-\kappa+\rho)}{\kappa}}\bigg(\frac{4}{\vare_n}\bigg)^{-\frac{\rho}{\kappa}}  $$
\end{proof}
\begin{proof}[Proof of Proposition \ref{prop:chordalescape} (b) and (b')]For any $R>x_0$, integer $n_0\geq 3^4$, and $E_n$ as in Lemma \ref{lemma:boundEn}, we have
$$\P^{\kappa,\rho}[\gamma_{[\tau_{R},\infty)}\cap S_r\neq \varnothing]\leq \P^{\kappa,\rho}[\gamma_{[\tau_{R},\infty)}\cap S_r\neq \varnothing, (\cup_{n=0}^\infty E_n)^c] + \P^{\kappa,\rho}[\cup_{n=0}^\infty E_n].$$
Hence,
\begin{equation}\label{eq:termzerochordal}\limsup_{\kappa\to 0+}\kappa \log\P^{\kappa,\rho}[\gamma_{[\tau_{R},\infty)}\cap S_r\neq \varnothing]\end{equation}
is bounded above by the maximum of
\begin{align}&\limsup_{\kappa\to 0+}\kappa\log \P^{\kappa,\rho}[\gamma_{[\tau_{R},\infty)}\cap S_r\neq \varnothing, (\cup_{n=0}^\infty E_n)^c],\label{eq:termonechordal}\\
&\limsup_{\kappa\to 0+}\kappa\log\P^{\kappa,\rho}[\cup_{n=0}^\infty E_n]\leq C_3-(\rho+2)\log n_0,\label{eq:termtwochordal}
\end{align}
where the bound on (\ref{eq:termtwochordal}) was shown in Lemma \ref{lemma:boundEn}. We now fix $n_0\geq 3^4$ such that the right-hand side of (\ref{eq:termtwochordal}) is bounded above by $-M$. It remains to show that, given the choice of $n_0$, there exists an $R$ such that (\ref{eq:termonechordal}) is bounded above by $-M$. For each non-negative integer $n$ we have, by the strong domain Markov property of SLE$_\kappa(\rho)$, and Lemma \ref{lemma:boundmartingale}, that
\begin{align*}&\P^{\kappa,\rho}[\gamma_{[\tau_{2^n R},\tau_{2^{n+1}R}]}\cap S_r\neq \varnothing, (\cup_{n=0}^\infty E_n)^c]\\
 \leq \int & \P^{\kappa,\rho}_{\gamma_{\tau_{2^nR}}}[\hat\gamma_{\hat\tau_{2^{n+1}R}}\cap S_r\neq \varnothing,\hat M^{\kappa,\rho}_{\hat \tau_{2^{n+1}R}}/\hat M^{\kappa,\rho}_0\leq f_n(\kappa,\rho)]d\P^{\kappa,\rho}[\gamma_{\tau_{2^n R}}],\end{align*}
where $\P^{\kappa,\rho}_{\gamma_{\tau_{2^nR}}}$ denotes the law of an SLE$_\kappa(\rho)$ in $\H\setminus\gamma_{\tau_{2^nR}}$ from $\gamma(\tau_{2^nR})$ to $\infty$ with force point $x_0$, and $\hat \cdot$ denotes the corresponding curves, stopping times, etc. We now use the absolute continuity of SLE$_\kappa(\rho)$ with respect to SLE$_\kappa$ with Radon-Nikodym derivative $$\frac{d\P^{\kappa,\rho}_{\gamma_{\tau_{2^nR}}}}{d\P^{\kappa,0}_{\gamma_{\tau_{2^{n}R}}}}(\hat\gamma_{\hat \tau_{2^{n+1}R}})=\frac{\hat M_{\hat \tau_{2^nR}}}{\hat M_0},$$ (where $\hat M_{\hat \tau_{2^{n+1}R}} = M_{\tau_{2^{n+1}R}}$ and $\hat M_0 = M_{\tau_{2^nR}}$), assume that $r/R<1/3$ and apply Lemma \ref{prop:propA3}. This gives
\begin{align*}&\int \P^{\kappa,\rho}_{\gamma_{\tau_{2^nR}}}[\hat\gamma_{\hat\tau_{2^{n+1}R}}\cap S_r\neq \varnothing,\hat M^{\kappa,\rho}_{\hat \tau_{2^{n+1}R}}/\hat M^{\kappa,\rho}_0\leq f_n(\kappa,\rho)]d\P^{\kappa,\rho}[\gamma_{\tau_{2^n R}}]\\
\leq &\int f_n(\kappa,\rho)\P^{\kappa,0}_{\gamma_{\tau_{2^nR}}}[\hat\gamma\cap S_r\neq \varnothing] d\P^{\kappa,\rho}[\gamma_{\tau_{2^n R}}]\\
\leq &f_n(\kappa,\rho)c_\kappa\bigg(\frac{r}{2^nR}\bigg)^{8/\kappa-1}
\end{align*}
and as a consequence,
\begin{align*}\P^{\kappa,\rho}[\gamma_{[\tau_R,\infty)}\cap S_r\neq \varnothing,(\cup_{n=0}^\infty E_n)^c]
\leq &\sum_{n=0}^{\infty} \P^{\kappa,\rho}[\gamma_{[\tau_{2^nR},\tau_{2^{n+1}R}]}\cap S_r\neq \varnothing, E_n^c]\\
\leq &c_\kappa\bigg(\frac{r}{R}\bigg)^{8/\kappa-1} \sum_{n=0}^\infty f_n(\kappa,\rho)2^{-n(8-\kappa)/\kappa}.
\end{align*}
In the case $\rho\in(-2,0)$, the right-hand side can be bounded above in the following way. Recall that in this case
$f_n(\kappa,\rho)=e^{-(n+n_0)\frac{\rho(4-\kappa+\rho)}{\kappa}}(4(n+n_0)^{1/2})^{-\rho/\kappa}.$ Since $-\rho(4-\kappa+\rho)\leq 4$ for all $\rho\in(-2,0),\kappa>0$ we have
$$e^{-\rho(4-\kappa+\rho)}/2^{8-\kappa}\leq e^4/2^7\leq 1/2,$$ for all $\kappa<1$. So for all $\kappa<1$, by $\ell_p$-norm monotonicity,
\begin{align*}\sum_{n=0}^\infty C_n(\kappa,\rho)2^{-n(8-\kappa)/\kappa}
\leq & 2^{-\rho/\kappa}e^{-n_0\rho(4-\kappa+\rho)/\kappa}\sum_{n=0}^\infty \bigg(\frac{(n+n_0)^{-\rho/2}}{2^{n}}\bigg)^{1/\kappa}\\
\leq & 2^{-\rho/\kappa}e^{-n_0\rho(4-\kappa+\rho)/\kappa}\bigg(\sum_{n=0}^\infty \frac{(n+n_0)^{-\rho/2}}{2^{n}}\bigg)^{1/\kappa}
\end{align*}
where the series on the right-hand side is convergent. It follows that 
\begin{equation}\limsup_{\kappa\to 0+}\kappa\log \P^{\kappa,\rho}[\gamma_{[\tau_R,\infty)}\cap S_r\neq \varnothing,(\cup_{n=0}^\infty E_n)^c]\leq C_8(n_0,\rho) + 8\log\frac{r}{R},\label{eq:escapefinal}\end{equation}
where $C_8(n_0,\rho)<\infty$ is a constant. By a similar argument we obtain (\ref{eq:escapefinal}) in the case $\rho\in[0,\infty)$. By choosing $R$ sufficiently large with respect to $\rho$ and $n_0$ the right-hand side of (\ref{eq:escapefinal}) is bounded above by $-M$. It follows that (\ref{eq:termzerochordal}) is bounded above by $-M$ as desired. The case where $\rho$ is replaced by $\kappa+\rho$ follows in the same way.
\end{proof}

\subsection{Radial case}\label{section:radialescape}
Recall that $\mathcal X^R$ denotes the family of simple curves in $\D$ starting at $1$ and ending at $0$. Fix $v_0\in(0,2\pi)$ and $\rho>-2$. Let $D_R=\{z\in\D:|z-z_0|<R\}$ and $S_R=\partial D_R$. For $\gamma\in\mathcal X^R$, let $\tau_R=\inf\{t:|\gamma(t)|=R\}$. The goal of this section is to prove the following proposition.
\begin{proposition}\label{prop:escaperadial}
For every $R\in(0,1)$ and every $M\in[0,\infty)$ there is an $r\in(0,R)$ s.t. 
\begin{enumerate}[label=(\alph*)]	
	\item $\inf\{I^R_{\rho,e^{iv_0}}(\gamma):\gamma\in\mathcal X^R,
	\gamma_{[\tau_r,\infty)}\cap S_R\neq\varnothing\}\geq M$
	\item $\limsup_{\kappa\to 0+}\kappa\log \P[\gamma^{\kappa,\rho}_{[\tau_r,\infty)}\cap S_R\neq \varnothing]\leq -M$
	\item[(b')] $\limsup_{\kappa\to 0+}\kappa\log \P[\gamma^{\kappa,\kappa+\rho}_{[\tau_r,\infty)}\cap S_R\neq \varnothing]\leq -M$
\end{enumerate}
\end{proposition}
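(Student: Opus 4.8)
The plan is to follow the structure of the proof of Proposition \ref{prop:chordalescape}, transporting each chordal ingredient to the disk; when a direct disk argument is cumbersome I would invoke the coordinate change of Proposition \ref{prop:coorchangeradialenergy}, under which a radial SLE$_\kappa(\rho)$ in $\D$ from $1$ to $0$ with boundary force point $e^{iv_0}$ becomes a chordal SLE$_\kappa(-6-\rho)$ in $\H$ with an interior force point, so that the regime $\rho>-2$ of the proposition lands in the regime $\tilde\rho=-6-\rho<-4$ treated in Section \ref{section:finiteenergyradial}. Under this change of coordinates the circles $S_R\supset S_r$ become shrinking neighbourhoods $V_R\supset V_r$ of the interior force point, with $V_r\to\{z_0\}$ as $r\to0$, and the return event $\{\gamma_{[\tau_r,\infty)}\cap S_R\neq\varnothing\}$ becomes the event that the chordal curve first enters $V_r$ and later exits $V_R$. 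Since for small $\kappa$ a radial SLE$_\kappa(\rho)$ with $\rho>-2$ has $\tau_{0+}=\infty$ a.s.\ (Lemma \ref{lemma:martingale}) and is mutually absolutely continuous both with radial SLE$_\kappa$ (with an explicit Radon--Nikodym martingale) and, locally, with chordal SLE$_\kappa$, essentially all steps of the chordal proof carry over; I would treat (a) and (b)--(b') separately.

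For part (a) I would work in the disk. By the additive property of the $\rho$-Loewner energy it suffices to bound from below the energy of $\gamma_{[\tau',\sigma]}$, where $\tau'$ is the time of closest approach to $0$ before the first return to $S_R$ (so $\dist(\gamma(\tau'),0)\le r$) and $\sigma$ the subsequent hitting time of $S_R$; applying the radial mapping-out function $g_{\tau'}$, which fixes $0$ and has $\crad(\D\setminus\gamma_{\tau'},0)\le 4r$, turns $\gamma_{[\tau',\infty)}$ into a curve in $\D$ from $\partial\D$ to $0$ passing through the point $g_{\tau'}(\gamma(\sigma))$, which by Koebe's theorem lies within $O(\sqrt{r/R})$ of $\partial\D$ and on the image of $S_R$, which (inside $\D\setminus\gamma_{\tau'}$) separates $0$ from $\partial\D$. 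A Beurling-type harmonic-measure estimate then shows that, seen from $0$, such a point makes a harmonic-measure angle $O(\sqrt{r/R})$ with one of the boundary arcs, so by the angle bound (\ref{eq:anglebound}) together with the chordal--radial comparison of Corollary \ref{cor:chordalradial} the (completed) curve has Loewner energy of order at least $\log(R/r)$. The only genuinely delicate point, exactly as in the chordal proof, is passing from $I^R_{\rho,\cdot}$ to $I^R$, i.e.\ controlling the force-point correction: one splits according to whether the angle at which $\gamma_{[\tau',\sigma]}$ reaches $S_R$ is small --- in which case the radial analogue of the welding estimate Lemma \ref{lemma:welding} already forces large energy --- or bounded away from $0$ and $\pi$, in which case Corollary \ref{cor:chordalradial}, or a change of coordinates to the interior-force-point setting and Proposition \ref{prop:radialbounds} with $\tilde\rho<-4$ (together with Corollary \ref{cor:infenergy} and Proposition \ref{prop:finiteenergy}), applies with bounded correction. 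Taking $r$ small enough makes the bound exceed $M$.

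For parts (b) and (b') I would mirror the proof of Proposition \ref{prop:chordalescape}(b)--(b') essentially verbatim. Write $E=\{\gamma_{[\tau_r,\infty)}\cap S_R\neq\varnothing\}$ and split $E$ into $E\cap(\bigcup_n E_n)^c$ and $\bigcup_n E_n$, where, after fixing dyadic scales (now measured in distance to $0$, equivalently in conformal radius), $E_n$ is the event that the force point $e^{iv_0}$ comes within $\vare_n$ of the tip at the $n$-th scale. The probability of $\bigcup_n E_n$ is bounded, uniformly as $\kappa\to0+$, by $\exp\big((C-\tfrac{\rho+2}{2}\log n_0)/\kappa\big)$: one repeats the proof of Lemma \ref{lemma:boundEn}, using Lemma \ref{lemma:martingale} (which is already stated for $X=R$) for the ``bulk'' part and, for the ``boundary'' part, the radial analogue of Lemma \ref{lemma:probWelding}, obtained from the invariant density of the autonomous diffusion $\theta_t=w_t-v_t$ solving $d\theta_t=\tfrac{\rho+2}{2}\cot(\theta_t/2)\,dt+\sqrt\kappa\,dB_t$ after a time change (this density is $\propto(\sin\tfrac\theta2)^{2(\rho+2)/\kappa}$, playing the role of the Beta invariant density in Lemma \ref{lemma:probWelding}). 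Fixing $n_0$ large makes this $\le-M$ in the $\kappa\log$ limit. On $(\bigcup_n E_n)^c$ one uses the absolute continuity of radial SLE$_\kappa(\rho)$ with respect to radial SLE$_\kappa$ --- whose Radon--Nikodym martingale is a product of powers of $|g_t'(e^{iv_0})|$ and $|W_t-z_t|$ and whose growth over the $n$-th scale is controlled on $E_n^c$ as in Lemma \ref{lemma:boundmartingale} --- together with the radial analogue of the escape estimate Proposition \ref{prop:propA3}, which I would deduce from Proposition \ref{prop:propA3} via the local absolute continuity of radial and chordal SLE$_\kappa$, or re-derive from the Brownian-excursion estimates of Lemmas \ref{lemma:escapeeta}--\ref{lemma:brownianexc}. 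Summing the resulting geometric series over scales and choosing $r$ small (equivalently $R/r$ large) gives $\limsup_{\kappa\to0+}\kappa\log\P^{\kappa,\rho}[E]\le-M$, and the same with $\rho$ replaced by $\kappa+\rho$.

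The main obstacle, as in the chordal case, is the bookkeeping in (b): one needs the radial SLE$_\kappa$ escape estimate with constants $c_\kappa$ satisfying $\kappa\log c_\kappa\to$ finite, and the Radon--Nikodym martingale for radial SLE$_\kappa(\rho)$ versus radial SLE$_\kappa$ must be dominated on $E_n^c$ by a factor $f_n(\kappa,\rho)$ mild enough that the geometric sum over scales converges --- the sign of $\rho$ again forcing a case distinction between $\rho\ge0$ and $\rho\in(-2,0)$, as in Lemma \ref{lemma:boundmartingale}. For part (a) the delicate point is, as noted, the interplay of the $\rho$-correction with the chordal-energy lower bound near the (interior) reference point, which is why one routes through the estimates of Section \ref{section:finiteenergyradial} rather than through a single welding bound.
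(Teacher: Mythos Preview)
Your overall architecture (energy estimate via harmonic-measure/angle bounds; probability estimate via a Radon--Nikodym change of measure plus an escape estimate, summed over dyadic scales) matches the paper's, but you are missing the one genuinely new ingredient the radial setting forces. In the disk a curve from $\partial\D$ to $0$ can wind around $0$, so a ``return'' from $S_r$ to $S_R$ can happen in two topologically distinct ways, and neither the chordal escape estimate nor its putative radial analogue applies uniformly across both. The paper handles this with a topological lemma (Lemma~\ref{lemma:etatop}): the arcs of $S_R$ reachable after $\tau_r$ are split into a class $A$ (only one side of $\gamma_{\tau_r}$ is accessible from the arc's component) and a class $B$ (reaching the arc forces only one side of the extended curve to be accessible from $0$). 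For $B$-returns Lemma~\ref{lemma:escapeangle} shows $\sin((v_t-w_t)/2)$ is necessarily small, so these are absorbed into the ``force point gets close'' event you control with Lemma~\ref{lemma:martingale}; for $A$-returns one gets, via the one-sided accessibility, a Beurling bound (Lemma~\ref{lemma:escapeenergy}) for part (a), and the existence of a separating chord $\gamma'$ (Lemma~\ref{lemma:etatop}(iii)) needed to feed into Lemma~\ref{lemma:escapeeta} for part (b). Your sketch does not address winding, and your proposed route for (b) --- a radial escape estimate for radial $\mathrm{SLE}_\kappa$ via local absolute continuity with chordal $\mathrm{SLE}_\kappa$ --- breaks down precisely when the curve winds, since the absolute continuity fails over such excursions.

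There is also a difference of strategy worth noting in (b): you propose comparing radial $\mathrm{SLE}_\kappa(\rho)$ to radial $\mathrm{SLE}_\kappa$, whereas the paper compares it to radial $\mathrm{SLE}_\kappa(\kappa-6)$, which after coordinate change is a \emph{chordal} $\mathrm{SLE}_\kappa$ in $\D\setminus\gamma_{\tau_n}$ from $\gamma(\tau_n)$ to $e^{iv_0}$. This is the point of the choice: it lets one invoke the chordal crosscut estimate Lemma~\ref{lemma:escapeeta} directly on the $A$-arcs (with $\gamma'$ from the topological lemma), and the Brownian-excursion bound of Lemma~\ref{lemma:brownianexc} then closes the geometric sum. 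The Radon--Nikodym martingale is correspondingly the one for $\mathrm{SLE}_\kappa(\rho)$ versus $\mathrm{SLE}_\kappa(\kappa-6)$, controlled on the large-$\sin\theta$ event much as in Lemma~\ref{lemma:boundmartingale} (with a case split at $\rho=2$ rather than $\rho=0$). Your invariant-density idea for $\theta_t$ and the use of Lemma~\ref{lemma:martingale} are in the right spirit (the paper's Lemmas~\ref{lemma:sinpoly}--\ref{lemma:sinexp} play that role), but without the $A$/$B$ decomposition you have no mechanism to reduce the remaining event to one where a chordal-type escape bound is valid.
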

We first show the following topological lemma which will allow us to, in a helpful way, partition the set $\{\gamma\in\mathcal X^R, \gamma_{[\tau_r,\infty)}\cap S_R\neq\varnothing\}$. 
\begin{lemma}\label{lemma:etatop}
Let $0<r<r'<R<1$ and consider a simple curve $\gamma:(0,T]\to\D$ starting at $1$ and with $\tau_r=T$. Let $\Omega$ denote the connected component of $D_R\setminus\gamma$ containing $0$. The set $S_R\cap \partial \Omega$ consists of (at most) countably many connected arcs $\eta$. Let $\Omega_0$ and $\Omega_\eta$ be the connected components of $\D\setminus(\gamma\cup S_{r'})$ containing $0$ and $\eta$ respectively. Then there is a partitioning of the set $\{\eta\}$ into two sets $A$ and $B$ so that the following holds
\begin{enumerate}[label=(\roman*)]
	\item If $\eta\in A$, then only one side of $\gamma$ is accessible within $\Omega_\eta$.
	\item If $\eta\in B$ and $\tilde\gamma:(0,\tilde T]\to\D$, $\tilde T>T$, is a simple curve for which $\tilde\gamma(t)=\gamma(t)$ for all $t\in(0,T]$, and $\tilde\gamma(\tilde T)\in \eta$ and $\tilde T=\inf\{t>T:|\tilde\gamma(t)|=R\}$, then only one side of $\tilde\gamma$ is accessible from $0$ in $\Omega_0\setminus\tilde\gamma$.
	\item There exists a curve $\gamma'\subset \D\setminus\gamma$ with endpoints at $\gamma(T)$ and $e^{iv_0}$ which does not intersect $\cup_{\eta\in A}\Omega_\eta$.
\end{enumerate}
\end{lemma}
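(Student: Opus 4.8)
The plan is to prove Lemma~\ref{lemma:etatop} by a purely planar-topological argument whose only substantive input is the Jordan curve theorem applied inside the slit domain $\D\setminus\gamma$. First I would fix the combinatorial picture. Since $\tau_r=T$, the arc $\gamma$ runs from $1\in\partial\D$ to its tip $\gamma(T)$ on $\{|z|=r\}$ while staying in $\{|z|>r\}$ for $t<T$, so $\D\setminus\gamma$ is simply connected and its prime-end boundary is the Jordan curve obtained from $\partial\D$ by opening it at $1$ and inserting the two sides of $\gamma$, which meet at the tip. Next I would describe how $\gamma$ meets the circle $S_{r'}$: with $T'=\sup\{t\le T:|\gamma(t)|=r'\}$ and $T''=\inf\{t:|\gamma(t)|=r'\}$, the tip arc $\gamma_{(T',T]}$ lies in $\{|z|<r'\}$, the initial arc $\gamma_{(0,T'']}$ runs from $1$ to $S_{r'}$ inside the annulus $\{r'<|z|<1\}$, and every other piece of $\gamma$ in that annulus is a crosscut with both endpoints on $S_{r'}$. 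Removing first the initial arc (which renders the annulus simply connected) and then these disjoint crosscuts one by one shows that $\{r'<|z|<1\}\setminus\gamma$ has a unique \emph{main} component, bordering $\partial\D\setminus\{1\}$ and both sides of the initial arc, together with a (possibly infinite, nested) family of \emph{pockets}, each of which borders $\gamma$ from one side only; moreover a component of $\{r'<|z|<1\}\setminus\gamma$ sees both sides of $\gamma$ if and only if it is the main one. Since $R>r'$, each window $\eta\subset S_R\cap\partial\Omega$ lies in the outer annulus, hence in the main component or in a pocket.

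I would then define the partition by declaring $\eta\in A$ precisely when $\Omega_\eta$ is a pocket---equivalently, when only one side of $\gamma$ is accessible within $\Omega_\eta$---and $\eta\in B$ otherwise, so that~(i) holds by construction.

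For~(ii), fix $\eta\in B$ and an extension $\tilde\gamma$ as in the statement. The key observation is that the part of $\tilde\gamma_{[T,\tilde T]}$ in $\{|z|<r'\}$, followed from $\gamma(T)$ until it first meets $S_{r'}$ at a point $q'$, cannot cross $\gamma$ or $\partial\D$; when the tip of $\gamma$ is accessible from $0$ inside $\{|z|<r'\}$ (the main case) this segment, pushed slightly off the tip arc of $\gamma$ and joined to it, is a crosscut $\alpha$ of the disk $\{|z|<r'\}$ with both endpoints on $S_{r'}$. Applying the Jordan curve theorem to $\alpha$ completed along $S_{r'}$, $\alpha$ separates $0$ from the far side of the tip arc, and the hypothesis $\eta\in B$ (the main component really borders both sides of $\gamma$, so no side is vacuous) upgrades this to: the component of $\Omega_0\setminus\tilde\gamma$ containing $0$ meets $\tilde\gamma$ from one side only. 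When the tip is already separated from $0$ inside $\{|z|<r'\}$, the parts of $\tilde\gamma$ on $\partial\Omega_0$ are all crosscuts cut from $\gamma$ and the same conclusion is immediate.

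For~(iii) it suffices to show that $\gamma(T)$ and $e^{iv_0}$ lie in one component of $(\D\setminus\gamma)\setminus\bigcup_{\eta\in A}\overline{\Omega_\eta}$; a path $\gamma'$ can then be taken inside that open set. Each $A$-pocket $\overline{\Omega_\eta}$ is glued to the rest of $\D\setminus\gamma$ only along arcs of $S_{r'}$ and does not touch $\partial\D$, and $e^{iv_0}\in\partial\D$ is accessible from the (never removed) main component, so the point is to check that, crossing $S_{r'}$, one can pass from the tip component of $\{|z|<r'\}\setminus\gamma$ to the main component without entering any $\Omega_\eta$ with $\eta\in A$. I expect this to be the main obstacle: removing any single $A$-pocket plainly does not disconnect $\D\setminus\gamma$, but controlling the removal of the whole---possibly infinite, deeply nested---family requires an induction on nesting depth and uses crucially that the $A$-pockets are exactly the windows of the component $\Omega\ni 0$ of $D_R\setminus\gamma$, so that each is reached from $\Omega$ and none of them surrounds the tip. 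A secondary source of technicalities throughout is the degenerate configurations (the tip $\gamma(T)$ not lying on $\partial\Omega_0$, or a window $\eta$ degenerating to a point), where the relevant assertions are vacuous or follow by approximation.
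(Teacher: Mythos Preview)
Your approach shares the same topological flavor as the paper's but misses its organizing device, and as a result your argument for (ii) is incomplete while your (iii) is, as you anticipate, left as an obstacle.

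The paper does not classify the windows $\eta$ directly by which sides of $\gamma$ are visible from $\Omega_\eta$. Instead it looks at the arcs $\xi\subset S_{r'}\cap\partial\Omega_0$: for each $\eta$ there is a unique such arc $\xi_\eta$ separating $0$ from $\eta$ in $\D\setminus\gamma$, and one sets $\eta\in A$ or $\eta\in B$ according to whether the two endpoints of $\xi_\eta$ lie on the same side of $\gamma$ or on opposite sides. This buys a ready-made Jordan curve $\Gamma_\eta=\xi_\eta\cup(\text{subarc of }\gamma)$ in the $A$ case, from which (i) is immediate. More importantly, in the $B$ case the component $\Omega_\eta$ automatically contains all of $\partial\D\setminus\{1\}$; hence one can continue $\tilde\gamma$ from $\tilde\gamma(\tilde T)\in\eta$ back to $1$ entirely inside $\Omega_\eta$ (so disjoint from $\Omega_0$), closing $\tilde\gamma$ into a Jordan curve in $\overline\D$ and giving (ii) in one line. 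Your attempt at (ii) works inside the inner disk $\{|z|<r'\}$ instead, and the step ``$\alpha$ separates $0$ from the far side of the tip arc, and the hypothesis $\eta\in B$ upgrades this to\ldots'' does not actually use the $B$-hypothesis in a way that yields the conclusion; the crosscut $\alpha$ you build can fail to separate $0$ from one side of $\tilde\gamma$ depending on where $\tilde\gamma$ exits $\{|z|<r'\}$.

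For (iii) the paper again avoids any induction: there is exactly one arc $\xi\subset S_{r'}\cap\partial\Omega_0$ separating $0$ from $1$ (it equals $\xi_\eta$ for every $\eta\in B$), and the corresponding Jordan curve $\Gamma_\xi$ traps every $A$-pocket on one side while $\Omega_0\cup\Omega_\xi$ is a connected open set on the other side with both $\gamma(T)$ and $e^{iv_0}$ accessible. So the curve $\gamma'$ lives in $\Omega_0\cup\Omega_\xi$ and the nesting issue never arises. Your proposed induction on nesting depth is not needed once you have $\Gamma_\xi$.
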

\begin{figure}
\includegraphics[width=\textwidth]{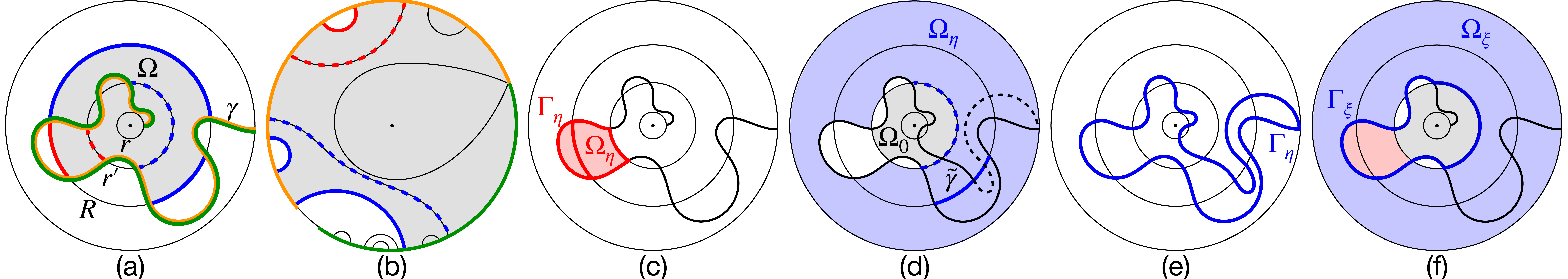}
\caption{Set-up in Lemma \ref{lemma:etatop} and its proof. In (a) we illustrate the left and right sides of a curve $\gamma$ in green and orange respectively, and the arcs $\eta$ in red and blue if they belong to $A$ and $B$ respectively. The dashed arcs correspond to arcs $\xi_\eta$. In (b) we show (schematically) how the set-up in (a) looks after mapping out $\gamma$. In (c)-(f) we illustrate various parts of the proof. Specifically, in (d) we illustrate an extension $\tilde\gamma$ of $\gamma$ (the solid curve) and a further extension, avoiding $\Omega_0$ (the dashed curve).\label{fig:toplemma}}
\end{figure}
\begin{proof}
The set $\partial\Omega_0\cap S_{r'}$ consists of (possibly) countably many connected arcs $\xi$. Among these arcs there is, for every $\eta$, exactly one arc $\xi=\xi_\eta$ separating $0$ from $\eta$ in $\D\setminus\gamma$. We let $\eta\in A$ if the endpoints of $\xi_\eta$ belong to the same side of $\gamma$, and $\eta\in B$ if $\xi_\eta$ if the endpoints of $\gamma$ belong to different sides of $\gamma$, see Figure \ref{fig:toplemma}(a) and (b).\par
For $\eta\in A$, $\xi_\eta$ and the subarc of $\gamma$ connecting the two endpoints of $\xi_\eta$ form a Jordan curve $\Gamma_\eta$ where the bounded component of $\C\setminus\Gamma_\eta$ contains (or equals) $\Omega_\eta$, and the remaining parts of $\gamma$ are contained in the unbounded component (see Figure \ref{fig:toplemma}(c)). Therefore, $\partial\Omega_\eta$ can only contain points from one side of $\gamma$, this shows (i).\par
If $\eta\in B$, then, by construction, $\Omega_\eta$ has all of $\partial \D$ as accessible points. Thus, any curve $\tilde\gamma$ as in (ii) could be continued from $\tilde\gamma(\tilde T)$ to $1$ without crossing $\Omega_0$, forming a Jordan curve $\Gamma_\eta$ (see Figure \ref{fig:toplemma}(d) and (e)). Since only one side of $\Gamma_\eta$ is accessible in $\C\setminus\Gamma_\eta$ from the origin, and since the continuation of $\tilde\gamma$ does not cross $\Omega_0$, only one side of $\tilde\gamma$ can be accessible from the origin in $\Omega_0\setminus\tilde\gamma$. This shows (ii).\par
Finally, there is exactly one arc $\xi\in\partial \Omega_0\cap S_{r'}$ separating $0$ from $1$ in $\D\setminus\gamma$ (namely $\xi=\xi_\eta$ for all $\eta\in B$). Let $\Omega_\xi$ be the outer component of $\D\setminus\Gamma_\xi$ where $\Gamma_\xi$ is the Jordan curve consisting of $\xi$ and the appropriate sub-arc of $\gamma$ (see Figure \ref{fig:toplemma}(f)). Observe that $\xi$ does not separate any of the arcs in $A$ from $0$ in $\D\setminus\gamma$, and hence $\cup_{\eta\in A}\Omega_{\eta}\subset \D\setminus \Omega_\xi$. On the other hand, $\Omega_0\cup\Omega_{\xi}$ is an open connected set with $\gamma(T)$ and $e^{iv_0}$ as accessible points. Therefore, there exists a curve $\gamma'$ in $\Omega_0\cup\Omega_\xi$ connecting $0$ to $1$, and any such a curve is disjoint from $\Omega_{\eta\in A}\Omega_\eta$. 
\end{proof}
For some $0<r<r'<R<1$, and simple curve $\gamma:(0,\tau_r]\to \D$ starting at $1$, let $A_{r,r',R,\gamma}$  and $B_{r,r',R,\gamma}$ be the sets of arcs $A$ and $B$  from Lemma \ref{lemma:etatop}. 
We partition the set of simple returning curves $$E_{r,R}=\{\gamma\in\mathcal X^R:\gamma_{[\tau_r,\infty)}\cap S_R\neq \varnothing\}=\tilde A_{r,r',R}\cup \tilde B_{r,r',R},$$
where 
$$\tilde A_{r,r',R}:=\{\gamma\in E_{r,R}:\gamma(\tau_{r,R})
\in\eta,\eta\in A_{r,r',R,\gamma_{\tau_r}}\},$$
$$\tilde B_{r,r',R}:=\{\gamma\in E_{r,R}:\gamma(\tau_{r,R})\in\eta,\eta\in B_{r,r',R,\gamma_{\tau_r}}\},$$
and $\tau_{r,R}=\inf\{t>\tau_r:|\gamma(t)|=R\}$.
\begin{lemma}\label{lemma:escapeangle}
If $\gamma\in\tilde B_{r,r',R}$ and $r/r'<3-2\sqrt{2}$, then $\sin((v_{\tau_{r,R}}-w_{\tau_{r,R}})/2))\leq 4\sqrt{r/r'}$.  
\end{lemma}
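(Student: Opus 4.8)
The plan is to relate the quantity $\sin((v_{\tau_{r,R}}-w_{\tau_{r,R}})/2)$ to a harmonic measure in $\D$, and then bound that harmonic measure using the topological picture from Lemma \ref{lemma:etatop}. Recall that for the radial Loewner flow, if $g_t:\D\setminus\gamma_t\to\D$ is the mapping-out function with $g_t(0)=0$, then $e^{iw_t}=g_t(\gamma(t))$ and $e^{iv_t}=g_t(e^{iv_0})$, and the quantity $(v_t-w_t)/(2\pi)$ is exactly the harmonic measure from $0$ (the fixed interior point) of one of the two boundary arcs of $\D\setminus\gamma_t$ cut out by $\gamma(t)$ and $e^{iv_0}$; equivalently, by conformal invariance of harmonic measure, $(v_t-w_t)/(2\pi)=\omega(0,\,\gamma_t^{\pm}\cup\alpha,\,\D\setminus\gamma_t)$ where $\alpha$ is the appropriate boundary arc of $\partial\D$ between $\gamma(0+)=1$ and $e^{iv_0}$ and $\gamma_t^{\pm}$ is the corresponding side of $\gamma_t$. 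The standard inequality $\sin(\pi x)\le \pi x$ reduces the claim to showing that this harmonic measure (for the side \emph{not} containing the ``bulk'' of the return excursion) is at most $\tfrac{4}{\pi}\sqrt{r/r'}$, or, after possibly replacing the arc by its complement, $\sin(\pi x)=\sin(\pi(1-x))$ lets us always work with the smaller of the two.

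The key geometric input is statement (ii) of Lemma \ref{lemma:etatop}: since $\gamma\in\tilde B_{r,r',R}$, the returning curve $\tilde\gamma=\gamma$ (extended so that $\tilde\gamma(\tau_{r,R})\in\eta$ for some $\eta\in B$) has the property that only one side of $\tilde\gamma$ is accessible from $0$ in $\Omega_0\setminus\tilde\gamma$, where $\Omega_0$ is the component of $\D\setminus(\gamma_{\tau_r}\cup S_{r'})$ containing $0$. I would first observe that, because $\tilde\gamma_{[\tau_r,\tau_{r,R}]}$ travels from the circle $S_r=\{|z-z_0|=r\}$ (at time $\tau_r$... here I should be careful: in the radial section $\tau_r$ is defined via $|\gamma(t)|=r$, i.e. distance to $0$, while $S_r$ is centered at $z_0$ — I would use whichever convention Lemma \ref{lemma:etatop} is stated with and adjust the constant) out to $S_R$ and back, a Brownian motion started at $0$ that exits $\D\setminus\tilde\gamma$ on the ``hidden'' side of $\tilde\gamma$ must first cross from near the small circle out past $S_{r'}$; monotonicity of harmonic measure then bounds the relevant harmonic measure by $\omega(0,\partial D_{r'},D_{r'})$-type quantities, or more precisely by the harmonic measure in an annulus $\{r<|\cdot|<r'\}$ of the inner circle as seen from outside. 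The explicit bound $\omega\le c\sqrt{r/r'}$ for such an annular harmonic measure — this is the same type of estimate used in the proof of the analogous chordal Lemma \ref{lemma:conebound} and in Corollary \ref{cor:infenergy} — gives the result once one tracks that the condition $r/r'<3-2\sqrt2$ is what makes the intermediate conformal-modulus / explicit Schwarz–Christoffel estimate produce the clean constant $4\sqrt{r/r'}$ (note $3-2\sqrt2=(\sqrt2-1)^2$, which is the natural threshold appearing when one compares a doubly-connected region to a round annulus).

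Concretely the steps are: (1) express $\sin((v_{\tau_{r,R}}-w_{\tau_{r,R}})/2)$ via conformal invariance as (the sine of $\pi$ times) a harmonic measure $\omega(0,\,\text{one side of }\gamma_{\tau_{r,R}}\cup\alpha,\,\D\setminus\gamma_{\tau_{r,R}})$; (2) invoke Lemma \ref{lemma:etatop}(ii) to identify which side is ``small'', namely the side not accessible from $0$ within $\Omega_0$; (3) by monotonicity of harmonic measure, bound this by the harmonic measure from $0$ of the inner boundary circle of the region separating $0$ from that side, which sits inside the disk $D_{r'}$ while $0$ is at distance of order $r'$ (or more) from it, combined with a uniform comparison to an annulus of moduli ratio controlled by $r/r'$; (4) compute/estimate the annular harmonic measure explicitly (e.g. via an inverse Schwarz--Christoffel or conformal map $z\mapsto z^{1/2}$-type argument, exactly the tool cited for (\ref{eq:harmonicmslit})) to get the bound $\le 4\sqrt{r/r'}/\pi$ under $r/r'<3-2\sqrt2$; (5) apply $\sin(\pi x)\le\pi x$. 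The main obstacle I anticipate is Step (3)–(4): carefully justifying, using only the topological separation from Lemma \ref{lemma:etatop}, that the ``hidden side'' harmonic measure is genuinely dominated by an annular harmonic measure with the stated modulus, and extracting the precise constant $4$ and the precise threshold $3-2\sqrt2$ rather than just ``$\le C\sqrt{r/r'}$'' — this requires choosing the comparison annulus correctly and using monotonicity twice (once to pass to $\Omega_0$, once to pass from the irregular separating region to a round annulus), which is the delicate bookkeeping in the argument.
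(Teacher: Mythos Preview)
Your overall framework is right and matches the paper: interpret $(v_{\tau_{r,R}}-w_{\tau_{r,R}})/(2\pi)$ as $\omega(0,\gamma_{\tau_{r,R}}^+\cup a,\D\setminus\gamma_{\tau_{r,R}})$, invoke Lemma~\ref{lemma:etatop}(ii) to see that only the $+$ side is accessible from $0$ inside $\Omega_0$, and then use monotonicity to pass to the domain $\Omega_0\setminus\gamma_{\tau_{r,R}}$. Your observation that one may equivalently work with the complementary (``hidden'') harmonic measure via $\sin(\pi x)=\sin(\pi(1-x))$ is also fine. Where your proposal diverges from the paper, and where it becomes vague, is Steps (3)--(4).

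The paper does \emph{not} compare to an annulus. After monotonicity, the relevant domain $\Omega_0\setminus\gamma_{\tau_{r,R}}$ is simply connected and sits inside $D_{r'}$; the curve $\gamma_{\tau_{r,R}}$ contains a path reaching radius $r$. The paper rescales by $1/r'$ and applies \emph{Beurling's projection theorem} to obtain
\[
\omega(0,\gamma_{\tau_{r,R}},\Omega_0\setminus\gamma_{\tau_{r,R}})\ \ge\ \omega\bigl(0,[r/r',1],\D\setminus[r/r',1]\bigr)=\tfrac{2}{\pi}\arcsin\tfrac{1-r/r'}{1+r/r'} .
\]
This is the step that does the work; your ``annulus of moduli ratio $r/r'$'' picture is the wrong geometry here (the point $0$ is not in an annular region, and the obstacle is a slit, not a circle), and it is precisely why you could not see how the constant $4$ and the threshold $3-2\sqrt{2}$ arise.

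Once Beurling is in place, both numbers fall out of an exact computation rather than an inequality chain. Writing $t=r/r'$, the condition $t<3-2\sqrt{2}=(\sqrt{2}-1)^2$ is exactly $\tfrac{1-t}{1+t}>\tfrac{1}{\sqrt 2}$, i.e.\ the Beurling lower bound exceeds $1/2$. This places $(v_{\tau_{r,R}}-w_{\tau_{r,R}})/2$ in $(\pi/2,\pi)$, the \emph{decreasing} branch of $\sin$, so one gets
\[
\sin\!\Big(\tfrac{v_{\tau_{r,R}}-w_{\tau_{r,R}}}{2}\Big)\ \le\ \sin\!\Big(2\arcsin\tfrac{1-t}{1+t}\Big)=\frac{4\sqrt{t}\,(1-t)}{(1+t)^2}\ \le\ 4\sqrt{t}.
\]
So the threshold is not a modulus/Schwarz--Christoffel artifact as you guessed, but simply the condition that puts you on the correct branch of sine; and the bound $\sin(\pi x)\le \pi x$ you propose in Step~(5) is neither needed nor (by itself) sufficient to extract the clean $4\sqrt{r/r'}$. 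Replace your Steps (3)--(5) with Beurling projection plus the double-angle identity and the proof is complete.
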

\begin{proof}
By applying Lemma \ref{lemma:etatop}(ii) to $\gamma_{\tau_r}$, we find that only one side of $\gamma_{\tau_{r,R}}$ will be accessible from $\Omega_0$. Without loss of generality, we may assume it to be the right side. Using monotonicity of harmonic measure we find  
$$\frac{(v_{\tau_{r,R}}-w_{\tau_{r,R}})}{2\pi}=\omega(0,\gamma_{\tau_{r,R}}^+\cup a,\D\setminus\gamma_{\tau_{r,R}})\geq\omega(0,\gamma_{\tau_{r,R}},\Omega_0\setminus\gamma_{\tau_{r,R}}),$$
where $a$ is the circular arc from $1$ to $e^{iv_0}$.
By Beurling's projection theorem (see, e.g., \cite[Theorem III.9.3]{GM05}), and an explicit computation, we have that
\begin{equation*}\label{eq:harmBeur}\omega(0,\gamma_{\tau_{r,R}},\Omega_0\setminus\gamma_{\tau_{r,R}})\geq\omega(0,[r/r',1],\D\setminus[r/r',1])=\frac{2}{\pi}\arcsin\frac{1-r/r'}{1+r/r'},
\end{equation*}
and if $r/r'<3-2\sqrt{2}$ then the right-hand side is larger than $\pi/2$. By monotonicity of $\sin$ on $[\pi/2,\pi]$, and the explicit computation
$$\sin((v_{\tau_{r,R}}-w_{\tau_{r,R}})/2))=\sin(\pi\omega(0,[r/r',1],\D\setminus[r/r',1]))\leq 4\sqrt{r/r'}.$$ 
\end{proof}

\begin{lemma}\label{lemma:escapeenergy}
If $\gamma\in\tilde A_{r,r',R}$ and $r'/R<3-2\sqrt{2}$, then $I^{\D,1,e^{iv_0}}(\gamma)\geq -4\log(16r'/R).$
\end{lemma}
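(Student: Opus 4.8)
The plan is to follow the same template as Lemma \ref{lemma:escapeenergy}'s companion, Lemma \ref{lemma:escapeangle}, but now exploiting part (i) of the topological Lemma \ref{lemma:etatop} together with the energy bound (\ref{eq:anglebound}). First I would fix $\gamma\in\tilde A_{r,r',R}$, so that $\gamma$ returns to $S_R$ after $\tau_r$ at a point lying on an arc $\eta\in A_{r,r',R,\gamma_{\tau_r}}$. Let $\tau_{r,R}=\inf\{t>\tau_r:|\gamma(t)-z_0|=R\}$ (with $z_0=e^{iv_0}$), and consider the piece $\gamma_{[\tau_r,\tau_{r,R}]}$, which travels from a point on $S_r$ to a point on $S_R$ while staying inside $\D$. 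Applying the mapping-out function $g_{\tau_r}$ of $\gamma_{\tau_r}$ we obtain a curve from $W_{\tau_r}$ in $\D$; after conformally mapping to $\H$ (sending $W_{\tau_r}$ to $0$ and the reference point $0$ to $\infty$) this becomes the initial segment of the driving-function-encoded curve, and $I^R(\gamma)\ge I^R(\gamma_{[\tau_r,\tau_{r,R}]})$ by additivity and non-negativity.

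The core estimate is an application of (\ref{eq:anglebound}): the radial Loewner energy (equivalently chordal, via the coordinate change and Corollary \ref{cor:chordalradial}) of the sub-curve $\gamma_{[\tau_r,\tau_{r,R}]}$ is bounded below by $-8\log\sin(\pi\omega)$ for a suitable harmonic measure $\omega$, or more directly I would argue as in the proof of Proposition \ref{prop:finiteenergy}: if $\hat\gamma=g_{\tau_r}(\gamma_{[\tau_r,\tau_{r,R}]})-W_{\tau_r}$ then $I^C(\hat\gamma)\ge -8\log\sin\hat\theta$, where $\hat\theta$ is the argument (as seen from the tip) of the endpoint relative to the hull. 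The key geometric input is that, by Lemma \ref{lemma:etatop}(i), within the component $\Omega_\eta$ only one side of $\gamma$ is accessible; this forces the tip $\gamma(\tau_{r,R})$ to be ``close to the boundary'' of the slit domain in the sense of harmonic measure, and a Beurling-type projection estimate (as in Lemma \ref{lemma:escapeangle}) yields $\sin\hat\theta_{\tau_{r,R}}\le 4\sqrt{r'/R}$ when $r'/R<3-2\sqrt2$. Combining, $I^R(\gamma)\ge -8\log(4\sqrt{r'/R})=-4\log(16 r'/R)$.

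Concretely, the steps in order: (1) reduce to the sub-curve $\gamma_{[\tau_r,\tau_{r,R}]}$ and change coordinates so we are in the chordal $\H$ setting, with the force point and reference point handled via Corollary \ref{cor:chordalradial} (or simply using $I^R\ge \tfrac14 I^{(\D;1,-1)}$-type bounds — actually here we only need the lower bound $I^R(\gamma)\ge$ chordal energy of the sub-curve after mapping, which follows from the coordinate change $I^R=I^C_{-6,\cdot}$ and Proposition \ref{prop:radialbounds} with $\theta_0=\pi/2$, $\rho=-6$ giving $I^R(\gamma)\ge I^C(\cdot)$ up to the harmonic-measure correction; more simply, $I^{(\D;1,e^{iv_0})}(\gamma)\ge I^{(\D;1,e^{iv_0})}$ of the completed sub-curve is $\ge$ the chordal Loewner energy in the mapped-out picture); (2) use the additivity of Loewner energy to pass to the sub-curve; (3) invoke (\ref{eq:anglebound}) on the mapped sub-curve to get $I\ge -8\log\sin\hat\theta$; (4) use Lemma \ref{lemma:etatop}(i) to see that $\Omega_\eta$ has only one side of $\gamma$ accessible, hence $\hat\gamma$ together with the circular arc forms a configuration where the endpoint separates an arc from $\gamma$; (5) apply Beurling's projection theorem exactly as in Lemma \ref{lemma:escapeangle} to bound $\sin\hat\theta\le 4\sqrt{r'/R}$; (6) conclude $I^{(\D;1,e^{iv_0})}(\gamma)\ge -4\log(16 r'/R)$.

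The main obstacle will be step (4): carefully converting the topological statement ``only one side of $\gamma$ is accessible within $\Omega_\eta$'' into a quantitative harmonic-measure lower bound of the correct form $\omega(\cdots)\ge \tfrac{2}{\pi}\arcsin\tfrac{1-r'/R}{1+r'/R}$ via a projection/monotonicity argument. The subtlety is that the relevant domain is $\Omega_\eta$ (or a component of $\D\setminus(\gamma\cup S_{r'})$ containing $\eta$), and one must set up the Beurling projection so that the radial scales $r'$ and $R$ enter correctly — specifically, the curve segment $\gamma_{[\tau_r,\tau_{r,R}]}$ crosses the annulus between $S_{r'}$ and $S_R$, and because one side is inaccessible, projecting onto a radial slit of the annulus gives the claimed bound. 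One also has to be slightly careful that the energy bound (\ref{eq:anglebound}) is stated for curves passing through an interior point, so the cleanest route is to follow the argument in Proposition \ref{prop:finiteenergy} verbatim: $I^C(g_t(\gamma_{[t,T]})-W_t)\ge -8\log\sin\theta_t$, applied with $t=\tau_r$ and $T=\tau_{r,R}$, where $\sin\theta_{\tau_r}$ is controlled by the one-sided accessibility. Everything else is routine bookkeeping about which metric (Euclidean vs.\ relative to $z_0$) the radii $r,r',R$ are measured in, which matters only up to harmless constants absorbed into the $16$.
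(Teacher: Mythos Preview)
Your overall strategy is correct and matches the paper's: use Lemma~\ref{lemma:etatop}(i) to see that only one side of $\gamma_{\tau_r}$ is accessible in $\Omega_\eta$, convert this into a harmonic-measure statement about $\gamma(\tau_{r,R})$, and feed that into the bound~(\ref{eq:anglebound}). However, two points in your write-up need correction.

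First, $I^{(\D;1,e^{iv_0})}$ is already the \emph{chordal} Loewner energy (reference point $e^{iv_0}\in\partial\D$), so the entire discussion of $I^R$, Corollary~\ref{cor:chordalradial}, and Proposition~\ref{prop:radialbounds} is a detour you do not need. After mapping $\D\setminus\gamma_{\tau_r}$ conformally to $\H$ with $\gamma(\tau_r)\mapsto 0$ and $e^{iv_0}\mapsto\infty$, the point $\gamma(\tau_{r,R})$ lands at some $z\in\H$ with $\arg z = \pi\,\omega(\gamma(\tau_{r,R}),\gamma_{\tau_r}^+\cup a,\D\setminus\gamma_{\tau_r})$ (where $a$ is the boundary arc from $1$ to $e^{iv_0}$), and~(\ref{eq:anglebound}) directly gives $I^{(\D;1,e^{iv_0})}(\gamma)\ge -8\log\sin(\pi\omega)$. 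Also, the disks $D_R$ here are centered at $0$, not at $e^{iv_0}$; $\tau_{r,R}=\inf\{t>\tau_r:|\gamma(t)|=R\}$.

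Second, and more substantively, the Beurling step is \emph{not} ``exactly as in Lemma~\ref{lemma:escapeangle}''. There the basepoint for harmonic measure is the interior reference point $0$ and the target is the curve; here the basepoint is the curve point $\gamma(\tau_{r,R})\in S_R$ and the target is $\gamma_{\tau_r}$. The paper sets this up by (a) observing (as in the proof of Lemma~\ref{lemma:etatop}(iii)) that there is a curve $\hat\eta$ from $0$ to $1$ in $\D\setminus\gamma_{\tau_r}$ disjoint from $\Omega_\eta$, (b) extending it by $[1,\infty]$, and (c) conformally mapping $\hat\C\setminus\overline{D_{r'}}$ to $\D$ with $\gamma(\tau_{r,R})\mapsto 0$, so that $\infty$ lands at modulus $r'/R$ and $\psi(\hat\eta\cup[1,\infty])$ becomes a curve joining $\partial\D$ to that point. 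Only then does Beurling give $\omega(\gamma(\tau_{r,R}),\gamma_{\tau_r},\Omega_\eta)\ge\omega(0,[r'/R,1],\D\setminus[r'/R,1])$ and hence $\sin(\pi\omega)\le 4\sqrt{r'/R}$. Your ``project onto a radial slit of the annulus'' gestures at this but does not capture the need for the auxiliary curve $\hat\eta$ (to serve as the Beurling obstacle) nor the inversion sending $\gamma(\tau_{r,R})$ to the center. Once you set this up correctly, the rest is as you say.
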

\begin{proof}
Since $\gamma\in\tilde A_{r,r',R}$ we have that $\gamma_{\tau_{r,R}}\in \eta$ for some $\eta\in A_{r,r',R,\gamma_{\tau_r}}$, and by Lemma \ref{lemma:etatop}(i) only one side of $\gamma_{\tau_r}$ is accessible from $\Omega_\eta$. Without loss of generality, we may assume that it is the right side. We have, by monotonicity of harmonic measure,
$$\omega(\gamma(\tau_{r,R}),\gamma_{\tau_r}^+\cup a,\D\setminus\gamma_{\tau_r})\geq\omega(\gamma(\tau_{r,R}),\gamma_{\tau_r},\Omega_\eta)$$
where $a$
is the circular arc from $1$ to $e^{iv_0}$. 
One can see, similar to Lemma \ref{lemma:etatop}(iii), that there is a curve $\hat\eta$ from $0$ to $1$ in $\D\setminus\gamma_{\tau_r}$, which does not cross $\Omega_\eta$ for any $\eta\in A_{r,r',R,\gamma_{\tau_r}}$, and therefore (again by monotonicity)
$$\omega(\gamma(\tau_{r,R}),\gamma_{\tau_r},\Omega_\eta)\geq \omega(\gamma(\tau_{r,R}),\hat\gamma\cup[1,\infty],\hat \C\setminus(\overline{D_{r'}}\cup\hat\gamma\cup[1,\infty])).$$
Let $\psi: \hat\C\setminus\overline{D_{r'}}\to \D$ be a conformal map with $\psi(\gamma_{\tau_{r,R}})=0$. Then, $|\psi(\infty)|=r'/R$ and $\psi(\hat\gamma\cup[1,\infty])$ is a path connecting $\partial\D$ and $\psi(\infty)$. Hence, Beurling's projection theorem (see, e.g., \cite[Theorem III.9.3]{GM05}) gives
$$\omega(\gamma(\tau_{r,R}),\hat\gamma\cup[1,\infty],\hat \C\setminus(\overline{D_{r'}}\cup\hat\gamma\cup[1,\infty]))\geq\omega(0,[r'/R,1],\D\setminus[r'/R,1]).$$
The same argument as in the proof of Lemma \ref{lemma:escapeangle} now shows
\begin{equation}\label{eq:claim}\sin(\pi\omega(\gamma(\tau_{r,R}),\gamma_{\tau_r}^+\cup a(0,v_0),\D\setminus\gamma_{\tau_r}))\leq 4\sqrt{r'/R},\end{equation}
given that $r'/R<3-2\sqrt{2}$, which gives the desired bound.
\end{proof}
\begin{proof}[Proof of Proposition \ref{prop:escaperadial}(a)] Fix $R$, $\rho$, $v_0$ and $M$. Set $r'\in(0,R)$ so that $r'/R<3-2\sqrt{2}$ and $$\min(1,\tfrac{2+\rho}{4})\bigg(-4\min(1,\tfrac{2+\rho}{4})\log(16r'/R)+(6+\rho)\log\sin(v_0/2)\bigg)\geq M.$$ Then, set $r\in(0,r')$ so that $r/r'<3-2\sqrt{2}$ and 
$$\min(1,\tfrac{2+\rho}{4})\bigg(-(6+\rho)\log(4\sqrt{r/r'})+\log(6+\rho)\log\sin(v_0/2)\bigg)\geq M.$$
Then, any curve $\gamma\in\tilde A_{r,r',R}\cup \tilde B_{r,r',R}$ has $I^R_{\rho,e^{iv_0}}(\gamma)= I^{\D,1,e^{iv_0}}_{-6-\rho,0}(\gamma)\geq M$ by the upper bound from Proposition \ref{prop:radialbounds} and the estimates from Lemma \ref{lemma:escapeangle} and \ref{lemma:escapeenergy}.
\end{proof}
Let $0<\alpha<\beta<1$ and $0<r_0<r_0'<R<1$ (these constants will be fixed in the proof of Proposition \ref{prop:escaperadial}(b) and (b'), but for now we consider them as arbitrary). Set $r_n=\alpha^n r_0$ and $r_n'=\beta^n r_0'$. For ease of notation, we denote $\tau_{n}=\tau_{r_n}$. By Koebe-1/4 we have that
$\crad(0,\D\setminus\gamma_{\tau_n})\in [r_n,4r_n]$, which gives
$$\tau_n\in [-\log(4\alpha^nr_0),-\log(\alpha^n r_0)],\quad \tau_{{n+1}}-\tau_{n}\in [-\log(4\alpha),-\log(\alpha/4)].$$
\begin{lemma}\label{lemma:sinpoly}
Let $\vare_n=(n+n_0)^{-1/2}$ and $$n_0>\max(1/\sin^2(v_0/6),1/\sin^2((2\pi-v_0)/6),\log(r_0)/\log(\alpha)).$$ Then
$$\limsup_{\kappa\to 0+}\kappa\log\bigg(\sum_{n=0}^\infty\P^{\kappa,\rho}[\exists t\in[0,\tau_{n}]:\ \sin((v^{\kappa,\rho}_t-w^{\kappa,\rho}_t)/2)<\vare_n]\bigg)\leq C_1(\rho,v_0)-(\rho+2)\log n_0,$$
where $C_1(\rho,v_0)$ is a constant.
The same holds when $\P^{\kappa,\rho}$ is replaced by $\P^{\kappa,\kappa+\rho}$. 
\end{lemma}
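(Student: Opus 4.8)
The plan is to reduce the estimate to a one-dimensional diffusion bound analogous to the chordal Lemma \ref{lemma:boundEn}, using the change of coordinates provided by Proposition \ref{prop:coorchangeradialenergy}. Recall that in the radial setting the relevant quantity is $\sin((v^{\kappa,\rho}_t-w^{\kappa,\rho}_t)/2)$, which is (up to the bounded factor $\cos$) comparable to $|W_t-z_t|$ in the notation of the radial SDE, and equals $\sqrt{K^R(v_t-w_t)}$ in the notation of Section \ref{subsection:ldpdrive}. So I would first set $\tilde w_t^{\kappa,\rho} = v^\rho_{t/\kappa}(\sqrt\kappa B) - w^\rho_{t/\kappa}(\sqrt\kappa B)$, exactly as in the proof of Lemma \ref{lemma:martingale}, so that $\tilde w^{\kappa,\rho}$ solves the autonomous SDE $d\tilde w_t = \tfrac{\rho+2}{2\kappa}\cot(\tilde w_t/2)\,dt + d\tilde B_t$ on $(0,2\pi)$, with $\tilde w_0 = v_0$. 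The event $\{\exists t\in[0,\tau_n]:\sin((v^{\kappa,\rho}_t-w^{\kappa,\rho}_t)/2)<\vare_n\}$ then translates, after the deterministic time change $t\mapsto \kappa t$ and using the (deterministic) upper bound $\tau_n \le -\log(\alpha^n r_0) \le C(n+n_0)$ coming from the Koebe estimate recalled just before the lemma, into $\{\exists t\in[0,\kappa C(n+n_0)]: K^R(\tilde w_t^{\kappa,\rho}) < \vare_n^2\}$, i.e. $\{\tau^{\kappa,\rho}_{\vare_n^2} \le \kappa C(n+n_0)\}$ in the notation of the proof of Lemma \ref{lemma:martingale}.

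Next I would invoke precisely the martingale estimate from the proof of Lemma \ref{lemma:martingale}: with $a = (\rho+2)/\kappa$ and $F_{a,v_0}$ the function defined there, $F_{a,v_0}(\tilde w_t^{\kappa,\rho}) + t$ is a local martingale, and optional stopping gives $\P[\tau^{\kappa,\rho}_\delta \le S] \le S/(-F_{a,\tilde v_0^R}(x^R_\delta))$ for any deterministic horizon $S$, where $\tilde v_0^R = v_0\wedge(2\pi-v_0)$ and $x^R_\delta = 2\arcsin\sqrt\delta$. Using the lower bound $-F_{a,\tilde v_0}(x) \ge c_1^a (\tilde v_0^R)^{2a+1}x^{-2a+1}/(4a^2-1)$ valid for $0<x<\tilde v_0^R/3$ and $a\ge 1$ — which is where the hypothesis $n_0 > \max(1/\sin^2(v_0/6),1/\sin^2((2\pi-v_0)/6))$ enters, since it guarantees $\vare_n < \sin(\tilde v_0^R/6)$ for all $n\ge 0$, hence $x^R_{\vare_n^2} < \tilde v_0^R/3$ — and $x^R_{\vare_n^2} \le c_3\vare_n$, one obtains
\[
\P^{\kappa,\rho}[\exists t\in[0,\tau_n]:\sin((v^{\kappa,\rho}_t-w^{\kappa,\rho}_t)/2)<\vare_n] \le c(\kappa,\rho,v_0)\, (n+n_0)\, \vare_n^{2a-1} = c(\kappa,\rho,v_0)\,(n+n_0)^{1-\frac{\rho+2}{\kappa}+\frac12},
\]
where $\kappa\log c(\kappa,\rho,v_0)$ converges as $\kappa\to 0+$. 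The same bound holds verbatim with $\rho$ replaced by $\kappa+\rho$, since $a$ is replaced by $(\rho+2+\kappa)/\kappa = a+1$ and all the constants behave in the same way in the $\kappa\to 0+$ limit (this is the content of the last assertion of Lemma \ref{lemma:martingale}).

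Finally, summing over $n$: $\sum_{n\ge 0}(n+n_0)^{1 + \frac12 - \frac{\rho+2}{\kappa}} = \sum_{m\ge n_0} m^{\frac32 - \frac{\rho+2}{\kappa}}$, which converges once $\kappa$ is small enough that $\frac{\rho+2}{\kappa} > \frac52$, and is bounded above by $C n_0^{\frac52 - \frac{\rho+2}{\kappa}}$ by comparison with an integral. Therefore
\[
\sum_{n=0}^\infty \P^{\kappa,\rho}[\cdots] \le c(\kappa,\rho,v_0)\, C\, n_0^{\frac52-\frac{\rho+2}{\kappa}},
\]
and taking $\kappa\log$ and then $\limsup_{\kappa\to 0+}$ — using again that $\kappa\log c(\kappa,\rho,v_0)\to C_1'(\rho,v_0)\in\R$ and $\kappa\log(Cn_0^{5/2}) \to 0$ — gives $\limsup_{\kappa\to 0+}\kappa\log(\sum_n\P^{\kappa,\rho}[\cdots]) \le C_1(\rho,v_0) - (\rho+2)\log n_0$, as claimed. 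The main obstacle is bookkeeping rather than conceptual: one must be careful that the deterministic time-change constant from Koebe only contributes a factor polynomial in $n$ (absorbed into the $(n+n_0)$ prefactor), and that the hypothesis on $n_0$ is exactly what licenses the use of the explicit lower bound on $-F_{a,\tilde v_0}$ uniformly in $n$; the hypothesis $n_0 > \log(r_0)/\log\alpha$ is what makes $\tau_n \ge 0$ for all $n$ so the events are non-vacuous. Everything else is a direct transcription of the argument already carried out in Lemmas \ref{lemma:martingale} and \ref{lemma:boundEn}.
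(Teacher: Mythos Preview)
Your argument is correct and is essentially the paper's own proof: both apply the explicit bound (\ref{eq:explicitconstant}) from Lemma~\ref{lemma:martingale} with the deterministic time horizon $\tau_n\le -\log(\alpha^n r_0)$, then sum the resulting series $\sum_n (n+n_0)^{3/2-(\rho+2)/\kappa}$. One small slip in your closing commentary: the hypothesis $n_0>\log(r_0)/\log\alpha$ is not about making $\tau_n\ge 0$ (that is automatic since $r_n<1$), but is precisely what allows the inequality $-\log(\alpha^n r_0)=-n\log\alpha-\log r_0\le -(n+n_0)\log\alpha=C(n+n_0)$ that you already used correctly in the body of the argument.
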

\begin{proof}
Using $\vare_n<\vare_0<\min(\sin(v_0/6),\sin((2\pi-v_0)/6))$, (\ref{eq:explicitconstant}) from the proof of Lemma \ref{lemma:martingale}, and $\tau_{n}\leq -\log(\alpha^nr_0)$, we find 
\begin{align*}
\sum_{n=0}^\infty\P^{\kappa,\rho}[\exists t\in[0,\tau_{n}]:\ \sin((v^{\kappa,\rho}_t-w^{\kappa,\rho}_t)/2)<\vare_n]
&\leq -C_2(\kappa,\rho,v_0)\sum_{n=0}^\infty (n\log\alpha+\log r_0)\vare_n^{2\frac{\rho+2}{\kappa}-1}\\
&\leq -C_2(\kappa,\rho,v_0)\log\alpha\sum_{n=0}^\infty (n+n_0)^{3/2-(\rho+2)/\kappa},
\end{align*}
where $\limsup_{\kappa\to 0+}\kappa\log C_2(\kappa,\rho,v_0)=:C_1(\rho,v_0)\in\R$, and where the second inequality follows from $n_0>\log(r_0)/\log(\alpha)$. Hence,
$$\limsup_{\kappa\to 0+}\sum_{n=0}^\infty\P^{\kappa,\rho}[\exists t\in[0,\tau_{n}]:\ \sin((v^{\kappa,\rho}_t-w^{\kappa,\rho}_t)/2)<\vare_n]\leq C_1-(\rho+2)\log(n_0).$$
The same holds when $\P^{\kappa,\rho}$ is replaced by $\P^{\kappa,\kappa+\rho}$.
\end{proof}
\begin{lemma}\label{lemma:sinexp}If $4\sqrt{r_0/r_0'}<\min(\sin(v_0/6),\sin((2\pi-v_0)/6))$, then
$$\limsup_{\kappa\to 0+}\kappa\log\bigg(\sum_{n=0}^\infty\P^{\kappa,\rho}[\exists t\in[0,\tau_{r_{n+1}}]:\sin((v^{\kappa,\rho}_t-w^{\kappa,\rho}_t)/2)<4\sqrt{r_n/r_n'}]\bigg)\leq C_3+(\rho+2)\log\frac{r_0}{r_0'},$$
where $C_3=C_3(\rho,v_0)$ is a constant.
\end{lemma}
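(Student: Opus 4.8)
The plan is to mimic the proof of Lemma~\ref{lemma:sinpoly}, the only structural difference being that the threshold $\vare_n:=4\sqrt{r_n/r_n'}$ here decays geometrically in $n$ (since $\alpha<\beta$ gives $r_n/r_n'=(\alpha/\beta)^n r_0/r_0'$) rather than polynomially, which makes the summation only easier. Concretely, I would bound each summand by the radial-case estimate (\ref{eq:explicitconstant}) from the proof of Lemma~\ref{lemma:martingale}, sum a geometric-times-linear series, and then take $\kappa\log$.

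First I would fix $n$ and set $T_n:=-\log r_{n+1}=-(n+1)\log\alpha-\log r_0$. Since $\gamma([0,\tau_{r_{n+1}}))\subset\{|z|>r_{n+1}\}$, we have $r_{n+1}\D\subset\D\setminus\gamma_{\tau_{r_{n+1}}}$, hence $\crad(0,\D\setminus\gamma_{\tau_{r_{n+1}}})\geq r_{n+1}$ and so $\tau_{r_{n+1}}\leq T_n$ (this is the deterministic upper bound for $\tau_n$ recorded before Lemma~\ref{lemma:sinpoly}). Because $\sin((v_t-w_t)/2)=\sqrt{K^R(v_t-w_t)}$, the event in the lemma is then contained in $\{\exists\,t\in[0,T_n]:K^R(v_t-w_t)<\vare_n^2\}$. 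The hypothesis ensures $\vare_n\leq\vare_0=4\sqrt{r_0/r_0'}<\min(\sin(v_0/6),\sin((2\pi-v_0)/6))=\sin(\tilde v_0^R/6)$ for every $n$ (recall $\tilde v_0^R=v_0\wedge(2\pi-v_0)$, and both $v_0/6$ and $(2\pi-v_0)/6$ lie where $\sin$ is increasing), which is exactly the condition $x^R_{\vare_n^2}=2\arcsin\vare_n<\tilde v_0^R/3$ under which (\ref{eq:explicitconstant}) was established. Applying it with $X=R$, $\vare=\vare_n$, $T=T_n$ and $a:=(\rho+2)/\kappa\geq1$ (valid once $\kappa\leq\rho+2$) would give
$$\P^{\kappa,\rho}\bigl[\exists\,t\in[0,\tau_{r_{n+1}}]:\sin((v_t-w_t)/2)<\vare_n\bigr]\leq c_2(\kappa,\rho)\,\frac{T_n\,\vare_n^{2a-1}}{(\tilde v_0^R)^{2a+1}},$$
with $\limsup_{\kappa\to0+}\kappa\log c_2(\kappa,\rho)$ finite; replacing $\rho$ by $\kappa+\rho$ merely replaces $a$ by $a+1$ and leaves the $\kappa\to0$ asymptotics of the exponent $2a-1$ unchanged.

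Next I would sum over $n$. Writing $\vare_n^{2a-1}=(16\,r_0/r_0')^{a-1/2}(\alpha/\beta)^{n(a-1/2)}$ and using $T_n=O(n)$, $\alpha/\beta<1$ and $a-\tfrac12\geq\tfrac12$ (so that $(\alpha/\beta)^{n(a-1/2)}\leq((\alpha/\beta)^{1/2})^n$), the series $\sum_{n\geq0}T_n(\alpha/\beta)^{n(a-1/2)}$ is bounded by a finite constant $S=S(\alpha,\beta,r_0)$ independent of $\kappa$. This gives
$$\sum_{n=0}^\infty\P^{\kappa,\rho}\bigl[\exists\,t\in[0,\tau_{r_{n+1}}]:\sin((v_t-w_t)/2)<\vare_n\bigr]\leq c_2(\kappa,\rho)\,(\tilde v_0^R)^{-2a-1}\,(16\,r_0/r_0')^{a-1/2}\,S.$$

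Finally, taking $\kappa\log$ and letting $\kappa\to0+$: $\kappa\log c_2(\kappa,\rho)$ has a finite limit, $\kappa\log S\to0$, $-(2a+1)\kappa\log\tilde v_0^R\to-2(\rho+2)\log\tilde v_0^R$, and $(a-\tfrac12)\kappa\log(16\,r_0/r_0')=(\rho+2-\tfrac\kappa2)\log(16\,r_0/r_0')\to(\rho+2)\log16+(\rho+2)\log(r_0/r_0')$; collecting the $\rho$- and $v_0$-dependent (but $r_0,r_0'$-independent) pieces into $C_3=C_3(\rho,v_0)$ produces the claimed bound, and the $\P^{\kappa,\kappa+\rho}$ statement follows the same way. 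The only delicate point will be the bookkeeping just described: verifying that the hypothesis $4\sqrt{r_0/r_0'}<\min(\sin(v_0/6),\sin((2\pi-v_0)/6))$ is precisely what makes (\ref{eq:explicitconstant}) applicable uniformly in $n$, and that the geometric factor $(\alpha/\beta)^{n(a-1/2)}$ dominates the linear growth of the deterministic bounds $T_n$, so that the summed estimate stays $\kappa$-uniform with leading $\kappa\log$-contribution exactly $(\rho+2)\log(r_0/r_0')$.
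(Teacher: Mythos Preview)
Your proposal is correct and follows precisely the approach the paper intends: the paper explicitly omits the proof, stating it is ``almost identical to that of Lemma~\ref{lemma:sinpoly}'', and your argument is exactly that adaptation, replacing the polynomially decaying threshold $\vare_n=(n+n_0)^{-1/2}$ by the geometrically decaying $\vare_n=4\sqrt{r_n/r_n'}$ and verifying that the hypothesis $4\sqrt{r_0/r_0'}<\min(\sin(v_0/6),\sin((2\pi-v_0)/6))$ makes (\ref{eq:explicitconstant}) applicable uniformly in $n$. The bookkeeping you outline (the uniform bound $(\alpha/\beta)^{n(a-1/2)}\leq(\alpha/\beta)^{n/2}$ for $\kappa\leq\rho+2$, so that the series bound $S$ is $\kappa$-independent and $\kappa\log S\to0$) is the right way to extract the leading contribution $(\rho+2)\log(r_0/r_0')$.
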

The proof is almost identical to that of Lemma \ref{lemma:sinpoly} and is therefore omitted.
\begin{proof}[Proof of Proposition \ref{prop:escaperadial}(b) and (b')]
Fix $R\in(0,1)$ and $M>0$ and consider sequences $(r_n)$, $(r_n')$ as in the set-up above. The constants $\alpha$ and $\beta$ will depend on $\rho$ but not on $M$ or $R$ (in fact, we will see that fixing $\alpha/\beta\in(0,1)$ and then choosing $\beta<1$ so that 
$\beta\leq 4^{-4}(\frac{\alpha}{\beta})^{5}$ if $\rho\in(-2,2)$ and $\beta<(\frac{\alpha}{\beta})^{1+\rho/6}$ if $\rho\in[2,\infty)$ will be sufficient for our purposes). Using the lemmas above, we will show that there is a choice of $r=r_0$ such that the statement holds. We start by imposing that $r_0<r'_0(3-2\sqrt{2})$ so that $0<r_n<r_n'(3-2\sqrt{2})<r_n'<R$ for all $n\geq 0$.
Let $\gamma$ be a radial SLE$_\kappa(\rho)$ in $\D$ from $0$ to $1$ with force point $e^{iv_0}$. 
We have
$$\P^{\kappa,\rho}[\gamma_{[\tau_{0},\infty)}\cap S_R\neq \varnothing]\leq\sum_{n\geq 0}\P^{\kappa,\rho}[\gamma_{[\tau_n,\tau_{n+1}]}\cap S_R\neq \varnothing],$$
and the $n$-th term on the right-hand side is bounded above by
\begin{align*}& \P^{\kappa,\rho}[\exists t\in[0,\tau_{{n+1}}]:\sin(\theta_t)<\tilde\vare_n]+\P^{\kappa,\rho}[\gamma_{[\tau_{n},\tau_{{n+1}}]}\cap S_R\neq \varnothing,\ \sin(\theta_t)\geq \tilde\vare_n \ \forall t\in[0,\tau_{{n+1}}]]\end{align*}
where $\tilde\vare_n=\max(4\sqrt{r_n/r_n'},\vare_n)$, $\vare_n=\vare(n_0)$, for some positive integer $n_0$, is as in Lemma \ref{lemma:sinpoly}, and $\theta_t:=(v_t-w_t)/2$. 
Therefore,
$\limsup_{\kappa\to 0+}\kappa\log\P^{\kappa,\rho}[\gamma_{[\tau_{0},\infty)}\cap S_R\neq \varnothing]$
is bounded above the maximum of 
\begin{align}&\limsup_{\kappa\to 0+}\kappa\log\sum_{n=0}^\infty\P^{\kappa,\rho}[\exists t\in[0,\tau_{{n+1}}]:\sin(\theta_t)<\tilde\vare_n],\label{eq:termone}\\
&\limsup_{\kappa\to 0+}\kappa\log\sum_{n=0}^\infty\P^{\kappa,\rho}[\gamma_{[\tau_{n},\tau_{{n+1}}]}\cap S_R\neq \varnothing \text{ and } \sin(\theta_t)\geq \tilde\vare_n \ \forall t\in[0,\tau_{{n+1}}]]\label{eq:termtwo}.\end{align}
By choosing $n_0$ sufficiently large, and the ratio $r_0/r_0'$ sufficiently small, Lemma \ref{lemma:sinpoly} and \ref{lemma:sinexp} show that (\ref{eq:termone}) is bounded above by $-M$. Note that the choices of $n_0$ and $r_0/r_0'$ depend only on $M$ and $\rho$, and in particular not on $r_0'/R$, so if we are able to bound (\ref{eq:termtwo}) above by $-M$, by choosing $r_0'/R$ appropriately, we are done.\par
Denote $B_n:=B_{r_n,r_n',R,\gamma_{\tau_n}}$. By Lemma \ref{lemma:escapeangle} and 
$$\{\gamma:\gamma_{[\tau_{n},\infty)}\cap S_R\neq \varnothing\}=\tilde A_{r_n,r_n',R}\cup\tilde B_{r_n,r_n',R}$$
we have 
\begin{align*}&\P^{\kappa,\rho}[\gamma_{[\tau_{n},\tau_{{n+1}}]}\cap S_R\neq \varnothing \text{ and } \sin(\theta_t)\geq \tilde\vare_n \ \forall t\in[0,\tau_{{n+1}}]]\\
\leq &\P^{\kappa,\rho}[\gamma_{[\tau_{n},\tau_{{n+1}}]}\cap(\cup_{\eta\in B_n}\eta)\neq \varnothing \text{ and } \sin(\theta_t)\geq \tilde\vare_n \ \forall t\in[0,\tau_{{n+1}}]]\\
=&\int_{\{\sin(\theta_t)\geq \tilde\vare_n\}} \P^{\kappa,\rho}_{\gamma_{\tau_n}}[\hat\gamma_{\hat \tau_{{n+1}}}\cap (\cup_{\eta\in B_n}\eta)\neq \varnothing \text{ and } \sin(\hat \theta_t)\geq \tilde\vare_n \ \forall t\in[0,\hat \tau_{ {n+1}}]]d\P^{\kappa,\rho}[\gamma_{\tau_{n}}],\end{align*}
where the strong domain Markov property of SLE$_\kappa(\rho)$ is used in the last step, and $ \P^{\kappa,\rho}_{\gamma_{\tau_n}}$ denotes the law of an SLE$_\kappa(\rho)$ from $\gamma(\tau_n)$ to $0$ in $\D\setminus\gamma_{\tau_n}$ with force point $e^{iv_0}$, and $\hat\cdot$ denotes the corresponding curve, stopping times, etc.
Recall that SLE$_\kappa(\rho)$ in $\D\setminus\gamma^{\kappa,\rho}_{\tau_{r_n}}$ is absolutely continuous with respect to a SLE$_\kappa(\kappa-6)$ or equivalently a (reparametrized) \emph{chordal} SLE$_\kappa$ in the same domain from $\gamma(\tau_{r_n})$ to $e^{iv_0}$ for stopping times strictly before the swallowing time of $0$ or $e^{iv_0}$. For the stopping time $\hat\tau_n'=\min(\hat \tau_{{n+1}},\inf\{t:\sin(\hat\theta_t)\leq \tilde\vare_n\})$, which is such a stopping time, we have 
$$\frac{d \P^{\kappa,\rho}_{\gamma_{\tau_n}}}{d \P^{\kappa,\kappa-6}_{\gamma_{\tau_n}}}(\hat \gamma_{\hat\tau_n'})=\bigg(\frac{\hat g_{\hat \tau_n'}'(0)}{\hat g_{0}'(0)}\bigg)^{\frac{(4+\rho)\rho-(\kappa-2)(\kappa-6)}{8\kappa}}\bigg(\frac{|\hat V_{\hat\tau_n'}-\hat W_{\hat\tau_n'}|}{|\hat V_{0}-\hat W_{0}|}\bigg)^{\frac{\rho-\kappa+6}{\kappa}}\bigg(\frac{|\hat g_{\hat \tau_n'}'(V_0)|}{|\hat g_{0}'(V_0)|}\bigg)^{\frac{(4-\kappa+\rho)\rho+2(\kappa-6)}{4\kappa}}.$$
We claim that, on the set $E_n=\{\hat\gamma_{\hat \tau_{n+1}}:\sin(\hat\theta_t)\geq \tilde\vare_n\ \forall t\in[0,\hat\tau_{n+1}]\}$ we have 
\begin{equation}\label{eq:boundRadonNik}\frac{d \P^{\kappa,\rho}_{\gamma_{\tau_n}}}{d \P^{\kappa,\kappa-6}_{\gamma_{\tau_n}}}(\hat \gamma_{\hat\tau_n'})\leq M_{n,\kappa}:=
f_\kappa\cdot \begin{cases}
(\frac{\alpha}{\beta})^{n\frac{\kappa-6-\rho}{2\kappa}}
(\frac{\alpha}{4})^{n\frac{(4-\kappa+\rho)\rho+2(\kappa-6)}{8\kappa}},
&\rho\in(-2,2],\\
(\frac{\alpha}{\beta})^{n\frac{\kappa-6-\rho}{2\kappa}},
&\rho\in[2,\infty),\\
\end{cases}\end{equation}
where $f_\kappa=f_\kappa(\rho,\alpha,n_0,r_0/r_0')$ (but does not depend on $n$ or $r_0'/R$), and for every fixed $\rho$, $\alpha$, $n_0$, $r_0/r_0'$ satisfying the constraints above, 
$$\limsup_{\kappa\to 0+}f_\kappa(\rho,\alpha,n_0,r_0/r_0')=\limsup_{\kappa\to 0+}f_\kappa(\kappa+\rho,\alpha,n_0,r_0/r_0')=\tilde C$$ where $\tilde C=\tilde C(\rho,\alpha,n_0,r_0/r_0')\in\R$.
To show this, first note that 
$$\frac{\hat g_{\hat \tau_n'}'(0)}{\hat g_{0}'(0)}=\frac{e^{-\tau_{{n+1}}}}{e^{-\tau_{n}}}\in[\alpha/4,4\alpha],$$
since $\hat\tau_n'=\hat\tau_{{n+1}}$ on $E_n$. So, the first factor of the Radon-Nikodym derivative can be ``swallowed'' by $f_\kappa$. Secondly, since $|\hat V_t-\hat W_t|=2\sin(\hat\theta_t)$, and $\rho-\kappa+6>0$ when $\kappa\leq 4$, 
$$\bigg(\frac{|\hat V_{\hat\tau_n'}-\hat W_{\hat\tau_n'}|}{|\hat V_{0}-\hat W_{0}|}\bigg)^{\frac{\rho-\kappa+6}{\kappa}}\leq \bigg(4\sqrt{\frac{r_n}{r_n'}}\bigg)^{\frac{\kappa-6-\rho}{\kappa}}\leq \bigg(4\sqrt{\frac{r_0}{r_0'}}\bigg)^{\frac{\kappa-6-\rho}{\kappa}}\bigg(\frac{\alpha}{\beta}\bigg)^{n\frac{\kappa-6-\rho}{2\kappa}},$$
where the first factor on the right-hand side is ``swallowed'' by $f_\kappa$. Finally, by the radial Loewner equation
$$\log\frac{|\hat g_{\hat\tau_n'}'(V_0)|}{|\hat g_{0}'(V_0)|}=-\int_0^{\hat\tau_n'}\frac{1}{2\sin^2(\hat\theta_t)}dt.$$
This shows that
$$0\leq -\log\frac{|\hat g_{\hat\tau_n'}'(V_0)|}{|\hat g_{0}'(V_0)|}\leq \frac{\hat\tau_n'}{2\tilde\vare_n^2}\leq \frac{(\tau_{{n+1}}-\tau_{n})}{2\vare^2_n}\leq -\log(\alpha/4)\frac{(n+n_0)}{2},$$
and therefore
$$\bigg(\frac{\alpha}{4}\bigg)^{\frac{n+n_0}{2}}\leq \frac{|\hat g_{\hat\tau_n'}'(V_0)|}{|\hat g_{0}'(V_0)|}\leq 1.$$ Since $(4-\kappa+\rho)\rho+2(\kappa-6)$ is non-negative on $[2,\infty)$ and non-positive on $(-2,2]$, the third factor of the Radon-Nikodym derivative can be bounded above by $1$ if $\rho\in[2,\infty)$ and by
$$\bigg(\frac{\alpha}{4}\bigg)^{n_0\frac{(4-\kappa+\rho)\rho+2(\kappa-6)}{8\kappa}}\bigg(\frac{\alpha}{4}\bigg)^{n\frac{(4-\kappa+\rho)\rho+2(\kappa-6)}{8\kappa}}
$$
if $\rho\in(-2,2]$ where again, the first factor is ``swallowed'' by $f_\kappa$. It follows that (\ref{eq:boundRadonNik}) holds. 
We see that 
\begin{align*}&\int_{\{\sin(\theta_t)\geq \tilde\vare_n\}} \P^{\kappa,\rho}_{\gamma_{\tau_n}}[\hat\gamma_{\hat \tau_{{n+1}}}\cap (\cup_{\eta\in B_n}\eta)\neq \varnothing \text{ and } \sin(\hat \theta_t)\geq \tilde\vare_n \ \forall t\in[0,\hat \tau_{ {n+1}}]]d\P^{\kappa,\rho}[\gamma_{\tau_{n}}]\\
&\leq \int_{\{\sin(\theta_t)\geq \tilde\vare_n\}}M_{n,\kappa} \P^{\kappa,\kappa-6}_{\gamma_{\tau_n}}[\hat\gamma_{\tau_{n+1}}\cap (\cup_{\eta\in B_n}\eta)\neq \varnothing]d\P^{\kappa,\rho}[\gamma_{\tau_{n}}]\\
&\leq \int_{\{\sin(\theta_t)\geq \tilde\vare_n\}}M_{n,\kappa}\tilde\P^{\kappa}_{\gamma_{\tau_n}}[\tilde\gamma\cap (\cup_{\eta\in B_n}\eta)\neq \varnothing]d\P^{\kappa,\rho}[\gamma_{\tau_{n}}],\end{align*}
where $\tilde \P^\kappa_{\gamma_{\tau_n}}$ denotes the law of a chordal SLE$_\kappa$, $\tilde\gamma$, in $\D\setminus\gamma_{\tau_n}$ from $\gamma(\tau_n)$ to $e^{iv_0}$. 
Using Lemma \ref{lemma:escapeeta} with $D=\D\setminus\gamma_{\tau_{n}}$, $x=\gamma(\tau_{n})$, $y=e^{iv_0}$, $\eta\in B_n$, and $\gamma'$ as in Lemma \ref{lemma:etatop}(iii), we obtain
\begin{align*}\tilde \P^{\kappa}_{\gamma_{\tau_n}}[\tilde\gamma\cap (\cup_{\eta\in B}\eta)\neq \varnothing]
&\leq \sum_{\eta\in B_n}\tilde \P^{\kappa}_{\gamma_{\tau_n}}[\tilde\gamma\cap \eta\neq \varnothing]
\leq c_\kappa\sum_{\eta\in B_n}\mathcal E_{\D\setminus\gamma_{\tau_n}}(\eta,\gamma')^{\frac{8}{\kappa}-1}\\
&\leq c_\kappa\sum_{\eta\in B_n}\mathcal E_{\D\setminus\gamma_{\tau_n}}(\eta,S_{r_n'})^{\frac{8}{\kappa}-1}\leq c_\kappa\bigg(\sum_{\eta\in B_n}\mathcal E_{\D\setminus\gamma_{\tau_n}}(\eta,S_{r_n'})\bigg)^{\frac{8}{\kappa}-1}\\
&\leq c_\kappa\bigg(2\mathcal E_{\D\setminus\gamma_{\tau_n}}(S_R,S_{r_n'})\bigg)^{\frac{8}{\kappa}-1}\leq c_\kappa\bigg(\tilde c\sqrt{\frac{r_{n}'}{R}}\bigg)^{\frac{8}{\kappa}-1}=c_\kappa\bigg(\tilde c\beta^{n/2}\sqrt{\frac{r_0'}{R}}\bigg)^{\frac{8}{\kappa}-1}
\end{align*}
In the third inequality, we have used that $S_{r'_n}$ separates $\gamma'$ from every $\eta\in B_n$ (Lemma \ref{lemma:etatop}(iii)). In the fourth inequality we use monotonicity of the $\ell^p$ norm and $8/\kappa-1\geq 1$. The fifth and sixth inequalities follow from Lemma \ref{lemma:brownianexc} and \cite[Equation (2.5)]{FL15}, whenever $r_0'/R\leq 1/2$, where $\tilde c$ is a constant. Now,
\begin{align*}
&\sum_{n=0}^\infty\P^{\kappa,\rho}[\gamma_{[\tau_{n},\tau_{{n+1}}]}\cap S_R\neq \varnothing,\ \sin(\theta_t)\geq \tilde\vare_n \ \forall t\in[0,\tau_{{n+1}}]]\\
\leq &\sum_{n=0}^\infty\P^{\kappa,\rho}[\gamma_{[\tau_{n},\tau_{{n+1}}]}\cap(\cup_{\eta\in B_n}\eta)\neq \varnothing,\ \sin(\theta_t)\geq \tilde\vare_n \ \forall t\in[0,\tau_{{n+1}}]]\\
\leq &\sum_{n=0}^\infty M_{n,\kappa}c_\kappa\bigg(\tilde c \beta^{n/2}\sqrt{\frac{r_0'}{R}}\bigg)^{8/\kappa-1}\\
\leq &f_\kappa c_\kappa\bigg(\tilde c \sqrt{\frac{r_0'}{R}}\bigg)^{8/\kappa-1}\sum_{n=0}^\infty \beta^{n\frac{8-\kappa}{2\kappa}}\cdot \begin{cases}
(\frac{\alpha}{\beta})^{n\frac{\kappa-6-\rho}{2\kappa}}
(\frac{\alpha}{4})^{n\frac{(4-\kappa+\rho)\rho+2(\kappa-6)}{8\kappa}},
&\rho\in(-2,2],\\
(\frac{\alpha}{\beta})^{n\frac{\kappa-6-\rho}{2\kappa}},
&\rho\in[2,\infty).\\
\end{cases}
\end{align*}
We now set $0<\alpha<\beta<1$ in an appropriate way so that the series on the right hand side converges. 
When $\rho\in[2,\infty)$ we first fix the ratio $\alpha/\beta\in(0,1)$ arbitrarily and set $\beta\leq (\frac{\alpha}{\beta})^{1+\frac{\rho}{6}}$. Then
$$\sum_{n=0}^\infty \beta^{n\frac{8-\kappa}{2\kappa}}\Big(\frac{\alpha}{\beta}\Big)^{n\frac{\kappa-6-\rho}{2\kappa}}\leq \sum_{n=0}^\infty \Big(\frac{\alpha}{\beta}\Big)^{n\frac{8-\kappa}{2\kappa}(1+\frac{\rho}{6})}\Big(\frac{\alpha}{\beta}\Big)^{n\frac{\kappa-6-\rho}{2\kappa}}\leq \sum_{n=0}^\infty \Big(\frac{\alpha}{\beta}\Big)^{\frac{n}{\kappa}}<\infty.$$
For $\rho\in(-2,2]$ we again fix $\alpha/\beta\in(0,1)$, but now set $\beta\leq 4^{-4}(\frac{\alpha}{\beta})^{5}.$ Then, for $\kappa<2$
$$\sum_{n=0}^\infty \beta^{n\frac{8-\kappa}{2\kappa}}\Big(\frac{\alpha}{\beta}\Big)^{n\frac{\kappa-6-\rho}{2\kappa}}\Big(\frac{\alpha}{4}\Big)^{n\frac{(4-\kappa+\rho)\rho+2(\kappa-6)}{8\kappa}}
\leq \sum_{n=0}^\infty \Bigg(
\beta^{8}\Big(\frac{\alpha}{\beta}\Big)^{-36}\Big(\frac{1}{4}\Big)^{-32}\Bigg)^{\frac{n}{8\kappa}}\leq \sum_{n=0}^\infty \Big(\frac{\alpha}{\beta}\Big)^{\frac{n}{2\kappa}}<\infty.$$
This shows that (\ref{eq:termtwo}) is bounded above by $ \hat C(\rho) +4\log \frac{r_0'}{R},$
for a constant $\hat C(\rho)$, and by choosing the ratio $r_0'/R$ sufficiently small, the upper bound can be made smaller than $-M$. This finishes the proof of (b). The proof of (b') is almost identical.  
\end{proof}

\bibliographystyle{acm}
\bibliography{ref}
\end{document}